\newtheorem{theorem}{Theorem}[section]                                          
\theoremstyle{plain}
            \newtheorem{lemma}[theorem]{Lemma}
            \newtheorem{corollary}[theorem]{Corollary}                   
            \newtheorem{proposition}[theorem]{Proposition}
\theoremstyle{remark}
            \newtheorem{remark}[theorem]{Remark}
\theoremstyle{definition}
            \newtheorem{definition}[theorem]{Definition}
            \newtheorem{example}[theorem]{Example}
\newcounter{theoremintro} 
\theoremstyle{plain}
\newtheorem{introtheorem}[theoremintro]{Theorem}
\newtheorem{introcorollary}[theoremintro]{Corollary}
\renewcommand{\ll }{\langle\hspace{-.7mm}\langle }
\newcommand{\rr }{\rangle\hspace{-.7mm}\rangle }
\newcommand{\bg}{\overline{G}}
\newcommand{\E}{\underline{E}}
\newcommand{\bh}{\overline{H}}
\newcommand{\bhc}{\overline{\mathcal H}}
\newcommand{\Z}{\mathbb{Z}}
\newcommand{\N}{\mathcal{N}}
\newcommand{\h}{\mathcal H}
\DeclareMathOperator{\cd}{cd}
\newcommand{\bsl}{\bigoplus_{\lambda\in\Lambda}}
\newcommand{\chg}{\mathrm{CoInd}^{\overline{G}}_{\overline{H}_{\lambda}}}
\newcommand{\ihg}{\mathrm{Ind}^{\overline{G}}_{\overline{H}_{\lambda}}}
\newcommand{\pl}{\prod_{\lambda\in\Lambda}}
\newcommand{\m}{\ll\mathcal{N}\rr}
\newcommand{\bhl}{\overline{H}_{\lambda}}
\newcommand{\bha}{\overline{H}_{\alpha}}
\newcommand{\bhb}{\overline{H}_{\beta}}
\newcommand{\ab}{\mathrm{ab}}
\newcommand{\bZ}{\mathbb{Z}}
\newcommand{\bC}{\mathbb{C}}
\newcommand{\bR}{\mathbb{R}}
\newcommand{\s}[1]{\langle #1 \rangle}
\newcommand{\Compacts}{\mathcal{K}}
\newcommand{\Linears}{\mathcal{L}}
\newcommand{\bX}{\overline{X}}
\newcommand{\bY}{\overline{Y}}
\newcommand{\tifrac}[2]{%
  \raisebox{0.5ex}{$#1$}\!/\!\raisebox{-0.5ex}{$#2$}%
} 
\newcommand{\llrr}[1]{\langle\!\langle #1 \rangle\!\rangle}
\newcommand{\colim}{\mathrm{colim}}
\title[Dehn fillings, $K$-homology, and the Baum--Connes conjecture]{Dehn fillings, equivariant homology, \\and the Baum--Connes conjecture}
\author{Shintaro Nishikawa \& Nansen Petrosyan}
\address{S. Nishikawa: School of Mathematical Sciences, University of Southampton, University Road, Southampton, SO17 1BJ, UK}
\email{s.nishikawa@soton.ac.uk}
	\address{N. Petrosyan: School of Mathematical Sciences, University of Southampton, University Road, Southampton, SO17 1BJ, UK}
	\email{n.petrosyan@soton.ac.uk}
\date{\today}
\begin{document}

\begin{abstract}
We establish a connection between Cohen–Lyndon triples and equivariant homology theory, with a focus on the Baum–Connes conjecture.

In the first part of this work, we establish an excision sequence for the classifying spaces for proper actions in equivariant homology theories. This provides a direct link between Cohen–Lyndon triples and the left-hand side of the Baum–Connes conjecture.

Independently of these, we prove that the Baum–Connes conjecture with coefficients (BCC) with finite wreath products holds for all discrete hyperbolic groups, building on the monumental work of Lafforgue. Combining this with  permanence properties and the work of Dahmani–Guirardel–Osin on relatively hyperbolic groups, we identify a broad class of groups, including all lattices in simple Lie groups of real rank one that satisfy the BCC with finite wreath products. This significantly broadens the scope of our first result, as Cohen–Lyndon triples arise naturally in the context of relatively hyperbolic groups, thereby connecting both sides of the Baum–Connes conjecture.
\end{abstract}

\maketitle

\vspace{-1cm}

\tableofcontents

\section{Introduction}

In the 1970s, Thurston dramatically transformed the study of $3$-manifolds by introducing his \emph{Geometrization Conjecture}. As supporting evidence, he proved that many non-Haken $3$-manifolds satisfy the conjecture \cite{thurston1983three}, using the concept of \emph{Dehn surgery}, a two-step procedure that modifies a $3$-manifold by first removing a solid torus and then gluing it back in a different way.

The second step of this process, called \emph{Dehn filling}, begins with a $3$-manifold $M$ with toral boundary and produces a new manifold by gluing a solid torus to $M$ via a boundary identification. Topologically distinct ways of gluing a solid torus are parametrized by free homotopy classes of essential simple closed curves on $\partial M$ (the image of the meridian circle of the solid torus under the identification), called \emph{slopes}. A celebrated theorem of Thurston asserts that if the interior of $M$ admits a hyperbolic structure, then all but finitely many of its Dehn fillings also admit a hyperbolic structure.

\begin{theorem}[{Thurston \cite{thurston1983three}}]\label{thm. thurston hyperbolic Dehn filling}
Let $M$ be a compact orientable $3$-manifold with boundary a torus, and with interior admitting a complete finite-volume hyperbolic structure. Then for all but finitely many slopes $s$ on $\partial M$, $M_s$ admits a hyperbolic structure. 
\end{theorem}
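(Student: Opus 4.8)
The plan is to follow Thurston's original strategy: deform the complete hyperbolic structure on $\mathrm{int}(M)$ through a one-complex-parameter family of incomplete structures, and read off the Dehn fillings from their metric completions. Write $\mathrm{int}(M)=\mathbb{H}^3/\Gamma$ with $\Gamma\le\mathrm{PSL}(2,\mathbb{C})$ discrete, torsion-free, and of finite covolume; the single cusp corresponds to a rank-two parabolic subgroup $P=\langle\mu,\lambda\rangle\cong\mathbb{Z}^2$, where $\mu$ is a meridian for a fixed framing of $\partial M$ and $\lambda$ a longitude, and let $\rho_0\colon\Gamma\to\mathrm{PSL}(2,\mathbb{C})$ be the holonomy of the complete structure.

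First I would analyze the local structure of the character variety $X(\Gamma)=\mathrm{Hom}(\Gamma,\mathrm{PSL}(2,\mathbb{C}))/\!/\mathrm{PSL}(2,\mathbb{C})$ near $[\rho_0]$. Combining Calabi--Weil local rigidity (no nontrivial deformations fixing all cusps) with a ``half lives, half dies'' argument applied to the torus boundary, one gets $\dim_{\mathbb{C}}H^1(\Gamma;\mathfrak{sl}(2,\mathbb{C})_{\rho_0})=1$ (one complex dimension per cusp) and that $X(\Gamma)$ is a smooth complex curve near $[\rho_0]$. Following Thurston and Neumann--Zagier, a holomorphic coordinate on this curve is the logarithm $u$ of the derivative of $\rho(\mu)$ at its fixed point on $\partial\mathbb{H}^3$, with $\rho_0$ corresponding to $u=0$; there is a companion holomorphic function $v=v(u)$, the logarithm of the derivative of $\rho(\lambda)$, with $v(0)=0$ and $v'(0)=\tau$ equal to the modulus of the cusp torus.

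Next I would upgrade each nearby representation $\rho_u$ to a genuine (incomplete) hyperbolic structure on $\mathrm{int}(M)$: the developing map deforms continuously with $u$, and for $u$ small the cusp neighborhood becomes a tube whose metric completion adjoins either a single closed geodesic or a cone-type singular locus, governed by the generalized Dehn surgery equation $p\,u+q\,v(u)=2\pi i$. When $(p,q)$ is a coprime integer pair, solving this equation selects a value of $u$ for which the completion of the structure attached to $\rho_u$ is a \emph{smooth} complete hyperbolic manifold, obtained by gluing a solid torus to $M$ along the slope $s=p\mu+q\lambda$; that is, $M_s$. Finally, since $u\to 0$ as $p^2+q^2\to\infty$, there is $\varepsilon>0$ such that the preceding geometric analysis is valid whenever $|u|<\varepsilon$ and then yields a hyperbolic structure on $M_s$; only finitely many coprime pairs $(p,q)$ fail $|u|<\varepsilon$, which gives the ``all but finitely many slopes'' conclusion.

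The main obstacle is precisely the geometric step: producing the deformed structures and, above all, showing that their metric completions are \emph{honest smooth} hyperbolic manifolds homeomorphic to the Dehn fillings $M_s$, with genuine completeness and the correct topology, rather than merely manipulating holonomy representations. This needs careful control of the developing map on the thin cusp region --- quantifying how the horospherical cross-section degenerates into a Margulis tube around the core geodesic --- which is the analytic heart of the argument (one clean route is the cone-manifold deformation theory of Hodgson--Kerckhoff, which also makes the exceptional set effective); by contrast, the character-variety bookkeeping and the counting of exceptional slopes are routine once that input is in place.
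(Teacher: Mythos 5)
Note first that the paper does not prove this theorem: it appears only as background motivation in the introduction, cited directly from Thurston \cite{thurston1983three}, so there is no proof in the paper to compare against. Your sketch is a correct high-level outline of the standard Thurston/Neumann--Zagier deformation argument, and you are right to identify the genuine analytic content as the step where one verifies that, for a solution $u$ of the generalized Dehn surgery equation $p\,u+q\,v(u)=2\pi i$ with $(p,q)$ coprime, the metric completion of the corresponding incomplete structure is actually a smooth complete hyperbolic manifold homeomorphic to $M_s$. The one point that deserves a word of justification rather than assertion is the claim that $u\to 0$ as $p^{2}+q^{2}\to\infty$: this is a consequence of the inverse-function-theorem analysis of the map $u\mapsto (p,q)$ built from the expansion $v(u)=\tau u+O(u^{3})$, which Neumann--Zagier carry out by showing the generalized Dehn surgery coefficients extend to a homeomorphism from a neighborhood of $0$ in the $u$-disc to a neighborhood of $\infty$ in $\mathbb{R}^{2}\cup\{\infty\}$; it does not follow immediately from the surgery equation alone. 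With that caveat, your outline is sound, and your suggestion of the Hodgson--Kerckhoff cone-manifold route as a clean (and effective) alternative to Thurston's original developing-map analysis is well taken.
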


There is an analogous construction in group theory, known as \emph{(group-theoretic) Dehn filling}. At an abstract level, this can be formalised as follows: given a group $G$, a subgroup $H \leq G$, and a normal subgroup $N \trianglelefteq H$, the Dehn filling associated to the triple $(G,H,N)$ is the quotient $G/\ll N \rr$, where $\ll N \rr$ denotes the normal closure of $N$ in $G$.

This algebraic framework mirrors the topological notion of Dehn filling, as seen through the Seifert--van Kampen theorem. Specifically, let \(G = \pi_1(M)\), \(H\) be the image of \(\pi_1(\partial M)\) in \(\pi_1(M)\), and \(N\) the image of \(\pi_1(\partial D^2)\). The gluing process in topology then corresponds to the group-theoretic construction.

However, in practice, group-theoretic Dehn fillings are studied not in this general form, but rather in settings that reflect the presence of negative curvature. Most notably, $G$ is either assumed to be \emph{hyperbolic relative} to $H$, or more generally $H$ is \emph{hyperbolically embedded} in $G$ (denoted $H\hookrightarrow_h G$) in the sense of Dahmani--Guirardel--Osin \cite{dahmani2017hyperbolically}. These frameworks provide the geometric control necessary to prove analogues of Thurston’s hyperbolic Dehn filling theorem in the algebraic setting.  

The first such results were established for relatively hyperbolic groups by Osin \cite{osin2007peripheral} and independently by Groves--Manning \cite{groves2008dehn}. These were later extended by Dahmani--Guirardel--Osin, who proved a generalisation of the main results of \cite{osin2007peripheral, groves2008dehn} in the setting of hyperbolically embedded subgroups \cite{dahmani2017hyperbolically}.

Dehn filling is a fundamental tool in group theory. It appears, for instance, in the solution of the Virtual Haken Conjecture \cite{agol2013virtual}, in the study of the Singer Conjecture \cite{PS_L2}, the Farrell--Jones Conjecture \cite{antolin2017farrell}, and the isomorphism problem \cite{DG18} for relatively hyperbolic groups, as well as in the construction of purely pseudo-Anosov normal subgroups of mapping class groups \cite{dahmani2017hyperbolically}. Further applications of Dehn fillings can be found in \cite{agol2016alternate, groves2016boundaries, sun2019cohomologyii}.

For simplicity of exposition, some of the results in this introduction are stated for a single peripheral subgroup, though they hold more generally for multiple peripherals.

\subsection{Cohen--Lyndon quotients}
A central focus of this article is the so-called \emph{Cohen--Lyndon property}, which originally arose in the study of asphericity conditions in group presentations. Roughly speaking, following the previous notation, the triple $(G,H,N)$ is said to satisfy the \emph{Cohen--Lyndon property} if the normal closure $\ll N \rr$ decomposes as a free product of certain conjugates of $N$. In this case, we say that $\bg := G / \ll N \rr$ is a \emph{Cohen--Lyndon quotient}. This property is closely related to the condition that $H$ is hyperbolically embedded in $G$. Indeed, a key result of Sun \cite{sun2018cohomologyi} states that for sufficiently deep normal subgroups $N \triangleleft H$, the triple $(G,H,N)$ satisfies the Cohen--Lyndon property.

The Cohen--Lyndon property has been established for many classes of groups, including one-relator groups \cite{cohen1963free}, certain groups with relative one-relator presentations \cite{EH87}, and more recently, classical and cubical small cancellation groups \cite{AD25, Arenas24}. 

Most recently, the second author and Bin Sun, using this property, obtained structural results on the cohomology and $L^2$-Betti numbers of Dehn filling quotients $\bg$ \cite{sun2019cohomologyii, PS_L2}. In \cite{PS_L2}, they constructed a contractible free $\bg$-CW-complex $E\bg$ associated to the Cohen--Lyndon quotient $\bg$, called the \emph{Dehn filling space}. This space proved extremely useful for applications, as it was tailored for topological excision. Here, we extend this construction and build a classifying space $\E \bg$ for proper actions (\Cref{thm: proper Dehn filling space}), which we refer to as a \emph{Dehn filling space for proper actions}. This new space retains similar excision properties. The key point is that the presence of torsion in $\bg$ rules out the existence of finite-dimensional models for $E\bg$, whereas $\E \bg$ is well-suited for constructing cocompact models in the cases of interest.

To construct $\E \bg$, we require a certain rigidity result on finite subgroups of Cohen--Lyndon quotients $\bg$, which we refer to as property {\rm \hyperlink{cond:fin}{(FIN)}}. We verify that this property holds under general conditions and for all our applications (\Cref{theorem FIN}).
\subsection{Equivariant homology sequence}
The general homological framework most suitable for our results is the equivariant homology theory defined axiomatically in \cite{BEL08} (see also \cite[Chapter 12]{Lueck25}). Fix a discrete group $G$ and a ring $S$. A $G$-homology theory $\h_\ast^G$ with values in $S$-modules is a collection of covariant functors $\h_n^G$ from the category of $G$-CW pairs to the category of $S$-modules, indexed by $n \in \mathbb{Z}$, and satisfying certain standard axioms. An equivariant homology theory $\h_\ast^{?}$ over a group $\Gamma$ (\cite[Definition 1.3]{BEL08}) assigns to every group $(G, \xi\colon G\to \Gamma)$ over $\Gamma$, a $G$-homology theory $\h_\ast^{G}$ (more precisely, $\h_\ast^{(G, \xi)}$), and comes with the induction structure for any homomorphism $\alpha\colon (G_1, \xi_1) \to (G_2, \xi_2)$ of groups over $\Gamma$ (see Section \ref{sec_homology}).

By applying the excision axiom to the Dehn filling space for proper actions $\E \bg$ constructed in \Cref{thm: proper Dehn filling space}, we obtain:

\begin{introtheorem}[Theorem \ref{thm_CLP_diagram}]\label{intro_thm_CLP_diagram}
Let $(G, H, N)$ be a Cohen--Lyndon triple satisfying property \hyperlink{cond:fin}{(FIN)}. Let $\h^{?}_{\ast}$ be any equivariant homology theory over $\bg$. Then we have the following commutative diagram with exact rows:
\begin{equation*}\label{eq_diag_equiv}
\xymatrix{
\cdots \ar[r] & \h_\ast^{H}(EH) \ar[r] \ar[d] & \h_\ast^{G}(EG) \ar[r] \ar[d] 
& \h_\ast^{\bg}(M_\phi, Z) \ar[d]^{\cong} \ar[r] & \cdots \\
\cdots \ar[r] & \h_\ast^{\bh}(\E \bh) \ar[r] & \h_\ast^{\bg}(\E \bg) \ar[r] 
& \h_\ast^{\bg}(Y, M_\theta) \ar[r] & \cdots.
}
\end{equation*}
\end{introtheorem}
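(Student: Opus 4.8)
The plan is to read off both rows from the long exact sequence of a $\bg$-CW pair together with the excision axiom, and to obtain the vertical ladder from the canonical comparison between the free and the proper Dehn filling spaces. Recall from \Cref{thm: proper Dehn filling space} that $\E\bg$ is built as a $\bg$-pushout of three pieces: an \emph{ambient} piece induced from a classifying space of $G$ and pushed forward along the quotient $q\colon G\twoheadrightarrow\bg$, a \emph{peripheral cap} $\bg\times_{\bh}\E\bh$ induced along $\bh\hookrightarrow\bg$, and a \emph{collar} induced from a classifying space of $H$ along which the cap is glued to the ambient, the gluing maps being induced by $H\hookrightarrow G$ and by the structure maps of the peripheral classifying spaces. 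The free Dehn filling space $E\bg$ of \cite{PS_L2} follows the identical blueprint with the free classifying spaces in place of the proper ones. Since free $G$-CW complexes are proper, the universal properties of the proper classifying spaces furnish comparison maps $EG\to\E G$, $EH\to\E H$, $E\bh\to\E\bh$ that are compatible, up to the relevant equivariant homotopy, with all the structure maps; dividing by $\llrr{N}$ and $N$ and inducing up to $\bg$, these assemble into a morphism of the two blueprints, in particular a $\bg$-map $E\bg\to\E\bg$.

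First I would feed the proper blueprint to $\h^{?}_{\ast}$. Letting $M_\theta$ be the mapping cylinder of the peripheral structure map $\theta$, the Dehn filling space $Y:=\E\bg$ contains $\bg\times_{\bh}M_\theta$ as a $\bg$-subcomplex and $M_\theta\simeq_{\bh}\E\bh$; the long exact sequence of the pair $(Y,\bg\times_{\bh}M_\theta)$, after applying the induction isomorphism $\h^{\bg}_{\ast}(\bg\times_{\bh}M_\theta)\cong\h^{\bh}_{\ast}(M_\theta)$ and $\bh$-homotopy invariance, is the bottom row. The same recipe applied to the free blueprint produces the top row, with one extra input: $\llrr{N}$ acts \emph{freely} on $EG$ and $N$ acts freely on $EH$, so the induction structure for the non-injective quotient homomorphisms $q$ and $q|_H$ yields isomorphisms $\h^{\bg}_{\ast}(EG/\llrr{N})\cong\h^{G}_{\ast}(EG)$ and $\h^{\bh}_{\ast}(EH/N)\cong\h^{H}_{\ast}(EH)$; this is precisely what makes $\h^{G}_{\ast}(EG)$ and $\h^{H}_{\ast}(EH)$ appear in the top row, with $M_\phi$ the mapping cylinder of the corresponding free structure map $\phi$ and $Z$ its source. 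Naturality of excision, of equivariant homotopy invariance, and of the induction structure then makes every square of the diagram commute, the first two verticals being the comparison (assembly-type) maps from the free picture to the proper picture.

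The crucial point is that the third vertical is an isomorphism. By excision, $\h^{\bg}_{\ast}(M_\phi,Z)$ and $\h^{\bg}_{\ast}(Y,M_\theta)$ are identified, respectively, with the equivariant homology of the pair $(\text{ambient},\text{collar})$ in the free and in the proper blueprint, and under these identifications the third vertical becomes the map induced by $EG\to\E G$ and $EH\to\E H$. I would then show that this map is a $\bg$-homotopy equivalence of pairs: relative to the collar, both ambient pieces are built from $\bg$-cells whose isotropy is governed, via property \hyperlink{cond:fin}{(FIN)}, by the peripheral structure, so the comparison restricts to a homotopy equivalence on $F$-fixed points for every finite $F\le\bg$ --- the torsion that the proper ambient carries over the free ambient is already present in the collar, hence cancels in the relative term. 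Equivariant homotopy invariance of $\h^{?}_{\ast}$ then yields the isomorphism.

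I expect this last step --- the identification of the two relative terms --- to be the main obstacle, since the free and proper ambient pieces are genuinely inequivalent $\bg$-spaces and become comparable only after collapsing the peripheral collar and invoking property \hyperlink{cond:fin}{(FIN)} to confine all finite subgroups of $\bg$ to the peripheral structure. Granting it, the two long exact sequences, the induction isomorphisms, and the commutativity of the ladder are formal consequences of the axioms of an equivariant homology theory over $\bg$ in the sense of \cite{BEL08} together with the constructions of the Dehn filling spaces in \Cref{thm: proper Dehn filling space} and \cite{PS_L2}.
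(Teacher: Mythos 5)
Your outline identifies the right tools --- the long exact sequence of a $\bg$-CW pair, the excision axiom, and the induction isomorphisms of \cite[Lemma 1.5]{BEL08} for quotient and inclusion homomorphisms --- but the detour through a comparison of the free and proper Dehn filling spaces is unnecessary, it promotes a routine step to your ``main obstacle,'' and the argument you propose to fill it in would not go through. The paper's proof stays entirely inside $Y=\E\bg$: by \Cref{proper_Dehn_space}, the pair $(M_\phi,Z)$ appearing in the theorem statement already consists of $\bg$-subcomplexes of $Y$, namely the collar $Z=\bg\times_{\bh}(N\backslash EH)$ and the mapping cylinder $M_\phi$ of $\phi\colon Z\to\ll N \rr\backslash EG$. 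Since $Y=M_\phi\sqcup_Z M_\theta$ is a $\bg$-pushout, the inclusion of pairs $(M_\phi,Z)\hookrightarrow(Y,M_\theta)$ is an excision map, and the third vertical is an isomorphism from the excision axiom alone; property \hyperlink{cond:fin}{(FIN)} plays no role there --- its only job is to guarantee, via \Cref{thm: proper Dehn filling space}, that $Y$ is a model for $\E\bg$ in the first place. The two rows are the long exact sequences of these two pairs (\Cref{prop_CL_diagram}), and \cite[Lemma 1.5]{BEL08} converts the absolute terms: $\h_\ast^{\bg}(M_\phi)\cong\h_\ast^{G}(EG)$ via the quotient $G\twoheadrightarrow\bg$, $\h_\ast^{\bg}(Z)\cong\h_\ast^{H}(EH)$ via $H\twoheadrightarrow\bh$ and $\bh\hookrightarrow\bg$, and $\h_\ast^{\bg}(M_\theta)\cong\h_\ast^{\bh}(\E\bh)$ via the inclusion.

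Your last paragraph rests on two misconceptions that would block a fill-in of your version. First, the free and proper Dehn filling spaces share the same ambient $\ll N \rr\backslash EG$ and the same collar $Z$; only the caps differ, $\bg\times_{\bh}E\bh$ versus $\bg\times_{\bh}\E\bh$. So the ``identification of the two relative terms'' you worry about is an equality of pairs of spaces, not a homotopy equivalence to be established. Second, the assertion that ``the torsion that the proper ambient carries over the free ambient is already present in the collar'' starts from a false premise: $\ll N \rr$ acts freely on $EG$, hence $\ll N \rr\backslash EG$ is a free $\bg$-CW complex with no torsion at all; the finite stabilizers of $\E\bg$ live entirely in the cap $\bg\times_{\bh}\E\bh$, which does not appear in the pair $(M_\phi,Z)$. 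The fixed-point comparison you envision is therefore not only unnecessary but aimed at the wrong piece of the complex. Stating the argument internally to $\E\bg$, as in \Cref{prop_CL_diagram}, makes both issues disappear, and your plan collapses to the paper's proof.
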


Given a group $\Gamma$ and a ring $R$ (with involution) equipped with a $\Gamma$-action, \cite[Theorem 5.1]{BEL08} provides the following equivariant homology theories:
\[
H_\ast^{?}(-;\mathrm{K_R}),\quad H_\ast^{?}(-;\mathrm{KH_R}),\quad H_\ast^{?}(-;\mathrm{L_R^{\langle -\infty \rangle}}).
\]
These theories take values in $\mathbb{Z}$-modules over the group $\Gamma$ and satisfy the properties listed in \cite[Theorem 5.1]{BEL08}. Theorem \ref{intro_thm_CLP_diagram} applies to any of these equivariant homology theories.

Regarding the topological $K$-homology $H_\ast^{?}(-;\mathrm{K^{\mathrm{top}}_{A,r}})$ for a $C^*$-algebra $A$ with a $\Gamma$-action (provided by \cite[Theorem 5.1]{BEL08}), to the best of the authors' knowledge, the currently known construction (\cite{M04}, \cite{Kranz21}, \cite{BEL21}) yields an ``equivariant" homology theory whose induction homomorphisms are defined only for injective group homomorphisms. For this reason, we establish the analogue of Theorem \ref{intro_thm_CLP_diagram} for Kasparov's analytic $K$-homology separately in Section \ref{sec:ex_Khom} (Theorem \ref{thm_CLP_diagram_Khom}).

In \Cref{sec: CLA}, we apply Theorem \ref{thm_CLP_diagram_Khom} to compute the topological $K$-theory (the left-hand side of the BC) of all Cohen--Lyndon aspherical groups, which include one-relator groups and many small-cancellation groups (\Cref{thm: CLA}). As an application, we obtain a complete computation of the $K$-theory of the reduced group $C^*$-algebra for all groups satisfying  either $C(7)$ or $C'(1/4)-T(4)$ small-cancellation presentations (see Remark \ref{rmk:CLA_applications}). We expect that the flexibility of allowing arbitrary coefficients will enable further computations and applications in the future.

\subsection{Relatively hyperbolic groups and the Baum--Connes conjecture}

Bartels \cite{Bartels17} proved the Farrell--Jones conjecture for relatively hyperbolic groups whose peripheral subgroups satisfy the Farrell--Jones conjecture (see also \cite[Theorem 16.21]{Lueck25}). Later, an alternative proof was obtained by Antolín--Coulon--Gandini \cite{antolin2017farrell}, assuming in addition that the peripheral subgroups are residually finite; their argument extends the Dehn filling theorem following Dahmani--Guirardel--Osin \cite{dahmani2017hyperbolically} and combines it with permanence results and the Farrell--Jones conjecture for hyperbolic groups \cite{BLR, BL12}.

We adapt the approach of Antolín--Coulon--Gandini to obtain a result on the Baum--Connes conjecture for relatively hyperbolic groups. To maximise the benefit of this strategy, we introduce the \emph{Baum--Connes conjecture with coefficients with finite wreath products} (abbreviated the \emph{BCC with finite wreath products}), following the formulation of the Farrell--Jones conjecture with finite wreath products \cite[Conjecture 13.27]{Lueck25}:

\begin{definition}[The Baum–Connes Conjecture with Coefficients with finite wreath products] We say that a (countable) discrete group $G$ satisfies the Baum–Connes conjecture
with coefficients with finite wreath products if for any finite group $F$ the wreath product $G\wr F$ satisfies the Baum–Connes conjecture with coefficients.
\end{definition}

Note that the BCC with finite wreath products for $G$ implies the Baum--Connes conjecture with coefficients (BCC) for $G$ (the BCC with finite wreath products is, a priori, stronger than the BCC).

In \cite{Lueck25}, a systematic implementation of this finite wreath products version is suggested (see \cite[Remark 16.10]{Lueck25}). The main advantage is the stability under finite extensions; in fact, the BCC with finite wreath products for $G_1$ and for $G_2$ are equivalent whenever $G_1$ and $G_2$ are commensurable. Consequently, the stability under group extensions of the BCC with finite wreath products is better than that of the BCC: for any group extension $1\to N\to G\to G/N\to1$, the BCC with finite wreath products holds for $G$, if it holds for $N$ and $G/N$ (see Section \ref{sec: permanence} for all these). All a-T-menable groups satisfy the BCC with finite wreath products by the Higson--Kasparov theorem \cite{HK01} (see Example \ref{ex_HK}).

We verify the BCC with finite wreath products (and in particular, the BCC) for a broad class of relatively hyperbolic groups:

\begin{introtheorem}[Theorem \ref{thm_BCC_rel}]\label{intro_thm. BCC permanence}
Let $G$ be a countable discrete group hyperbolic relative to a family $\{H_{\lambda}\}_{\lambda \in \Lambda}$ of groups, with $|\Lambda|<\infty$. Suppose for any $\lambda\in \Lambda$, $H_\lambda$ is residually finite and satisfies the BCC with finite wreath products. Then $G$ satisfies the BCC with finite wreath products.
\end{introtheorem}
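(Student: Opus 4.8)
\emph{Strategy.} The plan is to run the Dehn-filling argument of Antol\'in--Coulon--Gandini \cite{antolin2017farrell}, adapted to Baum--Connes. For the Farrell--Jones conjecture one is forced to iterate Dehn fillings and invoke a transitivity principle, since extensions are delicate there; for the Baum--Connes conjecture with coefficients the class of groups satisfying it is closed under group extensions with no side conditions (see Section \ref{sec: permanence}), so a single Dehn filling will suffice. Concretely, I will produce one surjection $G\twoheadrightarrow\bg$ onto a hyperbolic group whose kernel is a free product of finite-index subgroups of the $H_\lambda$, and then read off the conclusion from the permanence results of Section \ref{sec: permanence} together with the separately established hyperbolic case.

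\emph{Step 1: the filling.} Since each $H_\lambda$ is residually finite, for any finite subset $\mathcal F_\lambda\subseteq H_\lambda\setminus\{1\}$ there is a finite-index \emph{normal} subgroup $N_\lambda\trianglelefteq H_\lambda$ with $N_\lambda\cap\mathcal F_\lambda=\emptyset$ (intersect finitely many finite-index normal subgroups). By the Dehn-filling theorem of Dahmani--Guirardel--Osin \cite{dahmani2017hyperbolically} together with Sun's Cohen--Lyndon theorem \cite{sun2018cohomologyi}, the $\mathcal F_\lambda$ may be chosen so that, whenever $N_\lambda\cap\mathcal F_\lambda=\emptyset$ for all $\lambda$, the triple $(G,\{H_\lambda\}_\lambda,\{N_\lambda\}_\lambda)$ is a Cohen--Lyndon triple and the filling $\bg:=G/\ll\mathcal N\rr$, with $\mathcal N=\{N_\lambda\}_\lambda$, is hyperbolic relative to $\{H_\lambda/N_\lambda\}_\lambda$. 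Fix such $N_\lambda$, which by residual finiteness we may also take of finite index in $H_\lambda$. Then each $H_\lambda/N_\lambda$ is finite, so $\bg$ is a hyperbolic group; and the kernel $K:=\ll\mathcal N\rr$ of $G\twoheadrightarrow\bg$ is, by the Cohen--Lyndon property, a free product of $G$-conjugates of the $N_\lambda$. As $G$ is countable this free product is over a countable index set $I$; writing $K_J$ for the free product of the subfamily indexed by a finite subset $J\subseteq I$, we have $K=\colim_{J}K_J$ with injective structure maps, and each factor is isomorphic to one of the $N_\lambda$.

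\emph{Step 2: the ends, and the conclusion.} The quotient $\bg$ satisfies the BCC with finite wreath products because it is hyperbolic and, as proved earlier in this paper on the basis of Lafforgue's work, every discrete hyperbolic group has this property. For the kernel: each $N_\lambda$ has finite index in $H_\lambda$, hence is commensurable with $H_\lambda$, so by commensurability invariance of the BCC with finite wreath products (Section \ref{sec: permanence}) each $N_\lambda$ --- and therefore each factor of $K$, and then each finite sub-free-product $K_J$ by closure under finite free products --- has the property; passing to the directed colimit $K=\colim_J K_J$ (closure under directed colimits, Section \ref{sec: permanence}) shows that $K$ has it. Finally $1\to K\to G\to\bg\to1$ is an extension of groups satisfying the BCC with finite wreath products, so by closure under extensions (Section \ref{sec: permanence}) $G$ satisfies it as well.

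\emph{Main obstacle.} The assembly above is short; its substance is concentrated in the imported ingredients, the deepest being Lafforgue's theorem behind the hyperbolic case. Within the argument itself the delicate point is the treatment of the kernel $K$: one needs closure of the Baum--Connes conjecture with coefficients under actions on trees (to handle the finite free products $K_J$), in tandem with commensurability invariance and closure under directed colimits (to pass to the infinite free product $K$), and one must check that all of these survive the passage from a group $G$ to its finite wreath products $G\wr F$. Securing precisely this package is the point of working with the finite-wreath-product formulation, and it is also what makes the statement self-propagating: a group hyperbolic relative to residually finite groups that already satisfy the BCC with finite wreath products (for instance a-T-menable groups, by Higson--Kasparov \cite{HK01}) again satisfies it.
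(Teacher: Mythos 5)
Your proposal is correct and follows essentially the same route as the paper's proof of Theorem~\ref{thm_BCC_rel}: choose deep finite-index normal subgroups $N_\lambda$ (using residual finiteness) so the Cohen--Lyndon/Dehn-filling machinery applies, observe that $\bg$ is hyperbolic and the kernel $\ll\N\rr$ is a free product of conjugates of the $N_\lambda$, and conclude via the permanence properties of Theorem~\ref{thm_inh_wr}. One small misattribution in your closing commentary: the free-product permanence in Theorem~\ref{thm_inh_wr}\,\eqref{item_wr_free} is not obtained from closure under actions on trees, but from the extension $1\to N\to G_0\Asterisk G_1\to G_0\times G_1\to 1$ with $N$ free (hence a-T-menable), combined with product, extension, and colimit permanence.
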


To establish Theorem \ref{intro_thm. BCC permanence}, we combine the Dehn filling theorem of \cite{dahmani2017hyperbolically} with permanence results (see \Cref{sec: permanence}) and the following extension of Lafforgue's theorem \cite{Lafforgue12} (the BCC for hyperbolic groups), proved in \Cref{sec_hyp_BCC}:

\begin{introtheorem}[Theorem \ref{thm_hyp_BCC_wr}]\label{intro_thm_hyp_BCC_wr}
The BCC with finite wreath products holds for any hyperbolic group.
\end{introtheorem}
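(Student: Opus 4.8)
Fix a hyperbolic group $G$ and a finite group $F$. By the definition of the Baum--Connes conjecture with coefficients with finite wreath products, the task is to show that the restricted wreath product $G \wr F$ satisfies the BCC. Write $N := \bigoplus_{g \in G} F$ for the base group, on which $G$ acts by translating the index set, so that there is a split short exact sequence $1 \to N \to G \wr F \to G \to 1$. The guiding observation is that $G \wr F$ is \emph{amenable-by-hyperbolic}: the quotient $G$ satisfies the BCC with coefficients by Lafforgue's theorem \cite{Lafforgue12}, the kernel $N$ is amenable, and the Baum--Connes conjecture with coefficients propagates along extensions of this shape. So the plan is first to verify the relevant properties of the kernel and of the finite-subgroup structure, and then to feed them into the extension permanence developed in \Cref{sec: permanence}.

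I would begin by recording that $N = \bigoplus_{g \in G} F$ is a countable locally finite group: it is the directed union of the finite subgroups $F^{S}$ as $S$ ranges over the finite subsets of $G$. Hence $N$ is amenable --- in particular a-T-menable --- and therefore satisfies the BCC with coefficients by the Higson--Kasparov theorem \cite{HK01}. The same reasoning applies to the preimage in $G \wr F$ of any finite subgroup $C \leq G$, namely $N \rtimes C$: given a finite subset of $N \rtimes C$, one enlarges the supporting finite set $S \subseteq G$ to the $C$-invariant set $C \cdot S$, so that the subset already lies in the finite subgroup $F^{C \cdot S} \rtimes C$; thus $N \rtimes C$ is again locally finite, in particular amenable, and satisfies the BCC with coefficients.

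Now, with $G$ satisfying the BCC with coefficients by \cite{Lafforgue12}, and with the kernel $N$ together with the preimages $N \rtimes C$ of all finite subgroups $C \leq G$ satisfying it by the previous step, I would invoke the permanence of the BCC with coefficients under group extensions (established in \Cref{sec: permanence}, in the spirit of Oyono-Oyono's extension results) to conclude that $G \wr F$ satisfies the BCC with coefficients. Geometrically, this permanence reflects the fact that $G \wr F$ acts cocompactly, through the quotient map onto $G$, on a finite-dimensional model of $\underline{E}G$ (a Rips complex of the hyperbolic group $G$), with all cell stabilisers amenable (each being an extension of a finite subgroup of $G$ by $N$), so that the Dirac--dual-Dirac method available for $G$ can be transported along this action. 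Since $F$ was an arbitrary finite group, this establishes the BCC with finite wreath products for $G$.

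The only genuinely substantive ingredient is the permanence of the Baum--Connes conjecture \emph{with coefficients} under the extension $1 \to N \to G \wr F \to G \to 1$. In contrast to the conjecture without coefficients, the with-coefficients version does propagate along such extensions; but the precise hypotheses --- notably concerning the preimages of the possibly non-trivial finite subgroups of the hyperbolic quotient $G$ --- have to be checked, which is exactly the purpose of the amenability computations above. This is also the reason it is the finite-wreath-product formulation, rather than the BCC itself, that one wants to carry forward: it is automatically inherited by commensurable groups and, as developed in \Cref{sec: permanence}, by arbitrary group extensions whose kernel and quotient both possess it --- the very permanence properties that are exploited, together with the Dehn filling theorem of \cite{dahmani2017hyperbolically}, in the proof of \Cref{intro_thm. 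BCC permanence}.
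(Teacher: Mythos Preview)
Your proposal has a fundamental misreading of the wreath product. In the paper (and in standard convention), $G \wr F = (\prod_F G) \rtimes F$, with the \emph{finite} group $F$ permuting $|F|$ copies of the \emph{hyperbolic} group $G$. The short exact sequence is therefore
\[
1 \to \textstyle\prod_F G \to G \wr F \to F \to 1,
\]
not $1 \to \bigoplus_G F \to G \wr F \to G \to 1$. The latter would describe $F \wr G$, which is indeed amenable-by-hyperbolic and would satisfy BCC by the argument you sketch; but that is not the group in question.

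Even after correcting the direction, your permanence strategy fails. Products of hyperbolic groups do satisfy BCC by the known product permanence, so $\prod_F G$ is fine; the problem is the finite quotient $F$. The Oyono-Oyono/Chabert--Echterhoff extension theorem requires BCC for the preimages of all compact subgroups of the quotient, and when the quotient $F$ is itself finite, the preimage of $F$ is the whole group $G \wr F$ --- so the hypothesis is exactly the conclusion you want. Plain BCC is \emph{not} known to pass to finite overgroups, and the paper says so explicitly: ``the BCC for all groups that are commensurable to a product of hyperbolic groups \ldots\ had remained open in general, and it is not clear if it is implied by the previously-known permanence theorems and the BCC for hyperbolic groups.'' The finite-wreath-product formulation is introduced precisely to \emph{obtain} stability under finite extensions, so one cannot assume that stability in order to prove the theorem.

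The paper's actual proof is of a completely different nature. It does not use abstract permanence at all but goes back into Lafforgue's argument: one constructs an \emph{equivariant power functor} $\hat\otimes_\Omega \colon KK_G \to KK_{G\wr_\Omega F}$ (via a symmetric version of the Kasparov technical theorem), shows that it sends a gamma element $\gamma_G$ to a gamma element $\gamma_{G\wr_\Omega F}$ (Theorem~\ref{thm_gamma_wr_KK}), and that if $\gamma_G = 1$ in Lafforgue's controlled $KK_{G,\ell}$ then $\gamma_{G\wr_\Omega F} = 1$ in $KK_{G\wr_\Omega F,\tilde\ell}$ (Theorem~\ref{thm_gamma_wr}). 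Combined with the fact that the product metric space still has finite asymptotic dimension with linear control (Theorem~\ref{thm_gamma_Laff}), this yields BCC for $G\wr_\Omega F$ directly.
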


We prove Theorem \ref{intro_thm_hyp_BCC_wr} by combining the work of Lafforgue on the BCC for hyperbolic groups \cite{Lafforgue12} with the following main technical theorem and its corollary. We refer the reader to Section \ref{sec_hyp_BCC} for all the terminologies that are involved:

\begin{introtheorem}[Theorem \ref{thm_gamma_wr}]\label{intro_thm_gamma_wr} Let $G$ be a second countable, locally compact group. Let $\ell$ be any length function on $G$. Suppose $G$ has a gamma element $\gamma_G \in KK_G(\bC, \bC)$ in the Meyer--Nest sense. Then, for any second countable, compact group $F$ and for any finite $F$-set $\Omega$, the wreath product $G\wr_\Omega F$ has a gamma element $\gamma_{G\wr_\Omega F}$ in the Meyer--Nest sense. Moreover, if $\gamma_G =1_G$ in $KK_{G, \ell}(\bC, \bC)$, then $\gamma_{G\wr_\Omega F} =1_{G\wr_\Omega F}$ in $KK_{G\wr_\Omega F, \tilde\ell}(\bC, \bC)$ where $\tilde \ell$ is the length function on $G\wr_\Omega F$ defined by $\tilde \ell(g) \coloneq \sum_{\omega \in \Omega}\ell(g_\omega)$ for $(g_\omega)_{\omega \in \Omega}$ in $\prod_\Omega G \subset G \wr_\Omega F$ and $\tilde \ell (f) \coloneq 0$ for $f \in F$.
\end{introtheorem}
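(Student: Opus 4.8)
The plan is to build $\gamma_{G\wr_\Omega F}$ by taking an iterated external (Kasparov) product of copies of $\gamma_G$ and then incorporating the compact group $F$, working throughout with honest equivariant Kasparov cycles so that the relevant permutation symmetries are realised on the nose. Recall that for a second countable locally compact group $\Gamma$ one may take the Dirac morphism to be the augmentation $D_\Gamma\colon C_0(\underline{E}\Gamma)\to\mathbb{C}$; that $\gamma_\Gamma$ exists exactly when $D_\Gamma$ admits a section $\eta_\Gamma\colon\mathbb{C}\to C_0(\underline{E}\Gamma)$ with $D_\Gamma\circ\eta_\Gamma=1_{\mathbb{C}}$, in which case $\gamma_\Gamma=\eta_\Gamma\circ D_\Gamma$; and that $\gamma_\Gamma=1_\Gamma$ exactly when $D_\Gamma$ is invertible. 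All of this has an analogue in Lafforgue's length-decorated setting, keeping track of the propagation of cycles with respect to a fixed length function.

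First I would treat the finite direct product $G^\Omega:=\prod_{\omega\in\Omega}G$. The space $(\underline{E}G)^\Omega$ with the coordinatewise action is a model for $\underline{E}(G^\Omega)$, and under the identification $C_0\big((\underline{E}G)^\Omega\big)=C_0(\underline{E}G)^{\otimes\Omega}$ (external tensor product) the Dirac morphism $D_{G^\Omega}$ becomes the external power $D_G^{\otimes\Omega}$. Representing $D_G$ and $\eta_G$ by $G$-equivariant Kasparov cycles and taking $\Omega$-fold external products, the identity $D_G\circ\eta_G=1_{\mathbb{C}}$ gives a section $\eta_G^{\otimes\Omega}$ of $D_G^{\otimes\Omega}$; hence $\gamma_{G^\Omega}$ exists and equals $\gamma_G^{\otimes\Omega}$. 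If $\gamma_G=1_G$ then $D_G$ is invertible, hence so is $D_G^{\otimes\Omega}$, so $\gamma_{G^\Omega}=1$; since the external product of $\ell$-controlled cycles is controlled for the length function $(g_\omega)_\omega\mapsto\sum_{\omega}\ell(g_\omega)$, this equality already holds in the corresponding length-decorated group.

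Next I would feed in $F$. The action of $F$ on $G^\Omega$ permutes the factors through $F\to\operatorname{Sym}(\Omega)$, and hence extends the coordinatewise $G^\Omega$-action on $(\underline{E}G)^\Omega$ to a $G\wr_\Omega F$-action. That space is a model for $\underline{E}(G\wr_\Omega F)$: the action is proper because $G^\Omega$ acts properly and $F$ is compact, and for a compact $L\le G\wr_\Omega F$ the fixed-point set $\big((\underline{E}G)^\Omega\big)^{L}$ decomposes, indexed by the $L$-orbits on $\Omega$, as a product of subspaces of the form $(\underline{E}G)^{K}$ with $K\le G$ compact (the image under a coordinate projection of a point-stabiliser in $L$), hence is a product of contractible spaces and therefore contractible and nonempty. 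Consequently $D_{G\wr_\Omega F}$ is the augmentation of $C_0\big((\underline{E}G)^\Omega\big)$. Now, being built from $|\Omega|$ identical copies, the external power cycles representing $D_G^{\otimes\Omega}$ and (when $\gamma_G$ exists) $\eta_G^{\otimes\Omega}$ are fixed by the permutation action of $F$ and hence are automatically $G\wr_\Omega F$-equivariant, and they still satisfy $D_G^{\otimes\Omega}\circ\eta_G^{\otimes\Omega}=1_{\mathbb{C}}$; this produces a section of $D_{G\wr_\Omega F}$, so $\gamma_{G\wr_\Omega F}$ exists. When $\gamma_G=1_G$, take $\eta_G$ to be an inverse cycle for $D_G$; its external power is then a $G\wr_\Omega F$-equivariant inverse for $D_{G\wr_\Omega F}$, so $D_{G\wr_\Omega F}$ is invertible and $\gamma_{G\wr_\Omega F}=1_{G\wr_\Omega F}$. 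Declaring the elements of the compact subgroup $F$ to have length $0$, the length function on $G\wr_\Omega F$ coming from the external products coincides with the function $\tilde\ell$ of the statement, and the cycles above are $\tilde\ell$-controlled; hence $\gamma_{G\wr_\Omega F}=1_{G\wr_\Omega F}$ in $KK_{G\wr_\Omega F,\tilde\ell}(\mathbb{C},\mathbb{C})$.

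The main obstacle is to carry out the above inside Lafforgue's length-decorated (Banach, controlled-propagation) framework rather than in ordinary equivariant $KK$-theory: one must verify that the chosen models for $\underline{E}(G^\Omega)$ and $\underline{E}(G\wr_\Omega F)$, together with their augmentations, compute the Dirac morphism in that setting; that external products of controlled cycles are controlled with respect to the summed length function; that the compact factor $F$ can be absorbed with zero propagation; and that the invertibility witnessing $\gamma_G=1_G$ survives all of these operations. A secondary technical point is the identification of $(\underline{E}G)^\Omega$, equipped with the wreath-product action, as a genuine model for $\underline{E}(G\wr_\Omega F)$, and in particular the fixed-point computation for an arbitrary compact subgroup of $G\wr_\Omega F$.
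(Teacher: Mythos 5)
Your overall plan---realise $\gamma_{G\wr_\Omega F}$ as an $\Omega$-fold external power of $\gamma_G$, exploit the permutation symmetry of identical tensor factors, and then feed the compact group $F$ in at no extra cost---is in the same spirit as the paper's argument, but it skips precisely the step that makes the argument work, and it quietly strengthens the hypothesis.

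The central gap is the claim that, ``being built from $|\Omega|$ identical copies, the external power cycles representing $D_G^{\otimes\Omega}$ and $\eta_G^{\otimes\Omega}$ are fixed by the permutation action of $F$ and hence are automatically $G\wr_\Omega F$-equivariant.'' This is true for $D_G^{\otimes\Omega}$, since a $\ast$-homomorphism tensors symmetrically; it is \emph{not} true for $\eta_G^{\hat\otimes\Omega}$. The dual-Dirac morphism is a genuine $KK$-cycle, and the iterated external Kasparov product is represented by a cycle of the form $\bigl(E^{\hat\otimes\Omega},\ \sum_{\omega}N_\omega^{1/2}T_\omega\bigr)$ built from a partition of unity $(N_\omega)_\omega$ coming from the Kasparov technical theorem. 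The partition obtained by iterating the usual two-factor technical theorem is not $S_{|\Omega|}$-equivariant, and taking its $S_{|\Omega|}$-average does not repair this when $|\Omega|\geq 3$, because the averaged operators need not essentially commute with each other. The whole point of the paper's Theorem~\ref{thm_tech_thm} is to construct an $S_N$-equivariant partition of unity satisfying the required commutation relations simultaneously; this is the technical heart of the proof and cannot be waved away as ``automatic.'' Without it, your external power is not a well-defined $G\wr_\Omega F$-equivariant cycle, and the equivariant power map $\hat\otimes_\Omega$ that the paper constructs in Theorem~\ref{thm_KK_wr_functor} does not exist.

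Two further issues. First, you replace the abstract Meyer--Nest Dirac morphism $D\colon P\to\mathbb{C}$ with the augmentation of $C_0(\underline{E}G)$. That identification requires $\underline{E}G$ to admit a separable, locally compact model with $C_0(\underline{E}G)\in\langle\mathcal{CI}_G\rangle$, which is not part of the hypothesis; the theorem assumes only that $G$ is second countable locally compact with a gamma element in the Meyer--Nest sense. The paper therefore must prove directly that $P^{\otimes\Omega}$ lies in $\langle\mathcal{CI}_{G\wr_\Omega F}\rangle$ (Lemma~\ref{lem_wr_CI}), using the Izumi filtration of $M_\phi^{\otimes\Omega}$ and the $G\wr_\Omega F$-equivariant contractive completely positive splittings of Proposition~\ref{prop_admissible_cylinder}. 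Second, you acknowledge that ``the main obstacle is to carry out the above inside Lafforgue's length-decorated framework'' and then leave it unresolved. In $KK_{G,\ell}$ there is no Kasparov product, so ``$D_G$ is invertible and the invertibility survives'' is not an argument available to you. The paper addresses this by constructing a separate length-decorated power map via Theorem~\ref{thm_tech_thm_Gl} and Theorem~\ref{thm_wr_KK_Gl}, and then transports $\gamma_G=1_G$ across the commutative square~\eqref{eq_commute_wr_functor}. Finally, a small slip: the dual-Dirac condition is $D\otimes_{\mathbb{C}}\eta=1_P$, not ``$D\circ\eta=1_{\mathbb{C}}$''; the latter already asserts $\gamma_G=1_G$, whereas the first half of the theorem only assumes a gamma element exists.
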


\begin{introcorollary}[Corollary \ref{cor_gamma_to_wreath}]\label{intro_cor_gamma_to_wreath} Let $G$ be a second countable, locally compact group $G$ that acts continuously and isometrically on a metric space $(X, d)$ of finite asymptotic dimension with linear control. Let $x_0\in X$ and let $\ell$ be the length function defined by $\ell(g)=d(x_0, gx_0)$. Suppose that $G$ has a gamma element $\gamma_G$ in $KK_G(\bC, \bC)$ in the Meyer--Nest sense, and that $\gamma_G$ satisfies the following condition: for any $s>0$, there is $C\geq 0$ such that $\gamma_G=1_G$ in $KK_{G, s\ell+C}(\bC, \bC)$. Then, the BCC with finite wreath products holds for $G$.
\end{introcorollary}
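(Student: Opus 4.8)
The plan is to fix a finite group $F$ and prove that the wreath product $G\wr F$ satisfies the Baum--Connes conjecture with coefficients; letting $F$ vary then gives the conclusion. Since $F$ is finite, $G\wr F$ coincides with $G\wr_F F$ in the sense of \Cref{thm_gamma_wr}, where $F$ acts on itself by left translation, and $G\wr F$ is again second countable and locally compact. First I would record a geometric model for it: let $\prod_F G$ act on $\prod_F X$ coordinatewise and let $F$ permute the factors; with the $\ell^1$-metric $d_{\prod}\bigl((x_\omega),(y_\omega)\bigr)=\sum_{\omega}d(x_\omega,y_\omega)$ both actions are continuous and isometric, so $G\wr F$ acts continuously and isometrically on $\bigl(\prod_F X, d_{\prod}\bigr)$. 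This space has finite asymptotic dimension with linear control, since $X$ does and this property is stable under finite $\ell^1$-products. Finally, the basepoint $\xi_0=(x_0)_{\omega\in F}$ is fixed by $F$, so the length function $L(g)=d_{\prod}(\xi_0,g\xi_0)$ satisfies $L((g_\omega))=\sum_\omega \ell(g_\omega)$ on $\prod_F G$ and $L\equiv 0$ on $F$; that is, $L$ is exactly the length function $\tilde\ell$ appearing in \Cref{thm_gamma_wr}.

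By hypothesis $G$ has a gamma element $\gamma_G$ in the Meyer--Nest sense, so \Cref{thm_gamma_wr} produces a gamma element $\gamma_{G\wr F}$ for $G\wr F$. It therefore remains to show $\gamma_{G\wr F}=1$ in $KK_{G\wr F}(\bC,\bC)$, since this is equivalent to the BCC for $G\wr F$. Because $G\wr F$ acts continuously and isometrically on the space $\prod_F X$ of finite asymptotic dimension with linear control, with associated length $\tilde\ell$, the descent and comparison arguments underlying \Cref{sec_hyp_BCC} — the same ones by which ``$\gamma_G=1$ in $KK_{G,s\ell+C}(\bC,\bC)$ for all $s>0$'' already yields the BCC for $G$ itself (the case of trivial $F$) — reduce this to verifying
\[
\gamma_{G\wr F}=1 \ \text{ in }\ KK_{G\wr F,\, s\tilde\ell+C}(\bC,\bC)\quad\text{for every }s>0\text{ and some }C=C(s)\ge 0.
\]

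To obtain this, fix $s>0$. By hypothesis there is $C_0\ge 0$ with $\gamma_G=1$ in $KK_{G,s\ell+C_0}(\bC,\bC)$; as a bounded perturbation of the weight function leaves the controlled $KK$-group unchanged, $\gamma_G=1$ in $KK_{G,s\ell}(\bC,\bC)$. Now apply the ``moreover'' part of \Cref{thm_gamma_wr} with the length function $s\ell$ in place of $\ell$: the resulting wreath-product length is $\widetilde{s\ell}=s\tilde\ell$, so we get $\gamma_{G\wr F}=1$ in $KK_{G\wr F,\, s\tilde\ell}(\bC,\bC)$, hence in $KK_{G\wr F,\, s\tilde\ell+C}(\bC,\bC)$ for any $C\ge 0$. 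Since $s>0$ was arbitrary, the displayed condition holds, so $\gamma_{G\wr F}=1$ in $KK_{G\wr F}(\bC,\bC)$ and $G\wr F$ satisfies the BCC. As $F$ was an arbitrary finite group, $G$ satisfies the BCC with finite wreath products.

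The main obstacle is the reduction used in the second paragraph: passing from the family of controlled equalities $\gamma_{G\wr F}=1$ in $KK_{G\wr F,\,s\tilde\ell+C}(\bC,\bC)$ back to the equality $\gamma_{G\wr F}=1$ in ordinary equivariant $KK$-theory. This is exactly the point at which the finite-asymptotic-dimension-with-linear-control hypothesis enters, via the comparison between Lafforgue-type controlled (Banach) $KK$-theory and $KK_{G\wr F}(\bC,\bC)$ for groups acting on such spaces; one also needs the (routine) facts that finite $\ell^1$-products preserve ``finite asymptotic dimension with linear control'' and that the associated product length function is $\tilde\ell$. The remaining ingredients — the existence of $\gamma_{G\wr F}$ and the length identity $\widetilde{s\ell}=s\tilde\ell$ — come directly from \Cref{thm_gamma_wr}, and the insensitivity of controlled $KK$-groups to bounded perturbations of the weight is straightforward.
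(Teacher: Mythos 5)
Your proof has the right skeleton — product space with $\ell^1$-metric, finite asymptotic dimension with linear control inherited under finite products, the identity $\tilde\ell(g) = d_{\prod}(\xi_0, g\xi_0)$, and the combination of Theorems \ref{thm_gamma_wr} and \ref{thm_gamma_Laff} — and this is indeed the paper's route. But there is a genuine error in how you invoke Theorem \ref{thm_gamma_Laff}. You claim that establishing $\gamma_{G\wr F}=1$ in $KK_{G\wr F,\,s\tilde\ell+C}(\bC,\bC)$ for every $s>0$ lets you ``pass back'' to $\gamma_{G\wr F}=1$ in the ordinary group $KK_{G\wr F}(\bC,\bC)$, and you even single this step out in your closing paragraph as the crux, attributing it to ``the comparison between Lafforgue-type controlled (Banach) $KK$-theory and $KK_{G\wr F}(\bC,\bC)$.'' No such comparison exists, and the implication is false in general: the entire point of Lafforgue's controlled $KK$-theory is that one can prove the BCC without ever establishing $\gamma=1$ in $KK_G(\bC,\bC)$ — which is typically unattainable, e.g., for hyperbolic groups with property (T) the gamma element is \emph{not} $1$ in $KK_G(\bC,\bC)$. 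What Theorem \ref{thm_gamma_Laff} actually says is that the controlled equality for every $s$ forces $(\gamma\hat\otimes 1_A)\rtimes_r G$ to act as the identity on $K_\ast(A\rtimes_r G)$, which together with $\gamma$ being a gamma element yields the BCC \emph{directly}, bypassing any statement in $KK_{G\wr F}(\bC,\bC)$. So the correct conclusion from your own Step~3 is immediate, and your Step~4 (``so $\gamma_{G\wr F}=1$ in $KK_{G\wr F}(\bC,\bC)$'') is both unjustified and unnecessary.

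A second, smaller issue: the claim that ``a bounded perturbation of the weight function leaves the controlled $KK$-group unchanged,'' which you use to replace $s\ell+C_0$ by $s\ell$, is not justified. Cycles for $KK_{G,\ell_1}$ give cycles for $KK_{G,\ell_2}$ when $\ell_1\leq\ell_2$ (the norm bound $\|\pi(g)\|\leq e^{\ell_1(g)}$ implies $\|\pi(g)\|\leq e^{\ell_2(g)}$), but there is no reverse map, so you cannot pass from $\gamma_G=1$ in $KK_{G,s\ell+C_0}$ down to $KK_{G,s\ell}$. This detour is also unneeded: apply the ``moreover'' clause of Theorem \ref{thm_gamma_wr} to the length function $\ell'=s\ell+C_0$ directly; since $\widetilde{\ell'}\leq s\tilde\ell+|\Omega|C_0$ pointwise, the resulting equality in $KK_{G\wr_\Omega F,\widetilde{\ell'}}$ pushes forward to $KK_{G\wr_\Omega F,\,s\tilde\ell+|\Omega|C_0}$, which is exactly what Theorem \ref{thm_gamma_Laff} requires (this is what the paper does). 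With these two corrections your argument collapses to the paper's.
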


To the best of our knowledge, the BCC for all groups that are commensurable to a product of hyperbolic groups, which is implied by Theorem \ref{intro_thm_hyp_BCC_wr} and the permanence properties (see Section \ref{sec: permanence}), had remained open in general, and it is not clear if it is implied by the previously-known permanence theorems and the BCC for hyperbolic groups (Lafforgue, \cite{Lafforgue12}). It would be desirable to prove the BCC (with finite wreath products) for all groups acting isometrically, continuously, and properly on a product of weakly geodesic, uniformly locally finite hyperbolic metric spaces, but this is beyond the scope of this article. 

As applications of Theorem \ref{intro_thm. BCC permanence}, we obtain the following corollaries. We believe these are new even when considering the BCC (without finite wreath products) but it is worth mentioning that the  versions without finite wreath products of the following corollaries can be derived without Theorem \ref{intro_thm_hyp_BCC_wr} from a combination of known permanence properties, the BCC for hyperbolic groups (Lafforgue, \cite{Lafforgue12}), and the Higson--Kasparov theorem \cite{HK01}, combined with the Dehn filling theorem of \cite{dahmani2017hyperbolically}. Establishing that a group $G$ satisfies the BCC with finite wreath products is particularly advantageous because this version of the conjecture is stable under group extensions.

\begin{introcorollary}[Corollary \ref{cor_negative_curvature}]\label{intro_cor_negative_curvature}
Let $G$ be the fundamental group of a complete, noncompact, finite-volume Riemannian manifold with pinched negative sectional curvatures. Then $G$ satisfies the BCC with finite wreath products.
\end{introcorollary}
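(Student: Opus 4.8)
The plan is to realise $G$ as a relatively hyperbolic group whose peripheral subgroups are amenable and residually finite, and then to invoke Theorem~\ref{intro_thm. BCC permanence}. Write $M = \widetilde{M}/G$, where $\widetilde{M}$ is the universal cover — a Hadamard manifold whose sectional curvature is pinched between two negative constants. Since $M$ is complete, noncompact and of finite volume, the Margulis lemma together with the resulting thick--thin decomposition shows that $M$ has only finitely many cusps and that the fundamental group $H_i$ of each cusp neighbourhood (equivalently, the stabiliser in $G$ of a horoball) is a finitely generated virtually nilpotent group. By classical work on manifolds of pinched negative curvature (Eberlein, Heintze--Im~Hof), recast in the language of relative hyperbolicity by Bowditch and Farb, the group $G$ is hyperbolic relative to the finite family $\{H_1, \dots, H_k\}$ of these cusp subgroups; in particular the index set is finite, as required.

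Next I would verify the two conditions that Theorem~\ref{intro_thm. BCC permanence} imposes on the peripheral subgroups. First, each $H_i$ is finitely generated and virtually nilpotent, hence residually finite: finitely generated nilpotent groups are residually finite, and this property passes to finite extensions. Second, each $H_i$ is virtually nilpotent, hence amenable, hence a-T-menable, and therefore satisfies the BCC with finite wreath products by the Higson--Kasparov theorem (Example~\ref{ex_HK}). With both hypotheses checked, Theorem~\ref{intro_thm. BCC permanence} applied to the relatively hyperbolic pair $(G, \{H_1,\dots,H_k\})$ gives that $G$ satisfies the BCC with finite wreath products, which is the desired conclusion.

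The only genuinely delicate point is the geometric input of the first step: one must cite the appropriate form of the Margulis lemma for pinched negative curvature — controlling both the number of cusps and the algebraic (virtually nilpotent) structure of their fundamental groups — together with the fact that the resulting peripheral structure on $\pi_1(M)$ is a bona fide relatively hyperbolic structure in the sense used in Theorem~\ref{intro_thm. BCC permanence}. I expect this reduction to be where essentially all of the care is needed; everything afterwards is a direct appeal to permanence and to a-T-menability of virtually nilpotent groups.
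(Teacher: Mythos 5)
Your proof is correct and follows essentially the same route as the paper: cite the classical relative hyperbolicity of $G$ with respect to its (finitely many) finitely generated virtually nilpotent cusp subgroups, observe that these are residually finite and amenable (hence a-T-menable, hence in the class by Higson--Kasparov), and apply Theorem~\ref{intro_thm. BCC permanence}. The paper gives the same argument, citing Farb for the relatively hyperbolic structure.
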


\begin{introcorollary}[Corollary \ref{cor_lattice_BCC}]\label{intro_cor_lattice}
Any lattice in a simple Lie group of real rank one with finitely many connected components satisfies the BCC with finite wreath products.
\end{introcorollary}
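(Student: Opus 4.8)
The plan is to reduce the statement, using the invariance of the BCC with finite wreath products under commensurability, to two cases: uniform (cocompact) lattices, which are hyperbolic groups and so are covered by \Cref{intro_thm_hyp_BCC_wr}; and non-uniform lattices, whose quotient manifolds are complete, noncompact, finite-volume and negatively pinched, so that \Cref{intro_cor_negative_curvature} (equivalently \Cref{intro_thm. BCC permanence}) applies.

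First I would carry out the standard reductions. Let $G$ be a simple Lie group of real rank one with finitely many connected components and let $\Gamma \leq G$ be a lattice. Since the BCC with finite wreath products passes to and from finite-index subgroups (see \Cref{sec: permanence}), I may replace $G$ by its identity component $G^{\circ}$ and $\Gamma$ by the finite-index lattice $\Gamma \cap G^{\circ}$, reducing to the case $G$ connected; then, by Selberg's lemma, I may further replace $\Gamma$ by a torsion-free finite-index subgroup. (Recall that lattices in rank-one Lie groups are finitely generated, in particular countable.) Let $X = G/K$ be the associated symmetric space of noncompact type --- one of $\mathbb{H}^{n}_{\mathbb{R}}$, $\mathbb{H}^{n}_{\mathbb{C}}$, $\mathbb{H}^{n}_{\mathbb{H}}$, $\mathbb{H}^{2}_{\mathbb{O}}$ --- equipped with a $G$-invariant Riemannian metric whose sectional curvature is pinched between two negative constants. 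The torsion-free lattice $\Gamma$ then acts freely, properly discontinuously and isometrically on $X$, so $M := \Gamma \backslash X$ is a complete, finite-volume Riemannian manifold of pinched negative curvature with $\pi_{1}(M) \cong \Gamma$.

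Then I would split into two cases. If $\Gamma$ is cocompact, then $M$ is closed and, by the Milnor--\v{S}varc lemma, $\Gamma$ is quasi-isometric to $X$; since $X$ is a complete simply connected manifold of pinched negative curvature it is Gromov hyperbolic, hence $\Gamma$ is a hyperbolic group and \Cref{intro_thm_hyp_BCC_wr} gives the conclusion. If $\Gamma$ is not cocompact, then $M$ is a complete, noncompact, finite-volume Riemannian manifold with pinched negative sectional curvature, so \Cref{intro_cor_negative_curvature} applies directly. (Alternatively, in the non-uniform case: the thick--thin decomposition exhibits $\Gamma$ as hyperbolic relative to its finitely many cusp subgroups $\{H_{\lambda}\}$, each of which is finitely generated virtually nilpotent --- hence residually finite by Mal'cev's theorem and a-T-menable --- so each $H_{\lambda}$ satisfies the BCC with finite wreath products by the Higson--Kasparov theorem (see \Cref{ex_HK}), and \Cref{intro_thm. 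BCC permanence} applies.) In all cases $\Gamma$ satisfies the BCC with finite wreath products, and by commensurability invariance the same holds for the original lattice. I do not anticipate a genuine obstacle here once \Cref{intro_thm. BCC permanence} and \Cref{intro_thm_hyp_BCC_wr} are in hand: the only points requiring care are structural, namely the commensurability reductions (passage to the identity component and to a torsion-free subgroup) and, in the non-uniform case, the identification of the cusp subgroups as finitely generated virtually nilpotent groups --- precisely the input needed to verify the residual finiteness and BCC hypotheses of \Cref{intro_thm. BCC permanence}.
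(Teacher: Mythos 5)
Your approach is the same as the paper's---reduce to a finite-index subgroup of the identity component, then split into uniform lattices (which you argue are hyperbolic, so Theorem~\ref{intro_thm_hyp_BCC_wr} applies) and non-uniform lattices (which give finite-volume pinched negatively curved manifolds, so Corollary~\ref{intro_cor_negative_curvature} applies)---but you skip a reduction the paper makes first, namely quotienting by the center of the identity component, and this omission is a genuine gap because a connected simple Lie group of real rank one may have \emph{infinite} center.

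The universal cover $G_0=\widetilde{SL(2,\mathbb R)}$ is such a group: $Z(G_0)\cong\mathbb Z$. There is then no compact $K$ with $G_0/K$ a rank-one symmetric space; the hyperbolic plane carries only an action of the adjoint group $G_0/Z(G_0)$, and a lattice $\Gamma\leq G_0$ acts on it with kernel $\Gamma\cap Z(G_0)$. When this kernel is infinite (as it is, e.g., for $\pi_1(T^1\Sigma)$ with $\Sigma$ a closed hyperbolic surface, a torsion-free cocompact lattice in $\widetilde{PSL(2,\mathbb R)}$ with infinite cyclic center), the action on the hyperbolic plane is not proper even after passing to any finite-index subgroup, so your Milnor--\v{S}varc step and your construction of $M=\Gamma\backslash X$ with $\pi_1(M)\cong\Gamma$ both break down. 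Moreover such a $\Gamma$ is not Gromov hyperbolic---an infinite, non-virtually-cyclic hyperbolic group has finite center---so the ``cocompact $\Rightarrow$ hyperbolic'' branch of your dichotomy is false as stated. (The Selberg step is also not automatic here, since it requires linearity.)

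The paper's fix is one more application of the permanence machinery before your dichotomy: by the extension permanence property~\eqref{item_wr_ext} of Theorem~\ref{thm_inh_wr} applied to
\[
1\longrightarrow \Gamma_0\cap Z(G_0)\longrightarrow \Gamma_0\longrightarrow \overline\Gamma\longrightarrow 1,
\]
where $\Gamma_0\cap Z(G_0)$ is finitely generated abelian and hence lies in $\mathfrak{BCCwr}$ (Example~\ref{ex_HK}), one reduces to the lattice $\overline\Gamma$ in the centerless, linear adjoint group $G_0/Z(G_0)$, for which your argument works verbatim. With this step supplied, your proof is correct and coincides with the paper's.
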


Corollaries \ref{intro_cor_negative_curvature}, \ref{intro_cor_lattice} generalise the result of Chartterji--Ruane \cite[Corollary 0.3]{CR05} that   confirmed the Baum--Connes conjecture without coefficients (BC) for these groups.


\begin{introcorollary}[Corollary \ref{cor_Dehn_filling_mfld}]\label{intro_cor_Dehn_filling_mfld} Let $\overline{M}$ be a compact oriented $n$-manifold with nilmanifold boundary components  such that the centre of the fundamental group of each boundary component is of rank at least $2$. Suppose the interior of $\overline{M}$ admits a Riemannian metric with a complete pinched negative sectional curvature and finite volume.   If $M_T$  is a sufficiently deep Dehn filling manifold of $\overline{M}$, then $M_T$ is a closed oriented aspherical manifold and $\pi_1(M_T)$ satisfies the BCC with finite wreath products.
\end{introcorollary}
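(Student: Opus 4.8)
The plan is to realise $\pi_1(M_T)$ as a relatively hyperbolic Dehn filling whose peripheral subgroups are amenable and residually finite, and then to invoke Theorem~\ref{thm_BCC_rel}. Since the interior of $\overline{M}$ carries a complete, finite-volume, pinched negatively curved metric, its cusps have the standard warped-product form and $G:=\pi_1(\overline{M})$ is hyperbolic relative to the finite family $\{H_\lambda\}_{\lambda\in\Lambda}$ of boundary subgroups, each inclusion $H_\lambda\hookrightarrow G$ being $\pi_1$-injective and each $H_\lambda\cong\pi_1(Y_\lambda)$ a finitely generated torsion-free nilpotent group, where $Y_\lambda$ denotes the $\lambda$-th nilmanifold boundary component.

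First I would fix the exceptional finite set $\mathcal F\subset G\setminus\{1\}$ supplied by the relatively hyperbolic Dehn filling theorem of Osin and of Groves--Manning (see \cite{osin2007peripheral,groves2008dehn}, and also \cite{dahmani2017hyperbolically}). Using the hypothesis $\rk Z(H_\lambda)\ge 2$, I would choose, for each $\lambda$, a primitive infinite cyclic central subgroup $N_\lambda\le Z(H_\lambda)$ with $N_\lambda\cap\mathcal F=\emptyset$; since $Z(H_\lambda)\cong\Z^{m_\lambda}$ with $m_\lambda\ge 2$, infinitely many (hence arbitrarily deep) such slopes exist. Torsion-freeness of $H_\lambda$ together with primitivity of $N_\lambda$ in $Z(H_\lambda)$ forces $\bhl:=H_\lambda/N_\lambda$ to be torsion-free — in a torsion-free nilpotent group a power of an element is central only if the element is, so $N_\lambda$ is isolated in $H_\lambda$ — hence $\bhl$ is again finitely generated torsion-free nilpotent. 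Applying the Dehn filling theorem to sufficiently deep such $\{N_\lambda\}_{\lambda\in\Lambda}$ shows that $\bg:=G/\ll\bigcup_{\lambda\in\Lambda}N_\lambda\rr$ is hyperbolic relative to $\{\bhl\}_{\lambda\in\Lambda}$ and that each $\bhl$ injects into $\bg$. On the topological side, each $\bhl$ is the fundamental group of a closed infranilmanifold, so the corresponding filling pieces $W_\lambda$ (the disk-bundle fillings of the cusp cross-sections) are compact aspherical manifolds with $\partial W_\lambda=Y_\lambda$ and $\pi_1(W_\lambda)\cong\bhl$; gluing them to $\overline{M}$ along the $Y_\lambda$ therefore produces a closed oriented aspherical $n$-manifold $M_T$ with $\pi_1(M_T)\cong\bg$ by van Kampen (cf. the construction in \cite{PS_L2}).

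It then remains to verify the hypotheses of Theorem~\ref{thm_BCC_rel} for the pair $(\bg,\{\bhl\}_{\lambda\in\Lambda})$. The family is finite; and each $\bhl$, being finitely generated nilpotent, is residually finite and amenable, hence a-T-menable, and therefore satisfies the BCC with finite wreath products by Example~\ref{ex_HK} (Higson--Kasparov \cite{HK01}). Theorem~\ref{thm_BCC_rel} then yields that $\bg\cong\pi_1(M_T)$ satisfies the BCC with finite wreath products, which is the assertion.

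I expect the only genuine obstacle to be reconciling the two meanings of ``sufficiently deep'': the central slopes $N_\lambda$ must avoid the finite set $\mathcal F$ (so that the relatively hyperbolic structure of $\bg$ with peripherals $\bhl$, and the embeddings $\bhl\hookrightarrow\bg$, are available) and simultaneously satisfy the metric conditions needed for the filling manifold to exist, all while keeping each $\bhl$ torsion-free; it is precisely the rank-$\ge 2$ hypothesis that supplies infinitely many admissible primitive central directions and thereby makes both demands satisfiable. By contrast, the Baum--Connes input is soft, but one should note that the conclusion does \emph{not} follow by applying the result on $\pi_1$ of cusped pinched negatively curved manifolds (Corollary~\ref{cor_negative_curvature}) to $G$ itself, since the BCC (with or without finite wreath products) is not inherited by quotients; it is the relatively hyperbolic structure of the filled group together with the amenability of its peripheral quotients that brings Theorem~\ref{thm_BCC_rel} into play.
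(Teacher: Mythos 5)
Your argument for the Baum--Connes part is essentially the paper's: choose central, primitive, sufficiently deep $N_\lambda \leq Z(H_\lambda)$ so that the Dehn filling theorem gives $\bg$ hyperbolic relative to $\{\bhl\}$ with $\bhl$ residually finite f.g.\ nilpotent, then apply Theorem~\ref{thm_BCC_rel} (the paper routes this through Corollary~\ref{cor_quot_rel_hyp}, but the substance is identical). Your verification that primitivity of $N_\lambda$ in $Z(H_\lambda)$ plus isolatedness of the centre of a torsion-free nilpotent group forces $\bhl$ torsion-free is also correct.

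The genuine gap is in the topological part. You write that gluing aspherical filling pieces $W_\lambda$ to $\overline{M}$ ``therefore produces a closed oriented aspherical $n$-manifold $M_T$ \dots by van Kampen.'' Van Kampen identifies $\pi_1(M_T)$, but it says nothing about higher homotopy: gluing two aspherical pieces along a $\pi_1$-injective subspace does not by itself yield an aspherical space. Asphericity of $M_T$ is precisely the nontrivial content of \cite[Cor.~6.6]{sun2019cohomologyii}, and it is established there via the Cohen--Lyndon property of the triple $(G,\{H_\lambda\},\{N_\lambda\})$ (available for sufficiently deep fillings by Proposition~\ref{prop CL}\eqref{prop CL_4}), which makes the Dehn filling space a $K(\bg,1)$. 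Your proposal never invokes the Cohen--Lyndon property, so you have not established asphericity --- you would need to add ``choose $N_\lambda$ deep enough that the Cohen--Lyndon property also holds'' to the list of depth requirements (as the paper does explicitly) and then appeal to \cite[Cor.~6.6]{sun2019cohomologyii} or to Theorem~\ref{thm. topology}/\cite{PS_L2} rather than to van Kampen. A small further slip: the exceptional set $\mathcal F$ in the Dehn filling theorem lives in (the union of) $H_\lambda\smallsetminus\{1\}$, not in $G\smallsetminus\{1\}$, per Definition~\ref{def. sufficiently deep}.
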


It is important to note that in Theorem \ref{intro_cor_Dehn_filling_mfld}, $M_T$ is constructed using sufficiently deep normal subgroups of the peripherals of $\pi_1(\overline{M})$ so that the resulting triple satisfies the Cohen--Lyndon property. This is crucial in showing that $M_T$ is aspherical. 

Our next application concerns aspherical Einstein manifolds. Recall that Thurston’s Hyperbolic Dehn Filling Theorem \ref{thm. thurston hyperbolic Dehn filling} applies only in dimension~$3$. The obstruction in higher dimensions is straightforward: if $M$ is a manifold with torus boundary and $\dim M \geqslant 4$, then for any Dehn filling $M_T$ of $M$, the fundamental group $\pi_1(M_T)$ necessarily contains a $\mathbb{Z}^2$ subgroup. Consequently, $M_T$ cannot support a hyperbolic Riemannian metric. Nevertheless, Anderson~\cite{And06} showed that all sufficiently large Dehn fillings of $M$ do admit an Einstein metric.

\begin{introcorollary}[Corollary \ref{cor_Einstein_mfld}] \label{intro_cor_Einstein_mfld}Let $M$ be a complete, finite-volume hyperbolic manifold of dimension at least three, with toral cusps. Then, any
  $M_T$ obtained by a sufficiently large Dehn filling of the cusps of $M$ is an Anderson-type Einstein manifold with fundamental group satisfying the BCC with finite wreath products.
\end{introcorollary}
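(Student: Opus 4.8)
The plan is to realise $\pi_1(M_T)$ as a Dehn filling quotient of the relatively hyperbolic group $G=\pi_1(M)$ whose peripheral quotients are free abelian, and then to apply Theorem~\ref{intro_thm. BCC permanence}; the Einstein metric itself is quoted directly from Anderson \cite{And06}. Write $n=\dim M$ and present $M$ as the interior of a compact manifold $\overline{M}$ with $\partial\overline{M}=\bigsqcup_{\lambda\in\Lambda}T^{n-1}_\lambda$, where $\Lambda$ is finite since $M$ has finite volume. With $G=\pi_1(M)$ and $H_\lambda\cong\mathbb{Z}^{n-1}$ the cusp subgroups, $G$ is hyperbolic relative to $\{H_\lambda\}_{\lambda\in\Lambda}$ (standard for finite-volume cusped hyperbolic manifolds). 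A Dehn filling $M_T$ is obtained by gluing a copy of $D^2\times T^{n-2}$ to each boundary torus along an identification $\partial(D^2\times T^{n-2})\cong T^{n-1}_\lambda$; the homotopy class of the gluing is recorded by the slope subgroup $N_\lambda\leq H_\lambda$ mapping onto $\pi_1(\partial D^2)$, which is infinite cyclic, being the kernel of a surjection $\mathbb{Z}^{n-1}\twoheadrightarrow\mathbb{Z}^{n-2}$. By the Seifert--van Kampen theorem, $\pi_1(M_T)=G/\llrr{\bigcup_{\lambda\in\Lambda}N_\lambda}$ is precisely the Dehn filling quotient of the triple $(G,\{H_\lambda\},\{N_\lambda\})$, with peripheral quotients $H_\lambda/N_\lambda\cong\mathbb{Z}^{n-2}$.

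Next I would invoke the Dehn filling theorem of Dahmani--Guirardel--Osin \cite{dahmani2017hyperbolically}: there is a finite set $\mathcal F_\lambda\subseteq H_\lambda\setminus\{1\}$ for each $\lambda$ such that, provided every $N_\lambda$ is disjoint from $\mathcal F_\lambda$, the quotient $\overline{G}:=\pi_1(M_T)$ is hyperbolic relative to $\{H_\lambda/N_\lambda\}_{\lambda\in\Lambda}$ and each $H_\lambda/N_\lambda$ embeds into it. Since sufficiently long filling slopes avoid any prescribed finite set, this is a cofinite condition on the slopes; Anderson's hypothesis guaranteeing an Einstein metric on $M_T$ is likewise cofinite in the slopes \cite{And06}. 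Choosing the filling slopes in the intersection of these two cofinite families, we obtain $M_T$ that simultaneously is an Anderson-type Einstein manifold and has $\pi_1(M_T)$ hyperbolic relative to the finite family $\{\mathbb{Z}^{n-2}\}_{\lambda\in\Lambda}$.

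It then remains to run the permanence step. Each peripheral subgroup $\mathbb{Z}^{n-2}$ of $\pi_1(M_T)$ (nontrivial as $n\geq3$) is finitely generated abelian, hence residually finite, and a-T-menable (being amenable), so it satisfies the BCC with finite wreath products by the Higson--Kasparov theorem (Example~\ref{ex_HK}). Applying Theorem~\ref{intro_thm. BCC permanence} to $\pi_1(M_T)$ --- a countable group hyperbolic relative to a finite family of residually finite groups satisfying the BCC with finite wreath products --- yields that $\pi_1(M_T)$ satisfies the BCC with finite wreath products, which together with \cite{And06} proves the corollary. The only place requiring genuine care is the bookkeeping in the middle step, namely that Anderson's ``sufficiently large'' fillings and the ``sufficiently deep'' fillings of Dahmani--Guirardel--Osin cut out compatible cofinite families of slopes; everything else is either a citation or a direct appeal to Theorem~\ref{intro_thm. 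BCC permanence}. When $n=3$ the peripheral quotient is $\mathbb{Z}$ and $\pi_1(M_T)$ is in fact hyperbolic by Thurston's theorem, so one may alternatively apply Theorem~\ref{intro_thm_hyp_BCC_wr} in that case.
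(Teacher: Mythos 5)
Your proof is correct and follows essentially the same route as the paper: the paper packages the middle step into its Corollary~\ref{cor_quot_rel_hyp} (relatively hyperbolic group with f.g.\ virtually nilpotent peripherals, apply Dahmani--Guirardel--Osin Dehn filling plus Theorem~\ref{thm_BCC_rel}), and then intersects the resulting finite exclusion sets with the ones coming from Anderson's theorem exactly as you do. Your unpacking of that corollary (DGO filling, peripherals $\mathbb{Z}^{n-2}$ are abelian hence residually finite and a-T-menable, invoke Theorem~\ref{intro_thm. BCC permanence}) matches the paper's reasoning step for step; the remark about $n=3$ is a harmless extra observation.
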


Lastly, we present an application of Theorem \ref{intro_thm. BCC permanence} to quotients of the mapping class group of the closed genus 2 surface by sufficiently high powers of all Dehn twists. These types of quotients of mapping class groups of surfaces have recently attracted significant attention due to their hierarchically hyperbolic structure \cite{BHMS24, DHS21, BHS17}.

\begin{introcorollary}[Corollary \ref{cor: MCG quotients}] \label{intro_cor: MCG quotients}
There exists an integer $K_0 \geq 1$ such that for all non-zero multiples $K$ of $K_0$, the quotient $\mathrm{MCG}(\Sigma_2) / \mathrm{DT}_K$ satisfies the BCC with finite wreath products, where $\mathrm{DT}_K$ denotes the normal subgroup generated by the $K$-th powers of all Dehn twists.
\end{introcorollary}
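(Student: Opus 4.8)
The plan is to realise $Q_K:=\mathrm{MCG}(\Sigma_2)/\mathrm{DT}_K$ as a relatively hyperbolic group with explicit, tractable peripheral subgroups, and then apply Theorem~\ref{thm_BCC_rel}. Using the ``Dehn filling Dehn twists'' machinery of Dahmani and of Behrstock--Hagen--Martin--Sisto~\cite{BHMS24}, one first fixes $K_0\geq 1$ so that for every nonzero multiple $K$ of $K_0$ the group $Q_K$ is hyperbolic relative to the images of the stabilisers $\mathrm{Stab}_{\mathrm{MCG}(\Sigma_2)}(c)$, where $c$ runs over separating simple closed curves in $\Sigma_2$ (a single mapping class group orbit, hence one conjugacy class of peripheral subgroup). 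Heuristically, $\mathrm{DT}_K$ kills the $K$-th powers of all Dehn twists, so every abelian ``flat'' coming from a multicurve collapses to a finite group; the only product region that survives comes from a separating curve, whose complement has two connected pieces of positive complexity, and this is what remains non-hyperbolic in $Q_K$.

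The next step is to identify the peripheral subgroups. Cutting along a separating curve $c\subset\Sigma_2$ exhibits $\mathrm{Stab}(c)$ as a $\mathbb Z/2$-extension of the central product $(\mathrm{MCG}(\Sigma_{1,1})\times\mathrm{MCG}(\Sigma_{1,1}))/\langle(T_\partial,T_\partial^{-1})\rangle$, the two boundary Dehn twists being identified with $T_c$. Since the Dehn twists supported in $\Sigma_2\setminus c$ together with $T_c$ generate the Dehn twists lying in $\mathrm{Stab}(c)$, the image $\overline P\leq Q_K$ of $\mathrm{Stab}(c)$ is, up to a finite extension and a central quotient, a product $\overline A\times\overline A$ with $\overline A=\mathrm{MCG}(\Sigma_{1,1})/\llrr{T^K : T\text{ a Dehn twist}}$. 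Using $\mathrm{MCG}(\Sigma_{1,1})\cong B_3$ and the central extension $1\to\mathbb Z\to B_3\to\mathrm{PSL}_2(\mathbb Z)\to 1$, one checks that $\overline A$ surjects, with central cyclic kernel, onto $\mathrm{PSL}_2(\mathbb Z)/\llrr{\sigma^K}$, which for $K_0\geq 7$ is the infinite hyperbolic $(2,3,K)$ triangle group $\Delta(2,3,K)$. Hence $\overline A$ is either hyperbolic, or a central $\mathbb Z$-extension of a hyperbolic Fuchsian group; in either case $\overline A$ is residually finite (being a Fuchsian group, respectively the fundamental group of a Seifert-fibred $3$-manifold over a hyperbolic $2$-orbifold) and is either hyperbolic or a-T-menable.

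I would then turn the crank with the permanence properties. If $\overline A$ is hyperbolic it satisfies the BCC with finite wreath products by Theorem~\ref{thm_hyp_BCC_wr}; if it is a central extension of a hyperbolic group it is a-T-menable, hence satisfies it by the Higson--Kasparov theorem~\cite{HK01}. Applying the stability of the BCC with finite wreath products under group extensions (Section~\ref{sec: permanence}) successively to $1\to\overline A\to\overline A\times\overline A\to\overline A\to 1$, to the central quotient, and to the $\mathbb Z/2$-extension, and using commensurability invariance to absorb any imprecision in the identification of $\overline P$, one gets that $\overline P$ satisfies the BCC with finite wreath products; moreover $\overline P$ is finitely generated and virtually a product of residually finite ($3$-manifold or Fuchsian) groups, so it is residually finite. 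Theorem~\ref{thm_BCC_rel} applied to $Q_K$ with this peripheral structure then yields the corollary, with $K_0$ enlarged as needed so that the Dehn filling theorem applies and the triangle quotients are hyperbolic; the conclusion for all nonzero multiples of $K_0$ is immediate since these properties persist as $K$ grows.

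The main obstacle is the first step: extracting the precise relatively hyperbolic structure of $Q_K$ from the hierarchically hyperbolic structure of~\cite{BHMS24} --- that is, showing that after Dehn filling the Dehn twists the only non-hyperbolic pieces are exactly the separating-curve stabilisers and that they form an isolated (almost malnormal) family, so that $Q_K$ is genuinely relatively hyperbolic rather than merely acylindrically hyperbolic. Once that structural input is available, the description of the peripherals and the permanence bookkeeping are routine.
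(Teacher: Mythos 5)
Your overall strategy coincides with the paper's: realise $Q_K = \mathrm{MCG}(\Sigma_2)/\mathrm{DT}_K$ as relatively hyperbolic, check that the peripheral subgroup is residually finite and satisfies the BCC with finite wreath products, and invoke Theorem~\ref{thm_BCC_rel}. But the proposal has a genuine gap, which you yourself flag at the end: you do not have the structural input that $Q_K$ is actually relatively hyperbolic with an identified peripheral family, and the rest of the argument rests on it. This is exactly what the paper takes as a black box. Corollary~7.4 of \cite{BHMS24} (recorded in the paper as Proposition~\ref{prop: MCG quotients}) states outright that there is a $K_0$ such that for all nonzero multiples $K$ of $K_0$, $Q_K$ is hyperbolic relative to a single peripheral subgroup $H$ that is commensurable with the product of two $C'(1/6)$-groups. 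Given this, the paper's proof is short: $C'(1/6)$-groups are hyperbolic (hence satisfy the BCC with finite wreath products by Theorem~\ref{thm_hyp_BCC_wr}) and residually finite (via Wise's cubulation, Agol's virtual specialness, and Haglund--Wise), and the permanence properties of Theorem~\ref{thm_inh_wr} for commensurability and products transfer these to $H$, after which Theorem~\ref{thm_BCC_rel} applies; the paper's Remark~\ref{rem: a-T-amen} notes the alternative route through a-T-menability of cubulated groups, which is closer in spirit to part of your argument.

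Your attempt to describe the peripheral explicitly -- cutting $\Sigma_2$ along a separating curve $c$, writing $\mathrm{Stab}(c)$ via two copies of $\mathrm{MCG}(\Sigma_{1,1})\cong B_3$, and passing through $\mathrm{PSL}_2(\mathbb{Z})$ to a $(2,3,K)$ triangle group -- is plausible but not carried through rigorously, and it is not needed once one uses \cite[Cor.~7.4]{BHMS24}. One imprecision worth noting: the boundary twist $T_\partial$ of $\Sigma_{1,1}$ \emph{is} a Dehn twist (identified with $T_c$), so $T_\partial^K$ is killed too; the centre of $\overline A$ therefore becomes finite, and $\overline A$ is commensurable with the hyperbolic triangle group rather than a central $\mathbb{Z}$-extension of it. This does not break your argument (hyperbolic is the easy case for you), but it shows the explicit bookkeeping needs care. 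The step you cannot fill -- establishing that the images of the $\mathrm{Stab}(c)$ form an isolated, almost malnormal peripheral family so that $Q_K$ is genuinely relatively hyperbolic -- is precisely the content of the cited corollary; without it the proof is incomplete.
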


Note that it is still open whether $\mathrm{MCG}(\Sigma_2)$ satisfies the BC: although the mapping class groups have the rapid decay property (RD) \cite{BJM11}, it is not known whether they lie in Lafforgue's class $\mathcal{C}'$ (see the paragraph on the BC in \cite[Section 1]{Cha17}). Combining Corollary \ref{intro_cor: MCG quotients} with known permanence results, we obtain:

\begin{introcorollary} [Corollary \ref{cor: MCG BCC equal}] \label{intro_cor: MCG BCC equal} There is an integer $K_0 \geq 1$ such that   $\mathrm{MCG}(\Sigma_2)$ satisfies the BCC with finite wreath products if and only if $\mathrm{DT}_K$ satisfies BCC with finite wreath products for some non-zero multiple $K$ of $K_0$.
\end{introcorollary}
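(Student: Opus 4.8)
The plan is to read this off from \Cref{cor: MCG quotients} combined with the permanence properties of the BCC with finite wreath products collected in \Cref{sec: permanence}. Let $K_0 \geq 1$ be the integer furnished by \Cref{cor: MCG quotients}, so that for every non-zero multiple $K$ of $K_0$ the quotient $Q_K \coloneq \mathrm{MCG}(\Sigma_2)/\mathrm{DT}_K$ satisfies the BCC with finite wreath products. Everything hinges on the short exact sequence
\[
1 \longrightarrow \mathrm{DT}_K \longrightarrow \mathrm{MCG}(\Sigma_2) \longrightarrow Q_K \longrightarrow 1 .
\]

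For the implication from right to left: if $\mathrm{DT}_K$ satisfies the BCC with finite wreath products for some non-zero multiple $K$ of $K_0$, then, since $Q_K$ does as well by the choice of $K_0$ and the BCC with finite wreath products is stable under group extensions (\Cref{sec: permanence}), the displayed sequence yields the same conclusion for $\mathrm{MCG}(\Sigma_2)$.

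For the reverse implication I would use that the BCC with coefficients is inherited by (closed) subgroups, another permanence fact recorded in \Cref{sec: permanence}. Assume $\mathrm{MCG}(\Sigma_2)$ satisfies the BCC with finite wreath products and fix a finite group $F$, so that $\mathrm{MCG}(\Sigma_2) \wr F$ satisfies the BCC. Writing the wreath product as $\bigl(\prod_F \mathrm{MCG}(\Sigma_2)\bigr) \rtimes F$, the subgroup $\bigl(\prod_F \mathrm{DT}_K\bigr) \rtimes F = \mathrm{DT}_K \wr F$ embeds into it for every non-zero multiple $K$ of $K_0$; hence $\mathrm{DT}_K \wr F$ satisfies the BCC. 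As $F$ was arbitrary, $\mathrm{DT}_K$ satisfies the BCC with finite wreath products, and since this holds for every non-zero multiple of $K_0$ it holds, in particular, for some.

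I do not expect a genuine obstacle here: the substance of the statement already lies in \Cref{cor: MCG quotients}, and what remains is to apply the right permanence principle at each step -- stability under extensions for one direction, inheritance by subgroups for the other. The only points that require a little care are that $\mathrm{DT}_K \wr F$ is genuinely a subgroup of $\mathrm{MCG}(\Sigma_2) \wr F$ whenever $\mathrm{DT}_K \leq \mathrm{MCG}(\Sigma_2)$ (though not a normal one, which is exactly why a subgroup-permanence statement, and not merely the extension result, is needed in the reverse direction), and keeping track of which version of the conjecture -- with or without finite wreath products -- is invoked at each stage, so that the final output really is the ``with finite wreath products'' statement for $\mathrm{DT}_K$.
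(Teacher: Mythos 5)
Your proposal is correct and takes essentially the same route as the paper: both directions follow from the extension $1 \to \mathrm{DT}_K \to \mathrm{MCG}(\Sigma_2) \to \mathrm{MCG}(\Sigma_2)/\mathrm{DT}_K \to 1$ together with \Cref{cor: MCG quotients}, using the extension-stability of $\mathfrak{BCCwr}$ (Theorem \ref{thm_inh_wr}\eqref{item_wr_ext}) for one implication and subgroup-stability (Theorem \ref{thm_inh_wr}\eqref{item_wr_sub}) for the other. The only cosmetic difference is that in the forward direction you re-derive the subgroup-permanence of the finite-wreath-products version from the ordinary subgroup permanence of BCC, whereas the paper simply invokes Theorem \ref{thm_inh_wr}\eqref{item_wr_sub} directly.
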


\section{Dehn filling space for proper actions}

\subsection{Dehn filling}

Dehn filling is a process that takes as input a group~$G$, a family of subgroups~$\{H_\lambda\}_{\lambda\in\Lambda}$ of~$G$, and a family of normal subgroups~$\{N_\lambda \lhd H_\lambda\}_{\lambda\in\Lambda}$, and produces the quotient group
$$
G/\ll \bigcup_{\lambda\in\Lambda} N_\lambda \rr_G.
$$
Here and throughout, given a subset~$S$ of a group~$G$, we denote by~$\ll S \rr_G$ the normal closure of~$S$ in~$G$. When the ambient group is understood, we may simply write~$\ll S \rr$. We also let
$$\mathcal{N}=\bigcup_{\lambda\in\Lambda}N_{\lambda}, \; \h =\{H_\lambda\}_{\lambda\in\Lambda},  \; \bg=G/\m, \; \bhl=H_{\lambda}/N_{\lambda}, \; \bhc =\{\bhl\}_{\lambda \in \Lambda}.$$

The general setting of Dehn filling is too broad to yield meaningful conclusions. In practice, one often assumes that $G$ is hyperbolic relative to a family of subgroups $\{H_\lambda\}_{\lambda \in \Lambda}$ \cite[Definition~3.6]{dahmani2017hyperbolically}. More generally, one may assume that $\{H_\lambda\}_{\lambda \in \Lambda}$ is hyperbolically embedded in $G$ (denoted $\{H_\lambda\}_{\lambda \in \Lambda} \hookrightarrow_h G$)~\cite[Definition~4.25]{dahmani2017hyperbolically}. This notion extends relative hyperbolicity~\cite[Proposition~4.28]{dahmani2017hyperbolically} and provides a natural framework in which to study Dehn fillings. Under this assumption, it has been shown that for most choices of $\{N_\lambda \lhd H_\lambda\}_{\lambda\in\Lambda}$, the resulting quotient $\bg$ has desired properties.

We adopt the following terminology, which aligns with that used in $3$-manifold theory.

\begin{definition}\label{def. sufficiently deep}
Let $H\leq G$ be groups, and let $\mathcal{P} = \mathcal{P}(G,H,N)$  be properties defined for all normal subgroups~$N \lhd H$. We say that \emph{$\mathcal{P}$ holds for sufficiently deep normal subgroups~$N \lhd H$} if there exists a finite set~$\mathcal{F} \subset H \smallsetminus \{1\}$ such that $\mathcal{P}$ holds whenever~$N \cap \mathcal{F} = \emptyset$. 

If $\{N_{\lambda}\lhd H_{\lambda}\}_{\lambda\in\Lambda}$ is a family of sufficiently deep normal subgroups, then we say that $\bg$ is a \emph{sufficiently deep Dehn filling quotient of $G$}.
\end{definition}



\begin{theorem}[{Dahmani--Guirardel--Osin \cite[Theorem~7.19]{dahmani2017hyperbolically}, \cite[Theorem~1.1]{osin2007peripheral}}]\label{thm. simple Dehn filling}
Let $G$ be a group with a family of subgroups $\h\hookrightarrow_h G$. Then for all sufficiently deep normal subgroups $\{N_{\lambda}\lhd H_{\lambda}\}_{\lambda\in\Lambda}$, the natural homomorphism $\bhl\rightarrow\bg$ is injective for $\lambda\in\Lambda$ and  $\bhc \hookrightarrow_h \bg$.

Moreover, when $G$ is hyperbolic relative to $\h$, then all sufficiently deep Dehn filling quotients $\bg$ are hyperbolic relative to $\bhc$.
\end{theorem}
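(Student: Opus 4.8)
This is precisely the cited theorem of \cite{dahmani2017hyperbolically} (with the relatively hyperbolic case also in \cite{osin2007peripheral}), so at the level of the paper the proof is a reference; for completeness let me describe the strategy one follows, which is theirs. The plan is to recast the statement as a fact about a group acting on a hyperbolic space and then apply the theory of (very) rotating families. Starting from $\h \hookrightarrow_h (G, X)$, I would first pass to the \emph{cone-off} $\dot X$ of the relative Cayley graph $\Gamma(G, X \sqcup \mathcal H)$: for each $\lambda \in \Lambda$ and each coset $gH_\lambda$ one glues a hyperbolic cone of some radius $r$ over the embedded copy of $\Gamma(H_\lambda, H_\lambda)$. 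For $r$ large enough in terms of the hyperbolicity constant of $\Gamma(G, X\sqcup\mathcal H)$, the space $\dot X$ is again hyperbolic, $G$ acts on it by isometries, and the apices of the cones are in $G$-equivariant bijection with $\bigsqcup_{\lambda\in\Lambda} G/H_\lambda$, the apex $c_{gH_\lambda}$ having stabiliser $gH_\lambda g^{-1}$.

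Next I would organise the normal subgroups into a rotating family: for each apex $c = c_{gH_\lambda}$ set $R_c := g N_\lambda g^{-1} \trianglelefteq \mathrm{Stab}(c)$, so that $\mathcal R = \{R_c\}$ is a rotating family with $\langle \bigcup_c R_c \rangle = \ll \N \rr_G$. This is where \emph{sufficient depth} enters. One checks that $\mathcal R$ is a \emph{very rotating family} with separation and rotation parameters large compared to $\delta$: inside a cone of radius $r$, a nontrivial element $h \in H_\lambda$ displaces a point near the apex by an amount that grows with its relative length $\widehat d_\lambda(1,h)$, so $\mathcal R$ is very rotating as soon as every nontrivial element of each $N_\lambda$ has $\widehat d_\lambda$-length above a fixed threshold. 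Since $\h \hookrightarrow_h (G,X)$ forces each $(H_\lambda, \widehat d_\lambda)$ to be a proper metric space, the set $\mathcal F_\lambda \subset H_\lambda \smallsetminus\{1\}$ of elements below that threshold is finite; imposing $N_\lambda \cap \mathcal F_\lambda = \emptyset$, i.e. taking the $N_\lambda$ sufficiently deep, is exactly what the argument needs.

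I would then invoke the structure theorem for very rotating families from \cite{dahmani2017hyperbolically}. It yields: (i) $\ll \N \rr_G = \langle \bigcup_c R_c \rangle$ is the free product $\ast_{c}\, R_c$, hence a free product of conjugates of the $N_\lambda$ --- this is the Cohen--Lyndon property, and in particular $H_\lambda \cap \ll \N \rr_G = N_\lambda$, which is exactly the injectivity of $\bhl \to \bg$; (ii) the quotient $\dot X / \ll \N \rr_G$ is hyperbolic with constant depending only on $\delta$, it carries a $\bg$-action whose point stabilisers at the images of the apices are the conjugates of the $\bhl$, and the accompanying Greendlinger-type lemma controls how geodesics of $\dot X$ descend to this quotient. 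From (ii) one reads off that $\bhc \hookrightarrow_h (\bg, \bar X)$ with $\bar X$ the image of $X$: hyperbolicity of the new relative Cayley graph is immediate, while properness of the induced relative metrics $\widehat{\bar d}_\lambda$ on the $\bhl$ follows because any short $\bhl$-admissible path in the new Cayley graph lifts, via the Greendlinger lemma, to a path in $\dot X$ running near an apex that is not much shorter, which bounds $\widehat{\bar d}_\lambda$ below in terms of $\widehat d_\lambda$.

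For the ``moreover'' clause: if $G$ is hyperbolic relative to $\h$ one may take $X$ finite, so $\bar X$ is finite and $\bhc \hookrightarrow_h (\bg, \bar X)$ with a finite relative generating set, which (since $|\Lambda|<\infty$) means $\bg$ is hyperbolic relative to $\bhc$ by \cite[Proposition~4.28]{dahmani2017hyperbolically}; alternatively one invokes \cite[Theorem~1.1]{osin2007peripheral} directly. The main obstacle is the quantitative small-cancellation analysis underlying the very-rotating-families structure theorem: calibrating the cone radius, the separation and rotation thresholds, and then transferring the geometric control from $\dot X$ to its quotient so as to rule out unexpected short relations and to secure properness of the new relative metrics. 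This is the technical core of \cite{dahmani2017hyperbolically}, and I would simply quote it.
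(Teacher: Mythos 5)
The paper gives no proof of this statement, citing Dahmani--Guirardel--Osin and Osin directly; you correctly recognise this, and your sketch of the DGO strategy --- cone-off of the relative Cayley graph, organising the conjugates of the $N_\lambda$ as a very rotating family once the fillings are deep enough to push the relative lengths above a threshold, then invoking the structure theorem to obtain the free product decomposition of $\ll\N\rr$, hyperbolicity of the quotient cone-off, and hyperbolic embeddedness of $\bhc$ in $\bg$ --- is accurate in its essentials and matches what the cited sources do. One small precision: the free product $\ll\N\rr = \Asterisk R_c$ is indexed by a transversal for the $\ll\N\rr$-orbits of apices (equivalently, cosets of $H_\lambda\ll\N\rr$ in $G$), not by all apices $c$, since apices in the same $\ll\N\rr$-orbit give conjugate factors inside $\ll\N\rr$; this does not affect your argument but is worth keeping straight since it is exactly the indexing in the Cohen--Lyndon property used elsewhere in the paper.
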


\subsection{Cohen--Lyndon property} The notion of a Cohen--Lyndon triple was introduced in~\cite{sun2018cohomologyi}, though the underlying idea dates back to a paper of Cohen and Lyndon~\cite{cohen1963free}, which motivates the terminology. This property plays a crucial role throughout the paper.

\begin{definition}
   A group triple $(G,\{H_{\lambda}\}_{\lambda\in\Lambda},\{N_{\lambda}\}_{\lambda\in\Lambda})$ is called a \textit{Cohen--Lyndon triple} if there exist left transversals $T_{\lambda}$ of $H_{\lambda}\m$ in $G$ such that
$$\m =\Asterisk_{\lambda\in\Lambda,t\in T_{\lambda}} tN_{\lambda}t^{-1}.$$
\end{definition}

\begin{proposition}\label{prop CL} Below is a list of examples of Cohen--Lyndon triples:
    \begin{enumerate}
        \item {\rm(Cohen--Lyndon, \cite{cohen1963free})} Let $G$ be a free group, $H$ a maximal cyclic subgroup, and $N=\langle x \rangle$, $\forall x\in H\smallsetminus \{1\}$. Then $(G, H, N)$ is a Cohen--Lyndon triple.\\
        \item {\rm(Arenas--Duda, \cite{AD25})} Let $P = \langle s_1, \dots, s_n \mid r_1, \dots, r_k \rangle $ be either $C(6)$, $C(4)-T(4)$, or $C(3)-T(6)$ small-cancellation presentation. Let $G= \langle s_1, \dots, s_n \rangle $ and $H_i$ be the normaliser of $N_i=\langle r_i \rangle $ in $G$. Then, $(G,\{H_i\}_{i=1}^k,\{N_i\}_{i=1}^k)$ is a Cohen--Lyndon triple.\\
         \item {\rm(Edjvet--Howie, \cite{EH87})} Let $G=A\ast B$ where $A, B$ are locally indicable groups, $N=\langle x \rangle$, where $x$ does not lie in either a conjugate of $A$ or $B$ in $G$ and $H$ is the centraliser of $N$ in $G$. Then $(G, H, N)$ is a Cohen--Lyndon triple.\\
        \item \label{prop CL_4} {\rm(Sun, \cite[Theorem 5.1]{sun2018cohomologyi})}\label{thm. cl}
Let $G$ be a group with a family of subgroups $\{H_{\lambda}\}_{\lambda\in\Lambda}\hookrightarrow_{h}G$. Then for all sufficiently deep normal subgroups $\{N_{\lambda}\lhd H_{\lambda}\}_{\lambda\in\Lambda}$, $(G,\{H_{\lambda}\}_{\lambda\in\Lambda},\{N_{\lambda}\}_{\lambda\in\Lambda})$ is a Cohen--Lyndon triple.
    \end{enumerate}
\end{proposition}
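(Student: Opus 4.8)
The plan is to obtain each item by specialising the reference attached to it; no argument beyond a dictionary between the stated hypotheses and those of the cited theorem is needed, and indeed item~(4) is \cite[Theorem~5.1]{sun2018cohomologyi} verbatim.

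For items~(1) and~(3) the input is a Cohen--Lyndon theorem for the normal closure of a single element. In~(1), since $G$ is free and $H$ is maximal cyclic, $H=\langle v\rangle$ is the unique maximal cyclic subgroup containing $x$ (every nontrivial element of a free group lies in a well-defined maximal cyclic subgroup); writing $x=v^m$ we have $N=\langle x\rangle$ and $\m=\ll x\rr$, and the classical theorem of Cohen and Lyndon \cite{cohen1963free} provides a left transversal $T$ of $\langle v\rangle\ll x\rr$ in $G$ with $\ll x\rr=\Asterisk_{t\in T}t\langle x\rangle t^{-1}$. Since $\langle v\rangle\ll x\rr=H\m$ and $\langle x\rangle=N$, this is precisely the defining condition for $(G,H,N)$ to be a Cohen--Lyndon triple. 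In~(3), an element $x$ of $G=A\ast B$ not conjugate into a factor has infinite cyclic centraliser, equal to the maximal cyclic subgroup $\langle v\rangle$ containing it (it acts hyperbolically on the Bass--Serre tree, whose edge stabilisers are trivial), so the hypothesis identifies $H=\langle v\rangle$; Edjvet and Howie \cite{EH87} establish the analogue of the Cohen--Lyndon decomposition for $\ll x\rr$ inside a free product of locally indicable groups, that is, $\ll x\rr=\Asterisk_{t\in T}t\langle x\rangle t^{-1}$ for $T$ a left transversal of $\langle v\rangle\ll x\rr=H\m$, which is again the required statement.

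For item~(2), Arenas and Duda \cite{AD25} prove that for each of the listed metric small-cancellation presentations the normal closure $\m$ of the relators in the free group $G$ splits as $\Asterisk_{i}\Asterisk_{t\in T_i}t\langle r_i\rangle t^{-1}$, with $T_i$ a left transversal of $H_i\m$ in $G$ and $H_i$ the normaliser of $N_i=\langle r_i\rangle$; unwinding the definition, this says exactly that $(G,\{H_i\}_{i=1}^k,\{N_i\}_{i=1}^k)$ is a Cohen--Lyndon triple. I do not anticipate a genuine obstacle in any of the four cases, since all the mathematical substance lies in the cited works; the only bookkeeping is to match, in each cited statement, the subgroup modulo which the transversal is taken---the maximal cyclic subgroup in~(1) and~(3), the normaliser in~(2), the peripheral subgroup in~(4)---with the subgroup $H$ (respectively $H_i$) of the triple. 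If one wished to make the exposition self-contained, the one auxiliary point worth isolating is that, in the settings of~(1) and~(3), $H\cap\m=N$ (equivalently $\langle v\rangle\cap\ll v^m\rr=\langle v^m\rangle$), so that the free factors $t\langle x\rangle t^{-1}$ of the cited decompositions coincide with the conjugates $tNt^{-1}$; this follows because $v$ has order exactly $m$ in $G/\ll v^m\rr$, which in case~(3) is a manifestation of local indicability.
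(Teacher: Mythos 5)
The paper offers no proof of this proposition; it is a catalogue of results taken directly from the cited references, and your proposal does the same thing — translate each cited theorem's hypotheses and conclusion into the notation of the definition of a Cohen--Lyndon triple. Your dictionary is accurate (identifying $H$ with the maximal cyclic subgroup in (1), the centraliser in (3), the normaliser in (2), and reading off the transversal condition in each case), and the auxiliary remark that $H\cap\ll\N\rr=N$, while not strictly required by the definition, is a reasonable sanity check. This is the same approach the paper takes.
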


\subsection{Dehn filling space}

Let $(G,\h,\{N_{\lambda}\}_{\lambda\in\Lambda})$ be a Cohen--Lyndon triple. The goal of this section is to recall a specific $K(\bg,1)$-space from \cite{PS_L2}. 

First, we note that the natural map $\bhl\rightarrow \bg$ is injective (see e.g., \cite[Lemma 6.4]{sun2018cohomologyi}) for each $\lambda \in \Lambda$, and below we will view $\bhl$ as a subgroup of $\bg$. Let $BG$ (resp. $BH_{\lambda}, B\bhl$) be a $K(G,1)$ (resp. $K(H_{\lambda},1), K(\bhl,1)$) CW-complex.

\begin{definition}[\cite{PS_L2}]\label{Dehn_space}
For each $\lambda \in \Lambda$, there is a classifying map $\phi\colon \bigsqcup_{\lambda \in \Lambda} B H_{\lambda}\rightarrow BG$ (resp. $\psi\colon \bigsqcup_{\lambda \in \Lambda} B H_{\lambda}\rightarrow \bigsqcup_{\lambda \in \Lambda} B\bhl$) induced by the inclusions $H_{\lambda}\hookrightarrow G$ (resp. the quotient maps $q\colon H_{\lambda}\rightarrow \bhl$). Let $X$  be the complex obtained by gluing the mapping cylinders $M_{\phi},M_{\psi}$ along their common subcomplex $\bigsqcup_{\lambda \in \Lambda} B H_{\lambda}$. The complex $X$ is called the {\it Dehn filling space} associated to the triple  $(G,\h,\{N_{\lambda}\}_{\lambda\in\Lambda})$.  
\end{definition}

\begin{theorem}[\cite{PS_L2}]\label{thm. topology}
The Dehn filling space associated to a Cohen--Lyndon triple $(G,\h,\{N_{\lambda}\}_{\lambda\in\Lambda})$ is a $K(\bg,1)$-complex.
\end{theorem}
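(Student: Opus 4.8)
The plan is to compute $\pi_1(X)$ via the Seifert--van Kampen theorem and then verify that the universal cover is contractible by using the Cohen--Lyndon decomposition of $\m = \ll\mathcal N\rr$. First I would cover $X$ by two open sets: a neighbourhood $U$ of the mapping cylinder $M_\phi$ (which deformation retracts onto $BG$) and a neighbourhood $V$ of $M_\psi$ (which deformation retracts onto $\bigsqcup_\lambda B\bhl$), with $U\cap V$ deformation retracting onto $\bigsqcup_\lambda BH_\lambda$. Since each inclusion $H_\lambda\hookrightarrow G$ factors (after passing to $\bhl$) and the quotient maps $q\colon H_\lambda\to\bhl$ are the attaching data, Seifert--van Kampen gives
\[
\pi_1(X)\;\cong\;\Big(G \ast \Asterisk_{\lambda\in\Lambda}\bhl\Big)\Big/\big\langle\!\big\langle\, i_\lambda(h)\,q_\lambda(h)^{-1} : h\in H_\lambda,\ \lambda\in\Lambda \,\big\rangle\!\big\rangle,
\]
where $i_\lambda\colon H_\lambda\to G$ is the inclusion. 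The relations identify the image of $H_\lambda$ in the free factor $\bhl$ with its image in $G$, so the free factors $\bhl$ are absorbed and one is left with $G/\langle\!\langle\, \ker(q_\lambda) : \lambda \,\rangle\!\rangle = G/\ll\bigcup_\lambda N_\lambda\rr_G = \bg$. This identifies $\pi_1(X)\cong\bg$.

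Next I would show $X$ is aspherical, i.e. that the universal cover $\widetilde X$ is contractible. The mapping cylinders $M_\phi$ and $M_\psi$ are built from the aspherical complexes $BG$, $B\bhl$, $BH_\lambda$, which are $K(\pi,1)$'s, so $X$ is a $\pi_1$-injective-on-pieces construction precisely when the maps on $\pi_1$ induced by $\phi$ and $\psi$ are injective. The map on $\pi_1$ induced by $\psi$ is $q_\lambda\colon H_\lambda\twoheadrightarrow\bhl$, which is \emph{not} injective, so $X$ is not a graph of aspherical spaces in the naive sense; this is exactly where the Cohen--Lyndon property enters. The strategy is to analyse $\widetilde X$ directly: $\widetilde X$ is glued from copies of the universal covers of $M_\phi$ and $M_\psi$, indexed by cosets of $\bg$. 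The preimage of $M_\phi$ in $\widetilde X$ is a disjoint union of copies of $\widetilde{M_\phi}$ (contractible, as $M_\phi\simeq BG$ lifts to $EG$-like pieces once we pass far enough), and similarly for $M_\psi$; the key computation is the homotopy type of the preimage of the gluing locus $\bigsqcup_\lambda BH_\lambda$ inside $\widetilde X$. Its connected components are indexed by cosets $G\backslash\bg$ for the $BG$ side and by $g\m$-translates on the filling side, and a component is a copy of the cover of $BH_\lambda$ corresponding to the subgroup $H_\lambda\cap(\text{stabiliser})$. Running a Mayer--Vietoris / nerve argument, $\widetilde X$ is homotopy equivalent to a homotopy colimit whose acyclicity reduces to showing that $\m$, as a subgroup of $G$, is a free product of conjugates $tN_\lambda t^{-1}$ — which is precisely the Cohen--Lyndon hypothesis. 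Concretely: the $\bg$-cover of $M_\psi$ relative to $M_\phi$ measures $\ll\mathcal N\rr$, and the Cohen--Lyndon splitting $\m=\Asterisk_{\lambda,t\in T_\lambda} tN_\lambda t^{-1}$ forces the relevant relative homotopy groups to vanish, giving $\widetilde X\simeq *$.

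I would organise the asphericity step as follows. Let $\widehat X\to X$ be the cover corresponding to $\ll\mathcal N\rr\le G\ast\Asterisk\bhl$ before collapsing, equivalently the cover of $X$ with $\pi_1=\ll\mathcal N\rr$ under $\pi_1(X)=\bg$ — wait, more carefully: consider the intermediate cover $X'\to X$ corresponding to the kernel of $\bg\to 1$, i.e. $\widetilde X$ itself, but filter it by the cover $\check X$ with fundamental group equal to the image of $G$ in $\bg$. On $\check X$, the $BG$-piece lifts trivially while the $B\bhl$-pieces unwrap according to the cosets $\bhl\backslash(\text{something})$; the upshot is that $\check X$ is homotopy equivalent to the presentation-complex-type space whose only nontrivial homotopy is controlled by $\ll\mathcal N\rr\cap(\cdots)$. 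The cleanest route: build a $\bg$-CW structure on a space $W$ mapping to $X$ by explicitly gluing $EG$, $E\bhl$'s and $EH_\lambda$'s along the $\bg$-orbits dictated by the coset data, show $W\to X$ is a $\bg$-cover hence $W\simeq\widetilde X$, and show $W$ is contractible by an equivariant Mayer--Vietoris whose connecting maps are identified, via the Cohen--Lyndon transversals $T_\lambda$, with the inclusion of $\Asterisk_{\lambda,t} tN_\lambda t^{-1}$ into a free/contractible object. The main obstacle is precisely this last identification — translating the purely algebraic free-product decomposition of $\ll\mathcal N\rr$ into the statement that a geometrically-defined subcomplex of $\widetilde X$ (the union of the lifted mapping cylinders $M_\psi$) is contractible — and here I expect to need the standard fact that a free product of groups has a contractible classifying space obtained by gluing the individual $K(\cdot,1)$'s along a point (a tree of spaces), applied fibrewise over the $\bg$-action. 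Once that is in place, contractibility of $\widetilde X$ follows and $X=K(\bg,1)$.
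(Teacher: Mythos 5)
Your overall architecture is the right one and matches the proof in \cite[Theorem~4.2]{PS_L2} (which this theorem cites without reproving): compute $\pi_1(X)\cong\bg$ via Seifert--van Kampen, then prove asphericity by showing the universal cover $\widetilde X$ is contractible via Mayer--Vietoris, with the Cohen--Lyndon decomposition of $\m$ as the key algebraic input. Your first step is essentially correct, modulo base-point bookkeeping for a disconnected $U\cap V$.

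The asphericity step, however, contains a concrete error that derails the argument. You assert that the preimage of $M_\phi$ in $\widetilde X$ is a disjoint union of copies of $\widetilde{M_\phi}$ and is contractible. This is false when $\m\neq 1$: since $G\to\bg$ is surjective, the preimage of $M_\phi\simeq BG$ in $\widetilde X$ is \emph{connected} and is the cover of $BG$ with fundamental group $\ker(G\to\bg)=\m$, i.e.\ a copy of $\m\backslash EG\simeq K(\m,1)$, which is not contractible. Consequently the space $W$ of your last paragraph should not be glued from $EG$, $EH_\lambda$, $E\bhl$; the correct pieces of $\widetilde X$ are $\m\backslash EG$, $\bigsqcup_\lambda\bg\times_{\bhl}(N_\lambda\backslash EH_\lambda)$, and $\bigsqcup_\lambda\bg\times_{\bhl}E\bhl$. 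This also misplaces where Cohen--Lyndon enters. It is not used to show that the lifted $M_\psi$ is contractible (its deformation retract $\bigsqcup_\lambda\bg\times_{\bhl}E\bhl$ is automatically a disjoint union of contractible pieces because $\bhl\hookrightarrow\bg$). It is used on the $M_\phi$ side: the preimage of the gluing locus has $H_*$ in positive degrees equal to $\bigoplus_{\lambda}\bigoplus_{t\in T_\lambda}H_*(N_\lambda)$, and the CL splitting $\m=\Asterisk_{\lambda,t}tN_\lambda t^{-1}$ together with the fact that $H_n(\Asterisk_i A_i)\cong\bigoplus_i H_n(A_i)$ for $n\geq 1$ shows that the inclusion-induced map to $H_*(\m\backslash EG)=H_*(\m)$ is an isomorphism for $*\geq 1$. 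That isomorphism is precisely what kills $H_*(\widetilde X)$ in the Mayer--Vietoris sequence. Finally, the assertion that ``a free product of groups has a contractible classifying space'' is not true as stated; $K(A\ast B,1)$ is a wedge $K(A,1)\vee K(B,1)$, not contractible, and this is another sign that the role of CL has been misidentified.
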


By combining the above theorem with \Cref{prop CL} (\ref{thm. cl}) we get the following:

\begin{corollary}[\cite{PS_L2}]\label{cor. topology}
Let $G$ be a group with a family of subgroups $\h\hookrightarrow_{h}G$. Then, for all sufficiently deep normal subgroups $\{N_{\lambda}\lhd H_{\lambda}\}_{\lambda\in\Lambda}$, the Dehn filling space is a $K(\bg,1)$-complex.
\end{corollary}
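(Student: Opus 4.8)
The plan is to deduce this as an immediate combination of the two results just cited. By \Cref{prop CL}\,(\ref{thm. cl}) (Sun's theorem), since $\h = \{H_\lambda\}_{\lambda\in\Lambda} \hookrightarrow_h G$, there is a finite subset $\mathcal{F} \subset \bigcup_{\lambda\in\Lambda} (H_\lambda \smallsetminus \{1\})$ such that whenever $\{N_\lambda \lhd H_\lambda\}_{\lambda\in\Lambda}$ satisfies $N_\lambda \cap \mathcal{F} = \emptyset$ for all $\lambda$, the triple $(G,\{H_\lambda\}_{\lambda\in\Lambda},\{N_\lambda\}_{\lambda\in\Lambda})$ is a Cohen--Lyndon triple. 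For any such sufficiently deep choice of normal subgroups, \Cref{thm. topology} applies and tells us that the associated Dehn filling space $X$ (as in \Cref{Dehn_space}) is a $K(\bg,1)$-complex, where $\bg = G/\m$. This is exactly the assertion of the corollary, so nothing further is needed.

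I do not anticipate any genuine obstacle here, since the statement is a formal corollary: the only point requiring a small amount of care is to confirm that the notion of "sufficiently deep" appearing in \Cref{prop CL}\,(\ref{thm. cl}) and in \Cref{def. sufficiently deep} is the same, namely the existence of a single finite exceptional set $\mathcal{F}$ valid simultaneously for all $\lambda\in\Lambda$ (which is how Sun's theorem is stated), and that the family of Cohen--Lyndon triples so obtained is precisely the input to \Cref{thm. topology}. Once these bookkeeping points are noted, the proof is complete.
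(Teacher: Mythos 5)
Your proposal is correct and matches the paper's own (implicit) argument: the corollary is stated precisely as the combination of Proposition \ref{prop CL}\,(\ref{thm. cl}) with Theorem \ref{thm. topology}, exactly as you do. Your bookkeeping remark about the compatibility of the two notions of ``sufficiently deep'' is a reasonable sanity check and causes no issue.
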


An excision argument on the Dehn filling space yields the following application.

\begin{corollary}[{\cite[Theorem A (ii)]{sun2019cohomologyii}}]\label{cor. alg excision} Let $(G,\h,\{N_{\lambda}\}_{\lambda\in\Lambda})$ be a Cohen--Lyndon triple. Then, there is natural isomorphism induced by the quotient maps $G\to \bg$ and  $H_{\lambda}\to \bhl$,
    $$H^n(G, \h ; A)\cong H^n(\bg, \bhc ; A),  \; \; \forall n\geq 0.$$
\end{corollary}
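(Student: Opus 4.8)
The plan is to run a topological excision argument on the Dehn filling space $X$ associated to $(G,\h,\{N_\lambda\}_{\lambda\in\Lambda})$ in \Cref{Dehn_space}, exploiting that $X$ is a $K(\bg,1)$-complex by \Cref{thm. topology}. Choose CW models for $BG$, the $BH_\lambda$ and the $B\bhl$, together with cellular classifying maps $\phi,\psi$, so that the mapping cylinders $M_\phi$ and $M_\psi$, their common subcomplex $\bigsqcup_{\lambda\in\Lambda}BH_\lambda=M_\phi\cap M_\psi$, and $X=M_\phi\cup M_\psi$ are CW complexes in which $BG$ and $\bigsqcup_{\lambda\in\Lambda}B\bhl$ are subcomplexes. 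Regard $A$ as a local coefficient system on $X\simeq B\bg$; its restrictions to $M_\phi$, to $\bigsqcup_\lambda B\bhl$, and to $\bigsqcup_\lambda BH_\lambda$ are $A$ viewed as a module over $G$, over each $\bhl$, and over each $H_\lambda$ through the natural maps into $\bg$, and all cohomology below is taken with these compatible coefficients. Recall that relative group cohomology is computed by the mapping cylinder model, so that $(M_\phi,\bigsqcup_\lambda BH_\lambda)$ computes $H^\ast(G,\h;A)$; and, since $X$ is a $K(\bg,1)$ and the inclusions $\bhl\hookrightarrow\bg$ are injective by the Cohen--Lyndon hypothesis, $(X,\bigsqcup_\lambda B\bhl)$ computes $H^\ast(\bg,\bhc;A)$.

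The first step is cellular excision for the decomposition $X=M_\phi\cup M_\psi$ with intersection $\bigsqcup_\lambda BH_\lambda$: the inclusion of pairs induces an isomorphism $H^n(X,M_\psi;A)\cong H^n(M_\phi,\bigsqcup_\lambda BH_\lambda;A)\cong H^n(G,\h;A)$ for every $n$. The second step replaces $M_\psi$ by $\bigsqcup_\lambda B\bhl$: each component of $M_\psi$ is the mapping cylinder of $BH_\lambda\to B\bhl$ and therefore deformation retracts onto $B\bhl$, so $H^n(M_\psi,\bigsqcup_\lambda B\bhl;A)=0$ for all $n$; feeding this into the long exact cohomology sequence of the triple $\bigsqcup_\lambda B\bhl\subset M_\psi\subset X$ gives an isomorphism $H^n(X,\bigsqcup_\lambda B\bhl;A)\cong H^n(X,M_\psi;A)$, the left-hand side being $H^n(\bg,\bhc;A)$. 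Composing the two isomorphisms produces $H^n(\bg,\bhc;A)\cong H^n(G,\h;A)$ for all $n\geq 0$.

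It remains to identify this composite with the map induced by the quotient homomorphisms $G\to\bg$ and $H_\lambda\to\bhl$. The literal inclusion $M_\phi\hookrightarrow X$, read as a map of pairs $(M_\phi,\bigsqcup_\lambda BH_\lambda)\to(X,M_\psi)$, is the one realising the excision isomorphism of the first step; pushing its restriction to $\bigsqcup_\lambda BH_\lambda$ up the $M_\psi$-cylinder and extending over $M_\phi$ by the homotopy extension property exhibits it, through maps of pairs into $(X,M_\psi)$, as homotopic to the composite of a map of pairs $g\colon(M_\phi,\bigsqcup_\lambda BH_\lambda)\to(X,\bigsqcup_\lambda B\bhl)$ with the inclusion $(X,\bigsqcup_\lambda B\bhl)\hookrightarrow(X,M_\psi)$, where $g$ induces the natural quotient map $G\to\bg$ on $\pi_1(M_\phi)$ and $H_\lambda\to\bhl$ on $\pi_1(BH_\lambda)$. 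Chasing the resulting commuting square, the composite isomorphism of the previous paragraph equals $g^\ast$, which is by construction the map on relative cohomology induced by the quotient maps.

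The argument is essentially formal once the Dehn filling space and its $K(\bg,1)$ property are available; the only care needed is in working with honest CW models (so that cellular excision and the restriction of local coefficient systems are legitimate) and in the bookkeeping of the last paragraph identifying the excision isomorphism with the map induced by the quotient maps. I do not anticipate a genuine obstacle — this is the ``excision argument on the Dehn filling space'' indicated above, parallel to the topological proof of the companion homology statement in \cite{sun2019cohomologyii}; alternatively one could argue algebraically from the Cohen--Lyndon decomposition of $\m$ and a Lyndon--Hochschild--Serre argument, but the topological route is the most transparent.
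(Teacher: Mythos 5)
Your proof is the excision argument on the Dehn filling space that the paper itself points to (the paper only cites the result from \cite{sun2019cohomologyii} and states that ``An excision argument on the Dehn filling space yields the following application''), so you are taking essentially the same approach. The two isomorphisms — cellular excision for the decomposition $X=M_\phi\cup M_\psi$ along $\bigsqcup_\lambda BH_\lambda$, and the vanishing of $H^\ast(M_\psi,\bigsqcup_\lambda B\bhl;A)$ via the cylinder retraction — and the final identification of the composite with the map induced by the quotient homomorphisms are all correct as written.
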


%
%

\subsection{Dehn filling space for proper actions} \label{subsec: Dehn filling proper}
Recall, for a discrete group $\Gamma$, a \emph{proper $\Gamma$-CW-complex} is a $\Gamma$-CW-complex with only finite cell stabilisers. A proper $\Gamma$-CW-complex $Y$ is said to be {\it a model for a classifying space for proper actions $\E\Gamma$}, if for any finite subgroup $F \leq \Gamma$, the fixed point set $Y^F$ is contractible. Such a complex $Y$ always exists and is unique up to equivariant homotopy equivalence.

In this section, we adapt the construction of the Dehn filling space to construct a cellular model of the classifying space $\E \bg$ for proper actions when  $(G,\h,\{N_{\lambda}\}_{\lambda\in\Lambda})$ is a Cohen--Lyndon triple. We say that {\it $\bg$ has property} (FIN) or equivalently, the triple $(G,\h,\{N_{\lambda}\}_{\lambda\in\Lambda})$ {\it satisfies property} (FIN) if 

\begin{itemize}
    \item[(\hypertarget{cond:fin}{FIN})]  for every finite subgroup $F \leq \bg$, there exist $\alpha \in \Lambda$ and $g \in \bg$ such that 
\[
F \leq g \bha g^{-1}
\quad \text{and} \quad
F \cap h\bhb h^{-1} = \langle 1\rangle
\]
for all $\beta \neq\alpha $, and for $\beta = \alpha$ if $h \notin g \bha$.
\end{itemize}

The next result shows that property {\rm \hyperlink{cond:fin}{(FIN)}} occurs naturally as a by-product of the Cohen--Lyndon property. First, we need a theorem of Serre  whose proof can be found in \cite{Hueb79}.

\begin{theorem}[{Serre, \cite{Hueb79}}]\label{Serre} Let $\Gamma$ be a group and $\{\Gamma_i\}_{i \in I}$ a family of subgroups such that for all
$q \geq q_0$ the canonical map
\[
H^q(\Gamma; M) \to \prod_{i \in I} H^q(\Gamma_i; M)
\]
is an isomorphism for every $\Z\Gamma$-module $M$. Then for every finite subgroup $F \leq \Gamma$, there exist $i \in I$ and $\gamma \in \Gamma$ such that 
\[
F \leq \gamma\Gamma_i \gamma^{-1}
\quad \text{and} \quad
F \cap h\Gamma_j h^{-1} = \langle 1\rangle
\]
for all $j \neq i$, and for $j = i$ if $h \notin \gamma \Gamma_i$.
\end{theorem}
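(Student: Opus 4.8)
The plan is to run the argument through the Eckmann--Shapiro lemma together with the Mackey formula, which turns the cohomological hypothesis into a statement purely about the cohomology of $F$ and of its subgroups. Fix a finite subgroup $F\leq\Gamma$; we may assume $F\neq\langle1\rangle$, the trivial case being immediate (take $\gamma=1$ and any $i\in I$). For a finitely generated $\Z F$-module $A$, apply the hypothesis to the coinduced module $M=\mathrm{Coind}_F^{\Gamma}A$. Shapiro's lemma gives $H^q(\Gamma;M)\cong H^q(F;A)$, while the Mackey formula identifies $H^q(\Gamma_i;M)$ with $\prod_{[g]}H^q(F\cap g\Gamma_ig^{-1};A_g)$, where $[g]$ runs over the double cosets $F\backslash\Gamma/\Gamma_i$ and $A_g$ is the corresponding conjugate of $A$. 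Under these identifications the canonical map becomes the total restriction map
\[
H^q(F;A)\ \longrightarrow\ \prod_{i\in I}\ \prod_{[g]\in F\backslash\Gamma/\Gamma_i}H^q\!\bigl(F\cap g\Gamma_ig^{-1};A_g\bigr),
\]
and the hypothesis says precisely that this is an isomorphism for all $q\geq q_0$ and all such $A$. Set $F_{(i,[g])}:=F\cap g\Gamma_ig^{-1}\leq F$.

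First I would produce a pair $(i_0,[g_0])$ with $F_{(i_0,[g_0])}=F$, i.e.\ $F\leq g_0\Gamma_{i_0}g_0^{-1}$. Choosing $A=\mathbb{F}_p$ for a prime $p\mid|F|$ — or, when $F$ is not a $p$-group, $A=\mathbb{F}_p[F/P]$ for $P$ a Sylow $p$-subgroup, so that $H^\ast(F;A)\cong H^\ast(P;\mathbb{F}_p)$ by Shapiro inside $F$ — makes the left-hand side nonzero in infinitely many degrees, since a nontrivial $p$-group has infinite $\mathbb{F}_p$-cohomological dimension; hence some factor $H^q(F_{(i,[g])};A_g)$ is nonzero for some $q\geq q_0$, forcing $F_{(i,[g])}\neq\langle1\rangle$. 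The difficult point is to upgrade ``$F_{(i,[g])}\neq\langle1\rangle$'' to ``$F_{(i_0,[g_0])}=F$'', and this cannot be achieved by producing a single coefficient module whose cohomology is concentrated in degree $0$ on every proper subgroup of $F$ but not on $F$ itself: such a module exists when $F$ is a $p$-group (a Dickson-type product of one-dimensional classes lies in the kernel of restriction to every proper subgroup) but not in general, already failing for $F=S_3$. I would instead argue by induction on $|F|$, feeding the proper subgroups $F_{(i,[g])}$ into the inductive statement and exploiting the full strength of the hypothesis (all coefficients, all large degrees) to exclude the possibility that $F$ is ``assembled'' from subgroups sitting in distinct peripherals; alternatively, after enlarging $q_0$ one can realise the finiteness of the relative cohomological dimension of the pair $(\Gamma,\{\Gamma_i\})$ by a finite-dimensional contractible $\Gamma$-CW-complex whose cell stabilisers are conjugates of the $\Gamma_i$, and then apply Smith theory: the finite group $F$ fixes a cell, whose stabiliser is the sought conjugate of some $\Gamma_{i_0}$. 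This upgrade is the one genuine obstacle in the proof.

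Given such a pair, the remaining assertions are essentially formal. Since $g_0^{-1}Fg_0\leq\Gamma_{i_0}$, the double coset $Fg_0\Gamma_{i_0}$ coincides with the single coset $g_0\Gamma_{i_0}$, so the factor of the product indexed by $(i_0,[g_0])$ is just $H^q(F;A)$, and its component of the total restriction map is a conjugate of the identity. As the canonical map is an isomorphism onto the whole product, every other factor $H^q(F_{(i,[g])};A_g)$ must vanish for all $q\geq q_0$ and all finitely generated $A$; but taking $A=\mathrm{Ind}_{F_{(i,[g])}}^{F}\mathbb{F}_p$ with $p\mid|F_{(i,[g])}|$ would make that factor contain $H^q(F_{(i,[g])};\mathbb{F}_p)\neq0$ in infinitely many degrees. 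Hence $F_{(i,[g])}=\langle1\rangle$ for every $(i,[g])\neq(i_0,[g_0])$; since $F\cap h\Gamma_jh^{-1}$ is conjugate, by an element of $F$, to $F_{(j,[h])}$, and $h\notin g_0\Gamma_{i_0}$ forces $[h]\neq[g_0]$, this gives $F\cap h\Gamma_jh^{-1}=\langle1\rangle$ for all $j\neq i_0$ and, for $j=i_0$, whenever $h\notin g_0\Gamma_{i_0}$ — exactly the assertion, with $\gamma:=g_0$. Apart from the upgrade step, everything is bookkeeping with Shapiro's lemma and the Mackey formula.
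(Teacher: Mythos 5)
The paper does not prove Theorem~\ref{Serre} at all: it states it with a citation to Huebschmann's article and moves on, so there is no "paper proof" to compare against. Your proposal must therefore stand on its own, and on its own it is incomplete in exactly the place you say it is. The Shapiro--Mackey reduction is sound (cohomology commutes with arbitrary products of coefficient modules, and the Mackey formula for $\mathrm{Res}_{\Gamma_i}\mathrm{Coind}_F^\Gamma$ is valid even for infinitely many double cosets), the first step (some $F_{(i,[g])}\neq\langle1\rangle$) is correct, and the closing argument (once one has $(i_0,[g_0])$ with $F\leq g_0\Gamma_{i_0}g_0^{-1}$, the factor indexed by $(i_0,[g_0])$ is an identity component of the total restriction, hence every other factor is $0$, hence every other intersection is trivial by the induced-$\mathbb{F}_p$ choice) is also correct. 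But the bridging step — from "some intersection is nontrivial" to "some intersection is all of $F$" — is not proved, and you say so yourself. That is a genuine gap, not a cosmetic one: your own example of $S_3$ shows it cannot be closed by a single coefficient module, and the two alternatives you sketch are both unexecuted.

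Of the two sketches, the induction on $|F|$ is the one with a real chance of success: applying the hypothesis to $\mathrm{Ind}_{F'}^F B$ for a maximal nontrivial $F'=F_{(i,[g])}$ does yield the same shape of hypothesis for $F'$, and the inductive conclusion (that $F'$ meets exactly one peripheral nontrivially, and then so only in $F'$ itself) gives strong structural information; but converting that into the statement that $F$ itself is contained in a peripheral is a further argument you have not supplied, and the case where $F$ is a $p$-group has to be handled separately since it is not amenable to the same Sylow-induction move. The second sketch does not work as stated: finiteness of relative cohomological dimension does not, by any general theorem, produce a finite-dimensional contractible $\Gamma$-CW-complex whose cell stabilisers are (subgroups of) conjugates of the $\Gamma_i$ — that would be an Eilenberg--Ganea-type statement for Bredon cohomology relative to the family generated by the $\Gamma_i$, which is not available from the hypothesis; and even granted such a complex, Smith theory only gives fixed points for $p$-group actions on $\mathbb{F}_p$-acyclic spaces, so one would still need to glue the Sylow-wise conclusions into a statement about the whole of $F$. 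So the proposal correctly identifies the hard point but leaves it open; to accept the argument you would need to fully execute the induction (or give the Tate/Farrell-cohomology argument Huebschmann actually uses).
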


\begin{theorem}\label{theorem FIN} Let $(G,\h,\{N_{\lambda}\}_{\lambda\in\Lambda})$ be a Cohen--Lyndon triple. If $\cd G<\infty$, then $\bg$ has property {\rm \hyperlink{cond:fin}{(FIN)}}.
\end{theorem}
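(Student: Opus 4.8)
The plan is to obtain property {\rm \hyperlink{cond:fin}{(FIN)}} as a direct application of Serre's theorem (Theorem~\ref{Serre}) to $\Gamma=\bg$ equipped with the family of subgroups $\{\bhl\}_{\lambda\in\Lambda}$: the conclusion of Theorem~\ref{Serre} for this data is, verbatim, the statement of property {\rm \hyperlink{cond:fin}{(FIN)}}. Thus the whole problem reduces to verifying the hypothesis of Theorem~\ref{Serre}, i.e.\ that there is an integer $q_0$ so that for every $q\geq q_0$ and every $\Z\bg$-module $M$ the restriction map $H^q(\bg;M)\to\prod_{\lambda\in\Lambda}H^q(\bhl;M)$ is an isomorphism.

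This restriction map is exactly the map occurring in the long exact cohomology sequence of the group pair $(\bg,\bhc)$, so it is an isomorphism in every degree $q$ for which $H^q(\bg,\bhc;M)=0=H^{q+1}(\bg,\bhc;M)$. Hence it is enough to show that $H^q(\bg,\bhc;M)$ vanishes for all large $q$ and all $\Z\bg$-modules $M$. This is where the Cohen--Lyndon property is invoked: by the excision isomorphism of Corollary~\ref{cor. alg excision}, $H^q(\bg,\bhc;M)\cong H^q(G,\h;M)$, so it suffices to prove that the relative cohomology $H^q(G,\h;M)$ vanishes for $q$ large and every $\Z G$-module $M$.

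Now $\cd G<\infty$ enters, and in an elementary way. Put $d=\cd G$. Then $H^q(G;M)=0$ for $q>d$, and since each $H_\lambda$ is a subgroup of $G$ we have $\cd H_\lambda\leq\cd G=d$, whence $\prod_\lambda H^{q}(H_\lambda;M)=0$ for $q>d$. Feeding these two vanishings into the exact segment $\prod_\lambda H^{q-1}(H_\lambda;M)\to H^q(G,\h;M)\to H^q(G;M)$ of the long exact sequence of the pair $(G,\h)$ yields $H^q(G,\h;M)=0$ for all $q\geq d+2$. Combining the three paragraphs, the restriction map is an isomorphism for all $q\geq d+2$, so Theorem~\ref{Serre} applies with $q_0=d+2$ and produces property {\rm \hyperlink{cond:fin}{(FIN)}}.

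I do not expect a genuine obstacle here: the single nontrivial input, the excision isomorphism $H^\ast(\bg,\bhc;M)\cong H^\ast(G,\h;M)$, is precisely the incarnation of the Cohen--Lyndon property supplied by Corollary~\ref{cor. alg excision}, and the remainder is a formal chase through two long exact sequences together with the standard fact that subgroups inherit finite cohomological dimension. The only point demanding a little care is that Serre's hypothesis must be checked for \emph{all} coefficient modules, not just trivial ones, which is why it matters that the cohomological-dimension bounds force $H^q(G,\h;M)=0$ for every $\Z G$-module $M$ once $q\geq d+2$.
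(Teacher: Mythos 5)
Your proposal is correct and follows essentially the same route as the paper: verify Serre's hypothesis for the pair $(\bg,\bhc)$ and invoke Theorem~\ref{Serre}. The paper's proof compresses the verification into a citation of Proposition~4.4 of \cite{sun2019cohomologyii} (with a pointer to Corollary~\ref{cor. alg excision}), whereas you unpack exactly that step: pass through the long exact sequence of $(\bg,\bhc)$, apply the Cohen--Lyndon excision isomorphism $H^q(\bg,\bhc;M)\cong H^q(G,\h;M)$, and then kill $H^q(G,\h;M)$ for $q\geq\cd G+2$ via the long exact sequence of $(G,\h)$ together with $\cd H_\lambda\leq\cd G$. This is a self-contained rendering of the same argument rather than a genuinely different approach.
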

\begin{proof} From Proposition 4.4 of \cite{sun2019cohomologyii} (see also \Cref{cor. alg excision}), it follows that the canonical map
\[
H^q(\bg; M) \to \prod_{\lambda \in \Lambda} H^q(\bhl; M)
\]
is an isomorphism for every $\bg$-module $M$ and all $q\geq \cd G +2$. The claim now follows by direct application of \Cref{Serre}.
\end{proof}

As an immediate corollary, we obtain the follow applications.

\begin{corollary}\label{cor FIN} 
\label{prop FIN} The following Cohen--Lyndon triples satisfy property {\rm \hyperlink{cond:fin}{(FIN)}}:
    \begin{enumerate}
        \item Let $G$ be a free group, $H$ a maximal cyclic subgroup, and $N=\langle x \rangle$, $\forall x\in H\smallsetminus \{1\}$. Then $\bg$ has property {\rm \hyperlink{cond:fin}{(FIN)}}.\\
        \item Let $P = \langle s_1, \dots, s_n \mid r_1, \dots, r_k \rangle $ be either $C(6)$, $C(4)-T(4)$, or $C(3)-T(6)$ small-cancellation presentation. Let $G= \langle s_1, \dots, s_n \rangle $ and $H_i$ be the normaliser of $N_i=\langle r_i \rangle $ in $ \langle s_1, \dots, s_n \rangle $.  Then $\bg$  has property {\rm \hyperlink{cond:fin}{(FIN)}}.\\
        \item Let $G=A\ast B$ where $A, B$ are locally indicable groups and  $N=\langle x \rangle$, where $x$ does not lie in either a conjugate of $A$ or $B$ in $G$ and $H$ is the centraliser of $N$ in $G$.  If $\cd G<\infty$, then $\bg$ has property {\rm \hyperlink{cond:fin}{(FIN)}}.\\
        \item Let $G$ be a group with a family of subgroups $\h\hookrightarrow_h G$. If $\cd G<\infty$, then any sufficiently deep Dehn filling quotient $\bg$ has property {\rm \hyperlink{cond:fin}{(FIN)}}.
    \end{enumerate}
    \begin{proof}All parts except (4) follow directly from \Cref{theorem FIN}. For part (4), for all sufficiently deep normal subgroups $\{N_{\lambda}\lhd H_{\lambda}\}_{\lambda\in\Lambda}$, the triple $(G,\h,\{N_{\lambda}\}_{\lambda\in\Lambda})$ satisfies the Cohen--Lyndon property. We can then apply \Cref{theorem FIN}.
    \end{proof}
\end{corollary}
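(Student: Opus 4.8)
The plan is to deduce each of the four items directly from \Cref{theorem FIN}. Since that theorem (via Serre's \Cref{Serre} together with the excision isomorphism of \Cref{cor. alg excision}) already packages all of the cohomological work, the only remaining task is to check, case by case, that its two hypotheses are met: that the triple in question is a Cohen--Lyndon triple, and that $\cd G<\infty$. In every case the Cohen--Lyndon property is supplied by the corresponding part of \Cref{prop CL}, so the genuine point is the finiteness of $\cd G$.

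I would handle items (1) and (2) together. In item (1), $G$ is a free group; in item (2), $G=\langle s_1,\dots,s_n\rangle$ is again the free group on the generators of the small-cancellation presentation (so that the Cohen--Lyndon quotient $\bg=G/\m$ is exactly the presented group, which may well have torsion --- this is the situation property {\rm \hyperlink{cond:fin}{(FIN)}} is designed to describe). In both cases $\cd G\le 1<\infty$, and the triple is a Cohen--Lyndon triple by \Cref{prop CL}: part (1) (Cohen--Lyndon) for our item (1), part (2) (Arenas--Duda) for our item (2). Hence \Cref{theorem FIN} applies and $\bg$ has property {\rm \hyperlink{cond:fin}{(FIN)}} in each case. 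For item (3), the hypothesis $\cd G<\infty$ is part of the statement and $(G,H,N)$ is a Cohen--Lyndon triple by \Cref{prop CL} (Edjvet--Howie), so \Cref{theorem FIN} again gives the conclusion. For item (4), one first invokes \Cref{prop CL} (Sun) to see that, for all sufficiently deep $\{N_\lambda\lhd H_\lambda\}_{\lambda\in\Lambda}$, the triple $(G,\h,\{N_\lambda\}_{\lambda\in\Lambda})$ satisfies the Cohen--Lyndon property; combining this with the standing hypothesis $\cd G<\infty$ and applying \Cref{theorem FIN} shows that the corresponding Dehn filling quotient $\bg$ has property {\rm \hyperlink{cond:fin}{(FIN)}}.

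I do not expect any real obstacle: once \Cref{theorem FIN} and \Cref{prop CL} are in hand, the corollary is immediate. The only substantive --- and entirely routine --- observation is that in items (1) and (2) the relevant group $G$ is free, hence of cohomological dimension at most one, which is precisely why those two statements, unlike (3) and (4), require no extra finiteness hypothesis.
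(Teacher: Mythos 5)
Your proof is correct and follows the same approach as the paper: reduce everything to \Cref{theorem FIN}, using \Cref{prop CL} to supply the Cohen--Lyndon property case by case and noting that $\cd G<\infty$ (in (1) and (2) because $G$ is free, in (3) and (4) by hypothesis). The paper's own proof is terser — it simply says parts (1)--(3) follow directly from \Cref{theorem FIN} and spells out (4) — but you have filled in the same details.
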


\begin{remark} Parts (1) and (2) of \Cref{cor FIN} are classical results which can be found in \cite[\S II.5]{LS77} and \cite{Hueb79}, respectively. To the best of our knowledge, parts (3) and (4) appear to be new.
\end{remark}

\begin{definition}\label{proper_Dehn_space} Suppose $(G,\h,\{N_{\lambda}\}_{\lambda\in\Lambda})$ is a Cohen--Lyndon triple that satisfies property \hyperlink{cond:fin}{(FIN)}. The quotient maps $G\times_{H_{\lambda}} EH_{\lambda}\twoheadrightarrow \bg\times_{\bhl}N_{\lambda}\backslash EH_{\lambda}$ induce an isomorphism 
$$\ll \N \rr\backslash  \big(\bigsqcup_{\lambda \in \Lambda} G\times_{H_{\lambda}} EH_{\lambda}\big)\cong \bigsqcup_{\lambda \in \Lambda} \bg\times_{\bhl}(N_{\lambda}\backslash EH_{\lambda}).$$
Here, we use that $H_{\lambda}\cap \ll \N\rr = N_{\lambda}$, i.e.~ $\bhl \hookrightarrow \bg$, which is a consequence of the Cohen--Lyndon property \cite[Lemma 6.4]{sun2018cohomologyi}.
Let
\[\phi\colon  \bigsqcup_{\lambda \in \Lambda} \bg\times_{\bhl}(N_{\lambda}\backslash EH_{\lambda})\cong  \ll \N \rr\backslash  \big(\bigsqcup_{\lambda \in \Lambda} G\times_{H_{\lambda}} EH_{\lambda}\big)\xrightarrow{\psi} \ll \N \rr\backslash EG\]
and
\[\theta\colon\bigsqcup_{\lambda \in \Lambda}\bg\times_{\bhl}(N_{\lambda}\backslash EH_{\lambda})\rightarrow \bigsqcup_{\lambda \in \Lambda}\bg\times_{\bhl}\E\bhl\]
be the $\bg$-equivariant maps, where $\phi$ is the composition of the inverse of the above isomorphism with the map $\psi$ induced by the classifying maps  $G\times_{H_{\lambda}} EH_{\lambda} \to EG$ and $\theta$ is induced by the classifying maps $N_{\lambda}\backslash  EH_{\lambda} \to \E\bhl$. Denote by $M_{\phi}$ and $M_{\theta}$ the mapping cylinders of $\phi$ and $\theta$, respectively.
Let $Y$  be the complex obtained by gluing $M_{\phi}$ and $M_{\theta}$ along their common top (see \Cref{fig:proper_Dehn_space}). We call the complex $Y$ the {\it Dehn filling complex for proper actions} associated to the triple  $(G,\h,\{N_{\lambda}\}_{\lambda\in\Lambda})$.  
\end{definition}

\begin{figure}[ht!]
    \centering
\begin{tikzpicture}
  \coordinate (C1) at (0,0);
  \coordinate (C2) at (3.2,0);
  \coordinate (C3) at (6,0);
  
  \fill[cyan!10] 
  (0.98,0.2) -- (2.1,0.5) -- (2.1,-0.5) -- (0.98,-0.2) -- cycle;

\fill[cyan!10]
  (4.35,0.3) -- (5.2,0.1) -- (5.2,-0.1) -- (4.35,-0.3) -- cycle;

\filldraw[fill=gray!10] (C1) circle (1cm) node {\footnotesize{$\bg\times_{\bhl}\E\bhl$}};
\filldraw[fill=gray!10] (C2) circle (1.2cm) node {\footnotesize{$\bg\times_{\bhl}N_{\lambda}\backslash EH_{\lambda}$}};
\filldraw[fill=red!5] (C3) circle (.8cm) node {\footnotesize{$\ll \N \rr\backslash EG$}};

 \draw (.98,.2) -- (2.1,.5);
  \draw (.98,-.2) -- (2.1,-.5);

   \draw (4.35,.3) -- (5.2,0.1);
     \draw (4.35,-.3) -- (5.2,-0.1);

 \fill (0,1.1) circle (.7pt);
  \fill (0, -1.1) circle (.7pt);
   \fill (0.05,1.3) circle (.7pt);
  \fill (0.05, -1.3) circle (.7pt);
     \fill (0.1,1.5) circle (.7pt);
  \fill (0.1, -1.5) circle (.7pt);

   \fill (3.2,1.3) circle (.7pt);
  \fill (3.2, -1.3) circle (.7pt);
   \fill (3.25,1.5) circle (.7pt);
  \fill (3.25, -1.5) circle (.7pt);
     \fill (3.3,1.7) circle (.7pt);
  \fill (3.3, -1.7) circle (.7pt);

  \draw[<-] (-1, -2.5) -- (.8, -2.5);
  \draw[->] (1.7, -2.5) -- (3.2, -2.5);
  
  \node at (1.25, -2.5) {$M_{\theta}$};


  \draw[<-] (3.3, -2.5) -- (4.8, -2.5);
  \draw[->] (5.6, -2.5) -- (6.8, -2.5);
  
  \node at (5.2, -2.5) {$M_{\phi}$};

   \end{tikzpicture}
\caption{Dehn filling complex for proper actions associated to a Cohen--Lyndon triple $(G,\h,\{N_{\lambda}\}_{\lambda\in\Lambda})$. The left and middle circles represent the $\bg$-translates of spaces corresponding to a single subgroup $H_{\lambda} \in \h$, while the dots indicate the $\bg$-translates of spaces corresponding to all other subgroups in $\h$.}
    \label{fig:proper_Dehn_space}
\end{figure}
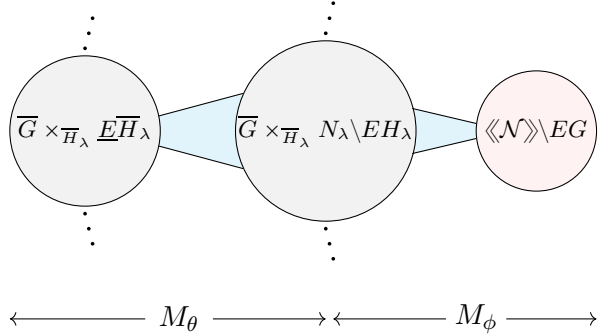

\begin{remark} Note that when $\bg$ is torsion-free,  $\E \bg$ is just $E\bg$ and the property \hyperlink{cond:fin}{(FIN)} holds vacuously. In this case, the complex $Y$ coincides with the universal cover of the Dehn filling space of \Cref{Dehn_space}.
\end{remark}

\begin{theorem}\label{thm: proper Dehn filling space}
Let $(G,\h,\{N_{\lambda}\}_{\lambda\in\Lambda})$ be  a Cohen--Lyndon triple that satisfies property \hyperlink{cond:fin}{(FIN)}. Then, every Dehn filling complex for proper actions is a CW-model for $\E \bg$.
\end{theorem}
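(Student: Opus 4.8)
namely that it is a proper $\bg$-CW-complex, and that for every finite subgroup $F \leq \bg$ the fixed-point set $Y^F$ is contractible. The first property will be essentially formal. Each of the pieces $\bg \times_{\bhl} \E\bhl$, $\bg \times_{\bhl} N_\lambda\backslash EH_\lambda$, and $\ll\N\rr\backslash EG$ is a proper $\bg$-CW-complex: the first because $\E\bhl$ is a model for the classifying space for proper actions of $\bhl$ (and induction up to $\bg$ along the inclusion $\bhl\hookrightarrow\bg$, which is injective by the Cohen--Lyndon property, preserves properness); the second because $N_\lambda \backslash EH_\lambda$ carries a free action of $\bhl = H_\lambda/N_\lambda$ (using $H_\lambda \cap \ll\N\rr = N_\lambda$), and induction again preserves the property; and the third because $EG$ is free over $G$ and $\ll\N\rr$ acts freely on it, so the quotient is a free $\bg$-CW-complex. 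The mapping cylinders $M_\phi$, $M_\theta$ of the $\bg$-equivariant cellular maps $\phi,\theta$ are then proper $\bg$-CW-complexes, and gluing two proper $\bg$-CW-complexes along a common $\bg$-CW-subcomplex yields a proper $\bg$-CW-complex. So $Y$ is a proper $\bg$-CW-complex.

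**The substance is the fixed-point computation.** I would fix a finite subgroup $F \leq \bg$ and analyse $Y^F$. Since $Y$ is obtained by gluing $M_\phi$ and $M_\theta$ along the common subcomplex $B := \bigsqcup_{\lambda\in\Lambda} \bg\times_{\bhl}(N_\lambda\backslash EH_\lambda)$, and taking fixed points commutes with pushouts of $\bg$-CW-complexes along cofibrations, we get $Y^F = M_\phi^F \cup_{B^F} M_\theta^F$, with $M_\phi^F \simeq$ (mapping cylinder of $\phi^F$) and similarly for $\theta$. So it suffices to understand the fixed points of each of the three constituent spaces and the maps between them. This is exactly where property \hyperlink{cond:fin}{(FIN)} enters. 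First consider the case where $F$ is nontrivial. By (FIN) there is a unique pair $(\alpha, g\bha)$ with $F \leq g\bha g^{-1}$, and $F$ is not conjugate into any other peripheral or into a different coset of $\bha$. Since $\ll\N\rr\backslash EG$ is a free $\bg$-CW-complex, $(\ll\N\rr\backslash EG)^F = \emptyset$; since each $N_\lambda\backslash EH_\lambda$ is $\bhl$-free, $(\bg\times_{\bhl} N_\lambda\backslash EH_\lambda)^F$ is nonempty only when $F$ is conjugate into $\bhl$, and by (FIN) this happens only for $\lambda = \alpha$ and, within that induced space, only over the single coset $g\bha$; there the fixed set is $(N_\alpha\backslash EH_\alpha)^{F'}$ for the corresponding conjugate $F' = g^{-1}Fg \leq \bha$, which is nonempty (and in fact contractible: $N_\alpha\backslash EH_\alpha$ is a model for $E(\bha/\!\!/ N_\alpha)$... more carefully, one observes $(N_\alpha\backslash EH_\alpha)^{F'}$ is the set of $N_\alpha$-orbits fixed by $F'$, and since $F'$ lifts to a finite subgroup of... ) — this is the delicate point, see below. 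Similarly $(\bg\times_{\bhl}\E\bhl)^F$ is nonempty only for $\lambda = \alpha$ over the coset $g\bha$, where it equals $(\E\bha)^{F'}$, which is contractible by definition of $\E\bha$. So $Y^F$ reduces to the fixed points of the part of $M_\theta$ sitting over the single coset $g\bha$: it is the mapping cylinder of $\theta^F\colon (N_\alpha\backslash EH_\alpha)^{F'} \to (\E\bha)^{F'}$, and a mapping cylinder of a map with contractible target deformation retracts onto that target (the cylinder $M_\theta^F$ deformation retracts to $(\E\bha)^{F'}\simeq *$), hence $Y^F$ is contractible. When $F = \langle 1\rangle$, we must separately show $Y = Y^{\langle 1\rangle}$ is contractible; but this follows because $Y$ is, up to the equivariant structure, built from the same homotopy pushout as the Dehn filling space of \Cref{Dehn_space}, whose universal cover is contractible by \Cref{thm. topology} — concretely, $Y/\bg$ is the Dehn filling space and $\pi_1 Y = 1$, or one runs a Mayer--Vietoris / van Kampen argument directly on the pushout $Y = M_\phi \cup_B M_\theta$ noting that $B \to M_\phi$ and $B \to M_\theta$ are, after passing to the relevant covers, the maps inducing the free-product decomposition $\ll\N\rr = \Asterisk tN_\lambda t^{-1}$.

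**The main obstacle is the claim that $(N_\alpha\backslash EH_\alpha)^{F'}$ is well-behaved** — specifically, that for a finite subgroup $F' \leq \bha = H_\alpha/N_\alpha$, the fixed-point set of $F'$ acting on $N_\alpha\backslash EH_\alpha$ is contractible, and that the induced map to $(\E\bha)^{F'}$ is a homotopy equivalence (equivalently, that $N_\alpha\backslash EH_\alpha$, though not itself a model for $\E\bha$, has the correct $F'$-fixed-point homotopy type for every finite $F'\leq\bha$, i.e.\ is a model for $\E\bha$ after all — which would require every finite subgroup of $\bha$ to lift to a finite subgroup of $H_\alpha$, or rather to act with a fixed point on $EH_\alpha/N_\alpha$). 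I expect the resolution is that one does \emph{not} need $N_\alpha\backslash EH_\alpha$ to be a model for $\E\bha$; rather, the map $\theta$ is engineered precisely so that on $F'$-fixed points it becomes a homotopy equivalence onto the contractible space $(\E\bha)^{F'}$, and the contractibility of $(N_\alpha\backslash EH_\alpha)^{F'}$ is not needed at all — only that the mapping cylinder $M_\theta^F$ deformation retracts onto its base $(\bg\times_{\bhl}\E\bhl)^F$, which is automatic for any mapping cylinder. What genuinely must be checked is that $(\ll\N\rr\backslash EG)^F$ and the unwanted components of the other fixed-point sets really do vanish, and this is exactly the content of property (FIN). I would therefore organise the proof as: (i) properness, (ii) reduction via pushout/fixed-point commutation to a local statement over a single coset, (iii) invoke (FIN) to kill all but one coset of one peripheral, (iv) the surviving piece deformation retracts onto $(\E\bha)^{F'}\simeq *$, and (v) handle $F=1$ by comparison with \Cref{thm. topology}.
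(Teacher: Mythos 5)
Your overall strategy---analysing $Y^F$ piece by piece through the pushout $Y = M_\phi\cup_Z M_\theta$, invoking \hyperlink{cond:fin}{(FIN)} to localise to a single coset of a single peripheral, and handling $F=\langle1\rangle$ by comparison with the PS--Sun contractibility argument---is the same as the paper's. However, you create a ``delicate point'' for yourself where none exists, and in doing so you write something incorrect. You correctly note that $N_\lambda\backslash EH_\lambda$ carries a \emph{free} $\bhl$-action (because $EH_\lambda$ is $H_\lambda$-free and $\bhl=H_\lambda/N_\lambda$), but then assert that $(\bg\times_{\bhl}N_\lambda\backslash EH_\lambda)^F$ ``is nonempty only when $F$ is conjugate into $\bhl$'' and go on to agonise over the homotopy type of $(N_\alpha\backslash EH_\alpha)^{F'}$. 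Freeness of the $\bhl$-action means that \emph{no nontrivial} subgroup fixes a point: $Z^F = \bigsqcup_\lambda(\bg\times_{\bhl}N_\lambda\backslash EH_\lambda)^F$ is simply \emph{empty} for every nontrivial finite $F$, as is $(\ll\N\rr\backslash EG)^F$ (since $\bg$ acts freely on $\ll\N\rr\backslash EG$). The entire ``obstacle'' paragraph is therefore moot: there are no fixed points in $Z$, none in the cylinder parts of $M_\phi$ or $M_\theta$, and none in $\ll\N\rr\backslash EG$, so $Y^F$ \emph{equals} $(\bg\times_{\bha}\E\bha)^F$ on the nose. One then applies \hyperlink{cond:fin}{(FIN)} to see this is $\cong\E\bha^{F'}\simeq\ast$ for the unique coset $g\bha$ with $g^{-1}Fg\leq\bha$. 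This is exactly what the paper does, and it makes the mapping-cylinder-deformation-retraction workaround (your item (iv)) unnecessary.

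Your eventual resolution in (iv) does, as it happens, also reach the correct conclusion (a mapping cylinder deformation retracts onto its base, and the base is contractible), but it is longer than needed and does not by itself handle the gluing of $M_\phi^F$---you still need to observe $(M_\phi)^F=\emptyset$, which again comes down to freeness. I would also flag a minor imprecision in your $F=\langle1\rangle$ case: it is not quite true that $Y/\bg$ is the Dehn filling space $X$ of Definition~\ref{Dehn_space}, since $X$ is assembled from the aspherical spaces $BG$, $BH_\lambda$, $B\bhl$, and when $\bg$ has torsion $\E\bg/\bg$ is not a $K(\bg,1)$. The correct statement, and what the paper uses, is that the \emph{contractibility} proof for $Y$ is verbatim the same Mayer--Vietoris/van~Kampen argument as for the universal cover of $X$ in \cite[Theorem~4.2]{PS_L2}.
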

\begin{proof} Suppose $Y$ is the Dehn filling complex for proper actions associated to the Cohen--Lyndon triple  $(G,\h,\{N_{\lambda}\}_{\lambda\in\Lambda})$. Let $F$ be a subgroup of $\bg$. If $Y^F$ is nonempty, then either there is a fixed point in $\ll \N \rr\backslash EG$, in which case $F$ is trivial or there is a fixed point in a $\bg$-translate of  $\E\bhl$ in $\bg\times_{\bhl}\E\bhl$ for some $\lambda \in \Lambda$, in which case $F$ is finite and is contained in a conjugate of $\bhl$.  It remains to show that for every finite $F< \bg$, the fixed-point set $Y^F$ is contractible.

Let $F$ be a finite subgroup of $\bg$. If $F=\langle 1\rangle$, then $Y^F=Y$ and the proof of the contractibility of $Y$ is completely analogous to the proof of the contractibility of the universal cover of the Dehn filling space of the triple  $(G,\h,\{N_{\lambda}\}_{\lambda\in\Lambda})$ in \cite[Theorem 4.2]{PS_L2}. If $F\ne \langle 1\rangle$, then by property {\rm \hyperlink{cond:fin}{(FIN)}},  $F$ is contained in a unique conjugate of $\bhl$ for some $\lambda \in \Lambda$. Without loss of generality, we can assume $F<\bhl$ and $F\cap h\bhl h^{-1}=\langle 1\rangle$ if $h\notin \bhl$. In this case, $Y^F=\E\bhl^F\simeq \ast$. 
\end{proof}

\section{Equivariant homology}\label{sec_homology}

\subsection{Excision diagram for equivariant homology}

We briefly recall from \cite{BEL08} (see also  \cite[Chapter 12]{Lueck25}) the notions of $G$-homology theories and equivariant homology theories. Fix a discrete group $G$ and a ring $S$. A $G$-homology theory $\h_\ast^G$ with values in $S$-modules is a collection of covariant functors $\h_n^G$ from the category of $G$-CW pairs to the category of $S$-modules indexed by $n\in \bZ$, together with the natural transformation 
$$\partial_n^G(X, X_0)\colon \h_n^G(X, X_0) \to \h_{n-1}^G(X_0)\coloneq \h^G_{n-1}(X_0, \emptyset)$$
satisfying the following axioms (see \cite[Definition 1.1]{BEL08}): $G$-homotopy invariance, long exact sequence of a pair, excision, and disjoint union axiom.

Let $(G,\{H_{\lambda}\}_{\lambda\in\Lambda},\{N_{\lambda}\}_{\lambda\in\Lambda})$ be a Cohen--Lyndon triple with $\bg=G/\ll \mathcal{N} \rr$ having property {\rm \hyperlink{cond:fin}{(FIN)}} and denote by $M_\phi, M_\theta$ the mapping cylinders of the $\bg$-equivariant maps $\phi, \theta$, respectively, as in Definition \ref{proper_Dehn_space}. Thus, we have canonical $\bg$-equivariant homotopy equivalences
\[
M_\phi \simeq \ll \mathcal{N}\rr \backslash EG \,\; \mbox{ and } \,\; M_\theta \simeq  \bigsqcup_{\lambda \in \Lambda} \bg \times_{\bhl} \E\bhl.
\]
Recall from \Cref{proper_Dehn_space} and Theorem \ref{thm: proper Dehn filling space} that the Dehn  filling complex $Y$ is a $\bg$-CW-model of $\E\bg$ where
\[
Y\coloneq M_\phi \sqcup_Z M_\theta  \,\; \mbox{ and } \,\;  Z\coloneq \bigsqcup_{\lambda \in \Lambda} \bg \times_{\bhl} (N_\lambda \backslash EH_\lambda).
\]

The following is immediate from the four axioms of a $\bg$-homology theory:

\begin{proposition}\label{prop_CL_diagram} Let $(G,\{H_{\lambda}\}_{\lambda\in\Lambda},\{N_{\lambda}\}_{\lambda\in\Lambda})$ be  a Cohen--Lyndon triple  satisfying property \hyperlink{cond:fin}{(FIN)}. Let $\h^{\bg}_{\ast}$ be any $\bg$-homology theory. Then, the map $(M_\phi, Z) \to (Y, M_\theta)$ of $G$-CW pairs  induces the following commutative diagram with exact rows
\begin{equation*}
\xymatrix{
\cdots  \ar[r] & \h_\ast^{\bg}(Z) \ar[r]^-{} \ar[d] &  \h_\ast^{\bg}(M_\phi) \ar[r] \ar[d]
& \h_\ast^{\bg}(M_\phi, Z) \ar[d]^{\cong}   \ar[r] & \cdots  \\
\cdots  \ar[r] & \h_\ast^{\bg}(M_\theta) \ar[r]^-{} & \h_\ast^{\bg}(Y)  \ar[r] 
& \h_\ast^{\bg}(Y, M_\theta) \ar[r] & \cdots,
}
\end{equation*}
which is isomorphic to
\begin{equation}\label{eq_diagram_Ghomlogy}
\xymatrix{
\cdots  \ar[r] & \bigoplus_{\lambda \in \Lambda}\h_\ast^{\bg}(\bg \times_{\bhl} N_\lambda \backslash EH_\lambda) \ar[r]^-{} \ar[d] &  \h_\ast^{\bg}(\m\backslash EG) \ar[r] \ar[d] 
& \h_\ast^{\bg}(M_\phi, Z) \ar[d]^{\cong}  \ar[r] & \cdots  \\
\cdots  \ar[r] & \bigoplus_{\lambda \in \Lambda}\h_\ast^{\bg}(\bg \times_{\bhl} \E \bhl)  \ar[r]^-{} & \h_\ast^{\bg}(\E \bg)  \ar[r] 
& \h_\ast^{\bg}(Y, M_\theta) \ar[r] & \cdots
}
\end{equation}
\end{proposition}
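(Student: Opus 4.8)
The plan is to read off both rows as the long exact sequences of the $\bg$-CW pairs $(M_\phi, Z)$ and $(Y, M_\theta)$ provided by the long exact sequence axiom, to obtain the vertical maps together with the commutativity of the resulting ladder from functoriality of $\h^{\bg}_\ast$ and naturality of the boundary homomorphisms $\partial^{\bg}_\ast$ applied to the map of pairs $(M_\phi,Z)\to(Y,M_\theta)$, and finally to rewrite the terms using $\bg$-homotopy invariance and the disjoint union axiom so as to arrive at \eqref{eq_diagram_Ghomlogy}.

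First I would record the purely combinatorial fact that the construction in \Cref{proper_Dehn_space} exhibits $Y$ as the union of the two $\bg$-CW subcomplexes $M_\phi$ and $M_\theta$ with $M_\phi\cap M_\theta=Z$, their common ``top''. The excision axiom of a $\bg$-homology theory, applied to this decomposition, then gives that the inclusion of pairs induces an isomorphism
\[
\h^{\bg}_\ast(M_\phi,Z)\;\xrightarrow{\ \cong\ }\;\h^{\bg}_\ast(Y,M_\theta),
\]
which is the isomorphism decorating the rightmost column. Combining this with the long exact sequence axiom for each of $(M_\phi,Z)$ and $(Y,M_\theta)$, and with the naturality of $\partial^{\bg}_\ast$ with respect to the map of pairs, produces the first displayed commutative diagram with exact rows.

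Next I would pass to the second diagram. By \Cref{proper_Dehn_space} there are canonical $\bg$-homotopy equivalences $M_\phi\simeq_{\bg}\m\backslash EG$ and $M_\theta\simeq_{\bg}\bigsqcup_{\lambda\in\Lambda}\bg\times_{\bhl}\E\bhl$, while $Z=\bigsqcup_{\lambda\in\Lambda}\bg\times_{\bhl}(N_\lambda\backslash EH_\lambda)$ by definition, and by \Cref{thm: proper Dehn filling space} the complex $Y$ is a $\bg$-CW model of $\E\bg$. Applying $\bg$-homotopy invariance to these equivalences, the disjoint union axiom to turn $\h^{\bg}_\ast$ of the disjoint unions indexed by $\Lambda$ into the direct sums $\bigoplus_{\lambda\in\Lambda}$, and using that all of these isomorphisms are natural in the maps involved, yields an isomorphism of ladders from the first diagram onto \eqref{eq_diagram_Ghomlogy}; under it the horizontal maps become those induced by $\theta$ and by $\phi$ (the latter precomposed with the identification of $Z$ with $\ll\N\rr\backslash\bigl(\bigsqcup_{\lambda\in\Lambda}G\times_{H_\lambda}EH_\lambda\bigr)$ from \Cref{proper_Dehn_space}), and the vertical maps become the evident ones.

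There is essentially no obstacle here beyond bookkeeping: the only points meriting a line of verification are that the equivariant mapping cylinders may be taken cellular, so that $Z$ is genuinely a $\bg$-CW subcomplex of both $M_\phi$ and $M_\theta$ and the excision axiom in the form of \cite[Definition 1.1]{BEL08} applies verbatim, and that the disjoint union axiom is invoked for arbitrary (possibly infinite) index sets, so that $\bigoplus_{\lambda\in\Lambda}$ is the correct identification even when $\Lambda$ is infinite. No new idea is required beyond the four axioms of a $\bg$-homology theory, which is why the statement is recorded as immediate.
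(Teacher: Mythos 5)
Your proposal is correct and matches the paper's intent: the paper records this proposition as immediate from the four axioms of a $\bg$-homology theory, and your argument is exactly that — excision applied to the $\bg$-CW triad $(Y; M_\phi, M_\theta)$ with $M_\phi \cap M_\theta = Z$ gives the rightmost isomorphism, naturality of the long exact sequences of the pairs $(M_\phi, Z)$ and $(Y, M_\theta)$ gives the ladder, and $\bg$-homotopy invariance plus the disjoint union axiom produce the identification with \eqref{eq_diagram_Ghomlogy}. The additional remarks about cellularity of the mapping cylinders and the disjoint union axiom for infinite $\Lambda$ are appropriate and confirm the hypotheses of the axioms are satisfied.
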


Now, recall from \cite[Definition 1.3]{BEL08} (see also \cite[Chapter 12]{Lueck25}) that an equivariant homology theory $\h_\ast^{?}$ over a group $\Gamma$  assigns to every group $(G, \xi\colon G\to \Gamma)$ over $\Gamma$, a $G$-homology theory $\h_\ast^{G}$ (more precisely,  $\h_\ast^{(G, \xi)}$), and comes with the induction structure: for any homomorphism $\alpha\colon (G_1, \xi_1) \to (G_2, \xi_2)$ of groups over $\Gamma$ and a $G_1$-CW pair $(X, X_0)$, there are natural homomorphisms
\begin{equation}\label{eq_induction}
\mathrm{ind}_\alpha\colon \h^{G_1}_n(X, X_0) \to \h^{G_2}_n(\alpha_\ast(X, X_0))\quad \alpha_\ast(X,X_0)\coloneq (G_2\times_{G_1}X, G_2\times_{G_1}X_0)
\end{equation}
satisfying (see \cite[Definition 1.3]{BEL08}) compatibility with the boundary homomorphisms, functoriality, compatibility with conjugation, and bijectivity axiom: 
$$\mathrm{ind}_\alpha\colon \h_n^{G_1}(\{\cdot\})\to \h_n^{G_2}(G_2/G_1)$$
is bijective for any homomorphism $\alpha\colon (G_1, \xi_1)\to (G_2, \xi_2)$ over $\Gamma$ such that the underlying map  $\alpha\colon G_1\to G_2$ is an inclusion of groups.

\begin{theorem}\label{thm_CLP_diagram} Let $(G,\{H_{\lambda}\}_{\lambda\in\Lambda},\{N_{\lambda}\}_{\lambda\in\Lambda})$ be a Cohen--Lyndon triple  satisfying property \hyperlink{cond:fin}{(FIN)}. Let $\h^{?}_{\ast}$ be any equivariant homology theory over $\bg$. We have the following commutative diagram with exact rows
\begin{equation}\label{eq_diag_equiv}
\xymatrix{
\cdots  \ar[r] & \bigoplus_{\lambda \in \Lambda}\h_\ast^{H_\lambda}( EH_\lambda) \ar[r]^-{} \ar[d] &  \h_\ast^{G}(EG) \ar[r] \ar[d] 
& \h_\ast^{\bg}(M_\phi, Z) \ar[d]^{\cong}  \ar[r] & \cdots  \\
\cdots  \ar[r] & \bigoplus_{\lambda \in \Lambda}\h_\ast^{\bhl}(\E \bhl)  \ar[r]^-{} & \h_\ast^{\bg}(\E \bg)  \ar[r] 
& \h_\ast^{\bg}(Y, M_\theta) \ar[r] & \cdots
}
\end{equation}
which is canonically isomorphic to the commutative diagram \eqref{eq_diagram_Ghomlogy}. Here, all the four maps in the first square are the obvious maps obtained by the induction homomorphisms followed by the classifying maps.
\end{theorem}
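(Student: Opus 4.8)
The plan is to deduce Theorem \ref{thm_CLP_diagram} from Proposition \ref{prop_CL_diagram} by identifying the relevant $\bg$-equivariant homology groups with the homology groups of the smaller groups $H_\lambda$, $\bhl$, and $G$, using the bijectivity axiom of the induction structure. The core observation is that each building block of the diagram \eqref{eq_diagram_Ghomlogy} is of the form $\h_\ast^{\bg}(\bg \times_{K} EK)$ or $\h_\ast^{\bg}(\bg\times_K N\backslash EK)$ for an appropriate subgroup $K$, and such induced spaces are precisely what the bijectivity axiom is designed to handle.

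First I would treat the term $\h_\ast^{\bg}(\m\backslash EG)$. Since $\m\backslash EG = \bg\times_G EG$ (using that $G$ acts freely on $EG$ with quotient $BG$, and $\ll\N\rr$ acts freely), the quotient map $G \to \bg$ is a homomorphism of groups over $\bg$ with underlying map the surjection $G\twoheadrightarrow \bg$ — but for the bijectivity axiom we need an \emph{inclusion}. The correct route is instead to view $EG$ as a free $G$-CW-complex and apply the induction structure to the identity-type situation: more carefully, $\h_\ast^{\bg}(\bg\times_G EG) \cong \h_\ast^{G}(EG)$ follows because $EG$, regarded as a $G$-CW-complex via $\xi\colon G\to\bg$, is mapped by $\mathrm{ind}$ to $\bg\times_G EG$, and since all isotropy groups are trivial, compatibility with the long exact sequence and the bijectivity axiom applied cell-by-cell (to the $0$-cells, which are copies of $G/\{1\}$) gives the isomorphism; one extends from cells to the whole complex by a standard colimit/Mayer–Vietoris induction over skeleta, using the disjoint union and long exact sequence axioms. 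Exactly the same argument applies to $\h_\ast^{\bg}(\bg\times_{\bhl}\E\bhl)\cong \h_\ast^{\bhl}(\E\bhl)$, now with the inclusion $\bhl\hookrightarrow\bg$ (which is genuinely injective by the Cohen--Lyndon property, \cite[Lemma 6.4]{sun2018cohomologyi}) and with $\E\bhl$ a proper $\bhl$-CW-complex whose isotropy groups are finite subgroups of $\bhl$; here the bijectivity axiom must be invoked in the form $\mathrm{ind}_\alpha\colon \h_n^{\bhl}(\bhl/F)\to \h_n^{\bg}(\bg/F)$ for finite $F$, which follows from the stated axiom together with compatibility with conjugation and the transitivity/functoriality of induction. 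Finally, $\h_\ast^{\bg}(\bg\times_{\bhl} N_\lambda\backslash EH_\lambda)\cong \h_\ast^{H_\lambda}(EH_\lambda)$: one first identifies $N_\lambda\backslash EH_\lambda = \bhl\times_{H_\lambda}EH_\lambda$ as an $\bhl$-CW-complex (free, since $N_\lambda$ acts freely on $EH_\lambda$), giving $\h_\ast^{\bhl}(\bhl\times_{H_\lambda}EH_\lambda)\cong \h_\ast^{H_\lambda}(EH_\lambda)$ via the inclusion $H_\lambda\hookrightarrow\bhl$ — wait, $H_\lambda$ is not a subgroup of $\bhl$ — so instead one factors the composite $H_\lambda\to\bhl\hookrightarrow\bg$ and argues directly: $\bg\times_{\bhl}(N_\lambda\backslash EH_\lambda) = \bg\times_{H_\lambda}EH_\lambda$ as $\bg$-CW-complexes (unravelling the balanced products using $\bg\times_{\bhl}(\bhl\times_{H_\lambda}-) \cong \bg\times_{H_\lambda}-$ via the quotient $H_\lambda\to\bhl$), and then $\h_\ast^{\bg}(\bg\times_{H_\lambda}EH_\lambda)\cong\h_\ast^{H_\lambda}(EH_\lambda)$ by the same cell-wise bijectivity argument applied to the \emph{inclusion} $H_\lambda\hookrightarrow G\hookrightarrow\bg$ composed appropriately — the point being that $EH_\lambda$ is $H_\lambda$-free, so only $\mathrm{ind}$ for $\{1\}\hookrightarrow$(anything) is needed, and that is always an inclusion.

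With these three identifications in hand, I would then check that they are compatible with the horizontal and vertical maps of \eqref{eq_diagram_Ghomlogy}: the vertical map $\h_\ast^{\bg}(Z)\to\h_\ast^{\bg}(M_\theta)$ is induced by $\theta$, which on each $\bg\times_{\bhl}$-summand is $\bg\times_{\bhl}(\text{classifying map }N_\lambda\backslash EH_\lambda\to\E\bhl)$; under the identifications this becomes the map $\h_\ast^{H_\lambda}(EH_\lambda)\to\h_\ast^{\bhl}(\E\bhl)$ which is $\mathrm{ind}$ along $H_\lambda\to\bhl$ followed by the classifying map, exactly as claimed in the theorem. Similarly the top horizontal maps become induction-then-classifying-map for $H_\lambda\to G$ and $\bhl\to\bg$. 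This bookkeeping is routine given the naturality of the induction structure (compatibility with boundary homomorphisms and functoriality), but it is the part that requires care. The isomorphism $\h_\ast^{\bg}(M_\phi,Z)\cong\h_\ast^{\bg}(Y,M_\theta)$ is unchanged, being already present in Proposition \ref{prop_CL_diagram}, and exactness of the rows is inherited. I expect the main obstacle to be the very first identification $\h_\ast^{\bg}(\bg\times_G EG)\cong\h_\ast^G(EG)$: since $G\to\bg$ is not injective, the bijectivity axiom does not apply to this homomorphism directly, and one must instead push the argument down to the free cells, where the relevant homomorphisms are inclusions of the trivial group, and then reassemble via the skeletal filtration — making sure that the equivariant homology theory's behaviour on arbitrary (possibly infinite-dimensional) free $G$-CW-complexes is controlled by its behaviour on cells, which is where the disjoint union axiom and a colimit argument (or the Milnor $\lim^1$ sequence, harmless here since everything is a genuine homology theory satisfying the stated axioms) enter.
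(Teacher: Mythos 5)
Your proof is correct and follows essentially the same route as the paper's. The paper simply cites \cite[Lemma 1.5]{BEL08}, which states precisely that $\mathrm{ind}_\alpha\colon \h_\ast^{G_1}(X,X_0)\to \h_\ast^{G_2}(\alpha_\ast(X,X_0))$ is bijective for any group homomorphism $\alpha$ whose kernel acts freely on $X\smallsetminus X_0$, to obtain the three induction isomorphisms $\h_\ast^{H_\lambda}(EH_\lambda)\cong\h_\ast^{\bhl}(N_\lambda\backslash EH_\lambda)\cong\h_\ast^{\bg}(\bg\times_{\bhl}N_\lambda\backslash EH_\lambda)$, $\h_\ast^{\bhl}(\E\bhl)\cong\h_\ast^{\bg}(\bg\times_{\bhl}\E\bhl)$, and $\h_\ast^G(EG)\cong\h_\ast^{\bg}(\m\backslash EG)$; what you do is re-derive the content of that lemma by the cell-by-cell and skeletal-filtration argument (reducing to orbits $G_1/H$ with $H\cap\ker\alpha=\{1\}$, invoking the bijectivity axiom for inclusions together with functoriality), which is exactly how the lemma itself is proved in \cite{BEL08}, so there is no gap — only a longer route to the same identifications. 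Your final observation, that the compatibility of the four maps in the first square with these identifications is handled by the naturality and functoriality of the induction structure, matches the paper's concluding remark verbatim.
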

\begin{proof} By \cite[Lemma 1.5]{BEL08}, the homomorphisms $H_{\lambda}\twoheadrightarrow \bhl$, $\bhl \hookrightarrow \bg$, and $G\twoheadrightarrow \bg$ induce the induction isomorphisms
\[
\h_\ast^{H_\lambda}( EH_\lambda) \cong \h_\ast^{\bhl}(N_\lambda \backslash EH_\lambda) \cong  \h_\ast^{\bg}(\bg \times _{\bhl} N_\lambda \backslash EH_\lambda),
\]
\[
\h_\ast^{\bhl }( \E\bhl) \cong \h_\ast^{\bg}(\bg \times _{\bhl} \E \bhl),
\]
\[
\h_\ast^{G}(EG) \cong \h_\ast^{\bg}(\m \backslash EG).
\]
After replacing the corresponding terms in the diagram \eqref{eq_diagram_Ghomlogy} via these isomorphisms, the assertion on the corresponding maps in \eqref{eq_diag_equiv} in the first square follows by the naturality and the functoriality of the induction homomorphisms.
\end{proof}

\begin{remark}\label{rem_CLP_proper} Theorem \ref{thm_CLP_diagram} holds more generally for any equivariant homology over $\bg$ that is defined on the category of proper $\bg$-CW pairs. The proof, including \cite[Lemma 1.5]{BEL08}, works verbatim.
\end{remark}

Given a group $\Gamma$ and a ring $R$ (with involution) equipped with a $\Gamma$-action, by \cite[Theorem 5.1]{BEL08} we have the following equivariant homology theories 
\[
H_\ast^{?}(-;\mathrm{K_R}),\; H_\ast^{?}(-;\mathrm{KH_R}),\; H_\ast^{?}(-;\mathrm{L_R^{\langle -\infty \rangle}}), 
\]
with values in $\bZ$-modules over the group $\Gamma$, satisfying the properties listed in \cite[Theorem 5.1]{BEL08}. Theorem \ref{thm_CLP_diagram} is applicable to any of these equivariant homology theories. 

For any equivariant homology theory $\h^{?}_\ast$ (over a group $\Gamma$), by the naturality of the induction homomorphisms, we have the following commutative diagram:
\begin{equation}\label{eq_asembly_0}
\xymatrix{
\h_\ast^{G_1}(X) \ar[r]^{f_1\circ \mathrm{ind}_\alpha} \ar[d] &  \h^{G_2}_\ast(Y) \ar[d] \\
\h_\ast^{G_1}(\{\cdot\}) \ar[r]^{f_2 \circ \mathrm{ind}_\alpha} &  \h^{G_2}_\ast(\{\cdot\}), 
}
\end{equation}
where the vertical maps are induced by the projection to the point $\{\cdot\}$, $\alpha\colon G_1\to G_2$ is a homomorphism over $\Gamma$, $f_1\colon \alpha_*(X)\to Y$ is any $G_2$-equivariant map, $f_2$ is the projection $G_2/G_1 \to \{\cdot\}$. 

\begin{corollary}\label{cor_CLP_diagram} In the setting of Theorem \ref{thm_CLP_diagram}, suppose that the projection to $\{\cdot\}$ induces isomorphisms
\begin{align*}
\begin{array}{rcl@{\hspace{1.5em}}rcl}
    \h_\ast^{H_\lambda}( EH_\lambda) &\cong& \h_\ast^{H_\lambda}(\{\cdot\}), &
    \h_\ast^{\bhl}( \E \bhl) &\cong& \h_\ast^{\bhl}(\{\cdot\}), \\
    \h_\ast^{G}( EG) &\cong& \h_\ast^{G}(\{\cdot\}), &
    \h_\ast^{\bg}( \E \bg) &\cong& \h_\ast^{\bg}(\{\cdot\}).
\end{array}
\end{align*}
Then, there is a commutative diagram with exact rows:
\begin{equation}\label{eq_diagram_equiv}
\xymatrix{
\cdots  \ar[r] & \bigoplus_{\lambda \in \Lambda}\h_\ast^{H_\lambda}(\{\cdot\}) \ar[r]^-{} \ar[d] &  \h_\ast^{G}(\{\cdot\}) \ar[r] \ar[d] 
& \h_\ast^{\bg}(M_\phi, Z) \ar[d]^{\cong}  \ar[r] & \cdots  \\
\cdots  \ar[r] & \bigoplus_{\lambda \in \Lambda}\h_\ast^{\bhl}(\{\cdot\})  \ar[r]^-{} & \h_\ast^{\bg}(\{\cdot\})  \ar[r] 
& \h_\ast^{\bg}(Y, M_\theta) \ar[r] & \cdots.
}
\end{equation}
Here, all the four maps in the first square are the obvious maps obtained by the induction homomorphisms followed by the projection to the point $\{\cdot\}$.
\end{corollary}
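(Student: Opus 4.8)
The plan is to obtain diagram \eqref{eq_diagram_equiv} by transporting diagram \eqref{eq_diag_equiv} of \Cref{thm_CLP_diagram} along the four projection‑to‑point isomorphisms supplied in the hypothesis, i.e.\ by replacing $\h_\ast^{H_\lambda}(EH_\lambda)$, $\h_\ast^{\bhl}(\E \bhl)$, $\h_\ast^{G}(EG)$, $\h_\ast^{\bg}(\E \bg)$ with $\h_\ast^{H_\lambda}(\{\cdot\})$, $\h_\ast^{\bhl}(\{\cdot\})$, $\h_\ast^{G}(\{\cdot\})$, $\h_\ast^{\bg}(\{\cdot\})$. The relative terms $\h_\ast^{\bg}(M_\phi, Z)$ and $\h_\ast^{\bg}(Y, M_\theta)$, the isomorphism between them, the connecting homomorphisms, and the two long exact sequences themselves are then simply carried over: once the replacements are made compatibly, \eqref{eq_diagram_equiv} is isomorphic as a ladder of sequences to \eqref{eq_diag_equiv}, so its rows are automatically exact. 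Hence the entire content of the argument is to check that the four new squares created by these replacements commute, and to read off the promised description of the resulting four maps of the first square.

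This is exactly what the commutative square \eqref{eq_asembly_0} provides. From the proof of \Cref{thm_CLP_diagram} (which rests on \cite[Lemma 1.5]{BEL08}) each of the four maps of the first square of \eqref{eq_diag_equiv} factors as $f\circ\mathrm{ind}_\alpha$, where $\alpha$ is one of the structural homomorphisms over $\bg$ — namely $H_\lambda\hookrightarrow G$, $G\twoheadrightarrow\bg$, $H_\lambda\twoheadrightarrow\bhl$, or $\bhl\hookrightarrow\bg$ — and $f$ is the corresponding canonical $\bg$‑map of $\bg$‑CW‑complexes, namely $G\times_{H_\lambda}EH_\lambda\to EG$, $\m\backslash EG\to\E \bg$, $N_\lambda\backslash EH_\lambda\to\E \bhl$, or $\bg\times_{\bhl}\E \bhl\to\E \bg$. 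Applying \eqref{eq_asembly_0} to each of these four edges, with this $\alpha$ and with $f_1=f$, yields a commutative square whose two horizontal arrows are the original map $f\circ\mathrm{ind}_\alpha$ and its point version $f_2\circ\mathrm{ind}_\alpha$, and whose two vertical arrows are precisely the projection‑to‑point maps, which by hypothesis are the assumed isomorphisms.

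Assembling these four squares along the four edges of the first square of \eqref{eq_diag_equiv}, one gets a commutative cube one of whose faces is the first square of \eqref{eq_diag_equiv} (commutative by \Cref{thm_CLP_diagram}), whose four ``side'' faces are the instances of \eqref{eq_asembly_0} just described, and whose four connecting edges are the hypothesised isomorphisms; therefore the opposite face — which is exactly the first square of \eqref{eq_diagram_equiv}, with its maps equal to $f_2\circ\mathrm{ind}_\alpha$, i.e.\ induction followed by projection to $\{\cdot\}$ — also commutes. Exactness of the two rows of \eqref{eq_diagram_equiv} then follows since each is obtained from the corresponding exact row of \eqref{eq_diag_equiv} by a ladder of isomorphisms (the four transports just constructed together with the identity on the relative terms). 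I do not expect a genuine obstacle here: the proof is a formal diagram chase, and the only points requiring care are bookkeeping, namely exhibiting each of the four maps of \eqref{eq_diag_equiv} in the form $f\circ\mathrm{ind}_\alpha$ so that \eqref{eq_asembly_0} is literally applicable, and confirming that the isomorphisms assumed in the statement coincide with the projection‑to‑point maps occurring as the vertical arrows of \eqref{eq_asembly_0}.
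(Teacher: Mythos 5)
Your proof is correct and is precisely the intended argument: the paper states Corollary \ref{cor_CLP_diagram} without proof as an immediate consequence of Theorem \ref{thm_CLP_diagram} together with the naturality square \eqref{eq_asembly_0}, which is stated just above the corollary for exactly this purpose. Your identification of the four $\alpha$'s and classifying maps, the transport of the ladder along the assumed isomorphisms, and the use of \eqref{eq_asembly_0} to recognise the transported maps as $f_2\circ\mathrm{ind}_\alpha$ all match what the authors have in mind.
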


\subsection{Excision diagram for the analytic $K$-homology}\label{sec:ex_Khom} For any locally compact group $G$, a $G$-$C^*$-algebra is a $C^*$-algebra equipped with a point-norm continuous action of $G$ by $\ast$-automorphisms.

In \cite[Theorem 5.1]{BEL08}, it is stated that for any discrete group $\Gamma$ and a $\Gamma$-$C^*$-algebra $A$, we have the following equivariant homology theories over $\Gamma$,
\[
H_\ast^{?}(-;\mathrm{K^{\mathrm{top}}_{A,r}}),  \; H_\ast^{?}(-;\mathrm{K^{\mathrm{top}}_{A,m}}),
\]
where $H_\ast^{?}(-;\mathrm{K^{\mathrm{top}}_{A,r}})$ has a slightly limited induction structure (the induction \eqref{eq_induction} exists iff the kernel of $\alpha$ acts on $X\smallsetminus X_0$ with amenable stabilizers). However, to the best of the authors' knowledge, with the currently known construction of these homology theories which uses $C^*$-categories (\cite{M04}, \cite{Kranz21}, \cite{BEL21}), the induction homomorphisms \eqref{eq_induction} are defined only when $\alpha$ is an injective group homomorphism. In particular, the proof of Theorem \ref{thm_CLP_diagram} does not work in this setting. For this reason, in this section, we prove the analogue of Theorem \ref{thm_CLP_diagram} for Kasparov's analytic $K$-homology separately.

Let us first recall the Baum--Connes conjecture with coefficients. For any locally compact group $G$ and for any $G$-$C^*$-algebra $A$, $A\rtimes_rG$ denotes the reduced crossed product. For any $C^*$-algebra $B$, $K_\ast(B)$ ($\ast=0,1$) denotes the topological $K$-theory of $B$.

\begin{definition}[The Baum--Connes conjecture with coefficients \cite{BCH}] We say that a second countable, locally compact group $G$ satisfies the Baum--Connes conjecture with coefficients (BCC) if for any separable $G$-$C^*$-algebra $A$, the Baum--Connes assembly map
\[
\mu^{G}_{A, r}\colon K^G_\ast(\E G; A)\to K_\ast(A\rtimes_rG)
\]
is an isomorphism of abelian groups for $\ast=0, 1$. 
\end{definition}

Here, for any separable $G$-$C^*$-algebra $A$, the analytic $K$-homology $K^G_\ast(-; A)$ for proper $G$-spaces is defined, using Kasparov's equivariant $KK$-theory $KK^G$ \cite{Kas88} as follows: for any pair $(X, X_0)$ of proper $G$-spaces with $X_0\subset X$ a closed $G$-subspace, we define
\[
K^G_\ast(X, X_0; A)\coloneq \colim_{(Y, Y_0)\subset (X, X_0)}KK^G_\ast(C_0(Y\smallsetminus Y_0), A) 
\]
where the colimit is taken over all $G$-compact $G$-sub-pairs $(Y, Y_0)\subset (X, X_0)$. Here, a proper $G$-space means it is either a metrizable proper $G$-space (as in \cite{BCH}) or a proper $G$-CW complex. For any such a proper $G$-space $X$, any $G$-compact $G$-subspace $Y$ is a second countable, locally compact proper $G$-space.

This is a $G$-homology theory defined for proper $G$-spaces; namely, it satisfies the four axioms: $G$-homotopy invariance, long exact sequence of a pair, excision and disjoint union. The long exact sequence of a pair exists since $C_0(Y_0), C_0(Y), C_0(Y\smallsetminus Y_0)$ are proper, nuclear separable $G$-$C^*$-algebras. Moreover, by \cite[Appendix]{KS03}, we have a canonical isomorphism $KK^G(B_1, B_2)\cong E^G(B_1, B_2)$ for any proper, nuclear separable $G$-$C^*$-algebra $B_1$ and for any separable $G$-$C^*$-algebra $B_2$, and all the arguments below in this section can be carried out using the equivariant $E$-theory \cite{GHT} instead.

For an introduction and comprehensive survey of the Baum--Connes conjecture, see \cite{BC2020}, \cite{BC02} and \cite{BCH}. For any countable discrete group $G$, it has been shown (see \cite{Kranz21}, \cite{BEL21}, \cite{HP04}) that the Baum--Connes assembly map $\mu^{G}_{A, r}$ is equivalent to the topological assembly map for the ``equivariant" (topological) $K$-homology theory:
\[
H_\ast^{G}(\E G; \mathrm{K^{\mathrm{top}}_{A,r}}) \to H_\ast^{G}(\{\cdot\}; \mathrm{K^{\mathrm{top}}_{A,r}}),
\]
induced by the projection $\E G\to \{\cdot\}$, but as we remarked above, the ``equivariant" homology theory $H_\ast^{?}(-; \mathrm{K^{\mathrm{top}}_{A,r}})$ lacks the induction structure with respect to  non-injective group homomorphisms. 

Below, throughout this section, we fix a separable $\bg$-$C^*$-algebra $A$, all groups and group homomorphisms are implicitly assumed to be over $\bg$, and for each such group $G_1$ over $\bg$, we consider $A$ as a $G_1$-$C^*$-algebra without writing $\mathrm{Res}_{\bg}^{G_1}(A)$ each time. Hence, $K_\ast^{G_1}(X; A)$ makes sense for any proper $G_1$-space $X$.

For any injective group homomorphism $\alpha\colon G_1\to G_2$, there are natural induction isomorphisms: 
\[
\mathrm{ind}_\alpha\colon K_\ast^{G_1}(X, X_0; A) \xrightarrow{\cong} K_\ast^{G_2}(\alpha_\ast(X, X_0); A)\quad \alpha_\ast(X,X_0)\coloneq (G_2\times_{G_1}X, G_2\times_{G_1}X_0).
\]
This is defined by the induction-restriction adjunction (the Frobenius reciprocity): a natural isomorphism (see \cite[Section 2]{CE} and \cite[Eq.(20)]{MN06}):
\[
KK^{G_2}(\mathrm{ind}_\alpha(B_1), B_2) \cong KK^{G_1}(B_1, B_2),
\]
for any separable $G_2$-$C^*$-algebra $B_2$ and for any separable $G_1$-$C^*$-algebra $B_1$. For any locally compact $G_1$-space $X$, we have $\mathrm{ind}_\alpha(C_0(X))\cong C_0(G_2\times_{G_1}X)$ canonically. 

\begin{lemma} Let $G$ be a countable discrete group and $(G,\{H_{\lambda}\}_{\lambda\in\Lambda},\{N_{\lambda}\}_{\lambda\in\Lambda})$ be  a Cohen--Lyndon triple satisfying property \hyperlink{cond:fin}{(FIN)}. Let $A$ be a separable $\bg$-$C^*$-algebra. Then, the map $(M_\phi, Z) \to (Y, M_\theta)$ of $G$-CW pairs induces the following commutative diagram with exact rows
\begin{equation*}
\xymatrix{
\cdots  \ar[r] & K_\ast^{\bg}(Z; A) \ar[r]^-{} \ar[d] &  K_\ast^{\bg}(M_\phi; A) \ar[r] \ar[d]
& K_\ast^{\bg}(M_\phi, Z; A) \ar[d]^{\cong}   \ar[r] & \cdots  \\
\cdots  \ar[r] & K_\ast^{\bg}(M_\theta; A) \ar[r]^-{} & K_\ast^{\bg}(Y; A)  \ar[r] 
& K_\ast^{\bg}(Y, M_\theta; A) \ar[r] & \cdots,
}
\end{equation*}
which is isomorphic to
\begin{equation}\label{eq_lem_diagram}
\xymatrix{
\cdots  \ar[r] & \bigoplus_{\lambda \in \Lambda}K_\ast^{\bhl}(N_\lambda \backslash EH_\lambda; A) \ar[r]^-{} \ar[d] &  K_\ast^{\bg}(\m\backslash EG; A) \ar[r] \ar[d] 
& K_\ast^{\bg}(M_\phi, Z; A) \ar[d]^{\cong}  \ar[r] & \cdots  \\
\cdots  \ar[r] & \bigoplus_{\lambda \in \Lambda}K_\ast^{\bhl}(\E \bhl; A)  \ar[r]^-{} & K_\ast^{\bg}(\E \bg; A)  \ar[r] 
& K_\ast^{\bg}(Y, M_\theta; A) \ar[r] & \cdots
}
\end{equation}
where, in the first square, the two vertical maps are induced by the classifying maps and  the two horizontal maps are induced by the induction maps followed by the maps induced by the classifying maps $EH_\lambda\to EG$  and $\E \bhl \to \E\bg$, respectively.
\end{lemma}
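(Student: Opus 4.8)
The plan is to run exactly the excision argument behind \Cref{prop_CL_diagram} and \Cref{thm_CLP_diagram}, but with the $\bg$-homology theory $K^{\bg}_\ast(-;A)$ on proper $\bg$-CW-pairs replacing an abstract equivariant homology theory, and invoking only the induction isomorphisms along the \emph{injective} homomorphisms $\bhl\hookrightarrow\bg$ — the only induction maps available for Kasparov's analytic $K$-homology. The first thing to check is that every pair in sight is a proper $\bg$-CW-pair, so that $K^{\bg}_\ast(-;A)$ is defined on it: $Y$ is a model for $\E\bg$ by \Cref{thm: proper Dehn filling space}, hence proper, and $M_\theta\simeq\bigsqcup_\lambda\bg\times_{\bhl}\E\bhl$ is proper because each $\E\bhl$ has finite cell stabilisers; on the other hand $\m\backslash EG$ is a free $\bg$-CW-complex (as $G$, and hence $\m$, acts freely on $EG$), and each $\bg\times_{\bhl}(N_\lambda\backslash EH_\lambda)$ is free as well (the stabiliser of $[g,x]$ equals $g\,\mathrm{Stab}_{\bhl}(x)g^{-1}$, and $\bhl$ acts freely on $N_\lambda\backslash EH_\lambda$ since $H_\lambda$ acts freely on $EH_\lambda$), so $M_\phi$ and $Z$ are free.

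Next I would apply the four axioms of $K^{\bg}_\ast(-;A)$ to the inclusion of $\bg$-CW-pairs $(M_\phi,Z)\hookrightarrow(Y,M_\theta)$, where $Y=M_\phi\cup_Z M_\theta$ and $M_\phi\cap M_\theta=Z$: the long exact sequences of the two pairs give the rows; naturality of the connecting homomorphism under the inclusion gives the commuting ladder; and the excision axiom gives that $K^{\bg}_\ast(M_\phi,Z;A)\to K^{\bg}_\ast(Y,M_\theta;A)$ is an isomorphism. This is the first diagram. Now apply $\bg$-homotopy invariance through the canonical $\bg$-homotopy equivalences $M_\phi\simeq\m\backslash EG$, $M_\theta\simeq\bigsqcup_\lambda\bg\times_{\bhl}\E\bhl$, $Y\simeq\E\bg$, together with the disjoint-union axiom applied to $Z$ and to $M_\theta$: the left column becomes $\bigoplus_\lambda K^{\bg}_\ast(\bg\times_{\bhl}N_\lambda\backslash EH_\lambda;A)\to\bigoplus_\lambda K^{\bg}_\ast(\bg\times_{\bhl}\E\bhl;A)$ (induced by $\theta$), the middle column becomes $K^{\bg}_\ast(\m\backslash EG;A)\to K^{\bg}_\ast(\E\bg;A)$ (induced by the classifying map $\m\backslash EG\to\E\bg$), and the two horizontal maps of the first square become those induced by $\phi$ (top) and by the classifying map $\bigsqcup_\lambda\bg\times_{\bhl}\E\bhl\to\E\bg$ (bottom).

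The final step is to transport the first column down to $\bhl$: for each $\lambda$ the injective homomorphism $\alpha_\lambda\colon\bhl\hookrightarrow\bg$ satisfies $\alpha_{\lambda,\ast}(N_\lambda\backslash EH_\lambda)=\bg\times_{\bhl}N_\lambda\backslash EH_\lambda$ and $\alpha_{\lambda,\ast}(\E\bhl)=\bg\times_{\bhl}\E\bhl$, so the natural induction isomorphisms recalled just before the Lemma give $\mathrm{ind}_{\alpha_\lambda}\colon K^{\bhl}_\ast(N_\lambda\backslash EH_\lambda;A)\xrightarrow{\cong}K^{\bg}_\ast(\bg\times_{\bhl}N_\lambda\backslash EH_\lambda;A)$ and $\mathrm{ind}_{\alpha_\lambda}\colon K^{\bhl}_\ast(\E\bhl;A)\xrightarrow{\cong}K^{\bg}_\ast(\bg\times_{\bhl}\E\bhl;A)$, where $A$ is regarded as a $\bhl$-$C^*$-algebra by restriction. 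Substituting these into the diagram of the previous step yields \eqref{eq_lem_diagram}. To pin down the maps in the first square I would invoke naturality of $\mathrm{ind}_{\alpha_\lambda}$: restricted to the $\lambda$-th summand of $Z$, the map $\phi$ is, under the canonical identification $\bg\times_{\bhl}(N_\lambda\backslash EH_\lambda)\cong\m\backslash(G\times_{H_\lambda}EH_\lambda)$, exactly $\mathrm{ind}_{\alpha_\lambda}$ applied to the $\bhl$-equivariant classifying map $N_\lambda\backslash EH_\lambda\to\mathrm{Res}^{\bhl}_{\bg}(\m\backslash EG)$, and $\theta$ correspondingly is $\mathrm{ind}_{\alpha_\lambda}$ of the classifying map $N_\lambda\backslash EH_\lambda\to\E\bhl$; so naturality identifies the two horizontal maps of \eqref{eq_lem_diagram} with $\mathrm{ind}_{\alpha_\lambda}$ followed by the maps induced by the classifying maps, and the two vertical maps with the maps induced by the classifying maps, as asserted.

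The one step that is not pure bookkeeping, and which I expect to be the main obstacle, is justifying that $\mathrm{ind}_{\alpha_\lambda}$ is natural with respect to $\bg$-equivariant maps of proper pairs \emph{and} commutes with the boundary homomorphisms of the long exact sequences — i.e.\ that it is an isomorphism of long exact sequences, not merely of the individual groups. This is built into its construction from Frobenius reciprocity at the $KK$-level: the $C^*$-algebraic induction functor carries the semi-split extensions $C_0(Y_0)\hookrightarrow C_0(Y)\twoheadrightarrow C_0(Y\smallsetminus Y_0)$ attached to $\bhl$-compact pairs $(Y,Y_0)$ to the corresponding extensions attached to the induced $\bg$-compact pairs $\alpha_{\lambda,\ast}(Y,Y_0)$, and the latter are cofinal among all $\bg$-compact sub-pairs of the induced complexes, so $\mathrm{ind}_{\alpha_\lambda}$ passes to the colimits defining $K^{\bg}_\ast(-;A)$ and intertwines the connecting maps. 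I would record this using the naturality properties of $\mathrm{ind}_\alpha$ listed in the paragraph preceding the Lemma; everything else then follows formally from the four axioms of $K^{\bg}_\ast(-;A)$ as a $\bg$-homology theory on proper $\bg$-CW-pairs.
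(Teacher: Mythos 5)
Your argument is correct and follows the same route as the paper: run the excision/long-exact-sequence/disjoint-union axioms on the proper $\bg$-CW-pair $(M_\phi,Z)\hookrightarrow (Y,M_\theta)$ (the paper cites the proof of Proposition \ref{prop_CL_diagram} specialised to $K^{\bg}_\ast(-;A)$ on proper $\bg$-spaces), then transport the first column along the induction isomorphisms for the inclusions $\bhl\hookrightarrow\bg$ and use their naturality to identify the maps in the first square. Your extra bookkeeping — verifying properness of every space involved, the freeness of the $\bhl$-action on $N_\lambda\backslash EH_\lambda$, and the compatibility of $\mathrm{ind}_{\alpha_\lambda}$ with connecting maps via the induced semi-split extensions (note the arrows should be $C_0(W\smallsetminus W_0)\hookrightarrow C_0(W)\twoheadrightarrow C_0(W_0)$, a harmless typo in your write-up) — is exactly what the paper's terse phrase ``apply (the proof of)'' is delegating, and is a worthwhile elaboration rather than a deviation.
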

\begin{proof} We apply (the proof of) Proposition \ref{prop_CL_diagram} for the $\bg$-homology theory $\h^{\bg}_\ast(-)=K_\ast^{\bg}(-;A)$ for proper $\bg$-spaces. The diagram \eqref{eq_diagram_Ghomlogy} is isomorphic to \eqref{eq_lem_diagram} via the induction isomorphisms for the inclusions $\bhl\to \bg$. The assertion on the maps in the first square follows from the naturality of the induction isomorphisms.
\end{proof}

For any normal subgroup $N_1$ of a countable discrete group $G_1$, let $\bg_1\coloneq G_1/N_1$. We continue to suppose that all groups and homomorphisms are over $\bg$, and $A$ is a $\bg$-$C^*$-algebra. Hence, $A$ is canonically a $G_1$-$C^*$-algebra with the trivial $N_1$-action. 

Let $\alpha_1 \colon G_1\to \bg_1$ be the quotient map. For any proper $G_1$-space $X$ on which $N_1$ acts freely, let $\bX$ be the proper $\bg_1$-space $N_1\backslash X$. We define an induction isomorphism 
\begin{equation}\label{eq_induction_quotient}
\mathrm{ind}_{\alpha_1}\colon K_\ast^{G_1}(X; A) \xrightarrow{\cong} K_\ast^{\bg_1}(\bX; A),
\end{equation}
by the composition
\begin{equation}\label{eq_induction_quotient_def}
K_\ast^{G_1}(X; A) \xrightarrow{\cong} K_\ast^{G_1\times \bg_1}(\Delta_\ast{X}; A) \xrightarrow{\cong}  K_\ast^{\bg_1}(\bX; A),
\end{equation}
where the first map is the induction isomorphism $\mathrm{ind}_{\Delta}$ for the diagonal inclusion $\Delta\colon G_1\to G_1\times \bg_1$, regarding $A$ as a $ G_1\times \bg_1$-$C^*$-algebra with the trivial $G_1$-action, and the second map is defined as follows: for a $G_1$-compact, locally compact second countable, proper $G_1$-space $X$ on which $N_1$ acts freely,
\begin{enumerate}
\item[(i)] we have $KK^{G_1\times \bg_1}(C_0(\Delta_\ast X), A) \cong KK^{\bg_1}(C_0(\Delta_\ast X)\rtimes G_1, A)$ canonically, since $G_1$-acts on $A$ trivially (see \cite[Eq (9)]{Meyer08}); 
\item[(ii)] the $\bg_1$-equivariant $C_0(\bX)$-$C_0(\Delta_\ast X)\rtimes G_1$-imprimitivity bimodule $\Lambda_{\Delta_\ast X,G_1}^{G_1\times \bg_1, G_1}$ (see \cite[Section 1]{CE} or \cite[Definition 5.2]{CE01}) induces an isomorphism $KK^{\bg_1}(C_0(\Delta_\ast X)\rtimes G_1, A)\cong KK^{\bg_1}(C_0(\bX), A)$. Here, we use the canonical $\bg_1$-equivariant isomorphism $G_1\backslash (\Delta_\ast X) = G_1\backslash((G_1\times \bg_1)\times_{\Delta G_1}  X) \cong \bX $, which sends $[(g_1,g_2,x)]\to [g_2\cdot x]$, to identify the two spaces.
\end{enumerate}

Regarding the imprimitivity bimodule $\Lambda_{\Delta_\ast X,G_1}^{G_1\times \bg_1, G_1}$ that we used, it is easier to describe its dual (the inverse), which is $C_0(\Delta_\ast X)\rtimes G_1$-$C_0(\bX)$-imprimitivity bimodule $E_{\Delta_\ast X,G_1}^{G_1\times \bg_1, G_1}$. This is defined by completing $C_c(\Delta_\ast X)$ with respect to the following $C_0(\bX)$-valued pre-inner product: for any $\xi_1,\xi_2\in C_c(\Delta_\ast X)$
\[
\langle \xi_1,\xi_2\rangle_{C_0(\bX)}(\overline{x})\coloneq \sum_{p(x)=\overline{x}} \overline{\xi_1(x)}\xi_2(x)
\]
where $p\colon \Delta_\ast X\to G_1\backslash\Delta_\ast X =\bX$ is the quotient map. With the canonical left-action of $C_0(\Delta_\ast X)\rtimes G_1$, this is a $\bg_1$-equivariant  $C_0(\Delta_\ast X)\rtimes G_1$-$C_0(\bX)$-imprimitivity bimodule $E_{\Delta_\ast X,G_1}^{G_1\times \bg_1, G_1}$. See \cite[Proposition 1.1]{KP22} or \cite[Section 5]{KS03} (for the dual $\Lambda_{\Delta_\ast X,G_1}^{G_1\times \bg_1, G_1}$) for example. 

For a general proper $G_1$-space, the induction \eqref{eq_induction_quotient} is defined by taking the colimit over $G_1$-compact proper $G_1$-subspaces, and it is well-defined by the naturality of the induction:

\begin{lemma}[{c.f.\cite[Lemma 1.5]{CE}}]\label{lem_quotient_ind_natural} For any $G_1$-equivariant continuous map $f\colon X\to Y$ between $G_1$-compact, second countable locally compact, proper $G_1$-spaces on which $N_1$ act freely, let $\overline{f}\colon \bX\to \bY$ be the induced map between the quotients $\bX=N_1\backslash X$ and $\bY=N_1\backslash Y$, which is a $\bg_1$-equivariant continuous map between proper $\bg_1$-spaces. The following square commutes
\[
\xymatrix{
\mathrm{ind}_{\alpha_1}\colon   &  K_\ast^{G_1}(X; A) \ar[d]^-{f_\ast} \ar[r]^-{\cong} &  K_\ast^{\bg_1}(\bX; A) \ar[d]^-{\overline{f}_\ast} \\
\mathrm{ind}_{\alpha_1}\colon   &  K_\ast^{G_1}(Y; A)  \ar[r]^-{\cong} &  K_\ast^{\bg_1}(\bY; A).
}
\]
It follows that the induction \eqref{eq_induction_quotient} is defined canonically for any proper $G_1$-space on which $N_1$-acts freely, and the above square commutes for any such proper $G_1$-spaces $X, Y$ and $f\colon X\to Y$.
\end{lemma}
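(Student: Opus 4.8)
The plan is to establish the commutativity of the square for $G_1$-compact $X$ and $Y$ first, and then deduce the general statement by passing to colimits. Recall from \eqref{eq_induction_quotient_def} that, for $G_1$-compact $X$, the map $\mathrm{ind}_{\alpha_1}$ is the composite
\[
K^{G_1}_\ast(X;A)\xrightarrow{\,\mathrm{ind}_\Delta\,} K^{G_1\times\bg_1}_\ast(\Delta_\ast X;A)\xrightarrow{\,(i)\,} KK^{\bg_1}_\ast\bigl(C_0(\Delta_\ast X)\rtimes G_1,\,A\bigr)\xrightarrow{\,(ii)\,} K^{\bg_1}_\ast(\bX;A),
\]
where $\mathrm{ind}_\Delta$ is the induction isomorphism for the inclusion $\Delta\colon G_1\hookrightarrow G_1\times\bg_1$, where $(i)$ is the descent isomorphism of \cite[Eq.~(9)]{Meyer08} (available because $G_1$ acts trivially on $A$), and where $(ii)$ is the isomorphism induced by the imprimitivity bimodule $\Lambda_{\Delta_\ast X,G_1}^{G_1\times\bg_1,G_1}$, equivalently by its inverse $E_{\Delta_\ast X,G_1}^{G_1\times\bg_1,G_1}$. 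First I would record that in the discrete setting every $G_1$-equivariant continuous map between $G_1$-compact proper $G_1$-spaces is automatically proper: if $X=G_1\cdot M$ with $M$ compact and $K\subseteq Y$ is compact, then $g f(M)\cap K\neq\emptyset$ for only finitely many $g\in G_1$ by properness of $Y$, so $f^{-1}(K)$ is contained in a finite union of $G_1$-translates of $M$, hence compact. Consequently $f$, $\Delta_\ast f$ and $\overline{\Delta_\ast f}$ induce $\ast$-homomorphisms on the relevant function algebras carrying $C_c$ into $C_c$, and under the canonical identifications $\overline{\Delta_\ast X}\cong\bX$, $\overline{\Delta_\ast Y}\cong\bY$ the map $\overline{\Delta_\ast f}$ is identified with $\overline f$. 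It therefore suffices to prove that each of the three maps in the composite above is natural with respect to the morphisms induced by $f$, $\Delta_\ast f$, $(\Delta_\ast f)^\ast\rtimes G_1$ and $\overline f$.

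For the first map, naturality of $\mathrm{ind}_\Delta$ with respect to $f$ and $\Delta_\ast f$ is the analogue for the diagonal inclusion of \cite[Lemma~1.5]{CE}, and is a formal consequence of the naturality in $B$ of the Frobenius-reciprocity isomorphism $KK^{G_1\times\bg_1}(\mathrm{ind}_\Delta(B),B')\cong KK^{G_1}(B,B')$. For the second map, naturality of $(i)$ is the naturality of the descent isomorphism in the source $C^\ast$-algebra: the $(G_1\times\bg_1)$-equivariant $\ast$-homomorphism $(\Delta_\ast f)^\ast\colon C_0(\Delta_\ast Y)\to C_0(\Delta_\ast X)$ descends to a $\ast$-homomorphism $(\Delta_\ast f)^\ast\rtimes G_1$ of the crossed products, and the square relating these via $(i)$ commutes (here the $G_1$-actions on $\Delta_\ast X$, $\Delta_\ast Y$ are proper, indeed free since $N_1$ acts freely on $X$ and $Y$, so the full and reduced crossed products coincide).

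The main obstacle, and the only step requiring a genuine computation, is the naturality of the Morita map $(ii)$. For this I would construct a morphism of imprimitivity bimodules $E_{\Delta_\ast Y,G_1}^{G_1\times\bg_1,G_1}\to E_{\Delta_\ast X,G_1}^{G_1\times\bg_1,G_1}$ covering the pair of $\ast$-homomorphisms $(\Delta_\ast f)^\ast\rtimes G_1$ and $\overline f^\ast\colon C_0(\bY)\to C_0(\bX)$. The candidate is pullback along $\Delta_\ast f$, which sends $C_c(\Delta_\ast Y)$ into $C_c(\Delta_\ast X)$. The points to verify are: (a) $\Delta_\ast f$ restricts to a bijection of each $G_1$-orbit of $\Delta_\ast X$ onto the corresponding $G_1$-orbit of $\Delta_\ast Y$ (this uses freeness of the $G_1$-actions), so that by the explicit fibrewise formula $\langle\xi_1,\xi_2\rangle_{C_0(\bX)}(\overline x)=\sum_{p(x)=\overline x}\overline{\xi_1(x)}\,\xi_2(x)$ and the identity $\overline{\Delta_\ast f}\circ p_X=p_Y\circ\Delta_\ast f$, pullback intertwines the $C_0(\bY)$- and $C_0(\bX)$-valued inner products; (b) pullback is compatible with the right $C_0$-module actions, by the same identity; and (c) pullback is compatible with the left $C_0(\Delta_\ast\bullet)\rtimes G_1$-actions, by functoriality of the $C_c$-level crossed-product construction. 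Completing gives the required bimodule morphism, hence a commuting square in $KK^{\bg_1}$. Pasting the three squares yields the lemma for $G_1$-compact $X$ and $Y$.

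Finally, for arbitrary proper $G_1$-spaces $X,Y$ on which $N_1$ acts freely, one has $K^{G_1}_\ast(X;A)=\colim_{X'}K^{G_1}_\ast(X';A)$ over the $G_1$-compact $G_1$-subspaces $X'\subseteq X$, and $\mathrm{ind}_{\alpha_1}$ is by definition the colimit of the isomorphisms $\mathrm{ind}_{\alpha_1}$ attached to the $X'$; applying the $G_1$-compact case to inclusions $X''\hookrightarrow X'$ shows this colimit system is coherent, so $\mathrm{ind}_{\alpha_1}$ is well defined, independent of the cofinal system chosen. Given $f\colon X\to Y$, each $G_1$-compact $X'$ is carried by $f$ into the $G_1$-compact subspace $f(X')\subseteq Y$, the $G_1$-compact case applies to $f|_{X'}\colon X'\to f(X')$, and taking the colimit over $X'$ produces the desired square. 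I expect step $(ii)$ to be the only place where anything beyond naturality bookkeeping for standard $KK$-theoretic isomorphisms is needed.
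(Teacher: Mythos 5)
Your proposal is essentially correct and follows the same route as the paper's proof: reduce to the $G_1$-compact case via colimits, peel off the naturality of $\mathrm{ind}_\Delta$ as automatic, and then verify that the composite of the descent isomorphism with the imprimitivity-bimodule Morita map is natural with respect to $f$, the key bimodule identity being exactly the one in the proof of \cite[Lemma 1.5]{CE}. You separate the descent isomorphism as an explicit naturality step and spell out the pullback bimodule morphism (and the observation that $f$ is automatically proper), whereas the paper works directly with Kasparov cycles and invokes the identity $[\Lambda_{\Delta_\ast Y,G_1}^{G_1\times\bg_1,G_1}]\otimes[(\Delta_\ast f)^\ast\rtimes G_1]=[\overline f^\ast]\otimes[\Lambda_{\Delta_\ast X,G_1}^{G_1\times\bg_1,G_1}]$ from \cite[Lemma 1.5]{CE} in one stroke; these are the same computation phrased at different levels of granularity.
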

\begin{proof} Note that the first map of the induction \eqref{eq_induction_quotient} is the induction isomorphism $\mathrm{ind}_\Delta$ for the injective homomorphism $\Delta\colon G_1\to G_1\times \bg_1$, which is natural with respect to maps between spaces. Thus, it suffices to show that the following square commutes:
\[
\xymatrix{
 K_\ast^{G_1\times \bg_1}(\Delta_\ast{X}; A) \ar[d]^-{\Delta_\ast(f)_\ast} \ar[r]^-{\cong} &  K_\ast^{\bg_1}(\bX; A) \ar[d]^-{\overline{f}_\ast} \\
 K_\ast^{G_1\times \bg_1}(\Delta_\ast{Y}; A)  \ar[r]^-{\cong} &  K_\ast^{\bg_1}(\bY; A).
}
\]
Here, $\Delta_\ast(f)\colon (G_1\times \bg_1)\times_{\Delta G_1}X \to  (G_1\times \bg_1)\times_{\Delta G_1}Y$ is the $G_1\times \bg_1$-equivariant map that $f$ induces.

Let $(E, T)$ be a cycle for $KK_\ast^{G_1\times \bg_1}(C_0(\Delta_\ast{X}); A) \cong KK_\ast^{\bg_1}(C_0(\Delta_\ast{X})\rtimes G_1, A)$ where $E$ is a graded $\bg_1$-equivariant Hilbert $C_0(\Delta_\ast{X})\rtimes G_1$-$A$-module and $T\in \Linears(E)$ is an odd bounded adjointable operator on $E$ satisfying the usual axioms (which do not matter in the argument below).

On one hand, the composition of the upper and right sides sends the element $[E, T]$ to 
\[
[\overline{f}^\ast]\otimes_{C_0(\bX)} [\Lambda_{\Delta_\ast X,G_1}^{G_1\times \bg_1, G_1} ]\otimes_{C_0(\Delta_\ast X)\rtimes G_1}[E,T]
\]
where $[\Lambda_{\Delta_\ast X,G_1}^{G_1\times \bg_1, G_1}] \in KK^{\bg_1}(C_0(\bX),C_0(\Delta_\ast X)\rtimes G_1)$ is the element defined by the imprimitivity bimodule $\Lambda_{\Delta_\ast X,G_1}^{G_1\times \bg_1, G_1}$ and $[\overline{f}^\ast]$ is the class of the $\ast$-homomorphism $\overline{f}^*\colon C_0(\bY)\to C_0(\bX)$.

On the other hand, the composition of left and lower sides sends the element $[E, T]$ to 
\[
[\Lambda_{\Delta_\ast Y,G_1}^{G_1\times \bg_1, G_1} ]\otimes_{C_0(\Delta_\ast Y)\rtimes G_1} [\Delta_\ast (f)^\ast\rtimes G_1]\otimes_{C_0(\Delta_\ast X)\rtimes G_1}[E,T]
\]
where $[\Delta_\ast (f)^\ast\rtimes G_1]$ is the class of the $\ast$-homomorphism $C_0(\Delta_\ast Y)\rtimes G_1\to C_0(\Delta_\ast X)\rtimes G_1$ induced from $f$.

By the proof of \cite[Lemma 1.5]{CE}, we have
\[
[\Lambda_{\Delta_\ast Y,G_1}^{G_1\times \bg_1, G_1} ]\otimes_{C_0(\Delta_\ast Y)\rtimes G_1} [\Delta_\ast (f)^\ast\rtimes G_1] = [\overline{f}^\ast]\otimes_{C_0(\bX)} [\Lambda_{\Delta_\ast X,G_1}^{G_1\times \bg_1, G_1} ] 
\]
in $KK^{\bg_1}(C_0(\bY), C_0(\Delta_\ast X) \rtimes G_1)$ on the level of bimodules. It follows that the above square commutes, and we are done.
\end{proof}

\begin{lemma}[c.f. {\cite[Lemma 5.13]{CE01}}]\label{lem_natural_ind_ind} Let $A$ be a separable $\bg$-$C^*$-algebra. For $i=1,2$, let $G_i$ be a countable discrete group and $N_i$ be a normal subgroup of $G_i$, and let $\bg_i=G_i/N_i$. Suppose these are all defined over the group $\bg$, and so, in particular $N_i$ acts trivially on $A$. Suppose $\beta\colon G_1\to G_2$ is an inclusion and $N_2\cap G_1=N_1$. Let $\alpha_i\colon G_i\to \bg_i$ be the quotient maps. Let $\overline{\beta}\colon \bg_1\to \bg_2$ be the inclusion induced by $\beta$.

For any proper $G_1$-space $X$ on which $N_1$-acts freely, the following square commutes
\[
\xymatrix{
\mathrm{ind}_{\alpha_1}\colon   &  K_\ast^{G_1}(X; A) \ar[d]_-{\mathrm{ind}_\beta}^-{\cong} \ar[r]^-{\cong} &  K_\ast^{\bg_1}(\bX; A) \ar[d]_-{\mathrm{ind}_{\overline{\beta}}}^-{\cong} \\
I_\ast \circ \mathrm{ind}_{\alpha_2}\colon   &  K_\ast^{G_2}(G_2\times_{G_1} X; A)  \ar[r]^-{\cong} &  K_\ast^{\bg_2}(\bg_2\times_{\bg_1} \bX; A),
}
\]
where we use the canonical isomorphism
\[
I\colon N_2\backslash(G_2\times_{G_1} X )\cong  \bg_2\times_{\bg_1}  (N_1\backslash X), \quad [g,x]\mapsto [g, x].
\]
\end{lemma}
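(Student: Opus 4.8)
The plan is to unravel both induction isomorphisms into their defining compositions through the ``doubled'' groups and to verify commutativity square by square. Recall from \eqref{eq_induction_quotient_def} that $\mathrm{ind}_{\alpha_i}$ factors as the diagonal induction $\mathrm{ind}_{\Delta_i}\colon K_\ast^{G_i}(-;A)\xrightarrow{\cong} K_\ast^{G_i\times\bg_i}(\Delta_{i\ast}(-);A)$ followed by the descent-plus-imprimitivity isomorphism $K_\ast^{G_i\times\bg_i}(\Delta_{i\ast}(-);A)\xrightarrow{\cong} K_\ast^{\bg_i}(N_i\backslash(-);A)$, which on the level of $KK$ is ``take the crossed product by $G_i$ (using triviality of the $G_i$-action on $A$) and then apply the imprimitivity bimodule $\Lambda^{G_i\times\bg_i,G_i}_{\Delta_{i\ast}(-),G_i}$''. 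So the big square decomposes as a vertical stack: an upper square involving only the diagonal inductions $\mathrm{ind}_{\Delta_i}$ and the space-level inductions $\mathrm{ind}_\beta$, $\mathrm{ind}_{\overline\beta}$, $\mathrm{ind}_{\beta\times\overline\beta}$; and a lower square involving the crossed-product/imprimitivity-bimodule maps together with $\mathrm{ind}_{\beta\times\overline\beta}$ and $\mathrm{ind}_{\overline\beta}$. For the upper square one only needs functoriality of the induction structure in Kasparov theory with respect to composition of injective homomorphisms, applied to the commuting square of inclusions $G_1\hookrightarrow G_1\times\bg_1$, $G_1\hookrightarrow G_2$, $G_2\hookrightarrow G_2\times\bg_2$, $G_1\times\bg_1\hookrightarrow G_2\times\bg_2$ (using $N_2\cap G_1=N_1$ so that $\Delta_2\circ\beta=(\beta\times\overline\beta)\circ\Delta_1$), together with the identification $(\beta\times\overline\beta)_\ast\Delta_{1\ast}X\cong\Delta_{2\ast}(G_2\times_{G_1}X)$ as $G_2\times\bg_2$-spaces.

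For the lower square I would argue at the level of the defining $KK$-cycles, exactly as in the proof of Lemma \ref{lem_quotient_ind_natural} above. Starting from a cycle $[E,T]$ for $K_\ast^{G_1\times\bg_1}(\Delta_{1\ast}X;A)\cong KK^{\bg_1}_\ast(C_0(\Delta_{1\ast}X)\rtimes G_1,A)$, one composition yields $[\overline\beta\text{-induction of }\ \Lambda^{G_1\times\bg_1,G_1}_{\Delta_{1\ast}X,G_1}\otimes[E,T]]$ while the other yields $[\Lambda^{G_2\times\bg_2,G_2}_{\Delta_{2\ast}(G_2\times_{G_1}X),G_2}\otimes((\beta\times\overline\beta)\text{-induction of }[E,T])]$; these agree once one knows that induction along $\beta\times\overline\beta$ of the imprimitivity bimodule $\Lambda^{G_1\times\bg_1,G_1}_{\Delta_{1\ast}X,G_1}$ is, as a $\bg_2$-equivariant bimodule, the bimodule $\Lambda^{G_2\times\bg_2,G_2}_{\Delta_{2\ast}(G_2\times_{G_1}X),G_2}$ (up to a canonical $\bg_1$-equivariant Morita equivalence that commutes with $\mathrm{ind}_{\overline\beta}$). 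This compatibility of the Green--Julg--type imprimitivity bimodules with induction in all three variables is precisely the content of \cite[Lemma 5.13]{CE01} (cited as ``c.f.'' in the statement), so I would reduce to invoking it after matching the spaces via the canonical isomorphism $I$ of the statement, checking that $I$ is indeed a $\bg_2$-homeomorphism intertwining the quotient maps $\Delta_{2\ast}(G_2\times_{G_1}X)\to N_2\backslash(G_2\times_{G_1}X)$ and the pushforward along $\overline\beta$ of $\Delta_{1\ast}X\to N_1\backslash X$.

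Finally, for a general (not $G_1$-compact) proper $G_1$-space $X$ one passes to the colimit over $G_1$-compact proper $G_1$-subspaces $Y\subset X$, using that $G_2\times_{G_1}Y$ exhaust $G_2\times_{G_1}X$ and $N_i\backslash(-)$ commutes with these colimits; the already-established commutativity for each $Y$ together with the naturality statements in Lemma \ref{lem_quotient_ind_natural} and the standard naturality of $\mathrm{ind}_\beta$, $\mathrm{ind}_{\overline\beta}$ under inclusions of subspaces then gives the result in the colimit. The main obstacle I anticipate is the bookkeeping in the lower square: one must be careful about which variable each imprimitivity bimodule lives in and make the identifications of the relevant crossed products $C_0(\Delta_{i\ast}(-))\rtimes G_i$ under induction along $\beta$ precise (this is where $N_2\cap G_1=N_1$ and the triviality of the $N_i$-actions on $A$ both get used), but this is exactly the situation handled in \cite[Lemma 5.13]{CE01}, so the work is primarily in transcribing that result into the present notation rather than proving something genuinely new.
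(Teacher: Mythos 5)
Your plan matches the paper's proof: the same two-step factorization of $\mathrm{ind}_{\alpha_i}$ through the diagonal inclusion $\Delta_i\colon G_i\to G_i\times\bg_i$, the upper square by functoriality of $\mathrm{ind}$ with respect to composition of injective homomorphisms (using $N_2\cap G_1=N_1$ to see $(\beta\times\overline\beta)\circ\Delta_1=\Delta_2\circ\beta$), the lower square by a cycle-level comparison via imprimitivity bimodules, and the reduction to $G_1$-compact $X$ by passing to colimits. The one presentational difference is that the paper verifies the lower square by comparing the \emph{inverses} of the two compositions: it uses the compression isomorphism (the inverse of $\mathrm{ind}$ for an open inclusion, \cite[Proposition 5.14]{CE}) starting from a cycle $[E,T]$ in $KK^{\bg_2}(C_0(\bg_2\times_{\bg_1}\bX),A)$, and checks --- analogously to, but not by directly citing, \cite[Lemma 5.13]{CE01} --- that the $G_1\times\bg_1$-equivariant compression of the big bimodule $E^{G_2\times\bg_2,G_2}_{(G_2\times\bg_2)\times_{G_1\times\bg_1}\Delta_\ast X,\,G_2}$ along the inclusion $C_0(\Delta_\ast X)\hookrightarrow C_0((G_2\times\bg_2)\times_{G_1\times\bg_1}\Delta_\ast X)$ is canonically $E^{G_1\times\bg_1,G_1}_{\Delta_\ast X,\,G_1}$. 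You instead compare the forward compositions and appeal to compatibility of $\mathrm{ind}_{\beta\times\overline\beta}$ with the $\Lambda$-bimodules; the two formulations are dual (the $\Lambda$'s and $E$'s are mutual inverses), and the compression version is marginally easier to verify concretely since it amounts to identifying a completion of $C_c(\Delta_\ast X)$ rather than computing an induced module, but the content is the same.
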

\begin{proof} 
Note that the first map of the induction \eqref{eq_induction_quotient} is the induction isomorphism $\mathrm{ind}_\Delta$ for the injective homomorphism $\Delta\colon G_1\to G_1\times \bg_1$, which is functorial with respect to the composition of injective group homomorphisms. Thus, it suffices to show that the following square commutes:
\[
\xymatrix{
 K_\ast^{G_1\times \bg_1}(\Delta_\ast{X}; A) \ar[d]^-{\cong}_-{\mathrm{ind}_{\beta\times \overline{\beta}}} \ar[r]^-{\cong} &  K_\ast^{\bg_1}(\bX; A) \ar[d]^-{\cong}_-{\mathrm{ind}_{ \overline{\beta}}} \\
 K_\ast^{G_2\times \bg_2}((G_2\times \bg_2)\times_{G_1\times \bg_1} \Delta_\ast{X}; A)  \ar[r]^-{\cong} &  K_\ast^{\bg_2}(\bg_2\times _{\bg_1}\bX; A).
}
\]
where we use the canonical isomorphism of $\bg_2$-spaces
\[
G_2\backslash \left((G_2\times \bg_2) \times_{G_1\times \bg_1} \Delta_\ast{X}\right) \cong \bg_2\times_{\bg_1}  (G_1\backslash \Delta_\ast X) \cong  \bg_2\times_{\bg_1}  (N_1\backslash X),
\]
sending
\[
[(g_1,g_2),(g_3, g_4, x)] \mapsto [g_2,[g_3,g_4,x]] \mapsto [g_2, g_4\cdot x], \quad g_1,g_2\in G_2, g_3,g_4 \in G_1, x\in X.
\]

It is enough to show this for $G_1$-compact $X$. We show the inverses of the two compositions are equal. We use the compression isomorphism (see \cite[Proposition 5.14]{CE}) which is the inverse of the induction isomorphism for an (open) inclusion of discrete groups.

Let $[E, T]$ be an element in $KK^{\bg_2}(C_0(\bg_2\times_{\bg_1}\bX), A)$ where $E$ has a representation of $C_0(\bg_2\times_{\bg_1}\bX)$. The restriction via the $\bg_1$-equivariant inclusion $C_0(\bX)= C_0(\bg_1\times_{\bg_1}\bX)\to C_0(\bg_2\times_{\bg_1}\bX)$ defines the so-called the compression/projection $(E^0, T^0)$ of $(E, T)$, which is a cycle for $KK^{\bg_1}(C_0(\bX), A)$. This is what the compression isomorphism $\mathrm{ind}_{\overline{\beta}}^{-1}$ maps $[E, T]$ to (see \cite[Proposition 5.14]{CE}).

On one hand, the inverse of the composition of the upper and right sides sends the element $[E, T]$ in $KK^{\bg_2}(C_0(\bg_2\times_{\bg_1}\bX), A)$ to 
\[
 [E_{\Delta_\ast X,G_1}^{G_1\times \bg_1, G_1} ]\otimes_{C_0(\bX)}[E^0,T^0]
\in KK^{\bg_1}(C_0(\Delta_\ast X)\rtimes G_1, A) \cong KK^{G_1\times \bg_1}(C_0(\Delta_\ast X), A),
\]
where $[E_{\Delta_\ast X,G_1}^{G_1\times \bg_1, G_1}] \in KK^{\bg_1}(C_0(\Delta_\ast X)\rtimes G_1, C_0(\bX))$ is the dual (the inverse) of $[\Lambda_{\Delta_\ast X,G_1}^{G_1\times \bg_1, G_1}]$ (see the paragraph before Lemma \ref{lem_quotient_ind_natural}). 

On the other hand, the inverse of the composition of left and lower sides sends the element $[E, T]$ to the compression of 
\begin{equation}\label{eq_compression_ET}
[E_{(G_2\times \bg_2) \times_{G_1\times \bg_1} \Delta_\ast X,G_2}^{G_2\times \bg_2, G_2} ]\otimes_{C_0(\bg_2\times_{\bg_1} \bX)}[E,T],
\end{equation}
and the compression is defined as its $G_1\times \bg_1$-equivariant restriction via the inclusion $C_0(\Delta_\ast\bX)\to C_0((G_2\times\bg_2)\times_{G_1\times\bg_1}\Delta_\ast\bX)$.

Analogously to the proof of \cite[Lemma 5.13]{CE01}, the $G_1\times \bg_1$-equivariant compression/restriction of $E_{(G_2\times \bg_2) \times_{G_1\times \bg_1} \Delta_\ast X,G_2}^{G_2\times \bg_2, G_2}$ via the inclusion $C_0(\Delta_\ast\bX)\to C_0((G_2\times\bg_2)\times_{G_1\times\bg_1}\Delta_\ast\bX)$ is canonically isomorphic to the completion of the subspace $C_c(\Delta_\ast X)$ whose inner product takes values in $C_0(\bX)$ inside $C_0(\bg_2\times_{\bg1}\bX)$, and that is isomorphic to $E_{\Delta_\ast X,G_1}^{G_1\times \bg_1, G_1}$. It follows, that the compression of \eqref{eq_compression_ET} is equal to  $[E_{\Delta_\ast X,G_1}^{G_1\times \bg_1, G_1} ]\otimes_{C_0(\bX)}[E^0,T^0]$ in $KK^{G_1\times \bg_1}(C_0(\Delta_\ast X), A)$, and we are done.
\end{proof}

\begin{theorem}\label{thm_CLP_diagram_Khom} Let $G$ be a countable discrete group, and let $(G,\{H_{\lambda}\}_{\lambda\in\Lambda},\{N_{\lambda}\}_{\lambda\in\Lambda})$ be a Cohen--Lyndon triple satisfying property \hyperlink{cond:fin}{(FIN)}. Let $A$ be a separable $\bg$-$C^*$-algebra. Then, we have the following commutative diagram with exact rows:
\begin{equation}\label{eq_diag_equiv_Khom}
\xymatrix{
\cdots  \ar[r] & \bigoplus_{\lambda \in \Lambda}K_\ast^{H_\lambda}( EH_\lambda; A) \ar[r]^-{} \ar[d] &  K_\ast^{G}(EG; A) \ar[r] \ar[d] 
& K_\ast^{\bg}(M_\phi, Z; A) \ar[d]^{\cong}  \ar[r] & \cdots  \\
\cdots  \ar[r] & \bigoplus_{\lambda \in \Lambda}K_\ast^{\bhl}(\E \bhl; A)  \ar[r]^-{} & K_\ast^{\bg}(\E \bg; A)  \ar[r] 
& K_\ast^{\bg}(Y, M_\theta; A) \ar[r] & \cdots
}
\end{equation}
which is canonically isomorphic to the commutative diagram \eqref{eq_lem_diagram}. Here, all the four maps in the first square are the obvious maps obtained by the induction homomorphisms followed by the classifying maps.
\end{theorem}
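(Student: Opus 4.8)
The plan is to transport the commutative diagram \eqref{eq_lem_diagram} of the preceding lemma along induction isomorphisms, exactly as in the proof of Theorem \ref{thm_CLP_diagram}; the only new ingredient is that the induction isomorphisms attached to the non-injective quotient homomorphisms $\alpha_\lambda\colon H_\lambda\twoheadrightarrow\bhl$ and $\alpha\colon G\twoheadrightarrow\bg$ are not available from \cite[Lemma 1.5]{BEL08} and must instead be built from the quotient induction \eqref{eq_induction_quotient}. Throughout, $A$ is viewed as a $G$-, $H_\lambda$- or $\bhl$-$C^*$-algebra by restriction along the structure maps to $\bg$, so that $\m$ and each $N_\lambda$ act trivially on $A$, which is precisely the hypothesis under which \eqref{eq_induction_quotient} and Lemmas \ref{lem_quotient_ind_natural} and \ref{lem_natural_ind_ind} were set up.

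\emph{Step 1.} Since $EH_\lambda$ is a free $H_\lambda$-CW complex it is a proper $H_\lambda$-space on which $N_\lambda$ acts freely, and $EG$ is a free $G$-space on which $\m$ acts freely. Hence \eqref{eq_induction_quotient} yields isomorphisms
\[
\mathrm{ind}_{\alpha_\lambda}\colon K_\ast^{H_\lambda}(EH_\lambda;A)\xrightarrow{\cong}K_\ast^{\bhl}(N_\lambda\backslash EH_\lambda;A),\qquad \mathrm{ind}_{\alpha}\colon K_\ast^{G}(EG;A)\xrightarrow{\cong}K_\ast^{\bg}(\m\backslash EG;A).
\]
Using $\mathrm{ind}_{\alpha_\lambda}$ and $\mathrm{ind}_{\alpha}$ to replace the two left-most terms of the upper row of \eqref{eq_lem_diagram}, and keeping all other terms, produces, tautologically, a diagram of the shape \eqref{eq_diag_equiv_Khom} isomorphic to \eqref{eq_lem_diagram}. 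Since the bottom row and the right-hand square are untouched, exactness of the rows and commutativity of every square but the first are inherited from the lemma; only the four maps of the first square need to be identified with the maps obtained by an induction homomorphism followed by a classifying map, as in the statement.

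\emph{Step 2.} Under the replacement, the left vertical map of the first square becomes $\mathrm{ind}_{\alpha_\lambda}$ followed by the map $K_\ast^{\bhl}(N_\lambda\backslash EH_\lambda;A)\to K_\ast^{\bhl}(\E\bhl;A)$ induced by the classifying map $N_\lambda\backslash EH_\lambda\to\E\bhl$ (that is, by $\theta$), and the middle vertical becomes $\mathrm{ind}_{\alpha}$ followed by the map induced by $\m\backslash EG\simeq M_\phi\hookrightarrow Y\simeq\E\bg$; both are of the required form, and no commutation has to be checked for these, as they are defined as such composites. The horizontal map $K_\ast^{G}(EG;A)\to K_\ast^{\bg}(M_\phi,Z;A)$ is the old one post-composed with $\mathrm{ind}_{\alpha}$, and the right square still commutes, because $\mathrm{ind}_{\alpha}$ has been inserted on one side only.

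\emph{Step 3 (the crux).} Fix $\lambda$ and let $\beta_\lambda\colon H_\lambda\hookrightarrow G$ be the inclusion. Since the triple is Cohen--Lyndon, $\m\cap H_\lambda=N_\lambda$ by \cite[Lemma 6.4]{sun2018cohomologyi}, so $\beta_\lambda$ descends to an inclusion $\overline{\beta_\lambda}\colon\bhl\hookrightarrow\bg$, and Lemma \ref{lem_natural_ind_ind} applied with $X=EH_\lambda$ gives the commutative square expressing $\mathrm{ind}_{\overline{\beta_\lambda}}\circ\mathrm{ind}_{\alpha_\lambda}=I_\ast\circ\mathrm{ind}_{\alpha}\circ\mathrm{ind}_{\beta_\lambda}$, where $I\colon\m\backslash(G\times_{H_\lambda}EH_\lambda)\xrightarrow{\cong}\bg\times_{\bhl}N_\lambda\backslash EH_\lambda$, $[g,x]\mapsto[g,x]$, is the canonical identification. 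Let $c_\lambda\colon G\times_{H_\lambda}EH_\lambda\to EG$ be the $G$-map induced by the classifying map $EH_\lambda\to EG$, and let $\overline{c_\lambda}$ be the map it induces on $\m$-quotients; Lemma \ref{lem_quotient_ind_natural} gives $\mathrm{ind}_{\alpha}\circ(c_\lambda)_\ast=(\overline{c_\lambda})_\ast\circ\mathrm{ind}_{\alpha}$. By the definition of $\phi$ in Definition \ref{proper_Dehn_space}, the restriction $\phi_\lambda$ of $\phi$ to the $\lambda$-th component satisfies $\overline{c_\lambda}=\phi_\lambda\circ I$, so $(\overline{c_\lambda})_\ast=(\phi_\lambda)_\ast\circ I_\ast$. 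Concatenating these two squares shows that the transport along $\mathrm{ind}_{\alpha_\lambda}$, $\mathrm{ind}_{\alpha}$ of the upper map $(\phi_\lambda)_\ast\circ\mathrm{ind}_{\overline{\beta_\lambda}}$ of \eqref{eq_lem_diagram} equals $(c_\lambda)_\ast\circ\mathrm{ind}_{\beta_\lambda}$, i.e. the induction for $H_\lambda\hookrightarrow G$ followed by the classifying map $G\times_{H_\lambda}EH_\lambda\to EG$. Summing over $\lambda$, using that all the maps involved respect the disjoint-union decompositions, completes the identification of the first square and hence the proof. The technical content — naturality of the quotient induction and its compatibility with restriction along inclusions — is already packaged in Lemmas \ref{lem_quotient_ind_natural} and \ref{lem_natural_ind_ind}, so the only delicate point is Step 3: matching $\overline{c_\lambda}$ with $\phi_\lambda\circ I$ and composing the two commuting squares in the correct order.
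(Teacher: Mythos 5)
Your proof is correct and follows essentially the same route as the paper: transport the diagram of the preceding lemma along the quotient-induction isomorphisms, then identify the maps of the first square using Lemma \ref{lem_quotient_ind_natural} (naturality of quotient induction) and Lemma \ref{lem_natural_ind_ind} (compatibility of quotient induction with induction along subgroup inclusions). You simply unpack the paper's one-sentence appeal to those two lemmas into an explicit Step 3, including the check $\overline{c_\lambda}=\phi_\lambda\circ I$, which is exactly the content the paper leaves implicit. (One cosmetic remark: in your closing sentence Lemma \ref{lem_natural_ind_ind} expresses compatibility with \emph{induction}, not \emph{restriction}, along inclusions; you use it correctly, it is only the wording that slips.)
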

\begin{proof} 

Using the induction isomorphisms for the quotient maps $G\to \bg$ and $H_\lambda \to \bhl$, the diagram \eqref{eq_lem_diagram} is isomorphic to the diagram \eqref{eq_diag_equiv_Khom}. The assertion on the corresponding maps in \eqref{eq_diag_equiv_Khom} in the first square follows by the naturality of the induction homomorphisms (for quotient homomorphisms), proven by Lemma \ref{lem_quotient_ind_natural}, and by the compatibility with the induction for injective group homomorphisms, proven by Lemma \ref{lem_natural_ind_ind}.
\end{proof}

\begin{corollary}\label{cor_CLP_BCC} Let $G$ be a countable discrete group, and let $(G,\{H_{\lambda}\}_{\lambda\in\Lambda},\{N_{\lambda}\}_{\lambda\in\Lambda})$ be a Cohen--Lyndon triple  satisfying property \hyperlink{cond:fin}{(FIN)}. Suppose that $G$ is torsion-free. Suppose that $G$ and $\bg$ satisfy the Baum--Connes conjecture with coefficients. Then, for any separable $\bg$-$C^*$-algebra $A$, we have the following commutative diagram with exact rows:
\begin{equation}\label{eq_diagram_RHS}
\xymatrix{
\cdots  \ar[r] & \bigoplus_{\lambda \in \Lambda}K_\ast(A\rtimes_r H_\lambda) \ar[r]^-{} \ar[d] & K_\ast(A\rtimes_r G) \ar[r] \ar[d] 
& K_\ast^{\bg}(M_\phi, Z; A)  \ar[d]^{\cong}  \ar[r] & \cdots  \\
\cdots  \ar[r] & \bigoplus_{\lambda \in \Lambda}K_\ast(A\rtimes_r \bhl)  \ar[r]^-{} & K_\ast(A\rtimes_r \bg)  \ar[r] 
& K_\ast^{\bg}(Y, M_\theta; A) \ar[r] & \cdots.
}
\end{equation}

Here, in the first square, the horizontal maps are induced by the canonical inclusions $A\rtimes_r H_\lambda \to A\rtimes_r G$, $A\rtimes_r \bhl \to A\rtimes_r \bg$. The second vertical map is the unique map that makes the following diagram commutes
\begin{equation}\label{eq_CLP_diagram_Khom_A}
\xymatrix{
K_\ast^{G}(EG; A) \ar[r]^-{\cong}_{\mu^G_{A,r}} \ar[d] &  K_\ast(A\rtimes_r G) \ar[d] \\
K_\ast^{\bg}(\E \bg; A) \ar[r]^-{\cong}_{\mu^{\bg}_{A, r}} &  K_\ast(A\rtimes_r \bg), 
}
\end{equation}
where the left vertical map is the induction for the quotient map $\alpha\colon G\to \bg$ followed by the classifying map, and similarly for the first vertical map.
\end{corollary}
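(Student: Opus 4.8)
The plan is to start from the commutative diagram \eqref{eq_diag_equiv_Khom} supplied by Theorem \ref{thm_CLP_diagram_Khom} (whose hypotheses are exactly those assumed here) and to replace, in both rows and at every degree, each of the four entries $K_\ast^{H_\lambda}(EH_\lambda;A)$, $K_\ast^{G}(EG;A)$ (top row) and $K_\ast^{\bhl}(\E\bhl;A)$, $K_\ast^{\bg}(\E\bg;A)$ (bottom row) by the $K$-theory of the appropriate reduced crossed product, using the Baum--Connes assembly maps, which under our hypotheses are isomorphisms. Since $G$ is torsion-free, so is every subgroup $H_\lambda\leq G$; hence $EG$ and $EH_\lambda$ are, respectively, $G$- and $H_\lambda$-homotopy equivalent to the classifying spaces $\E G$ and $\E H_\lambda$ for proper actions, which gives canonical identifications $K_\ast^{G}(EG;A)\cong K_\ast^{G}(\E G;A)$ and $K_\ast^{H_\lambda}(EH_\lambda;A)\cong K_\ast^{H_\lambda}(\E H_\lambda;A)$. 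Consequently the four entries carry Baum--Connes assembly maps to $K_\ast(A\rtimes_r H_\lambda)$, $K_\ast(A\rtimes_r G)$, $K_\ast(A\rtimes_r\bhl)$, $K_\ast(A\rtimes_r\bg)$, each formed with the restriction of $A$ to a $C^*$-algebra over the group in question.

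First I would check that all four assembly maps are isomorphisms. By hypothesis $\mu^{G}_{A,r}$ and $\mu^{\bg}_{A,r}$ are isomorphisms. For the peripheral entries, $H_\lambda$ is a subgroup of $G$ and $\bhl$ is a subgroup of $\bg$ by the Cohen--Lyndon property (\cite[Lemma 6.4]{sun2018cohomologyi}); since the Baum--Connes conjecture with coefficients is inherited by closed subgroups (see, e.g., the permanence results of Chabert--Echterhoff \cite{CE01}), the groups $H_\lambda$ and $\bhl$ satisfy the BCC, so $\mu^{H_\lambda}_{A,r}$ and $\mu^{\bhl}_{A,r}$ are isomorphisms as well. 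Substituting the indicated entries of \eqref{eq_diag_equiv_Khom} through these isomorphisms (and defining the new maps by conjugation with the assembly isomorphisms) therefore produces an isomorphism of the two long exact sequences onto a diagram whose first two columns read $\bigoplus_{\lambda\in\Lambda} K_\ast(A\rtimes_r H_\lambda)$ and $K_\ast(A\rtimes_r G)$ along the top and $\bigoplus_{\lambda\in\Lambda} K_\ast(A\rtimes_r\bhl)$ and $K_\ast(A\rtimes_r\bg)$ along the bottom, with the third column and the remaining terms of the two sequences unchanged; in particular the rows of the resulting diagram \eqref{eq_diagram_RHS} are exact.

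It then remains to identify the maps. The vertical maps are transport-of-structure maps by construction: the second vertical map equals $\mu^{\bg}_{A,r}\circ(\text{classifying map}\circ\mathrm{ind}_\alpha)\circ(\mu^{G}_{A,r})^{-1}$, which is exactly the unique map making \eqref{eq_CLP_diagram_Khom_A} commute, and the first vertical map is the analogous composite associated with the quotient homomorphisms $H_\lambda\to\bhl$; these are precisely the maps named in the statement. For the horizontal maps of the first square of \eqref{eq_diag_equiv_Khom} — induction along the inclusion $H_\lambda\hookrightarrow G$ (resp.\ $\bhl\hookrightarrow\bg$) followed by the classifying map — one invokes the naturality of the analytic assembly map with respect to subgroup inclusions (see, e.g., \cite{CE01}): the square whose horizontal maps are $\mu^{H_\lambda}_{A,r}$ and $\mu^{G}_{A,r}$, whose left vertical map is the classifying map after induction along $H_\lambda\hookrightarrow G$, and whose right vertical map is induced by the canonical inclusion $A\rtimes_r H_\lambda\to A\rtimes_r G$, commutes, and likewise for $\bhl\hookrightarrow\bg$. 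Under the assembly isomorphisms this turns the first-square horizontal maps of \eqref{eq_diag_equiv_Khom} into the maps induced by the canonical inclusions of crossed products, and commutativity of every remaining square of \eqref{eq_diagram_RHS} then follows from that of \eqref{eq_diag_equiv_Khom} together with these two compatibilities. The main point to be careful about is thus exactly this pair of standard facts — subgroup permanence of the BCC and naturality of the assembly map under inclusions — applied to the coefficient algebras $\mathrm{Res}^{H_\lambda}_{\bg}A$ and $\mathrm{Res}^{\bhl}_{\bg}A$; the rest is diagram-chasing once the maps in the statement have been unwound.
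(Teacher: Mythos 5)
Your proposal is correct and follows essentially the same route as the paper's proof: start from the diagram of Theorem \ref{thm_CLP_diagram_Khom}, use the torsion-freeness of $G$ (and hence of $H_\lambda$) to identify $EG$ with $\E G$ and $EH_\lambda$ with $\E H_\lambda$, conjugate by the Baum--Connes assembly isomorphisms (using subgroup permanence of the BCC for $H_\lambda$ and $\bhl$), and then invoke naturality of the assembly map under subgroup inclusions to identify the first-square horizontal maps with the ones induced by the canonical inclusions of reduced crossed products. The paper's own proof singles out this last naturality fact as the only nontrivial point and cites \cite{BMP02} for it, whereas you unwind all the routine substitutions explicitly and cite \cite{CE01}, but the mathematical content is the same.
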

\begin{proof} The only nontrivial assertion is the one on the horizontal maps in the first square, and this follows from the known fact (see the proof of \cite[Theorem 6.3]{BMP02}) that the following diagram commutes for any inclusion $G_1\to G_2$ of countable groups and for any separable $G_2$-$C^*$-algebra $B$,
\begin{equation*}
\xymatrix{
K_\ast^{G_1}(\E G_1; B) \ar[r]^-{\cong}_{\mu^{G_1}_{B,r}} \ar[d] &  K_\ast(B\rtimes_r G_1) \ar[d] \\
K_\ast^{G_2}(\E G_2; B) \ar[r]^-{\cong}_{\mu^{G_2}_{B, r}} &  K_\ast(B\rtimes_r G_2), 
}
\end{equation*}
where the first vertical map is induction map followed by the classifying map and the second vertical map is induced by the inclusion $B\rtimes_r G_1 \to B\rtimes_rG_2$.
\end{proof}

\begin{remark} In \Cref{cor_CLP_BCC}, one can alternatively assume that $\cd G < \infty$. Indeed, in this case, $G$ is torsion-free, and the Cohen–Lyndon triple satisfies \hyperlink{cond:fin}{(FIN)} by \Cref{theorem FIN}.
\end{remark}

\begin{remark}\label{rem_CLP_coeff} In \Cref{cor_CLP_BCC}, if we fix a coefficient $\bg$-$C^*$-algebra $A$, we only need that all groups $H_\lambda$, $\bhl$, $G$ and $\bg$ satisfy the Baum--Connes conjecture for the specific coefficient $A$. In many cases of interest, $H_\lambda$, $\bhl$, $G$ all satisfy the BCC with coefficients, and in that case, we only need the BC (without coefficient) for $\bg$ to have the diagram \eqref{eq_CLP_diagram_Khom_A} for $A=\bC$.
\end{remark}

\subsection{K-theory of Cohen--Lyndon aspherical groups}\label{sec: CLA} 


A group presentation 
$$P = \langle s_1, \dots, s_n \mid r_1, \dots, r_k \rangle$$ 
is said to be {\it Cohen--Lyndon aspherical (CLA)} if $(G,\{H_i\}_{i=1}^k,\{N_i\}_{i=1}^k)$ is a Cohen--Lyndon triple, where $G= \langle s_1, \dots, s_n \rangle $, $N_i=\langle r_i \rangle$ and $H_i$ is the normaliser of $N_i$ in $G$. Since $G$ is a free group, for each $i$, we have $H_i=\langle r_i'\rangle$ and $r_i=(r_i')^{d_i}$ for some $d_i\geq1$.

Cohen--Lyndon asphericity was introduced in \cite[\S III.10]{LS77}, and it implies standard notions of combinatorial asphericity conditions for group presentations, as discussed in \cite{CCH81}. Observe that, by \Cref{prop CL}, one-relator groups as well as groups satisfying small cancellation conditions  $C(6)$, $C(4)-T(4)$, or $C(3)-T(6)$ are (CLA). Applying \Cref{theorem FIN}, we obtain:

\begin{proposition}\label{prop: CLA} Let $P = \langle s_1, \dots, s_n \mid r_1, \dots, r_k \rangle $ be (CLA), then $P$ satisfies {\rm \hyperlink{cond:fin}{(FIN)}}. 
\end{proposition}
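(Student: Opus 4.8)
The plan is to deduce \Cref{prop: CLA} directly from \Cref{theorem FIN} by verifying its single hypothesis, namely that the ambient group $G$ has finite cohomological dimension. First I would recall the setup: since $P = \langle s_1, \dots, s_n \mid r_1, \dots, r_k \rangle$ is (CLA), by definition $(G,\{H_i\}_{i=1}^k,\{N_i\}_{i=1}^k)$ is a Cohen--Lyndon triple with $G = \langle s_1, \dots, s_n\rangle$ the free group on the generators, $N_i = \langle r_i \rangle$, and $H_i$ the normaliser of $N_i$ in $G$. The key observation is simply that $G$ is a free group, hence $\cd G \leq 1 < \infty$ (indeed $\cd G = 1$ unless $n = 0$, in which case $\cd G = 0$).

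With this in hand, I would apply \Cref{theorem FIN} to the Cohen--Lyndon triple $(G,\{H_i\}_{i=1}^k,\{N_i\}_{i=1}^k)$: the hypothesis $\cd G < \infty$ is satisfied, so $\bg = G/\llrr{\mathcal{N}}$ has property \hyperlink{cond:fin}{(FIN)}. By the convention established in \Cref{subsec: Dehn filling proper}, saying that the triple (equivalently, $\bg$) satisfies property \hyperlink{cond:fin}{(FIN)} is exactly what it means for the presentation $P$ to satisfy \hyperlink{cond:fin}{(FIN)}. This completes the argument.

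There is essentially no obstacle here — the proof is a one-line invocation of \Cref{theorem FIN}, with the only thing to note being the standard fact that free groups have cohomological dimension at most one (so that \Cref{theorem FIN} applies). If anything merits a sentence of care, it is making explicit that $G$ in the (CLA) definition is genuinely the \emph{free} group on $s_1, \dots, s_n$ (as stated in the excerpt immediately after the definition of (CLA), where it is noted that each $H_i = \langle r_i' \rangle$ is infinite cyclic and $r_i = (r_i')^{d_i}$), rather than the group \emph{presented} by $P$; this is what guarantees $\cd G < \infty$.

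\begin{proof}
By definition of a (CLA) presentation, $(G,\{H_i\}_{i=1}^k,\{N_i\}_{i=1}^k)$ is a Cohen--Lyndon triple, where $G = \langle s_1, \dots, s_n\rangle$ is the free group on $s_1, \dots, s_n$, $N_i = \langle r_i\rangle$, and $H_i$ is the normaliser of $N_i$ in $G$. Since $G$ is free, $\cd G \leq 1 < \infty$. Hence, by \Cref{theorem FIN}, $\bg = G/\llrr{\mathcal{N}}$ has property \hyperlink{cond:fin}{(FIN)}; equivalently, the triple $(G,\{H_i\}_{i=1}^k,\{N_i\}_{i=1}^k)$, and therefore the presentation $P$, satisfies property \hyperlink{cond:fin}{(FIN)}.
\end{proof}
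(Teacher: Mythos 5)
Your proof is correct and is exactly the argument the paper intends: the text preceding \Cref{prop: CLA} says "Applying \Cref{theorem FIN}, we obtain," and the only hypothesis of \Cref{theorem FIN} is $\cd G < \infty$, which holds since $G$ is free. Your explicit note that $G$ is the free group (not the presented group $P$) is a worthwhile clarification but does not change the substance of the argument.
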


\begin{corollary}[See Corollary \ref{cor_CLP_BCC}] Let $P = \langle s_1, \dots, s_n \mid r_1, \dots, r_k \rangle $ be (CLA). Let $G= \langle s_1, \dots, s_n \rangle $, $N_i=\langle r_i  \rangle$ and $H_i$ be the normaliser of $N_i$ in $G$. Suppose $\bg=P$ satisfies the Baum--Connes conjecture with coefficients. Then, for any separable $\bg$-$C^*$-algebra $A$, we have the following commutative diagram with exact rows:
\begin{equation}\label{eq_CLA_A}
\xymatrix{
\cdots  \ar[r] & \bigoplus_{1\leq i \leq k}K_\ast(A\rtimes_r H_i)  \ar[d] \ar[r] & K_\ast(A\rtimes_r G) \ar[d] \ar[r] & K_\ast^{\bg}(M_\phi, Z; A) \ar[d]^{\cong} \ar[r] & \cdots \\
\cdots  \ar[r] &  \bigoplus_{1\leq i \leq k} K_\ast(A\rtimes_r \bh_i) \ar[r] &  K_\ast(A\rtimes_r \bg)  \ar[r] & 
K_\ast^{\bg}(Y, M_\theta; A) \ar[r] & \cdots.
}
\end{equation}
\end{corollary}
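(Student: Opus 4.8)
The plan is to recognise this corollary as a direct instance of Corollary~\ref{cor_CLP_BCC}, so that the proof reduces to verifying the hypotheses of that corollary in the present situation; no essentially new argument is needed.

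First, by the very definition of a (CLA) presentation, the triple $(G,\{H_i\}_{i=1}^k,\{N_i\}_{i=1}^k)$---where $G=\langle s_1,\dots,s_n\rangle$, $N_i=\langle r_i\rangle$, and $H_i$ is the normaliser of $N_i$ in $G$---is a Cohen--Lyndon triple. Secondly, since $G$ is a free group we have $\cd G=1<\infty$, so Proposition~\ref{prop: CLA} (equivalently Theorem~\ref{theorem FIN}) shows that this triple satisfies property~\hyperlink{cond:fin}{(FIN)}. Thirdly, a free group is torsion-free, as required. Fourthly, a free group is a-T-menable---it acts freely on a tree, hence metrically properly on a Hilbert space---so the Higson--Kasparov theorem~\cite{HK01} shows that $G$ satisfies the Baum--Connes conjecture with coefficients, while $\bg=P$ satisfies the BCC by hypothesis. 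Thus all four hypotheses of Corollary~\ref{cor_CLP_BCC} hold.

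Applying Corollary~\ref{cor_CLP_BCC} then produces a commutative diagram with exact rows of the shape of~\eqref{eq_diagram_RHS}; relabelling the index set as $\{1,\dots,k\}$ and writing $H_\lambda=H_i$, $\bhl=\bh_i$ turns it into precisely~\eqref{eq_CLA_A}. The one point worth making explicit is why the peripheral terms appear as reduced crossed products rather than equivariant $K$-homology groups: in the (CLA) case each $H_i\cong\Z$ (since $H_i=\langle r_i'\rangle$ with $r_i=(r_i')^{d_i}$) and each $\bh_i\cong\Z/d_i\Z$ is finite, so both groups satisfy the Baum--Connes conjecture with coefficients (by amenability, resp.\ finiteness); consequently the induction isomorphisms of Theorem~\ref{thm_CLP_diagram_Khom}, followed by the assembly maps $\mu^{H_i}_{A,r}$ and $\mu^{\bh_i}_{A,r}$, identify $K_\ast^{H_i}(EH_i;A)$ with $K_\ast(A\rtimes_r H_i)$ and $K_\ast^{\bh_i}(\E\bh_i;A)$ with $K_\ast(A\rtimes_r\bh_i)$, compatibly with all the structure maps in the diagram. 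I do not anticipate any genuine obstacle here: the argument is a bookkeeping specialisation of Corollary~\ref{cor_CLP_BCC}, and the only non-formal inputs are the computation $\cd G=1$ that feeds Theorem~\ref{theorem FIN} and the Higson--Kasparov theorem that supplies the BCC for the free group $G$.
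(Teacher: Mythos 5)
Your proposal is correct and matches the paper's intended route: the corollary is precisely a specialisation of Corollary~\ref{cor_CLP_BCC}, and you verify its hypotheses correctly (Cohen--Lyndon triple from the CLA definition, property (FIN) via Proposition~\ref{prop: CLA}/Theorem~\ref{theorem FIN} using $\cd G=1$, torsion-freeness and a-T-menability of the free group $G$ giving BCC for $G$ by Higson--Kasparov, and BCC for $\bg$ by assumption). Your closing paragraph re-deriving the identification of the peripheral terms with reduced crossed products via $H_i\cong\Z$ and $\bh_i$ finite cyclic is fine but not needed: Corollary~\ref{cor_CLP_BCC} already produces the crossed-product form of the diagram, its proof relying implicitly on the fact that BCC descends to subgroups, so $H_\lambda\leq G$ and $\bhl\leq\bg$ automatically satisfy BCC; your alternative justification is a harmless, slightly more explicit variant of the same point.
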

 

For brevity, we focus on the simplest case $A = \bC$. In this case, we only need to assume the BC without coefficient for $\bg=P$ to have \eqref{eq_CLA_A} (Remark \ref{rem_CLP_coeff}). Let us simplify notations and let $K^?_\ast(X, X_0)\coloneq K^?_\ast(X, X_0; \bC)$. In this case, the diagram \eqref{eq_CLA_A} is isomorphic to the following commutative diagram

\begin{equation}\label{eq_sc0}
\xymatrix{
\cdots  \ar[r] &\bigoplus_{1\leq i \leq k} K_\ast(C^*_r(H_i))  \ar[d] \ar[r] & K_\ast(C^*_r(G)) \ar[d] \ar[r] & K_\ast(BG, B\h) \ar[d]^{=} \ar[r] & \cdots \\
\cdots  \ar[r] & \bigoplus_{1\leq i \leq k}K_\ast(C^*_r(\bh_i)) \ar[r] & K_\ast(C^*_r(\bg))  \ar[r] & K_\ast(BG, B\h) \ar[r] & \cdots 
}
\end{equation}
which is isomorphic to
\begin{equation}\label{eq_sc1}
\xymatrix{
\cdots  \ar[r] & \bigoplus_{1\leq i \leq k} K_\ast(BH_i)  \ar[d] \ar[r] & K_\ast(BG) \ar[d] \ar[r] & K_\ast(BG, B\h) \ar[d]^{=} \ar[r] & \cdots \\
\cdots  \ar[r] &  \bigoplus_{1\leq i \leq k}K^{\bh_i}_\ast(\E\bh_i) \ar[r] &  K^{\bg}_\ast(\E\bg)  \ar[r] & K_\ast(BG, B\h)\ar[r] & \cdots 
}
\end{equation}

Here, we use the induction isomorphisms (using the freeness of the actions)
\[
K_\ast^{\bg}(M_\phi, Z) \cong  K_\ast^G(EG, \bigsqcup_{\lambda \in\Lambda}G\times_{H_\lambda}EH_\lambda ) \
\cong K_\ast(BG, B\h)
\]
where $B\h \coloneq \bigsqcup_{\lambda \in \Lambda} BH_\lambda$.

To proceed, we use the following explicit identifications (c.f. \cite[Chapter 7]{BC02})
\begin{equation*}
\begin{array}{cc}
K_0(BH_i)\cong K_0(\{\cdot\})\cong \mathbb{Z}, & K_1(BH_i)\cong H_i (\cong \mathbb{Z}), \\
K_0(BG)\cong K_0(\{\cdot\}) \cong \mathbb{Z}, & K_1(BG) \cong G^{\ab} (\cong \mathbb{Z}^n),\\
K^{\bh_i}_0(\E\overline{H_i}) =  K^{\bh_i}_0(\{\cdot\}) \cong R(\bh_i) \cong K_0(C^*(\overline{H}_i)), & K^{\bh_i}_1(\E\overline{H}_i) = K^{\bh_i}_1(\{\cdot\}) \cong 0,
\end{array}
\end{equation*}
where $R(\overline{H_i})$ is the (complex) representation ring of $\overline{H_i}$. Here, under the isomorphism $R(\bh_i) \cong K_0(C^*(\overline{H}_i))$, the unit class $[1_{C^*(\overline{H}_i)}]$ in $K_0(C^*(\overline{H}_i))$ corresponds to the class $[\lambda_{\overline{H}_i}]$ of the regular representation. The image of the vertical map $\bZ\cong K_0(\{\cdot\})\cong K_0(BH_i)\cong K_0^{H_i}(EH_i)\to K^{\bh_i}_0(\E\overline{H_i}) =  K^{\bh_i}_0(\{\cdot\}) \cong R(\bh_i)$ consists of $\bZ[\lambda_{\overline{H}_i}]$. This is because the map factors through $K_0(\{\cdot\}) \cong K^{H_i}_0(H_i)\cong K_0^{\bh_i}(\bh_i)\to K^{\bh_i}_0(\{\cdot\})$.

With these, the diagram \eqref{eq_sc1} is identified as the following: 

\begin{equation}\label{eq_relator1}
\xymatrix{
\cdots  \ar[r]^-{\partial_1} & \bigoplus_{1\leq i \leq k}\bZ  \ar[d]^{\bigoplus_{1\leq i\leq k}[\lambda_{\bh_i}]\cdot} \ar[r]^-{+} & \bZ \ar[d] \ar[r] & K_0(BG, B\h) \ar[d]^{=} \ar[r]^-{\partial_0} & \cdots \\
\cdots  \ar[r]^-{d_1} &  \bigoplus_{1\leq i \leq k} R(\bh_i) \ar[r] & K^{\bg}_0(\E\overline{G})  \ar[r] & K_0(BG, B\h) \ar[r]^-{d_0} & \cdots 
}
\end{equation}

\begin{equation}\label{eq_relator2}
\xymatrix{
\cdots  \ar[r]^-{\partial_0} &  \bigoplus_{1\leq i \leq k} H_i  \ar[d] \ar[r] & G^\ab \ar[d] \ar[r] & K_1(BG, B\h) \ar[d]^{=} \ar[r]^-{\partial_1} & \cdots \\
\cdots  \ar[r]^-{d_0} & 0 \ar[r] & K^{\bg}_1(\E\overline{G})  \ar[r] & K_1(BG, B\h) \ar[r]^-{d_1} & \cdots 
}
\end{equation}

We see that $\partial_0$ is injective, and it induces an isomorphism
\[
K_0(BG, B\h)\cong \ker( \bigoplus_{1\leq i \leq k}H_i \to G^\ab).
\]
We see that the image of $\partial_1$ consists of the elements in $\bigoplus_{1\leq i \leq k}\bZ$ whose sum is $0$, and we have the following exact sequence
\[
0\to K_0(BG, B\h) \to \bigoplus_{1\leq i \leq k} H_i \to G^{\ab} \to K_1(BG, B\h)  \to \bZ^{k-1}\to 0
\]
We have $d_0=0$ trivially. The image of $d_1$ in $\bigoplus_{1\leq i \leq k} R(\bh_i)$ is generated by the differences of the classes $[\lambda_{\overline{H}_i}]$ in $R(H_i)$, which we denote by $\langle [\lambda_{\overline{H}_i}] - [\lambda_{\overline{H}_j}]\rangle$; it is isomorphic to $\bZ^{k-1}$. This is because $d_1$ factors through $\partial_1$. From these, we obtain the following exact sequences
\[
0\to \langle [\lambda_{\overline{H}_i}] - [\lambda_{\overline{H}_j}]\rangle\to  \bigoplus_{1\leq i \leq k} R(\bh_i) \to K_0^{\bg}(\E\bg) \to K_0(BG, B\h)\to 0,
\]
\[
0\to  K_1^{\bg}(\E\bg) \to K_1(BG, B\h)\to \bZ^{k-1} \to 0.
\]
Denote by $\pi:G\to G^\ab$ and  $\bar\pi:\bg\to \bg^\ab$ the abelianisation homomorphisms. Since $\ll \mathcal N \rr \leq \ll \mathcal H \rr \lhd G$, we have the natural isomorphism:
\begin{align*}
\tifrac{G^\mathrm{ab}}{\sum_{i=1}^k\pi(H_i)}
&\cong \tifrac{G}{[G,G]\cdot \llrr{\mathcal H}}\\
&\cong \tifrac{{\tifrac{G}{\llrr{\mathcal N}}}}{{\tifrac{([G,G]\cdot \llrr{\mathcal H})}{\llrr{\mathcal N}}}}\\
&\cong \tifrac{\bg}{[\bg, \bg]\cdot \llrr{\overline{\mathcal H}}}\\
&\cong\tifrac{\bg^\mathrm{ab}}{\sum_{i=1}^k\bar\pi(\bh_i)}.
\end{align*}

Combining these we see that
\[
K_1^{\bg}(\E\bg) \cong \tifrac{G^\mathrm{ab}}{\sum_{i=1}^k\pi(H_i)}\cong \tifrac{\bg^\mathrm{ab}}{\sum_{i=1}^k\bar\pi(\bh_i)},
 \]
 and
 \[
K_0^{\bg}(\E\bg)  \cong R \oplus \ker( \bigoplus_{1\leq i \leq k}H_i \to G^\ab),
\]
where $R$ is the quotient of $\bigoplus_{1\leq i \leq k} R(\bh_i)$ by $\langle [\lambda_{\overline{H}_i}] - [\lambda_{\overline{H}_j}]\rangle$ (identifying the classes of the regular representations). Here, we used the projectivity of the free abelian group $\ker( \bigoplus_{1\leq i \leq k}H_i \to G^\ab)$.

Thus, we have obtained the following:

\begin{theorem}\label{thm: CLA} Suppose $P = \langle s_1, \dots, s_n \mid r_1, \dots, r_k \rangle$ is (CLA) and satisfies the Baum--Connes conjecture (without coefficients). Let $G= \langle s_1, \dots, s_n \rangle $, $N_i=\langle r_i \rangle$ and $H_i$ is the normaliser of $N_i$ in $G$. Let $\bh_i=H_i/N_i$ and $\bg=G/\ll \mathcal{N} \rr=P$. Denote by $\pi:G\to G^\ab$ and  $\bar\pi:\bg\to \bg^\ab$ the abelianisation homomorphisms. 
\begin{enumerate}
\item In general, we have
 \[
K_0(C^*_r(\bg))  \cong K_0^{\bg}(\E\bg)  \cong R \oplus \ker( \bigoplus_{1\leq i \leq k}H_i \to G^\ab),
\]
\[
K_1(C^*_r(\bg)) \cong K_1^{\bg}(\E\bg) \cong \tifrac{G^\mathrm{ab}}{\sum_{i=1}^k\pi(H_i)}\cong \tifrac{\bg^\mathrm{ab}}{\sum_{i=1}^k\bar\pi(\bh_i)},
 \]
\noindent where $R$ is the quotient of $\bigoplus_{1\leq i \leq k} R(\bh_i)$ by $\langle [\lambda_{\overline{H}_i}] - [\lambda_{\overline{H}_j}]\rangle$ (identifying the classes of the regular representations). In particular, we have $K^{\bg}_1(\E\bg)\cong 0$ if and only if $\bigoplus_{1\leq i \leq k} H_i$ surjects onto $G^\ab$.\\

\item Suppose $\pi(r_i)=0$ for all $1\leq i\leq k$.  Then,  we have
\[
 K_0(C^*_r(\bg)) \cong  K^{\bg}_0(\E\overline{G})\cong R \oplus \bigoplus_{1\leq i \leq k}H_i,
\]
\[
K_1(C^*_r(\bg)) \cong K^{\bg}_1(\E\bg) \cong   G^\ab \cong \bg^\ab.
\]
\end{enumerate}
\end{theorem}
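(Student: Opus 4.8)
The plan is to feed the trivial coefficient algebra $A=\bC$ into \Cref{cor_CLP_BCC} and then extract the groups by a diagram chase. Since $G=\langle s_1,\dots,s_n\rangle$ is free, it is torsion-free with $\cd G\leq 1$, so the triple is a Cohen--Lyndon triple satisfying property {\rm \hyperlink{cond:fin}{(FIN)}} by \Cref{theorem FIN} (see also \Cref{prop: CLA}); the peripheral subgroups $H_i=\langle r_i'\rangle\cong\bZ$ and the fillings $\bh_i=H_i/N_i$ are cyclic, the latter finite, so $G$, $H_i$ and $\bh_i$ all satisfy the BCC, and by \Cref{rem_CLP_coeff} the only genuine hypothesis for the diagram of \Cref{cor_CLP_BCC} with $A=\bC$ is the Baum--Connes conjecture (without coefficients) for $\bg=P$, which is assumed. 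This yields the ladder \eqref{eq_sc0} with exact rows, which after the induction isomorphisms $K_\ast^{\bg}(M_\phi,Z)\cong K_\ast(BG,B\h)$ (freeness of the $G$-actions, $B\h=\bigsqcup_i BH_i$) becomes \eqref{eq_sc1}, comparing the Baum--Connes left-hand sides of $G$ and $\bg$ through the common relative term $K_\ast(BG,B\h)$.

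Next I would substitute the classical computations of the terms, following \cite[Chapter~7]{BC02}: $K_0(BG)\cong K_0(BH_i)\cong\bZ$ in degree $0$, $K_1(BG)\cong G^{\ab}$, $K_1(BH_i)\cong H_i$, and $K_\ast^{\bh_i}(\E\bh_i)\cong R(\bh_i)$ concentrated in degree $0$. The point needing care is the vertical map $\bZ\cong K_0(BH_i)\to K_0^{\bh_i}(\E\bh_i)\cong R(\bh_i)$: it factors as $K_0(\{\cdot\})\cong K_0^{H_i}(H_i)\cong K_0^{\bh_i}(\bh_i)\to K_0^{\bh_i}(\{\cdot\})$, hence sends the generator to the class $[\lambda_{\bh_i}]$ of the regular representation; this is what produces the ``difference of regular representations'' in the final answer. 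With these substitutions \eqref{eq_sc1} becomes \eqref{eq_relator1}--\eqref{eq_relator2}, and the chase is then routine: the sum map $\bigoplus_i\bZ\to\bZ$ is onto, forcing $\partial_0$ injective with image $\ker(\bigoplus_i H_i\to G^{\ab})$, so $K_0(BG,B\h)\cong\ker(\bigoplus_i H_i\to G^{\ab})$; since $[\lambda_{\bh_i}]$ is non-torsion, the vertical map $\bigoplus_i\bZ\to\bigoplus_i R(\bh_i)$ is injective, so $d_1$ has the same kernel as $\partial_1$ and image $\langle[\lambda_{\bh_i}]-[\lambda_{\bh_j}]\rangle\cong\bZ^{k-1}$, while $d_0=0$ since its target $\bigoplus_i K_1^{\bh_i}(\E\bh_i)$ vanishes. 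Reading off the resulting four-term sequences gives
\[
0\to\langle[\lambda_{\bh_i}]-[\lambda_{\bh_j}]\rangle\to\bigoplus_{i}R(\bh_i)\to K_0^{\bg}(\E\bg)\to K_0(BG,B\h)\to 0
\]
and
\[
0\to K_1^{\bg}(\E\bg)\to K_1(BG,B\h)\to\bZ^{k-1}\to 0 .
\]
Because $\ker(\bigoplus_i H_i\to G^{\ab})$ is a subgroup of the free abelian group $\bigoplus_i H_i$, hence projective, the first sequence splits and gives $K_0^{\bg}(\E\bg)\cong R\oplus\ker(\bigoplus_i H_i\to G^{\ab})$ with $R=\bigl(\bigoplus_i R(\bh_i)\bigr)/\langle[\lambda_{\bh_i}]-[\lambda_{\bh_j}]\rangle$; chasing $K_1^{\bg}(\E\bg)$ back through the top row of \eqref{eq_relator2} identifies it with $\coker(\bigoplus_i H_i\to G^{\ab})=G^{\ab}/\sum_i\pi(H_i)$, which the chain of quotient identities recorded just before the theorem (using $\ll\mathcal N\rr\leq\ll\mathcal H\rr\lhd G$) rewrites as $\bg^{\ab}/\sum_i\bar\pi(\bh_i)$. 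Applying the Baum--Connes isomorphisms $K_\ast(C^\ast_r(\bg))\cong K_\ast^{\bg}(\E\bg)$ transports everything to group $C^\ast$-algebra $K$-theory, proving part~(1).

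For part~(2), suppose $\pi(r_i)=0$ for all $i$. Since $r_i=(r_i')^{d_i}$, $H_i=\langle r_i'\rangle$ and $G^{\ab}\cong\bZ^n$ is torsion-free, $0=\pi(r_i)=d_i\,\pi(r_i')$ forces $\pi(r_i')=0$, so $\pi(H_i)=0$ for every $i$. Hence $\sum_i\pi(H_i)=0$ and $\bg^{\ab}=G^{\ab}$ (the relators die in the abelianisation), so $K_1^{\bg}(\E\bg)\cong G^{\ab}\cong\bg^{\ab}$; and $\ker(\bigoplus_i H_i\to G^{\ab})=\bigoplus_i H_i$, so $K_0^{\bg}(\E\bg)\cong R\oplus\bigoplus_i H_i$. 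Translating once more through Baum--Connes completes the proof.

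The main obstacle is not this final assembly, which is pure bookkeeping once the inputs are in place, but everything upstream: having \Cref{cor_CLP_BCC} (equivalently \Cref{thm_CLP_diagram_Khom}) available rests on the construction of the Dehn filling complex for proper actions and, crucially, on the functoriality of the analytic $K$-homology excision ladder along the \emph{non-injective} quotient homomorphisms $G\to\bg$ and $H_i\to\bh_i$. Within the proof itself the only delicate step is pinning down the vertical map $K_0(BH_i)\to R(\bh_i)$ as multiplication by the regular-representation class $[\lambda_{\bh_i}]$, since the whole summand $R$ in the final formula is governed by it.
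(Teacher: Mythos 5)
Your proposal is correct and follows essentially the same route as the paper: feed $A=\bC$ into Corollary \ref{cor_CLP_BCC}, pass to the ladder \eqref{eq_sc0}--\eqref{eq_sc1}, substitute the classical $K$-homology computations, pin down the vertical map $K_0(BH_i)\to R(\bh_i)$ as the regular-representation class via the factorisation through $K_0^{\bh_i}(\bh_i)$, extract the two four-term exact sequences, split the first one by projectivity of $\ker(\bigoplus_i H_i\to G^{\ab})$, and identify $K_1^{\bg}(\E\bg)$ with $\coker(\bigoplus_i H_i\to G^{\ab})$ via the injectivity of $\bigoplus_i[\lambda_{\bh_i}]\cdot$. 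The handling of part (2), using torsion-freeness of $G^{\ab}\cong\bZ^n$ to deduce $\pi(r_i')=0$ from $\pi(r_i)=0$, also matches the paper's intent.
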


This streamlines and generalises the $K$-theory/$K$-homology computations for one relator groups done in \cite{BBV99}, \cite{MV03}. 

\begin{remark}\label{rmk:CLA_applications} We point out that besides one relator groups (\cite{Tu99Tree}, \cite{BBV99}, \cite{Oyo01}), there are other known classes of groups with (CLA) presentations that satisfy the BCC. For instance, groups with $C(7)$ small-cancellation presentations are hyperbolic (see e.g.\,\cite{AJW24}) and hence satisfy the BCC by \cite[Theorem 0.4]{Lafforgue12}. Also, groups with $C'(1/4)-T(4)$ small-cancellation presentations act properly and cocompactly on a CAT(0) cube complex by \cite[Theorem 1.2]{wise06} and hence satisfy the BCC by \cite{NR97} and \cite{HK01}. Therefore,  \Cref{thm: CLA} applies to these classes of groups.
\end{remark}

\section{Baum--Connes conjecture with finite wreath products}\label{sec: permanence}


Following the formulation of the Farrell--Jones Conjecture with finite wreath products \cite[Conjecture 13.27]{Lueck25}, we introduce \emph{the Baum--Connes conjecture with coefficients with finite wreath products}, the \emph{BCC with finite wreath products} in short. The content of this section is essentially a replicate of the treatment in \cite[Section 13.7]{Lueck25} in the context of the Baum--Connes conjecture. In the book, a systematic implementation of the finite wreath products version has already been suggested (see \cite[Remark 16.10]{Lueck25}). Our primary reason for introducing this version is because we believe that our results on the Baum--Connes conjecture for relatively hyperbolic groups, which will be treated in Section \ref{sec_CL_BCC}, is best formulated for the BCC with finite wreath products.

For any discrete groups $G, F$ and for any $F$-set $\Omega$, we denote by $G\wr_\Omega F$, the (unrestricted) wreath product $(\prod_{\Omega}G)\rtimes F$. The restricted wreath product $(\oplus_{\Omega}G)\rtimes F$ is denoted by $G\mathrm{wr}_\Omega F$, where $\bigoplus_{\Omega}G$ denotes the subgroup of $\prod_{\Omega} G$ consisting of tuples with finite support (note this is not the coproduct in $\mathbf{Grp}$). When $\Omega = F$, equipped with the left-translation action of $F$, we denote it by $G\wr F$, instead of $G\wr_F F$.

If $N$ is a normal subgroup of $G$, then by the Krasner--Kaloujnine universal embedding theorem \cite{Kras_Kal_51}, $G$ can be embedded as a subgroup of $N\wr (G/N)$. It follows that, if $H$ is a subgroup of $G$ of finite index, $G$ can be embedded as a subgroup of $N\wr (G/N)$ for some finite index normal subgroup $N\subset H$ of $G$. 

Suppose $1 \to K  \to G \to Q \to 1$ is an extension of groups. Then, 
\[
1 \to \prod_\Omega K   \to G \wr_\Omega  F \to Q \wr_\Omega F \to 1
\]
is an extension of groups.

\begin{definition}[The Baum--Connes Conjecture with Coefficients with finite wreath products] We say that a (countable) discrete group $G$ satisfies the Baum--Connes conjecture with coefficients with finite wreath products if for any finite group $F$ the group $G\wr F$ satisfies the Baum--Connes conjecture with coefficients.
\end{definition}

\begin{example}\label{ex_HK} All a-T-menable groups (see \cite{CCJJV}) satisfy the BCC with finite wreath products by the Higson--Kasparov theorem \cite{HK01}.  In fact, if $G$ is a-T-menable, $G\wr F$ is a-T-menable for any finite group $F$: more generally, any extension of an amenable group by an a-T-menable group is a-T-menable \cite[Example 6.1.6]{CCJJV} (see also paragraphs before \cite[Theorem 1.1]{dCSV}). Hence,  $G\wr F$ satisfies the BCC for any finite group $F$.
\end{example}

The following is a formal counterpart of \cite[Theorem 13.32]{Lueck25} in the context of the Baum--Connes conjecture:

\begin{theorem}[Inheritance properties of the BCC with finite wreath products] \label{thm_inh_wr} Let $\mathfrak{BCCwr}$ be the class of countable discrete groups satisfying the Baum--Connes conjecture with coefficients with finite wreath products. Then, the following holds:
\begin{enumerate}
\item\label{item_wr_sub} If $H$ is a subgroup of $G$ and if $G \in \mathfrak{BCCwr}$, then $H \in \mathfrak{BCCwr}$; 
\item\label{item_wr_wr} If $G \in \mathfrak{BCCwr}$, then for any finite group $F$, $G \wr F \in \mathfrak{BCCwr}$; 
\item\label{item_wr_over} Suppose $H$ is a subgroup of $G$ of finite index. Then, $G \in \mathfrak{BCCwr}$ if and only if $H \in \mathfrak{BCCwr}$; 
\item\label{item_wr_commen} Suppose that two groups $G_0$ and $G_1$ are commensurable. Then $G_0 \in \mathfrak{BCCwr}$ if and only if $G_1 \in \mathfrak{BCCwr}$; 
\item\label{item_wr_prod} If $G_0$ and $G_1$ belong to $\mathfrak{BCCwr}$, then $G_0\times G_1 \in \mathfrak{BCCwr}$; 
\item\label{item_wr_ext} Let $1 \to K  \to G \to Q \to 1$ be an extension of groups. Suppose $K$ and $Q$ belong to $\mathfrak{BCCwr}$. Then $G \in \mathfrak{BCCwr}$; 
\item\label{item_wr_uni} Suppose that $G$ is the union of an increasing sequence $(G_n)_{n\geq1}$ of subgroups of $G$ where $G_n \in  \mathfrak{BCCwr}$. Then $G  \in \mathfrak{BCCwr}$; 
\item\label{item_wr_gen} If $G, H \in \mathfrak{BCCwr}$, then for any countable $H$-set $\Omega$, the restricted wreath product $G \mathrm{wr}_\Omega H \in \mathfrak{BCCwr}$; 
\item\label{item_wr_free} Let $\{G_i \,\mid \,i\in I\}$ be a countable collection of groups such that $G_i$ belongs to $\mathfrak{BCCwr}$ for all $i \in I$. Then, the free product $\Asterisk_{i \in I} G_i \in \mathfrak{BCCwr}$.
\end{enumerate}
\end{theorem}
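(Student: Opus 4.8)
The plan is to bootstrap all nine properties from the classical permanence properties of the Baum--Connes conjecture with coefficients (BCC) --- stability under passage to subgroups, under finite direct products, and under countable increasing unions --- together with Oyono-Oyono's extension theorem and the Higson--Kasparov theorem; the finite-wreath-product formalism is engineered precisely so that these inputs suffice. Recall Oyono-Oyono's theorem: for an extension $1\to N\to E\xrightarrow{p}Q\to 1$ of countable groups with $Q$ satisfying the BCC, the group $E$ satisfies the BCC provided $p^{-1}(C)$ satisfies the BCC for every finite subgroup $C\leq Q$ (the case $C=\langle1\rangle$ being the requirement that $N$ itself satisfies the BCC). I would establish the nine items in the order listed, each using only the preceding ones.

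For \eqref{item_wr_sub}, note $H\wr F\leq G\wr F$ for every finite $F$, and $G\wr F$ satisfies the BCC, so by subgroup-stability $H\wr F$ does too. For \eqref{item_wr_wr}, the point is the standard associativity of wreath products, $(G\wr F)\wr F'\cong G\wr_{\Omega}K$, where $K=F\wr_{F'}F'$ is a \emph{finite} group and $\Omega=F'\times F$ is a transitive $K$-set with point stabiliser $L\cong\prod_{F'\smallsetminus\{\mathrm{pt}\}}F$; pulling functions back along the quotient $K\twoheadrightarrow K/L\cong\Omega$ yields a $K$-equivariant inclusion $\prod_\Omega G\hookrightarrow\prod_K G$, hence an embedding $(G\wr F)\wr F'\hookrightarrow G\wr K$. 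Since $G\in\mathfrak{BCCwr}$ and $K$ is finite, $G\wr K$ satisfies the BCC, and so does its subgroup $(G\wr F)\wr F'$; as $F'$ is arbitrary, $G\wr F\in\mathfrak{BCCwr}$. For \eqref{item_wr_over}, the forward direction is \eqref{item_wr_sub}; conversely, take a finite-index normal subgroup $N\leq H$ of $G$ (e.g.\ the normal core $\bigcap_{g\in G}gHg^{-1}$), so $N\in\mathfrak{BCCwr}$ by \eqref{item_wr_sub}, hence $N\wr(G/N)\in\mathfrak{BCCwr}$ by \eqref{item_wr_wr}, and the Krasner--Kaloujnine embedding $G\hookrightarrow N\wr(G/N)$ recorded before the definition of $\mathfrak{BCCwr}$, together with \eqref{item_wr_sub}, gives $G\in\mathfrak{BCCwr}$. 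Item \eqref{item_wr_commen} is then immediate from \eqref{item_wr_over}.

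For \eqref{item_wr_prod}, the diagonal $F$-action gives an embedding $(G_0\times G_1)\wr F\hookrightarrow(G_0\wr F)\times(G_1\wr F)$; both factors satisfy the BCC, hence so does the direct product and hence its subgroup. For the extension property \eqref{item_wr_ext}, given $1\to K\to G\to Q\to1$ with $K,Q\in\mathfrak{BCCwr}$ and $F$ finite, apply Oyono-Oyono's theorem to the induced extension $1\to\prod_FK\to G\wr F\xrightarrow{p}Q\wr F\to1$ recorded before the definition: here $\prod_FK\cong K^{|F|}\in\mathfrak{BCCwr}$ by \eqref{item_wr_prod}, and $Q\wr F$ satisfies the BCC; the remaining hypothesis is that $p^{-1}(C)$ satisfies the BCC for every finite $C\leq Q\wr F$, and this holds because $p^{-1}(C)$ contains $\prod_FK\in\mathfrak{BCCwr}$ as a finite-index subgroup, whence $p^{-1}(C)\in\mathfrak{BCCwr}$ by \eqref{item_wr_over}. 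Thus $G\wr F$ satisfies the BCC, so $G\in\mathfrak{BCCwr}$. (Item \eqref{item_wr_prod} is also the special case $K=G_0$, $Q=G_1$ of \eqref{item_wr_ext}, but it is needed first.)

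For \eqref{item_wr_uni}, since $F$ is finite, $G\wr F=\bigcup_n(G_n\wr F)$ is a countable increasing union of groups satisfying the BCC, hence satisfies the BCC. For \eqref{item_wr_gen}, $\bigoplus_\Omega G$ is a countable increasing union of finite direct powers of $G$, hence lies in $\mathfrak{BCCwr}$ by \eqref{item_wr_prod} and \eqref{item_wr_uni}, and then $G\mathrm{wr}_\Omega H=(\bigoplus_\Omega G)\rtimes H\in\mathfrak{BCCwr}$ by \eqref{item_wr_ext}. For \eqref{item_wr_free}, writing any countable free product as an increasing union of finite sub-free-products reduces us, via \eqref{item_wr_uni}, to $G_1*G_2$; the kernel $N$ of the canonical surjection $G_1*G_2\twoheadrightarrow G_1\times G_2$ meets every conjugate of $G_1$ and of $G_2$ trivially, hence is free by the Kurosh subgroup theorem, so a-T-menable, hence in $\mathfrak{BCCwr}$ by the Higson--Kasparov theorem (\Cref{ex_HK}); since also $G_1\times G_2\in\mathfrak{BCCwr}$ by \eqref{item_wr_prod}, \eqref{item_wr_ext} yields $G_1*G_2\in\mathfrak{BCCwr}$. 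The only non-formal ingredient beyond classical BCC permanence is Oyono-Oyono's extension theorem, and the crux of the argument is verifying its finite-subgroup-preimage hypothesis in \eqref{item_wr_ext} --- which is exactly what the passage to finite wreath products via \eqref{item_wr_over} is designed to supply; the other point requiring care is the bookkeeping in \eqref{item_wr_wr} identifying the iterated wreath product with a wreath product over the finite group $F\wr_{F'}F'$.
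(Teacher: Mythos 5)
Your proof is correct and matches the paper's argument item by item: the same embeddings (subgroups under passage to wreath products in (1), iterated-wreath into wreath over a bigger finite group in (2), Krasner--Kaloujnine in (3), diagonal in (5)), the same appeal to the Chabert--Echterhoff--Oyono-Oyono extension theorem in (6) with the same verification via (3) and (5) that preimages of finite subgroups lie in $\mathfrak{BCCwr}$, the same union and restricted-wreath arguments in (7)--(8), and the same reduction of (9) through the free-kernel extension $1\to N\to G_1\ast G_2\to G_1\times G_2\to 1$ with Higson--Kasparov for the free kernel. The only cosmetic difference is in (2): the paper simply cites \cite[Lemma 13.26]{Lueck25} for the canonical embedding $(G\wr F)\wr F'\hookrightarrow G\wr(F\wr F')$, whereas you unfold it explicitly via the identification $(G\wr F)\wr F'\cong G\wr_\Omega K$ with $K=F\wr F'$, $\Omega=F'\times F\cong K/L$, followed by pullback along $K\twoheadrightarrow\Omega$; this is a more self-contained derivation of the same embedding and an acceptable substitute for the citation.
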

\begin{proof} 
\eqref{item_wr_sub} For any group $F$, $H\wr F$ is a subgroup of $G\wr F$. The BCC for $G\wr F$ implies the BCC for its (closed) subgroup $H\wr F$ \cite[Theorem 2.5]{CE}. 

\eqref{item_wr_wr} For any finite groups $F$ and $F'$, we have a canonical embedding $(G \wr F)\wr F' \to G \wr (F \wr F')$ (see \cite[Lemma 13.26]{Lueck25}).  The BCC for $G\wr (F\wr F')$ implies the BCC for its (closed) subgroup $(G\wr F)\wr F'$.

\eqref{item_wr_over} For any finite index subgroup $H$ of $G$, $G$ embeds into $N\wr F$ for some normal subgroup $N\subset H$ of $G$ of finite index and $F=G/N$. The assertion follows from this and \eqref{item_wr_sub}, \eqref{item_wr_wr}.

\eqref{item_wr_commen} By definition, there are subgroups $H_0\leq  G_0$ and $H_1\leq G_1$ of finite index such that $H_0$ is isomorphic to $H_1$. The assertion thus follows from \eqref{item_wr_over}.

\eqref{item_wr_prod} For any finite group $F$, $(G_0\times G_1)\wr F$ embeds into $(G_0\wr F)\times (G_1 \wr F)$.  The BCC for $G_0 \wr F$ and $G_1\wr F$ implies the BCC for their product (\cite[Theorem 3.17]{CE}, \cite[Corollary 7.12]{Oyono01}). Thus, the subgroup $(G_0\times G_1)\wr F$ satisfies the BCC.

\eqref{item_wr_ext} For any finite group $F$, we consider an extension
\[
1 \to \prod_FK  \to G \wr F \to Q \wr F \to 1.
\]
By assumption and \eqref{item_wr_over}, \eqref{item_wr_prod}, $Q\wr F$ and any subgroup of $G\wr F$ containing $\prod_FK$ with finite index satisfy the BCC. Hence, $G\wr F$ satisfies the BCC by the permanence properties of the BCC for the extensions of groups (see \cite[Theorem 2.1]{CEO04}, \cite[Theorem 3.1.]{Oyono01}).

\eqref{item_wr_uni} For any finite group $F$, $G\wr F$ is the increasing union of subgroups $G_n\wr F$ which satisfy the BCC. Hence, the union $G\wr F$ satisfies the BCC by \cite[Theorem 6.3]{BMP02}, \cite[Theorem 10.4]{MN06}, or \cite[Theorem 0.8]{BEL08}.

\eqref{item_wr_gen} We consider an extension 
\[
1 \to \bigoplus_\Omega G  \to G \mathrm{wr}_\Omega  H \to H\to 1. 
\]
The assertion follows by assumption and by \eqref{item_wr_prod}, \eqref{item_wr_ext}, \eqref{item_wr_uni}.

\eqref{item_wr_free} The only-if part follows by \eqref{item_wr_sub}. For the other direction, We first consider the free product $G_0 \Asterisk G_1$ of two groups $G_0$ and $G_1$ in $\mathfrak{BCCwr}$. Consider the following extension:
\[
1 \to N\to G_0 \Asterisk G_1 \to G_0 \times G_1 \to 1.
\]
Then, $N$ is a free group (see proof of \cite[Lemma 3.21.]{Kuhl09}). The free groups belong to  $\mathfrak{BCCwr}$ since they are a-T-menable groups. By \eqref{item_wr_prod} and \eqref{item_wr_ext}, the free product $G_0 \Asterisk G_1 \in \mathfrak{BCCwr}$. The general case follows from this by induction and by \eqref{item_wr_uni}. 
\end{proof}

\begin{remark} It follows that a discrete group $G$ satisfies the BCC with finite wreath products if and only if all finite products $G^n$ and their finite extensions satisfy the BCC. This is, a priori, stronger than saying all finite extensions of $G$ satisfy the BCC. 
\end{remark}

\begin{remark} Consider the following statement: Suppose $G$ acts on a countable (not necessarily locally finite) oriented tree. Then $G \in \mathfrak{BCCwr}$ if and only if, for every vertex $v$ of the tree, the stabilizer $G_v = \mathrm{Stab}_G(v)$ belongs to $\mathfrak{BCCwr}$. This can be shown by combining the equivariant power functor (Definition \ref{def_wreath_func}), introduced in Section \ref{sec_hyp_BCC}, with the powerful results established by Pimsner in \cite{Pimsner}. However, the detailed argument is somewhat technical, and we do not provide the proof in this article.
\end{remark}

\section{Hyperbolic groups satisfy the BCC with finite wreath products}\label{sec_hyp_BCC}

The following is a deep result of Vincent Lafforgue \cite{Lafforgue12} which says in particular that all discrete hyperbolic groups satisfy the BCC (see also the survey paper \cite{Puschnigg14}):

\begin{theorem}[{\cite[Theorem 0.4]{Lafforgue12}}]\label{thm_Laff_BCC} The Baum--Connes conjecture with coefficients holds for any second countable, locally compact group $G$ that acts isometrically, continuously and properly on a weakly geodesic, uniformly locally finite hyperbolic metric space. 
\end{theorem}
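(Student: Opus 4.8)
The plan is to follow Lafforgue's Dirac--dual-Dirac argument carried out in the setting of Banach $KK$-theory; a genuine proof is far beyond a sketch, so I only outline the structure. The starting point is the standard reduction: it suffices to produce a \emph{$\gamma$-element} $\gamma_G \in KK_G(\bC,\bC)$ together with a homotopy identifying it with the identity in a version of equivariant $KK$-theory strong enough to force the Baum--Connes assembly map with arbitrary coefficients to be an isomorphism. The point of working over a Banach (rather than $C^*$) framework is that some of the hyperbolic groups in the statement have property (T) (e.g.\ cocompact lattices in $\mathrm{Sp}(n,1)$), so one cannot hope to realise $\gamma_G = 1_G$ in $C^*$-algebraic $KK_G$; instead the homotopy is sought in Lafforgue's Banach $KK$-theory $KK^{\mathrm{ban}}$, and the conclusion is transported back to $C^*$-algebras via \emph{unconditional completions} of $C_c(G)$ and the comparison maps to $A \rtimes_r G$.

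First I would construct the $\gamma$-element. A weakly geodesic, uniformly locally finite hyperbolic metric space on which $G$ acts properly, isometrically and continuously is (coarsely) a \emph{bolic} space in the sense of Kasparov--Skandalis, and such an action yields, by the Dirac/dual-Dirac construction, an element $\gamma_G \in KK_G(\bC,\bC)$ whose structure is governed by the geometry of the space at infinity. Once the bolicity estimates (derived from the thin-triangle condition) are in place this part is essentially formal.

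Second --- and this is where the main difficulty lies --- I would show that $\gamma_G = 1_G$ after passing to $KK^{\mathrm{ban}}$ for suitable Banach completions. The homotopy is geometric: one deforms the relevant Fredholm module by contracting the hyperbolic space towards a basepoint along quasi-geodesics, so that hyperbolicity controls how far the deformed module fails to be $G$-invariant at scale $R$. The delicate point is quantitative: this homotopy is \emph{not} norm-bounded, but its matrix coefficients decay sub-exponentially in the word length, and one must prove the decay is fast enough that the homotopy converges in an unconditional Banach completion $\mathcal{A}(G)$ through which the assembly map can still be computed. This is precisely where property (RD) for hyperbolic groups (Jolissaint, de la Harpe--Valette) and the uniform local finiteness of the space enter: together they produce unconditional completions for which the comparison map $K_\ast(\mathcal{A}(G)) \to K_\ast(C^*_r(G))$, and its analogue with coefficients, is an isomorphism while the Banach homotopy lives in $\mathcal{A}(G)$. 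Making these decay estimates uniform over all coefficient algebras $A$ is the real obstacle; granting it, Lafforgue's Banach descent and his comparison between the topological and Banach assembly maps combine to give that $\mu^G_{A,r}$ is an isomorphism for every separable $G$-$C^*$-algebra $A$.
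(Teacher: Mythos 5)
The paper does not prove this theorem: it is cited as a black box from \cite[Theorem 0.4]{Lafforgue12}, and what the paper actually recalls and uses in Section \ref{sec_hyp_BCC} is Lafforgue's mechanism via the groups $KK_{G,\ell}(A,B)$ (Hilbert modules with a \emph{non-isometric} $G$-action bounded by $e^{\ell(g)}$) and the finite-asymptotic-dimension-with-linear-control theorem (Theorem \ref{thm_gamma_Laff}). Your sketch instead describes Lafforgue's earlier route from \cite{La02}: $\gamma$-element in Banach $KK^{\mathrm{ban}}$, unconditional completions $\mathcal A(G)$ of $C_c(G)$, and rapid decay to compare $K_\ast(\mathcal A(G))$ with $K_\ast(C^*_r(G))$. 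These are genuinely different proofs, and the difference matters.

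There is a real gap in your proposal. The unconditional-completion/RD route gives the Baum--Connes conjecture \emph{without} coefficients for hyperbolic groups: the key comparison isomorphism $K_\ast(\mathcal A(G))\cong K_\ast(C^*_r(G))$ relies on a rapid-decay-type inequality that is not available for the algebras $\mathcal A(G,A)$ with arbitrary separable $G$-$C^*$-algebra coefficients $A$, so the argument does not descend to $K_\ast(A\rtimes_r G)$ for general $A$. You acknowledge this (``making these decay estimates uniform over all coefficient algebras $A$ is the real obstacle''), but then assume it can be granted inside the $KK^{\mathrm{ban}}$ framework; in fact it cannot, and this is precisely the gap that motivated \cite{Lafforgue12}. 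The 2012 proof replaces the Banach/unconditional-completion machinery altogether: one shows $\gamma_G=1_G$ in $KK_{G,s\ell+C}(\bC,\bC)$ for all $s>0$ (Theorem 1.3 of \cite{Lafforgue12}), and then Theorem \ref{thm_gamma_Laff}, whose proof hinges on finite asymptotic dimension with linear control rather than on RD, upgrades this to ``$\gamma_G\otimes_\bC 1_A$ acts as the identity on $K_\ast(A\rtimes_r G)$'' for every coefficient algebra $A$. Also, minor framing point: \cite{Lafforgue12} builds the Dirac element directly from the hyperbolic geometry (via a metric of combings and a specific construction adapted to weakly geodesic, uniformly locally finite spaces), not by invoking Kasparov--Skandalis bolicity as a black box.
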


The goal of this section is to provide our proof of the following extension of the theorem above. For any locally compact groups $G$, $F$, and for any finite $F$-set $\Omega$, we consider the wreath product $G \wr_\Omega F=(\prod_\Omega G) \rtimes F$. A finite $F$-set will always mean a finite discrete set equipped with a continuous $F$-action. 

\begin{theorem}\label{thm_hyp_BCC_wr} Let $G$ be any second countable, locally compact group that acts isometrically, continuously and properly on a weakly geodesic, uniformly locally finite hyperbolic metric space. Then, for any second countable, compact group $F$, and for any finite $F$-set $\Omega$, the wreath product $G\wr_\Omega F=(\prod_\Omega G) \rtimes F$ satisfies the Baum--Connes conjecture with coefficients.
\end{theorem}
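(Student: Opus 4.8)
The plan is to combine Lafforgue's theorem (Theorem~\ref{thm_Laff_BCC}) with the wreath-product construction of gamma elements (Theorem~\ref{thm_gamma_wr}); structurally the proof runs as that of Corollary~\ref{cor_gamma_to_wreath}, now carried out with $F$ allowed to be compact rather than finite. Fix a basepoint $x_0\in X$ and set $\ell_0(g):=d(x_0,gx_0)$; this is a proper length function on $G$ precisely because the action of $G$ on $X$ is proper. Since $G$ acts properly and isometrically on the hyperbolic space $X$, it has a gamma element $\gamma_G\in KK_G(\bC,\bC)$ in the Meyer--Nest sense (as established in the course of \cite{Lafforgue12}). The first step is to extract from \cite{Lafforgue12} the \emph{quantitative} triviality of $\gamma_G$: for every $s>0$ there is $C=C(s)\geq 0$ with $\gamma_G=1_G$ in $KK_{G,\,s\ell_0+C}(\bC,\bC)$. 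This is a repackaging, in the Meyer--Nest/length-filtered language, of the analytic heart of \cite{Lafforgue12} --- the homotopy between $\gamma_G$ and the identity carried out in exponentially weighted completions, using Lafforgue's strengthened rapid-decay estimates --- and it is exactly the hypothesis appearing in Corollary~\ref{cor_gamma_to_wreath}. The weakly geodesic, uniformly locally finite hyperbolic space $X$ moreover has finite asymptotic dimension with linear control, a standard fact for hyperbolic spaces of bounded geometry, so $G$ itself satisfies all the hypotheses of Corollary~\ref{cor_gamma_to_wreath}.

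I then feed $\gamma_G$ into Theorem~\ref{thm_gamma_wr}. As $\gamma_G$ exists, the wreath product $G\wr_\Omega F$ has a gamma element $\gamma_{G\wr_\Omega F}$ in the Meyer--Nest sense; and for each $s>0$, applying the ``moreover'' part of Theorem~\ref{thm_gamma_wr} with the length function $\ell:=s\ell_0+C(s)$ yields $\gamma_{G\wr_\Omega F}=1$ in $KK_{G\wr_\Omega F,\,\tilde\ell^{(s)}}(\bC,\bC)$, where $\tilde\ell^{(s)}\big((g_\omega)_{\omega}\big)=\sum_{\omega\in\Omega}\ell(g_\omega)$ on $\prod_\Omega G$ and $\tilde\ell^{(s)}$ vanishes on $F$. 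Hence $s\,\tilde\ell_0\le\tilde\ell^{(s)}\le s\,\tilde\ell_0+|\Omega|\,C(s)$ on $\prod_\Omega G$, where $\tilde\ell_0\big((g_\omega)_{\omega}\big):=\sum_{\omega\in\Omega}\ell_0(g_\omega)$; letting $s$ vary over $(0,\infty)$, a routine comparison of length functions upgrades this to the quantitative triviality of $\gamma_{G\wr_\Omega F}$ with respect to $\tilde\ell_0$, that is, $\gamma_{G\wr_\Omega F}=1$ in $KK_{G\wr_\Omega F,\,s\tilde\ell_0}(\bC,\bC)$ for all $s>0$. Finally, $\tilde\ell_0$ is the orbit length function of the proper, isometric, continuous action of $G\wr_\Omega F$ on $\prod_\Omega X$ endowed with the $\ell^{1}$-metric and with $F$ permuting the coordinates; this action is proper because $\Omega$ is finite, $F$ is compact, and the action of $G$ on $X$ is proper, and a finite $\ell^{1}$-product of spaces of finite asymptotic dimension with linear control again has finite asymptotic dimension with linear control. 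Thus $G\wr_\Omega F$ itself satisfies the geometric and $KK$-theoretic hypotheses of Corollary~\ref{cor_gamma_to_wreath}.

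It remains to deduce the BCC for $G\wr_\Omega F$ from this data, and for that the length-filtered Dirac--dual-Dirac argument of Section~\ref{sec_hyp_BCC} --- the mechanism by which both Corollary~\ref{cor_gamma_to_wreath} and Lafforgue's Theorem~\ref{thm_Laff_BCC} produce the conjecture --- applies verbatim: any second countable, locally compact group that acts properly and isometrically on a space of finite asymptotic dimension with linear control and whose Meyer--Nest gamma element is quantitatively trivial for the associated orbit length function satisfies the Baum--Connes conjecture with coefficients. Applied to $G\wr_\Omega F$, this shows that the assembly map $\mu^{G\wr_\Omega F}_{A,r}$ is an isomorphism for every separable $(G\wr_\Omega F)$-$C^{*}$-algebra $A$, which is the assertion. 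The main obstacle is the first step --- passing from Lafforgue's theorem to the statement ``$\gamma_G=1$ in $KK_{G,\,s\ell_0+C(s)}$ for all $s$'', which amounts to tracking the exponential weights through Lafforgue's construction of the homotopy $\gamma_G\sim 1$; the wreath-product step (Theorem~\ref{thm_gamma_wr}) and the final descent/assembly argument are then formal.
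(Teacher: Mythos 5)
Your proposal is correct and follows essentially the same route as the paper, which proves this theorem in a single line by citing \cite[Theorem 1.3]{Lafforgue12} to verify the hypotheses of Corollary~\ref{cor_gamma_to_wreath}; your version just unwinds the body of that corollary (and usefully notes that its proof, via Theorem~\ref{thm_gamma_wr} and Theorem~\ref{thm_gamma_Laff}, already covers compact $F$ and finite $F$-sets $\Omega$, not merely finite groups $F$). One small flag: your ``routine comparison of length functions'' claiming $\gamma_{G\wr_\Omega F}=1$ in $KK_{G\wr_\Omega F,\,s\tilde\ell_0}(\bC,\bC)$ with no additive constant goes the wrong way --- the natural map is $KK_{G,\ell_1}\to KK_{G,\ell_2}$ for $\ell_1\le\ell_2$, so triviality at $s\tilde\ell_0+|\Omega|C(s)$ does not descend to triviality at $s\tilde\ell_0$ --- but this step is unnecessary, as the hypothesis of Theorem~\ref{thm_gamma_Laff} already allows an arbitrary additive constant $C$.
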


For the definition of the equivariant $KK$-theory, we refer the reader to \cite{Kas88} and \cite{Bla98}. We recall from \cite[Definition 4.10]{BC2020} that for a second countable, locally compact group $G$, we say that a gamma element exists for $G$ in the classical sense if there is an element $\gamma_G$ in $KK_G(\bC, \bC)$ satisfying the following two conditions:
\begin{enumerate}
\item $\gamma_G$ factors through a separable, graded, proper $G$-$C^*$-algebra $A$: $\gamma_G=\beta \otimes_A \alpha$ for $\alpha\in KK_G(A, \bC)$ and $\beta\in KK_G(\bC, A)$;
\item $\mathrm{Res}_G^{L}(\gamma_G) = 1_L$ in $KK_L(\bC, \bC)$ for any compact subgroup $L$ of $G$. 
\end{enumerate}
We also briefly recall from \cite[Definition 8.1]{MN06} the following (a priori) slightly more general definition of the gamma element. First of all, we denote by $KK_G$, the (triangulated) category of separable, \emph{ungraded} $G$-$C^*$-algebras with morphism sets $KK_G(A, B)$. A morphism in $KK_G$ is called a weak equivalence if its restriction to $KK_L$ is a $KK_L$-equivalence for all compact subgroups $L$ of $G$. We say that \emph{a gamma element exists for $G$ in the Meyer--Nest sense} if there is $\eta\in KK_G(\bC, P)$ (a dual Dirac morphism) such that $D\otimes_\bC \eta =1_P$ for a Dirac morphism $D \in KK_G(P,\bC)$. Here, as in \cite[Definition 4.5]{MN06}, a Dirac morphism $D$ is a weak equivalence $P\to \bC$ with $P$ in $\langle \mathcal{CI}_G \rangle$, the localizing subcategory (see \cite[Section 2.5]{MN06}) of $KK_G$ generated by the full subcategory $\mathcal{CI}_G$ consisting of all separable ungraded $G$-$C^*$-algebras that are $KK_G$-equivalent to the induced algebra $\mathrm{Ind}_L^G(B)$ for some compact subgroup $L$ of $G$ and for some separable ungraded $L$-$C^*$-algebra $B$. In this case, $\gamma_G\coloneq \eta\otimes_P D$ in $KK_G(\bC, \bC)$ is called a gamma element in the Mayer--Nest sense.

If a gamma element $\gamma_G$ exists for $G$ in the classical sense, it is a gamma element in the Meyer--Nest sense \cite[Theorem 8.2]{MN06}. The gamma element (in either sense), if exists, is a unique element in $KK_G(\bC, \bC)$ characterized by the definition, and satisfies $\gamma_G^2=\gamma_G$.

The existence of a gamma element for $G$ implies the split-injectivity of the Baum--Connes assembly map $\mu^G_{A, r}$ for all coefficients $A$. If $\gamma_G=1_G$ in $KK_G(\bC, \bC)$, the BCC holds for $G$. The famous obstruction to $\gamma_G=1_G$ in $KK_G(\bC, \bC)$ is Kazhdans's property $(T)$: the gamma element $\gamma_G$ for $G$, if it exists, is not equal to $1_G$ for any non-compact group $G$ with property $(T)$.

\begin{remark} It is worth remarking that any second countable, locally compact group $G$ that admits a uniform embedding into a Hilbert space has a gamma element in the classical sense. This is \cite{Tu04} in the discrete group case. The general case follows by \cite[Theorem 3.4]{DL15} (see also \cite[Theorem 6.1]{STY02}) and \cite{Tu99}. Note that \cite[Theorem 3.5]{DL15} (\cite{STY02} in the discrete case) proves the split-injectivity of the assembly map in this setting. We explain this briefly. As in the proof of \cite[Theorem 3.5]{DL15}, for some second countable, compact convex $G$-space $X$ such that $X\rtimes G$ is a-T-menable, the unit inclusion $\iota\colon \bC\to C(X)$ is a weak equivalence in $KK_G$. The point is that $\iota$ is a dual-Dirac morphism for $G$. By \cite{Tu99}, the identity on $C(X)$ factors thorough a separable, graded proper $G$-$C^*$-algebra $A$. As in the proof of \cite[Theorem 8.2]{MN06}, this implies $D\otimes 1_{C(X)} \colon P\otimes C(X)\to C(X)$ is a $KK_G$-equivalence for a (any) Dirac morphism $D\colon P\to \bC$ for $G$.  Also, $1_P\otimes \iota \colon P\to P\otimes C(X)$ is a $KK_G$-equivalence since it is a weak equivalence between objects in $\langle \mathcal{CI}_G \rangle$. It follows $\iota\colon \bC \to C(X)$ is a dual-Dirac morphism, a weak inverse of some Dirac morphism $D'\colon C(X) \to \bC$. Moreover, since the identity on $C(X)$ factors through $A$, the gamma element exists in the classical sense as well. 
\end{remark}

We now recall from \cite[Section 1]{Lafforgue12} (see also \cite{La02}), the definition of the group $KK_{G, \ell}(A, B)$ for any separable graded $G$-$C^*$-algebras $A, B$ and for any length function $\ell$ for $G$. The set $E_{G, \ell}(A, B)$ is defined as the set of unitary isomorphism classes of triples $(E, \pi, T)$ satisfying the following: $E$ is a countably generated, graded Hilbert $A$-$B$-module, $\pi$ is a graded, (not necessarily isometric, not necessarily adjointable) continuous $G$-action on $E$ as the $B$-pair $(\overline{E}, E)$ (see Remark \ref{rem_El} below),  with operator-norm bound $\| \pi(g) \| \leq e^{l(g)}$ for $g\in G$, such that the representation of $A$ on $E$ is $G$-equivariant, and $T\in \Linears(E)$ (the algebra of adjointable operators on $E$) is an odd, (not necessarily self-adjoint), bounded, adjointable operator on $E$ such that for any $a\in A$, the operators $[a, T]$, $a(1-T^2)$ belong to $\mathcal{K}(E)$ (the algebra of $B$-compact operators on $E$) and $g \mapsto a(T-g(T))$ is a norm-continuous map from $G$ to $\mathcal{K}(E)$. The homotopy equivalence relation on $E_{G, \ell}(A, B)$ is defined by the cycles of $E_{G, \ell}(A, B[0,1])$ where $B[0,1]\coloneq C([0,1], B)$: two cycles $(E_i, \pi_i, T_i)$, $i=0,1$ in $E_{G, \ell}(A, B)$  are homotopic if and only if there is a cycle $(E, \pi, T)$ in $E_{G, \ell}(A, B[0,1])$ whose evaluation at $t=i$ is isomorphic to $(E_i, \pi_i, T_i)$ for $i=0,1$. The abelian group $KK_{G, \ell}(A, B)$ is the set of the homotopy equivalence classes of the cycles in $E_{G, \ell}(A, B)$, equipped with the direct sum operation.

In particular, $E_{G, \ell}(\bC, \bC)$ consists of the isomorphism classes of cycles $(H, \pi, T)$ where $H$ is a separable, graded Hilbert space equipped with a continuous $G$-action with $\| \pi(g) \| \leq e^{l(g)}$, $T$ is an odd, bounded operator on $H$ such that $1-T^2$ is compact and $g\mapsto g(T)-T$ is a norm-continuous map from $G$ to $\mathcal{K}(H)$. 

\begin{remark}\label{rem_El} Even though $A, B$ are $G$-$C^*$-algebras, we need to consider a $G$-action on the $B$-pair $(\overline{E}, E)$ in the sense of \cite{La02} (see [Proposition 1.1.4 and Section 1.2]\cite{La02}). In our setting, this is equivalent to saying that we have two continuous (left) $G$-actions $e \mapsto g^{\rangle}(e)$ and $f \mapsto  g^{\langle}(f)$ on $E$ satisfying
\[
\s{g^{\langle}f, g^{\rangle}e}_B = g(\s{f, e}_B)
\]
for $g\in G$, $e, f\in E$.
This is redundant for $A=B=\bC$, but it is a necessary replacement of the usual axiom $\s{g(f), g(e)}_B =g(\s{f, e}_B)$ for $e, f\in E$, and $g \in G$ when the $G$-actions are no longer unitary. This guarantees that the canonical $G$-actions on $\Compacts(E)$ and on $\Linears(E)$ are well-defined: we have two $G$-actions on $\Linears(E)$ given by $g^{\rangle}(T)\coloneq g^{\rangle}T (g^{-1})^{\rangle}$, $g^{\langle}(T)\coloneq g^{\langle}T (g^{-1})^{\langle}$. We have
\[
\s{x, g^{\rangle}(T)y}_B = \s{g^{\langle}(T^*)x, y}_B
\]
for any $x, y\in E$. That is, we have $(g^{\rangle}(T))^* = g^{\langle}(T^*)$. For any rank-one operator $\theta_{x, y}$, we have $g^{\rangle}(\theta_{x, y})=\theta_{g^{\rangle}x, g^{\langle}y}$. When the $G$-action on $B$ is trivial, for example if $B=\bC$ or $C[0,1]$, such a $G$-action on $E$ as the $B$-pair $(\overline{E}, E)$ is same as a $G$-action on $E$ by adjointable operators: if $g^{\rangle}$ is the left multiplication by $u_g\in \Linears(E)$, $g^{\langle}$ is the left multiplication by $u_{g^{-1}}^*$. When the $G$-action on $B$ is non-trivial, $g^{\rangle}$ is not adjointable as a map on $E$ but it is adjointable as a map from $E$ to $E'$ where $E'$ is a Hilbert $B$-module which is equal to $E$ as a Banach space and equipped with the following Hilbert $B$-module structure: $e{\cdot}b\coloneq eg(b)$, $\s{f, e}_B'\coloneq g^{-1}(\s{f, e}_B)$. Then, $g^{\rangle}\colon E\to E', e\mapsto g^{\rangle}(e)$ is adjointable with adjoint $(g^{-1})^{\langle}$: we have
\[
\langle f, g^{\rangle}e\rangle_B'= g^{-1}(\langle f, g^{\rangle}e  \rangle_B) = \s{(g^{-1})^{\langle}f, e}_B.
\]
In particular, the operator norm of $g^{\rangle}$ on $E$ is equal to the square-root of the operator norm of the positive operator $(g^{-1})^{\langle} \cdot g^{\rangle } \in \Linears(E)$. We have an analogous remark for $g^{\langle}$. From these, we see that $g^{\rangle}\otimes1$ and $g^{\langle}\otimes1$ define a continuous $G$-action on the Hilbert $B\otimes C$-module $E\otimes X$ as the $B\otimes C$-pair for any $C^*$-algebra $C$ with the trivial $G$-action and for any Hilbert $C$-module $X$, and the operator norm of $g^{\rangle}\otimes1$ is equal to that of $g^{\rangle}$, and similarly for the graded tensor product.

In \cite{Laff10}, \cite{Lafforgue12}, only the coefficients $A=B=\bC$, and $B=[0,1]$ (for defining the homotopy equivalence relation) are relevant to the main results. Hence, these nuances were not so essential and not emphasized. The same remark applies to us: for our main results, we only need $KK_{G, \ell}(\bC, \bC)$ but we work in a more general setting to facilitate future applications.
\end{remark}

\begin{remark}
Another remark is that the definition on $E_{G,\ell}(A, B)$ only involves conditions on $T$, and no extra conditions on $T^*$ since these are redundant: for example if $g^{\rangle}(T)-T$ is compact, then $g^{\langle}(T^*)-T^*$ is compact. Hence, we simply write $g$ to mean  $g^{\rangle}$ or $\pi(g)$, and use the notation $g(T)$ for $g^{\rangle}(T)$. We certainly do not require $T$ to be essentially self-adjoint, and that would be unnatural for the non-unitary $G$-actions setting.
\end{remark}

\begin{theorem}[{\cite[Theorem 2.3]{Laff10}, see also \cite[Theorem 1.2]{Lafforgue12}}] \label{thm_gamma_Laff} Let $G$ be a second countable, locally compact group $G$ that acts continuously and isometrically on a metric space $(X, d)$ of finite asymptotic dimension with linear control. Let $x_0\in X$ and let $\ell$ be the length function defined by $\ell(g)=d(x_0, gx_0)$. Suppose that an element $y\in KK_G(\bC, \bC)$ satisfies the following condition: for any $s>0$, there is $C\geq0$ such that $y=1_G$ in $KK_{G, s\ell+C}(\bC, \bC)$. Then, 
\[
(y\otimes_\bC 1_A) \rtimes_rG \in KK(A\rtimes_rG, A \rtimes_rG)
\]
acts as the identity on $K_\ast( A \rtimes_rG)$ for any separable ungraded $G$-$C^*$-algebra $A$. In particular, if in addition $y$ is a gamma element $\gamma_G$ of $G$, the BCC holds for $G$.
\end{theorem}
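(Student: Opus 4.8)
The plan is to follow Lafforgue's ``controlled'' (Banach--type) descent argument \cite{Laff10, Lafforgue12}. Since $1_G\in KK_G(\bC,\bC)$ descends to the identity of $A\rtimes_rG$, the first reduction is to show that $\big((y-1_G)\otimes_\bC 1_A\big)\rtimes_rG$ annihilates $K_\ast(A\rtimes_rG)$. The hypothesis supplies, for every $s>0$, a constant $C(s)\ge 0$ and a cycle in $E_{G,s\ell+C(s)}(\bC,C[0,1])$ whose endpoints represent $1_G$ and $y$; tensoring with $A$ and using the tensor--product constructions for the non-isometric $G$-actions on Hilbert pairs recorded in Remark \ref{rem_El}, one obtains a homotopy over $A$ with the same exponential bound $\|\pi(g)\|\le e^{s\ell(g)+C(s)}$ on the $G$-representations.

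The core step is to descend such homotopies, whose $G$-actions are only exponentially (not uniformly) bounded, to actual morphisms of $K_\ast(A\rtimes_rG)$. Here the plan is to use that $(X,d)$ has finite asymptotic dimension with \emph{linear} control: for each scale $R>0$ there is a cover of $X$ of multiplicity at most $\operatorname{asdim}X+1$, Lebesgue number at least $R$, and mesh at most $\lambda R$ with $\lambda$ independent of $R$. This produces a $G$-equivariant partition of unity $(\psi_U)_U$ with $\sum_U\psi_U^2=1$, propagation $\lesssim\lambda R$, and $\|g\cdot\psi_U-\psi_U\|_\infty\lesssim \ell(g)/R$, and hence Schur--type multipliers at scale $R$ that cut the descended cycle down to finite propagation. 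Because $s$ may be taken arbitrarily small while $R=R(s)$ is chosen afterwards, the non-unitarity of $\pi$ contributes only a bounded error on each scale, and the linear control makes the differences between consecutive scales summable; a telescoping argument as $R\to\infty$ (equivalently $s\to0$) then identifies the descent of $y$ with that of $1_G$ on $K_\ast(A\rtimes_rG)$. This estimate --- uniform boundedness and summability of the scale-$R$ Schur multipliers acting on Hilbert modules over the \emph{reduced} crossed product --- is the main obstacle, and is precisely the analytic heart of \cite{Laff10, Lafforgue12}; linear control of the asymptotic dimension is what forces the relevant geometric series to converge.

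Finally, for the last clause: if $y=\gamma_G$ is a gamma element (in the classical or Meyer--Nest sense), then $\mu^G_{A,r}$ is split injective for every separable $G$-$C^*$-algebra $A$, and the composition of $\mu^G_{A,r}$ with the associated dual--Dirac map is multiplication by $\gamma_G$ on $K_\ast(A\rtimes_rG)$. Having just shown that this multiplication is the identity, $\mu^G_{A,r}$ is also surjective, hence an isomorphism for every $A$, so $G$ satisfies the Baum--Connes conjecture with coefficients.
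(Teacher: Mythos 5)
This theorem is cited in the paper from Lafforgue's work (\cite[Theorem 2.3]{Laff10}, \cite[Theorem 1.2]{Lafforgue12}); the paper gives no proof of its own, so there is no internal proof to compare against beyond the citation. Your proposal is a high-level sketch of Lafforgue's descent argument rather than a self-contained proof, and you say so: you explicitly defer the analytic heart to \cite{Laff10, Lafforgue12}, which is the appropriate posture for a theorem the paper itself treats as a black box. The structure you describe --- reducing to showing that $(y-1_G)$ descends to zero on $K$-theory, exploiting the exponentially bounded homotopies in $E_{G, s\ell+C}(\bC, C[0,1])$, using finite asymptotic dimension with linear control to control propagation, and sending $s\to 0$ --- captures the right ingredients, and the concluding paragraph (existence of $\gamma_G$ gives split injectivity of $\mu^G_{A,r}$, and $\gamma_G$ acting as the identity on $K_\ast(A\rtimes_rG)$ gives surjectivity, hence BCC) is the standard argument.

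One point of caution about fidelity to Lafforgue: the ``$G$-equivariant partition of unity'' and ``Schur multipliers at scale $R$'' phrasing is closer to the controlled $K$-theory / coarse-geometry picture than to what \cite{Laff10} actually does. Lafforgue's argument runs through unconditional completions and operators on Hilbert modules with exponential growth bounds, and the covers furnished by finite asymptotic dimension with linear control are not in general $G$-equivariant --- there is no properness or cocompactness assumed on the $G$-action. The geometric input enters through the length function $\ell$ and the growth estimates $\|\pi(g)\|\leq e^{s\ell(g)+C}$, not through an equivariant cover. Likewise, ``telescoping across scales'' is somewhat loose shorthand for the fact that for small enough $s$ the descended classes become well defined and homotopic. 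None of this is a gap in the logical sense, since you are not claiming to reprove the theorem, but it would be misleading to read your sketch as an accurate account of Lafforgue's method rather than as an impressionistic summary of its spirit.
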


See \cite[Definition 2.1]{Laff10} for the definition of finite asymptotic dimension with linear control. Any weakly geodesic, uniformly locally finite, hyperbolic metric space $(X, d)$ has finite-asymptotic dimension with linear control (see \cite[Proposition 2.10]{Laff10}). If metric spaces $X_1, \ldots, X_m$ have finite asymptotic dimension with linear control, then so does their product, equipped with any of the $l^p$-product metric for $1\leq p \leq \infty$ (see Remarque after \cite[Corollary 2.12]{Laff10}). In particular, any finite product of weakly geodesic, uniformly locally finite, hyperbolic metric spaces (with $\ell^1$-product metric) has finite asymptotic dimension with linear control. 

The following is our main technical tool to prove Theorem \ref{thm_hyp_BCC_wr}:

\begin{theorem}\label{thm_gamma_wr} Let $G$ be a second countable, locally compact group. Let $\ell$ be any length function on $G$. Suppose $G$ has a gamma element $\gamma_G \in KK_G(\bC, \bC)$ in the Meyer--Nest sense. Then, for any second countable, compact group $F$ and for any finite $F$-set $\Omega$, $G\wr_\Omega F$ has a gamma element $\gamma_{G\wr_\Omega F}$ in the Meyer--Nest sense. Moreover, if $\gamma_G =1_G$ in $KK_{G, \ell}(\bC, \bC)$, then $\gamma_{G\wr_\Omega F} =1_{G\wr_\Omega F}$ in $KK_{G\wr_\Omega F, \tilde\ell}(\bC, \bC)$ where $\tilde \ell$ is the length function on $G\wr_\Omega F$ defined by $\tilde \ell(g) \coloneq \sum_{\omega \in \Omega}\ell(g_\omega)$ for $(g_\omega)_{\omega \in \Omega}$ in $\prod_\Omega G \subset G \wr_\Omega F$ and $\tilde \ell (f) \coloneq 0$ for $f \in F$.
\end{theorem}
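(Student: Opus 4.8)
Write $\Gamma := G\wr_\Omega F = (\prod_\Omega G)\rtimes F$. The plan is to transport the Meyer--Nest data of $G$ to $\Gamma$ through the equivariant power functor $\mathrm{Pow}^F_\Omega$ of Definition~\ref{def_wreath_func}: it sends a separable ungraded $G$-$C^*$-algebra $A$ to the spatial tensor power $A^{\otimes\Omega}$ on which $\prod_\Omega G$ acts leg-by-leg and $F$ permutes the legs, and it is a symmetric monoidal functor $KK_G\to KK_\Gamma$ compatible with composition and with Kasparov products. Fix a dual Dirac morphism $\eta_G\in KK_G(\bC,P_G)$ and a Dirac morphism $D_G\in KK_G(P_G,\bC)$ with $D_G\otimes_\bC\eta_G=1_{P_G}$, so $\gamma_G=\eta_G\otimes_{P_G}D_G$. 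Put $P_\Gamma:=P_G^{\otimes\Omega}$, $\eta_\Gamma:=\eta_G^{\otimes\Omega}\in KK_\Gamma(\bC,P_\Gamma)$ and $D_\Gamma:=D_G^{\otimes\Omega}\in KK_\Gamma(P_\Gamma,\bC)$ (using $\bC^{\otimes\Omega}=\bC$). Functoriality and monoidality give at once $D_\Gamma\otimes_\bC\eta_\Gamma=(D_G\otimes_\bC\eta_G)^{\otimes\Omega}=1_{P_\Gamma}$ and $\eta_\Gamma\otimes_{P_\Gamma}D_\Gamma=\gamma_G^{\otimes\Omega}=:\gamma_\Gamma$. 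So everything reduces to showing that $D_\Gamma$ is a Dirac morphism for $\Gamma$, i.e.\ (a) $P_G^{\otimes\Omega}\in\langle\mathcal{CI}_\Gamma\rangle$ and (b) $D_\Gamma$ is a weak equivalence for $\Gamma$; then, by uniqueness of the Dirac morphism, $\gamma_\Gamma=\gamma_G^{\otimes\Omega}$ is the gamma element of $\Gamma$ in the Meyer--Nest sense.

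The clean input to both (a) and (b) is the identity of $\Gamma$-$C^*$-algebras
\[
\bigl(\mathrm{Ind}_L^G B\bigr)^{\otimes\Omega}\;\cong\;\mathrm{Ind}^{\,\Gamma}_{\,L\wr_\Omega F}\bigl(B^{\otimes\Omega}\bigr),
\]
valid since induction commutes with tensor products; as $\Omega$ is finite and $F$ is compact, $L\le G$ compact forces $L\wr_\Omega F$ to be a \emph{compact} subgroup of $\Gamma$, so $\mathrm{Pow}^F_\Omega$ carries the generators of $\mathcal{CI}_G$ into $\mathcal{CI}_\Gamma\subseteq\langle\mathcal{CI}_\Gamma\rangle$, and the analogous statement for weak equivalences of generators follows as well. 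The obstacle is that $\mathrm{Pow}^F_\Omega$ is a \emph{polynomial} (degree $|\Omega|$), not additive, functor: it does not preserve mapping cones, so neither the membership $P_G\in\langle\mathcal{CI}_G\rangle$ nor the weak contractibility of the cone of $D_G$ can be transported formally. This is exactly where Pimsner's methods \cite{Pimsner} enter. The device is that for a $G$-equivariant extension $0\to J\to A\to A/J\to 0$ the power $A^{\otimes\Omega}$ carries a finite $\Gamma$-equivariant decreasing filtration by the (closed $\Gamma$-invariant) ideals generated by tensors lying in $J$ in at least $k$ legs, whose successive subquotients are, $F$-orbit pattern by $F$-orbit pattern, $\Gamma$-algebras induced from tensor products of lower powers of $J$ and of $A/J$; running this filtration against a $\mathcal{CI}_G$-cellular model of $P_G$ (respectively, of $\mathrm{cone}(D_G)$) and inducting on $|\Omega|$ and on the cellular structure reduces (a) (respectively (b)) to the generator case above. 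I expect this filtration/induction bookkeeping --- identifying the subquotients as induced $\Gamma$-algebras over the various orbit patterns of $\Omega$, and checking all compatibilities $\Gamma$-equivariantly, including for the infinite cellular structures arising in a homotopy colimit presentation of $P_G$ --- to be the main work, and it is precisely the ``somewhat technical'' argument alluded to in the remark following Theorem~\ref{thm_inh_wr}.

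The length-function refinement is then routine. Assume $\gamma_G=1_G$ in $KK_{G,\ell}(\bC,\bC)$, witnessed by a cycle $(H,\pi,T)$ in $E_{G,\ell}(\bC,C[0,1])$ (trivial $G$-action on $C[0,1]$, $\|\pi(g)\|\le e^{\ell(g)}$) whose evaluations at $0$ and $1$ represent $\gamma_G$ and $1_G$. Its $\Omega$-fold power $(H^{\otimes\Omega},\pi^{\otimes\Omega},T_\Omega)$ is a cycle in $E_{\Gamma,\tilde\ell}(\bC,C([0,1]^\Omega))$: for $n=(n_\omega)_{\omega}\in\prod_\Omega G$ one has $\|\pi^{\otimes\Omega}(n)\|=\prod_{\omega}\|\pi(n_\omega)\|\le e^{\sum_{\omega}\ell(n_\omega)}=e^{\tilde\ell(n)}$, each $f\in F$ acts by a permutation unitary of norm $1=e^{\tilde\ell(f)}$, and the compactness and norm-continuity conditions on $T_\Omega$ are part of the construction of $\mathrm{Pow}^F_\Omega$ on cycles. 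Composing the coefficient with the $\Gamma$-equivariant $*$-homomorphism $C([0,1]^\Omega)\to C[0,1]$ given by restriction to the (permutation-invariant) diagonal yields a cycle in $E_{\Gamma,\tilde\ell}(\bC,C[0,1])$ interpolating $\gamma_G^{\otimes\Omega}$ and $1_G^{\otimes\Omega}$. Hence $\gamma_\Gamma=\gamma_G^{\otimes\Omega}=1_G^{\otimes\Omega}=1_\Gamma$ in $KK_{\Gamma,\tilde\ell}(\bC,\bC)$, completing the proof.
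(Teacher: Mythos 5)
Your high-level plan matches the paper's proof of Theorem~\ref{thm_gamma_wr_KK}: transport the Dirac data through the equivariant power functor $\hat\otimes_\Omega$, show $P_G^{\hat\otimes\Omega}\in\langle\mathcal{CI}_{G\wr_\Omega F}\rangle$ via a filtration bootstrap (this is the Izumi filtration of Lemma~\ref{lem_wr_CI} and Proposition~\ref{prop_admissible_cylinder}), and verify that $D_\Gamma=\hat\otimes_\Omega(D_G)$ is a Dirac morphism. There is, however, a genuine gap in step (b), the weak-equivalence check. Your filtration/bootstrap only reduces the weak contractibility of $\mathrm{cone}(D_\Gamma)$ to that of $\mathrm{cone}(D_G)^{\hat\otimes\Omega}$ --- these are $I_1$ and $I_{|\Omega|}$ of the Izumi filtration of $M_{D_G}^{\hat\otimes\Omega}$, and are not the same object --- and then stalls: you still need that $\hat\otimes_\Omega$ carries weakly contractible $G$-algebras to weakly contractible $G\wr_\Omega F$-algebras, which is a condition on \emph{all} compact subgroups of $G\wr_\Omega F$ and cannot be propagated by a further filtration because the class of weakly contractible objects, unlike $\langle\mathcal{CI}_\Gamma\rangle$, is not generated by a manageable set of objects. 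Concretely, a compact subgroup of $G\wr_\Omega F$ need not sit inside any $(\prod_\Omega L)\rtimes F$ with $L\le G$ compact: for $G=\Z$, $F=\Omega=C_2$ with the swap action, the order-two subgroup generated by $((1,-1),\sigma)$ is not contained in any such product, so you have no handle on the restrictions of $\gamma_G^{\hat\otimes\Omega}$ beyond the obvious product form. Nothing in your proposal bridges this.

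The paper supplies precisely this missing input: Lemma~\ref{lem_wr_conjugate} shows that every element of $G\wr_\Omega F$ is \emph{conjugate} into a compact subgroup of the shape $(\prod_{O\in\Omega/F_x}\prod_O L_O)\rtimes F_x$, and a character-theory observation (injectivity of $R(L)\to C(L)^{\mathrm{Ad}L}$) reduces the verification of $\mathrm{Res}_L(\hat\otimes_\Omega(\gamma_G))=1_L$ to compact subgroups topologically generated by one element, so Lemma~\ref{lem_wr_conjugate} applies; the functoriality of $\hat\otimes_\Omega$ and its compatibility with restriction and external tensor products (Theorem~\ref{thm_KK_wr_functor}) then finish the check. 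Your citation of Pimsner is a false lead --- that reference appears in the paper only in an unrelated remark about trees, not in this argument. For the $KK_{G,\ell}$ refinement your norm estimate for $\pi^{\hat\otimes\Omega}$ is correct, but ``the compactness and norm-continuity conditions on $T_\Omega$ are part of the construction'' suppresses the real technical content: one must prove a symmetric, non-unitary variant of Kasparov's technical theorem (Theorems~\ref{thm_tech_thm}, \ref{thm_tech_thm_Gl}) to build the $F$-equivariant partition of unity $(N_\omega)_\omega$ defining $T_\Omega=\sum_\omega N_\omega^{1/2}T_\omega$. The paper packages the refinement via Theorem~\ref{thm_wr_KK_Gl} and the commutative diagram~\eqref{eq_commute_wr_functor}, which is cleaner than your diagonal-restriction homotopy but in the same spirit.
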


The proof is given at the end of this section after several preparations. We first give immediate consequences of Theorem \ref{thm_gamma_wr}:

\begin{corollary}\label{cor_gamma_to_wreath} Let $G$ be a second countable, locally compact group $G$ that acts continuously and isometrically on a metric space $(X, d)$ of finite asymptotic dimension with linear control. Let $x_0\in X$ and let $\ell$ be the length function defined by $\ell(g)=d(x_0, gx_0)$. Suppose that $G$ has a gamma element $\gamma_G$ in $KK_G(\bC, \bC)$ in the Meyer--Nest sense, and that $\gamma_G$ satisfies the following condition: for any $s>0$, there is $C\geq0$ such that $\gamma_G=1_G$ in $KK_{G, s\ell+C}(\bC, \bC)$. Then, the BCC with finite wreath products holds for $G$.
\end{corollary}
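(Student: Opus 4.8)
The plan is to verify the defining condition of the BCC with finite wreath products directly: fix an arbitrary finite group $F$ and show that $G\wr F=\big(\prod_{F}G\big)\rtimes F$ satisfies the BCC, by applying Theorem~\ref{thm_gamma_Laff} with the gamma element supplied by Theorem~\ref{thm_gamma_wr}. First I would set up the geometry. Regard $F$ as a finite $F$-set under left translation, so that $G\wr F$ is a second countable, locally compact group and $F$ is a second countable compact group. Put $\overline{X}=\prod_{F}X$ with the $\ell^1$-product metric $\overline{d}\big((x_\omega)_\omega,(y_\omega)_\omega\big)=\sum_{\omega\in F}d(x_\omega,y_\omega)$; the coordinate-wise action of $\prod_F G$ together with the permutation action of $F$ on the coordinates makes $\overline{X}$ into a continuous, isometric $G\wr F$-space. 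Since $(X,d)$ has finite asymptotic dimension with linear control, so does the finite product $(\overline{X},\overline{d})$, by the Remarque following \cite[Corollary~2.12]{Laff10}. With base point $\overline{x}_0=(x_0)_{\omega\in F}$, the induced length function $\overline{\ell}(\gamma)=\overline{d}(\overline{x}_0,\gamma\overline{x}_0)$ is exactly the length function $\tilde\ell$ attached to $\ell$ in Theorem~\ref{thm_gamma_wr}: for $(g_\omega)_\omega\in\prod_F G$ one gets $\sum_{\omega}d(x_0,g_\omega x_0)=\sum_\omega\ell(g_\omega)$, and every $f\in F$ fixes $\overline{x}_0$.

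Next I would invoke Theorem~\ref{thm_gamma_wr} to obtain a gamma element $\gamma_{G\wr F}\in KK_{G\wr F}(\bC,\bC)$ for $G\wr F$ in the Meyer--Nest sense. It then remains to check the hypothesis of Theorem~\ref{thm_gamma_Laff} for $y=\gamma_{G\wr F}$ relative to the action on $(\overline{X},\overline{d})$, i.e.\ that for every $s>0$ there is $C'\geq 0$ with $\gamma_{G\wr F}=1_{G\wr F}$ in $KK_{G\wr F,\,s\overline{\ell}+C'}(\bC,\bC)$. Given $s>0$, the hypothesis on $\gamma_G$ yields $C\geq 0$ with $\gamma_G=1_G$ in $KK_{G,\,s\ell+C}(\bC,\bC)$. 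Here is where care is needed: one applies the ``moreover'' clause of Theorem~\ref{thm_gamma_wr} with the coarse length function $s\ell+C$ in place of the length function $\ell$. Its proof goes through unchanged for such a function, since $s\ell+C$ differs from the genuine length function $s\ell$ only by the additive constant $C\geq0$, which merely relaxes the operator-norm bounds $\|\pi(g)\|\leq e^{\,s\ell(g)+C}$ in the definition of the relevant $KK$-groups; moreover the ``sum over $\omega$'' recipe defining $\tilde\ell$ is additive, so applied to $s\ell+C$ it outputs precisely $s\tilde\ell+C|F|$ on $\prod_F G$ and $0$ on $F$, hence $s\overline{\ell}+C|F|$ on all of $G\wr F$. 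Therefore $\gamma_{G\wr F}=1_{G\wr F}$ in $KK_{G\wr F,\,s\overline{\ell}+C|F|}(\bC,\bC)$, which is the required vanishing with $C'=C|F|$.

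Finally, Theorem~\ref{thm_gamma_Laff} applied to $G\wr F$ acting isometrically and continuously on $(\overline{X},\overline{d})$, with $y=\gamma_{G\wr F}$ a gamma element satisfying the affine-vanishing condition just verified, gives that $G\wr F$ satisfies the Baum--Connes conjecture with coefficients. Since $F$ was an arbitrary finite group, $G$ satisfies the BCC with finite wreath products.

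The step I expect to be the main obstacle is the appeal, in the second paragraph, to the proof of Theorem~\ref{thm_gamma_wr} for the non-length function $s\ell+C$, together with bookkeeping the additive constant so that the resulting controlled vanishing is with respect to exactly the length function $\overline{\ell}$ coming from the product action on $\overline{X}$, rather than merely to a function coarsely equivalent to it. If one prefers not to reopen the proof of Theorem~\ref{thm_gamma_wr}, it suffices to note that $\gamma_{G\wr F}=1$ in $KK_{G\wr F,\varphi}(\bC,\bC)$ for any $\varphi$ dominating $s\tilde\ell+C|F|$ pointwise, so the single function $s\overline{\ell}+C|F|$ that Theorem~\ref{thm_gamma_Laff} requires is already covered.
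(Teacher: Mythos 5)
Your proof is correct and follows the paper's strategy exactly: combine Theorems \ref{thm_gamma_Laff} and \ref{thm_gamma_wr}, identify the length function $\tilde\ell$ of Theorem \ref{thm_gamma_wr} with the one coming from the $\ell^1$-product metric on $X^\Omega$, and track constants to conclude $\gamma_{G\wr_\Omega F}=1_{G\wr_\Omega F}$ in $KK_{G\wr_\Omega F,\, s\tilde\ell+|\Omega|C}(\bC,\bC)$. The subtlety you flag—that Theorem \ref{thm_gamma_wr} is being invoked with the sub-additive control function $s\ell+C$, which is not a length function in the strict sense—is handled tacitly by the paper, and your observation that the proof of Theorem \ref{thm_gamma_wr} goes through unchanged for such functions is the right way to close that gap.
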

\begin{proof} This follows directly from Theorem \ref{thm_gamma_Laff} and Theorem \ref{thm_gamma_wr}. Note that the length function $\tilde \ell$ on $G\wr_\Omega F$ of Theorem \ref{thm_gamma_wr} is equal to $g\mapsto \tilde{d}((x_0)_\omega, g\cdot (x_0)_\omega)$ for the product metric space $(X^\Omega, \tilde{d})$ of $(X, d)$, equipped with the canonical $G\wr_\Omega F$-action, where $\tilde d$ is the $\ell^1$-product metric. If $\gamma_G=1_G$ in $KK_{G, s\ell+C}(\bC, \bC)$, then $\gamma_{G\wr_\Omega F}=1_{G\wr_\Omega F}$ in $KK_{G\wr_\Omega F, s\tilde \ell + |\Omega|C}(\bC, \bC)$ by Theorem \ref{thm_gamma_wr} so Theorem \ref{thm_gamma_Laff} is applicable to the gamma element  $\gamma_{G\wr_\Omega F}$. 
\end{proof}

\begin{proof}[Proof of Theorem \ref{thm_hyp_BCC_wr}]
By \cite[Theorem 1.3]{Lafforgue12}, the assumptions of Corollary \ref{cor_gamma_to_wreath} are satisfied for any second countable, locally compact group $G$ that acts isometrically, continuously and properly on a weakly geodesic, uniformly locally finite hyperbolic metric space. 
\end{proof}

\begin{remark} See also \cite{CP22} for a deep discussion on the gamma element for hyperbolic groups. 
\end{remark}

The idea for proving Theorem \ref{thm_gamma_wr} is quite simple. The theorem follows immediately once we construct the analogue for $KK_{G, \ell}(A, B)$ of the equivariant power map/functor:
\[
\hat\otimes_\Omega \colon KK_G(A, B) \to KK_{G\wr_\Omega F}(A^{\hat\otimes \Omega}, B^{\hat\otimes \Omega}).
\]
It is worth remarking that, for the proof of Theorem \ref{thm_gamma_wr}, we only need this map for $KK_{G, \ell}(\bC, \bC)$ and for $KK_G(A, B)$ where $A, B$ are ungraded. We warn the reader that the map $\hat\otimes_\Omega$ is not additive unless $|\Omega|=1$.

For this purpose, we first define the equivariant power map $\hat\otimes_\Omega$ for $KK_G(A, B)$.

As usual, the commutator $[-,-]$ is graded and the tensor product $\hat\otimes$ is graded and minimal/spatial. When the algebras involved are ungraded, we also use the notation $\otimes$ in place of $\hat\otimes$. We refer the reader to \cite[Section 2]{Kas81} and \cite[Chapter 14]{Bla98} for the definitions of graded tensor products of graded $C^*$-algebras and graded Hilbert modules.

Let $\Omega$ be a finite $F$-set. It is important to clarify the canonical actions of the wreath product $G \wr_\Omega F$ on the graded tensor products $A^{\hat\otimes \Omega}$ and $E^{\hat\otimes \Omega}$, where $A$ is a graded $G$-$C^*$-algebra and $E$ is a $G$-equivariant graded Hilbert $A$-module. We emphasize that $A^{\hat\otimes \Omega}$ and $E^{\hat\otimes \Omega}$ are defined without requiring a fixed total order on the index set $\Omega$.

Let $(A_\omega)_{\omega \in \Omega}$ be a family of unital graded $\mathbb{C}$-algebras. Define the algebraic graded tensor product over $\Omega$ as the universal unital graded $\mathbb{C}$-algebra $\hat\bigotimes^{\mathrm{alg}}_{\omega \in \Omega} A_\omega$ generated by all $A_\omega$ ($\omega \in \Omega$), subject to the following relations: the inclusion maps $A_\omega \hookrightarrow \hat\bigotimes^{\mathrm{alg}}_{\omega \in \Omega} A_\omega$ are unital graded algebra homomorphisms; for any $a \in A_\omega$, $b \in A_{\omega'}$ with $\omega \ne \omega'$, the graded commutator vanishes: $[a, b]=0$. We note that, for any fixed total order on $\Omega =\{\omega_i\}_{1\leq i \leq |\Omega|}$, $ \omega_1 < \omega_2 < \dots < \omega_{|\Omega|}$, there is a canonical isomorphism $\hat\bigotimes^{\mathrm{alg}}_{\omega \in \Omega} A_\omega \cong A_{\omega_1} \hat\otimes^{\mathrm{alg}} \cdots \hat\otimes^{\mathrm{alg}} A_{\omega_{|\Omega|}}$.

If each $A_\omega$ is a unital graded $C^*$-algebra, we define $\hat\bigotimes_{\omega \in \Omega} A_\omega$ as the completion of $\hat\bigotimes^{\mathrm{alg}}_{\omega \in \Omega} A_\omega$ with respect to the spatial norm. In the non-unital case, we use the unitization $A_\omega^+$ and pass to the appropriate subalgebra. Again, with a fixed total order on $\Omega$, we obtain a canonical isomorphism:
$\hat\bigotimes_{\omega \in \Omega} A_\omega \cong A_{\omega_1} \hat\otimes \cdots \hat\otimes A_{\omega_{|\Omega|}}$. If all $A_\omega = A$, we write $A^{\hat\otimes \Omega} := \hat\bigotimes_{\omega \in \Omega} A$.

For a family $(E_\omega)_{\omega \in \Omega}$ where $E_\omega$ is a graded Hilbert $A_\omega$-module, we define the graded tensor product $\hat\bigotimes_{\omega \in \Omega} E_\omega $ as the closed subspace of $\hat\bigotimes_{\omega \in \Omega} \Linears (A_\omega \oplus E_\omega )$ (topologically) spanned by the tensor product of $E_\omega$ regarded as the subspace $\Compacts(A_\omega, E_\omega)$ of $\Linears (A_\omega \oplus E_\omega )$ by 
\[
E_\omega \cong \begin{bmatrix}0 & 0 \\ \Compacts(A_\omega, E_\omega)& 0 \end{bmatrix} \subset \Linears (A_\omega \oplus E_\omega),
\]
with the inner product $\s{e_1, e_2}\coloneq e_1^\ast e_2 \in \hat\bigotimes_{\omega \in \Omega} A_\omega \subset \hat\bigotimes_{\omega \in \Omega} \Linears (A_\omega \oplus E_\omega)$. Again, after fixing a total order on $\Omega$, this agrees canonically with the standard graded tensor product.  If all $E_\omega = E$, we write $E^{\hat\otimes \Omega} := \hat\bigotimes_{\omega \in \Omega} E$. 

With these definitions, the actions of the wreath product $G \wr_\Omega F$ on $A^{\hat\otimes \Omega}$ and $E^{\hat\otimes \Omega}$ are canonical and well-defined for any graded $G$-$C^*$-algebra $A$ and for any graded $G$-equivariant Hilbert $A$-module $E$. This makes $E^{\hat\otimes \Omega}=\hat\bigotimes_\Omega E$ a $G\wr_\Omega F$-equivariant graded Hilbert $A^{\hat\otimes \Omega}$-module. 

It is sometimes convenient to fix a total order on $\Omega$ and work with the standard expressions for the graded tensor product. We will occasionally do this implicitly with care. For instance, if $\Omega = C_2$ with nontrivial action by the cyclic group $C_2 = \{1, \tau\}$ with order $1<\tau$ (or the other), then for homogeneous elements $a, b \in A$, we have:
\[
\tau(a \hat\otimes b) = (-1)^{|a||b|} (b \hat\otimes a),
\]
where we identify $A^{\hat\otimes \Omega} \cong A \hat\otimes A$ (using the order). 

We caution the reader that unless a total order on $\Omega$ is fixed, expressions such as $\hat\otimes_{\omega \in \Omega} a_\omega$ are not well-defined as elements of $\hat\bigotimes_{\omega \in \Omega} A_\omega$, due to the nontrivial braiding and resulting sign ambiguity (if more than one  $a_\omega$ has degree $1$).

\begin{theorem}\label{thm_KK_wr_functor} For any second countable, locally compact groups $G, F$, for any finite $F$-set $\Omega$, and for any separable graded $G$-$C^*$-algebras $A, B$, there is a map
\[
\hat\otimes_\Omega \colon KK_G(A, B) \to KK_{G\wr_\Omega F}(A^{\hat\otimes \Omega}, B^{\hat\otimes \Omega})
\]
 where $A^{\hat\otimes \Omega}$, $B^{\hat\otimes \Omega}$ are equipped with the canonical actions of $G\wr_\Omega F$, satisfying the following:
\begin{enumerate}
\item $\hat\otimes_\Omega$ sends the class of a $G$-equivariant, graded $\ast$-homomorphism $\phi\colon A \to B$ to the class of the $G\wr_\Omega F$-equivariant, graded $\ast$-homomorphism $\phi^{\hat\otimes \Omega}\colon A^{\hat\otimes \Omega} \to B^{\hat\otimes \Omega}$;
\item for any $G$-equivariant, graded $\ast$-homomorphisms $\phi\colon A\to B$, $\psi\colon C \to D$, and for any $x\in KK_G(B, C)$, we have $\hat\otimes_\Omega(\phi \otimes_B x  \otimes_C \psi ) = (\hat\otimes_\Omega(\phi)) \otimes_{B^{\hat\otimes \Omega}}  (\hat\otimes_\Omega(x))   \otimes_{C^{\hat\otimes \Omega}} (\hat\otimes_\Omega(\psi))$ in $KK_{G\wr_\Omega F}(A^{\hat\otimes\Omega}, D^{\hat\otimes \Omega})$;
\item for any $G$-equivariant, graded $\ast$-homomorphism $\phi\colon A \to B$ that is a $KK_G$-equivalence, $\hat\otimes_\Omega (\phi)$ is a $KK_{G\wr_\Omega F}$-equivalence.
\end{enumerate}
The restriction of $\hat\otimes_\Omega$ to the subcategory of the separable ungraded $C^*$-algebras is a functor from $KK_G$ to $KK_{G\wr_ \Omega F}$, and it is a unique functor satisfying the first condition (1) above.
\end{theorem}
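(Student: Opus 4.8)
The plan is to construct $\hat\otimes_\Omega$ at the level of Kasparov cycles and then to read off all of (1)--(3), the functoriality, and the uniqueness from the formal calculus of Kasparov products. The starting point is the bookkeeping set up in the preceding paragraphs: for a graded $G$-$C^*$-algebra $A$ and a graded $G$-equivariant Hilbert $A$-module $E$, the order-free graded tensor powers $A^{\hat\otimes\Omega}$ and $E^{\hat\otimes\Omega}$ carry canonical actions of $G\wr_\Omega F$, with $\prod_\Omega G$ acting coordinatewise and $F$ permuting the factors through the finite image of $F$ in $\mathrm{Sym}(\Omega)$ (the Koszul signs being absorbed by the order-free conventions), and $E^{\hat\otimes\Omega}$ is then a $G\wr_\Omega F$-equivariant Hilbert $A^{\hat\otimes\Omega}$-module with a $G\wr_\Omega F$-equivariant representation $\phi^{\hat\otimes\Omega}$ of $A^{\hat\otimes\Omega}$ whenever $\phi\colon A\to\Linears(E)$ is $G$-equivariant. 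So, given $x\in KK_G(A,B)$, I would pick a representative cycle $(E,\phi,T)$, form the $G\wr_\Omega F$-equivariant data $\bigl(E^{\hat\otimes\Omega},\phi^{\hat\otimes\Omega}\bigr)$, and equip it with an ``$\Omega$-fold external self-power'' operator $T_\Omega$ making it a Kasparov cycle for $KK_{G\wr_\Omega F}(A^{\hat\otimes\Omega},B^{\hat\otimes\Omega})$.

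For the operator the cleanest route is the Baaj--Julg unbounded picture. Choosing (as one may, after the usual reductions and possibly stabilizing $B$) an unbounded representative $(E,\phi,\mathcal D)$ of $x$, with $\mathcal D$ odd, self-adjoint, regular and $[\mathcal D,\phi(a)]$ bounded on a dense subalgebra, I would set $\mathcal D_\Omega:=\overline{\sum_{\omega\in\Omega}\iota_\omega(\mathcal D)}$, where $\iota_\omega$ inserts the operator into the $\omega$-th factor. Since $\mathcal D$ is odd the $\iota_\omega(\mathcal D)$ pairwise anticommute, so $\mathcal D_\Omega$ is again odd, self-adjoint and regular with $\mathcal D_\Omega^2=\sum_\omega\iota_\omega(\mathcal D^2)$, the commutator condition is immediate coordinatewise, and, crucially, $\mathcal D_\Omega$ is $G\wr_\Omega F$-equivariant because the defining sum is symmetric in the unordered set $\Omega$. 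The one nontrivial cycle axiom, local compactness of $\phi^{\hat\otimes\Omega}(a)(1+\mathcal D_\Omega^2)^{-1}$, I would verify via the heat-kernel identity
\[
(1+\mathcal D_\Omega^2)^{-1}=\int_0^\infty e^{-s}\,\bigl(e^{-s\mathcal D^2}\bigr)^{\hat\otimes\Omega}\,ds,
\]
a norm-convergent integral whose integrand, for $s>0$ and $a=a_1\hat\otimes\cdots\hat\otimes a_{|\Omega|}$ in a dense subalgebra, equals $\bigotimes_{\omega}\phi(a_\omega)e^{-s\mathcal D^2}$, a graded tensor product of $B$-compact operators, hence $B^{\hat\otimes\Omega}$-compact. (If one prefers to avoid unbounded representatives, the same class is produced in the bounded picture by the iterated Kasparov product of $(E,\phi,T)$ with itself, using Kasparov's Technical Theorem to build a connection that can be chosen symmetrically in $\Omega$, hence $G\wr_\Omega F$-equivariantly, by averaging over the finite image of $F$ in $\mathrm{Sym}(\Omega)$.) One then checks that $\hat\otimes_\Omega(x):=[E^{\hat\otimes\Omega},\phi^{\hat\otimes\Omega},T_\Omega]$ is independent of all choices: homotopies and unitary isomorphisms of cycles for $x$ power up to homotopies and isomorphisms of cycles for $\hat\otimes_\Omega(x)$, and the equivariant operator-homotopy class of $T_\Omega$ is pinned down by the symmetric connection condition.

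With $\hat\otimes_\Omega$ in hand, property (1) is immediate: the cycle of a $G$-equivariant graded $\ast$-homomorphism $\phi\colon A\to B$ is $(B,\phi,0)$, whose $\Omega$-power is $(B^{\hat\otimes\Omega},\phi^{\hat\otimes\Omega},0)$, the cycle of $\phi^{\hat\otimes\Omega}$. For (2), note that for $\ast$-homomorphisms the operations $\phi\otimes_B(-)$ and $(-)\otimes_C\psi$ on $KK_G$ are pullback along $\phi$ and pushforward $\psi_\ast$ at the level of cycles, and $\Omega$-powering intertwines these with pullback along $\phi^{\hat\otimes\Omega}$ and pushforward along $\psi^{\hat\otimes\Omega}$, using $(\rho\circ\phi)^{\hat\otimes\Omega}=\rho^{\hat\otimes\Omega}\circ\phi^{\hat\otimes\Omega}$ and the canonical $G\wr_\Omega F$-equivariant identification $(E\otimes_\psi D)^{\hat\otimes\Omega}\cong E^{\hat\otimes\Omega}\otimes_{\psi^{\hat\otimes\Omega}}D^{\hat\otimes\Omega}$ (under which $T_\Omega\otimes1$ corresponds to $(T\otimes1)_\Omega$); combined with (1) this yields the asserted identity. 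Property (3) follows formally: if $\phi$ is a $KK_G$-equivalence with inverse $y\in KK_G(B,A)$, then by (1) and (2) the class $\hat\otimes_\Omega([\phi])$ has two-sided inverse $\hat\otimes_\Omega(y)$. Restricting to ungraded separable algebras, (1) and (2) show $A\mapsto A^{\otimes\Omega}$, $[\phi]\mapsto[\phi^{\otimes\Omega}]$ respects composition and identities, hence is a functor $KK_G\to KK_{G\wr_\Omega F}$; uniqueness among functors extending $\phi\mapsto[\phi^{\otimes\Omega}]$ on honest homomorphisms holds because every morphism of $KK_G$ is a composite of classes of $\ast$-homomorphisms and of formal inverses of $\ast$-homomorphisms that are $KK_G$-equivalences (the Cuntz picture), all of which are then forced to go to their $\Omega$-powers.

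I expect the main obstacle to be the equivariant, order-free construction of $T_\Omega$: one must simultaneously (i) exhibit it as a genuine Kasparov cycle — the naive bounded sum $\sum_\omega\iota_\omega(T)$ fails the local-compactness axiom, which is exactly why one passes to the unbounded picture (where the heat-kernel identity saves the day) or to the connection formalism — and (ii) arrange that no step refers to an ordering of $\Omega$, so that the $F$-action through $\mathrm{Sym}(\Omega)$ is genuinely implemented on the cycle; the order-free tensor-power conventions recalled above are precisely what make (ii) work, and they are also what allow the averaging argument in the bounded alternative. A secondary, routine point is checking that $T_\Omega$ meets the continuity/properness conditions in the definition of equivariant cycles — these hold coordinatewise.
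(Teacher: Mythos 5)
Your primary construction, in the Baaj--Julg unbounded picture with $\mathcal D_\Omega:=\overline{\sum_\omega\iota_\omega(\mathcal D)}$, is a genuinely different route from the paper's: the paper stays entirely with bounded Kasparov cycles, takes $T'=\sum_{\omega\in\Omega} N_\omega^{1/2}T_\omega$ for a symmetric, essentially commuting family $(N_\omega)_{\omega}$, and proves the existence of such a family by a bespoke $S_N$-equivariant variant of Kasparov's Technical Theorem (Theorem~\ref{thm_tech_thm}). The unbounded route is slicker exactly where the paper works hardest: the sum over $\Omega$ of copies of an odd operator is automatically symmetric, so no partition of unity has to be produced equivariantly, and your heat-kernel identity disposes of local compactness. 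You also sidestep the paper's mapping-cylinder detour for functoriality under a non-surjective $\psi$, since in the unbounded picture $(E\hat\otimes_\psi D)^{\hat\otimes\Omega}\cong E^{\hat\otimes\Omega}\hat\otimes_{\psi^{\hat\otimes\Omega}}D^{\hat\otimes\Omega}$ intertwines the operators directly, whereas the bounded construction needs condition~(6) on the $N_\omega$, which is only automatic for surjective $\psi$. Be aware, though, that the paper chooses the bounded proof on purpose: the same construction is reused verbatim to define $\hat\otimes_\Omega$ on $KK_{G,\ell}(A,B)$ in Theorem~\ref{thm_wr_KK_Gl}, where the $G$-action is non-unitary and the unbounded picture is not readily available; so your route, even if made rigorous here, would not serve Theorem~\ref{thm_gamma_wr}.

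Two parts of your argument have genuine gaps. First, the well-definedness of $[\mathcal D_\Omega]$ as a function of the class $x$ (rather than of the chosen unbounded representative) is not adequately argued: ``homotopies power up to homotopies'' glosses over the fact that a homotopy of bounded cycles does not straightforwardly lift to a homotopy of unbounded cycles with prescribed endpoints, and that, after powering a cycle over $B[0,1]$, one must still pass from $(B[0,1])^{\hat\otimes\Omega}$ to $B^{\hat\otimes\Omega}[0,1]$ along the diagonal while keeping track of $F$-equivariance; the paper handles this by running its entire construction directly on cycles over $B[0,1]$. Second, and more concretely, your bounded-picture fallback --- build a connection via Kasparov's Technical Theorem and then average over the image of $F$ in $\mathrm{Sym}(\Omega)$ --- is precisely the pitfall the paper flags in the remark immediately following Theorem~\ref{thm_tech_thm}: for $|\Omega|\geq 3$, the $S_N$-averages of a standard partition of unity $(N'_i)$ need not essentially commute with one another unless one already knows $[N'_i, w(N'_j)]\in\Compacts(E)$ for all permutations $w$, which the off-the-shelf Technical Theorem does not supply. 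So the naive averaging fails, and the paper's simultaneous, carefully interleaved choice of approximate units is in fact necessary for the bounded route.
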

\begin{proof} We give a proof using Kasparov cycles since we need to generalise the construction later to the group $KK_{G, \ell}(A, B)$, in which case both the categorical/homotopical approach (c.f. \cite[Section 2.2]{Bunke2023}) and the asymptotic homomorphisms approach (c.f. \cite[Lemma 2.4]{CEKN}) are subtle. We also note that, if one prefers, $F$ can be assumed to be a finite group without loss of generality for the obvious reason. 

First, we construct a candidate map
\[
F_{A, B} \colon KK_G(A, B) \to KK_{G\wr_\Omega F}(A^{\hat\otimes \Omega}, B^{\hat\otimes \Omega})
\]
for any separable graded $G$-$C^*$-algebras $A, B$. Let $(E, \pi, T)$ be a Kasparov cycle for $KK_G(A, B)$. Consider the following triple
\[
(E^{\hat\otimes \Omega}, \pi\wr_\Omega F, T') 
\]
where $\pi\wr_\Omega F$ is the canonical action of $G\wr_\Omega F$ on the graded tensor product $E^{\hat\otimes \Omega}$, making it a $G \wr_\Omega F$-equivariant graded Hilbert $A^{\hat \otimes \Omega}$-$B^{\hat \otimes \Omega}$-module, $T' \in \Linears(E^{\hat\otimes \Omega})$ is of the form
\[
T'= \sum_{\omega \in \Omega} N_\omega^{1/2} T_\omega,
\]
where $T_\omega\coloneq (1\hat\otimes  \ldots \hat\otimes1\hat \otimes T \hat\otimes 1 \hat\otimes \ldots \hat\otimes1 )$ with $T$ in the $\omega$-th factor, and the family $(N_\omega)_{\omega\in \Omega}$ of operators in $\Linears(E^{\hat\otimes \Omega})$ satisfies the following seven conditions:
\begin{enumerate}
\item $N_\omega$ are $\prod_\Omega G$-continuous, even degree, positive elements in $\mathcal{L}(E^{\hat\otimes \Omega})$; 
\item $1=\sum_{w\in \Omega} N_\omega$;
\item $[N_\omega, N_{\omega'}] \in \Compacts(E^{\hat\otimes \Omega})$ for any $\omega, \omega'$ in $\Omega$;\item $g(N_\omega)-N_\omega \in \Compacts(E^{\hat\otimes \Omega})$ for any $g\in \prod_{\Omega}G$;
\item $N_\omega \Compacts(E)_\omega \subset \Compacts(E^{\hat\otimes \Omega})$ where $\Compacts(E)_\omega=\bC \hat\otimes \ldots \hat\otimes  \Compacts(E) \hat\otimes \bC \hat\otimes\ldots \hat\otimes \bC$ with $\Compacts(E)$ in the $\omega$-th factor;
\item $[N_\omega, \Delta] \subset \Compacts(E^{\hat\otimes \Omega})$ for a separable set $\Delta$ containing $A^{\hat\otimes \Omega}$, $\{T_{\omega'}, \omega' \in \Omega\}$ and $\Compacts(E)_{\omega'}$ for all $\omega'\in \Omega$;
\item $w(N_{w^{-1}\omega})=N_{\omega}$ for any $w\in F$, $\omega\in \Omega$.
\end{enumerate}

The existence of such a family $(N_\omega)_{\omega\in \Omega }$ is proven in Theorem \ref{thm_tech_thm} below.

Then, it is easy to see that the triple $(E^{\hat\otimes\Omega}, \pi\wr_\Omega F, T')$ is a Kasparov cycle for $KK_{G\wr_\Omega F}(A^{\hat\otimes \Omega}, B^{\hat\otimes \Omega})$. The homotopy class $[E^{\hat\otimes\Omega}, \pi\wr_\Omega F, T']$ does not depend on the choice of $(N_\omega)_{\omega \in \Omega}$ satisfying the conditions above. To see this, given two families $(N_\omega)_{\omega \in \Omega}$, $(N'_\omega)_{\omega \in \Omega}$ of operators satisfying the condition above, by Theorem \ref{thm_tech_thm}, we can take another family $(N''_\omega)_{\omega \in \Omega}$ satisfying, in addition to the conditions above, that $[N''_\omega, N_{\omega'}], [N''_\omega, N'_{\omega'}] \in \Compacts(E^{\hat\otimes \Omega})$ for all $\omega, \omega' \in \Omega$. Then, the straight-line homotopies from $N_\omega$ to $N''_\omega$, and from $N''_\omega$ to $N'_\omega$ provide a homotopy  $(N^t_\omega)_{\omega \in\Omega, t\in [0,1]}$ connecting $(N_\omega)_\omega$ to $(N'_\omega)_\omega$ satisfying the conditions above. This shows the independence of $[E^{\hat\otimes\Omega}, \pi\wr_\Omega F, T']$ on the choice of $(N_\omega)_{\omega \in \Omega}$.

The construction applied to the cycles for $KK_G(A, B[0,1])$ shows that the map 
\[
(E, \pi, T) \mapsto  [E^{\hat\otimes \Omega}, \pi \wr_\Omega F, T'] 
\]
descends to 
\[
F_{A, B}\colon KK_G(A, B) \to KK_{G\wr_\Omega F}(A^{\hat\otimes \Omega}, B^{\hat\otimes \Omega}).
\]
From the construction, $F_{A, B}$ maps the class $[\phi]$ of a $G$-equivariant graded $\ast$-homomorphism $\phi\colon A \to \mathcal{K}(E)$ to the class of the $G\wr_\Omega F$-equivariant graded $\ast$-homomorphism $\phi^{\hat\otimes \Omega}\colon A^{\hat\otimes \Omega} \to \mathcal{K}(E)^{\hat\otimes \Omega} \cong \mathcal{K}(E^{\hat\otimes \Omega})$. In particular, it sends the unit in $KK_G(A, A)$ to the unit in $KK_{G\wr_\Omega F}(A^{\hat\otimes \Omega}, A^{\hat\otimes \Omega})$. 

From the construction, we see that $F_{A, C}$ maps the Kasparov product $x\otimes_B y$ to $F_{A, B}(x)\otimes_{B^{\hat\otimes \Omega}} F_{B, C}(y)$ provided either $x \in KK_G(A, B)$ is a $G$-equivariant graded $\ast$-homomorphism from $A$ to $B$, or $y \in KK_G(B, C)$ is a surjective $G$-equivariant graded $\ast$-homomorphism from $B$ to $C$. The first case is easy to see. To see the case when $y$ is a surjective $\ast$-homomorphism $\phi\colon B\to C$, we note that in this case, the canonical map $\Compacts(E)\hat\otimes \bC \to \Compacts(E\hat\otimes_B C)$ is surjective  for any Hilbert $B$-module $E$, since we have $\theta_{e_1\hat\otimes \phi(b_1), e_2\hat\otimes \phi(b_2)} =\theta_{e_1b_1, e_2b_2}\hat\otimes 1$ on $E\hat\otimes_BC$ for any $e_1, e_2 \in E$ and $b_1, b_2\in B$. For any element $x=[E, \pi, T]$ in $KK_G(A, B)$, $F_{A, B}(x)\otimes_{B^{\hat\otimes \Omega}} \phi^{\hat \otimes \Omega}$ is represented by a cycle of the form
\[
((E\hat\otimes_B C)^{\hat\otimes \Omega}, \pi \wr_\Omega F, T')
\]
where $T'=\sum_{\omega \in \Omega}N_\omega^{1/2} (T\hat\otimes 1)_\omega$ and operators $N_\omega \in \Linears((E\hat\otimes_B C)^{\hat\otimes \Omega})$ satisfy the seven conditions as above with $E\hat\otimes_B C$ in place of $E$. Here, the point is that condition (6) of $N_\omega$ is satisfied thanks to the surjectivity of $\Compacts(E)\hat\otimes \bC \to \Compacts(E\hat\otimes_B C)$ that we mentioned earlier (the surjectivity of $\phi\colon B\to C$ is not needed for any of the other conditions for $N_\omega$ to hold). It follows that $F_{A, B}(x)\otimes_{B^{\hat\otimes \Omega}} \phi^{\hat \otimes \Omega}= F_{A, C}(x \otimes_{B} \phi)$.

For the functoriality with respect to a not necessarily surjective $\ast$-homomorphism $\phi \colon B\to C$, we take the following roundabout approach. Any map $\phi\colon B\to C$ is the composition of the canonical split-inclusion $\iota_B\colon B\to M_\phi$ to the mapping cylinder $M_\phi$ of $\phi$ and a surjective map $\psi\colon M_\phi\to C$. The map $\iota_B\colon B\to M_\phi$ is a homotopy equivalence with a surjective inverse $\pi_B$. Combining this observation with the previous part, we have for any $x\in KK_G(A, B)$, $F_{A, B}(x\otimes_B \phi) = F_{A, B}(x)\otimes_{B^{\hat \otimes \Omega}} F_{B, C}(\phi)$ for any $\ast$-homomorphism $\phi \colon B\to C$. 

 It follows that if $\phi\colon A\to B$ is a graded $G$-$\ast$-homomorphism which is a $KK_G$-equivalence with the inverse $x$, then $F_{A, B}(\phi)$ is a $KK_{G\wr _\Omega F}$-equivalence with inverse $F_{B, A}(x)$. 
 
 For the last part of the statement, recall from \cite[Theorem 6.5]{Meyer2000} that any map in $KK_G(A, B)$ is a composition of $\ast$-homomorphisms and the inverses of $\ast$-homomorphisms in $KK_G$, if $A, B$ are ungraded. By the previous part, it follows $F_{A, C}(x\otimes_B y )= F_{A, B}(x)\otimes_{B^{\hat\otimes \Omega}} F_{B, C}(y)$ for any $x\in KK_G(A, B), y\in KK_G(B, C)$ if $A, B, C$ are ungraded. Therefore $F_{A, B}$ defines a functor $\hat\otimes_\Omega$ from $KK_G$ to $KK_{G\wr_\Omega F}$ for the ungraded $C^*$-algebras. The uniqueness follows again by \cite[Theorem 6.5]{Meyer2000}.
\end{proof}

Here is some variant of the Kasparov technical theorem. The latter  assertion (the symmetric case) is what is used in the proof above.

\begin{theorem}[c.f. {\cite[Theorem 1.4]{Kas88}}]\label{thm_tech_thm} For $1\leq i\leq N$, let $G_i$ be a second countable, locally compact group, let $A_i, B_i$ be separable graded $G_i$-$C^*$-algebras, and let $(E_i, \pi_i, T_i)$ be a Kasparov cycle for $KK_{G_i}(A_i, B_i)$. Let $E=\hat\bigotimes_{1\leq i \leq N}E_i$ be the exterior tensor product (over $\bC$), which is a $G$-equivariant graded $\hat\bigotimes_{1\leq i \leq N}A_i$-$\hat\bigotimes_{1\leq i \leq N}B_i$-module with the $G$-action $\hat\bigotimes_{1\leq i \leq N} \pi_i$ where $G=\prod_{1\leq i\leq N}G_i$. We regard $T_i\in \Linears(E)$ via $\Linears(E_i) \subset \Linears(E)$. Let $\Delta$ be any separable subspace of $\Linears(E)$ consisting of elements $x\in \Linears(E)$ such that $[x, \Compacts(E_i)]\subset \Compacts(E_i) + \Compacts(E)$ for all $i$ where we regard $\Compacts(E_i)\subset \Linears(E)$ (in practice, we include $\Compacts(E_j)$, $A_j$ and $T_j$ in $\Delta$ for all $j$).

Then, there is a family $(N_i)_{1\leq i\leq N}$ of elements in $\Linears(E)$ satisfying the following conditions:
\begin{enumerate}
\item $N_i$ are $G$-continuous, even degree, positive elements in $\mathcal{L}(E)$; 
\item $1 =\sum_{1\leq i \leq N}N_i$;
\item $[N_i, N_{j}] \in \Compacts(E)$ for any $1\leq i, j\leq N$;
\item $g(N_i)-N_i \in \Compacts(E)$ for any $g\in G$;
\item $N_i \Compacts(E_i) \subset \Compacts(E)$;
\item $[N_i, \Delta] \subset \Compacts(E)$.
\end{enumerate}

If in addition, $G_i$, $A_i$, $B_i$, $E_i$, $\pi_i$, $T_i$ are independent of $i$, then $(N_i)_{1\leq i\leq N}$ can be taken to be $S_N$-equivariant, for the symmetric group $S_N$, in the sense that 
\begin{enumerate}
\item[(7)] $w(N_{w^{-1}i})=N_{i}$ for any $w\in S_N$ and for any $1\leq i \leq N$,
\end{enumerate}
for the canonical $S_N$-action on $E$.
\end{theorem}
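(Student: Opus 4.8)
The plan is to deduce the statement from the equivariant Kasparov technical theorem \cite[Theorem 1.4]{Kas88} by an induction on $N$ that removes one tensor factor at each step, and then to obtain the symmetric refinement by averaging over $S_N$. For $N=1$ take $N_1=1$. For the inductive step, write $E=E_1\hat\otimes F$ with $F=\hat\bigotimes_{2\le i\le N}E_i$, so that $\Compacts(E)=\Compacts(E_1)\hat\otimes\Compacts(F)$. The key point is that $\Compacts(E_1)\hat\otimes 1_F$ and $1_{E_1}\hat\otimes\Compacts(F)$ are separable, $G$-invariant $C^*$-subalgebras of $\Linears(E)$ (with $G=\prod_iG_i$) whose product lies in $\Compacts(E)$, and that the separable subspace $\Delta'\supseteq\Delta\cup\bigcup_j(\{T_j\}\cup A_j\cup\Compacts(E_j))$ derives $\Compacts(E_1)\hat\otimes 1_F$ modulo $\Compacts(E)$; hence \cite[Theorem 1.4]{Kas88} supplies $P\in\Linears(E)$, $0\le P\le 1$, $G$-continuous, with $P(\Compacts(E_1)\hat\otimes 1_F)\subset\Compacts(E)$, $(1-P)(1_{E_1}\hat\otimes\Compacts(F))\subset\Compacts(E)$, $[P,\Delta']\subset\Compacts(E)$ and $g(P)-P\in\Compacts(E)$. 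Applying the inductive hypothesis to the module $F$ with the cycles $(E_i,\pi_i,T_i)_{i\ge 2}$, the group $\prod_{i\ge2}G_i$ and a suitably large separable $\Delta_F\subset\Linears(F)$ produces positive $(\widetilde N_i)_{2\le i\le N}$ on $F$ with $\sum_{i\ge2}\widetilde N_i=1_F$ and conditions (1)--(6) relative to $\Compacts(F)$.

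I would then set $N_1:=P$ and $N_i:=(1-P)^{1/2}(1_{E_1}\hat\otimes\widetilde N_i)(1-P)^{1/2}$ for $2\le i\le N$; these are positive and $\sum_iN_i=P+(1-P)^{1/2}1_E(1-P)^{1/2}=1$. Condition (5) for $N_1=P$ is immediate, and for $i\ge2$ it follows from the elementary fact that $ab\in\Compacts(E)$ with $b\ge0$ forces $ab^{1/2}\in\Compacts(E)$ (seen in $\Linears(E)/\Compacts(E)$, since then $(ab^{1/2})(ab^{1/2})^\ast=aba^\ast\in\Compacts(E)$): with $b=1-P$ this gives $(1-P)^{1/2}(1_{E_1}\hat\otimes\Compacts(F))\subset\Compacts(E)$, whence $N_i\Compacts(E_i)\subset\Compacts(E)$ using also $[P,\Compacts(E_i)]\subset\Compacts(E)$ and $\widetilde N_i\Compacts(E_i)\subset\Compacts(F)$. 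Conditions (3), (4), (6) for $N_1=P$ come from \cite[Theorem 1.4]{Kas88}; for $i\ge2$ they are checked in $\Linears(E)/\Compacts(E)$, where $[P]$ commutes with $\Delta'$, so that conjugation by $[(1-P)^{1/2}]$ turns the relations for $1_{E_1}\hat\otimes\widetilde N_i$ (inherited from those for $\widetilde N_i$ via the tensor factorisation and the $G$-continuity statement on $F$) into the required ones, the residual $1_{E_1}\hat\otimes\Compacts(F)$-terms being absorbed once more by $(1-P)^{1/2}(1_{E_1}\hat\otimes\Compacts(F))\subset\Compacts(E)$. A compact perturbation at the end, as usual, repairs exact positivity and the normalisation if these are spoilt.

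For the symmetric statement, assume all data independent of $i$, first replace $\Delta$ by the still-separable, $S_N$-invariant subspace $\sum_{w\in S_N}w(\Delta)$ (which still derives each $\Compacts(E_i)$ modulo $\Compacts(E)$), apply the non-symmetric case to get $(N_i)$ satisfying (1)--(6), and set $\widehat N_i:=\tfrac1{N!}\sum_{w\in S_N}w(N_{w^{-1}(i)})$. This is positive, sums to $1$ since $S_N$ fixes $1\in\Linears(E)$, still satisfies (1)--(6) (each $w(N_j)$ does, using $w(\Compacts(E_j))=\Compacts(E_{w(j)})$, $w(\Compacts(E))=\Compacts(E)$ and the $S_N$-invariance of $\Delta$), and satisfies (7) by construction, since $w'(\widehat N_{w'^{-1}(i)})=\tfrac1{N!}\sum_w(w'w)(N_{(w'w)^{-1}(i)})=\widehat N_i$.

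The main obstacle is the bookkeeping around condition (6) in the inductive step: since $\Delta$ need not respect the factorisation $E=E_1\hat\otimes F$, one must carefully separate the relations obtained from the inductive hypothesis (which only controls commutators inside $\Linears(F)$) from those forced by $P$ (which controls commutators with all of $\Delta'$), and make sure conjugation by $(1-P)^{1/2}$, together with the final compact perturbation, preserves positivity, the normalisation and every identity modulo $\Compacts(E)$ at once. If this proves awkward for the most general $\Delta$, one instead runs the quasicentral approximate unit argument underlying \cite[Theorem 1.4]{Kas88} directly, with $\Delta$ built into the quasicentrality requirement from the outset.
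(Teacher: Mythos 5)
Your non-symmetric construction (induction on tensor factors via Kasparov's technical theorem, with $N_1 = P$ and $N_i = (1-P)^{1/2}(1\hat\otimes\widetilde N_i)(1-P)^{1/2}$) is a genuinely different route from the paper, which instead constructs all $N_i$ simultaneously as weighted sums of operators $M_J$ built from carefully chosen quasi-central approximate units. The reason the paper avoids your route becomes fatal, however, at the last step: your treatment of the symmetric case (7) has a real gap, one that the paper explicitly warns against in the remark immediately following the theorem.

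The averaging step does not preserve condition (3). You set $\widehat N_i = \tfrac{1}{N!}\sum_{w\in S_N} w(N_{w^{-1}(i)})$ and assert that $(1)$--$(6)$ hold "since each $w(N_j)$ does." That reasoning is sound for the conditions that are linear in a single $N_i$ (items (1), (2), (4), (5), (6)), but item (3) is bilinear: $[\widehat N_i, \widehat N_j]$ is a double sum whose cross terms $[w(N_a),w'(N_b)]$ for $w\neq w'$ are \emph{not} controlled by the non-symmetric conclusion, which only gives $[N_a,N_b]\in\Compacts(E)$ and hence $[w(N_a),w(N_b)]\in\Compacts(E)$ for a \emph{single} $w$. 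For $w\neq w'$ you would need something like $[(w'^{-1}w)(N_a),N_b]\in\Compacts(E)$, which neither your inductive construction nor a generic application of Kasparov's technical theorem provides (your $N_1=P$ treats the factor $E_1$ asymmetrically, so $w(P)$ bears no controllable relation to the $N_j$). Enlarging $\Delta$ to $\sum_w w(\Delta)$ does not help, because the obstruction involves the $N_j$ themselves, which are constructed \emph{after} $\Delta$ is fixed. For $N=2$ the problem disappears ($\widehat N_2 = 1-\widehat N_1$), but already for $N=3$ it is fatal. This is precisely the point of the paper's remark after Theorem \ref{thm_tech_thm}; the paper circumvents it by building the $M_J$ from approximate units that are chosen $S_N$-equivariantly from the start, so that $[M_J, M_{J'}]\in\Compacts(E)$ is established directly and the resulting $N_i$ are $S_N$-equivariant on the nose rather than by after-the-fact averaging.
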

\begin{proof} 

For each subset $J\subset \{1, \ldots, N\}$, we define elements $M_J\in \Linears(E)$ of the form
\begin{equation}\label{eq_MJ}
M_J \coloneq \sum_{k\geq 0  }d_k\left( \prod_{j\in J} (1-u_k^{(j)})  \prod_{i \in \{1, \ldots, N\} \smallsetminus J} u_k^{(i)}  \right) d_k,
\end{equation}
\begin{itemize}
\item $d_k=(w_{k+1}-w_{k})^{1/2}$ for a suitable increasing sequence $(w_k)_{k\geq 0}$ of approximately $G$-equivariant, quasi-central approximate unit in $\Compacts(E)$ of even degree, with $w_0=0$;
\item $(u^{(i)}_k)_{k\geq0}$ is a suitable increasing sequence of approximately $G$-equivariant, quasi-central approximate unit in $\Compacts(E_i)$ of even degree.
\end{itemize}
Note that each summand is positive since $[u_k^{(i)}, u_k^{(j)}]=0$. The sum that defines $M_J$ is in the strict topology, and the convergence is guaranteed by taking a suitable $(w_k)_k$ so that $\|d_k h \| \leq 2^{-k}$ for a fixed, contractive, strictly positive element $h$ in $\Compacts(E)$ (see \cite{Hig87} for example).

The key condition (aside from the standard ones) we arrange for $(M_J)_J$ to satisfy is $[M_J, M_{J'}]\in \Compacts(E)$ for any $J, J'$. The desired positive elements $N_i$ are defined by
\begin{equation}\label{eq_Ni}
N_i \coloneq  \frac{1}{N}M_{\emptyset} + \sum_{i \in J\subset \{1, \ldots, N\}}\frac{1}{|J|}M_J.
\end{equation}
It is clear that we have
\[
1 = \sum_{J\subset\{1, \ldots, N \}}M_J =\sum_{1\leq i \leq N}N_i.
\]

We define $u^{(i)}_k, w_k$ inductively in the following order: $u^{(i)}_k$, $w_k$, $u^{(i)}_{k+1}$, $w_{k+1}, \ldots $, starting with $u^{(i)}_0=w_0=0$. We fix contractive, strictly positive elements $h_i\in \Compacts(E_i)$ and $h \in \Compacts(E)$. Write $G$ as the increasing union $\bigcup_{n\geq 1} Y_n$ of relatively compact open subsets $Y_n$. Write (the norm closure of) $\Delta$ as the closed span of a compact subset $Z$ contained in the unit ball. Let $(\delta_n)_{n\geq 1}$ be a fixed, decreasing sequence of strictly positive universal constants that are  explained later below. Having constructed $u^{(i)}_k$, $w_k$ for $k<n$, we construct $u^{(i)}_n\geq u^{(i)}_{n-1}$, and then $w_n\geq w_{n-1}$, so that the following conditions hold: 
 \begin{enumerate}
\item[(a1)] $\|[u^{(i)}_n, w_m]\| <  \delta_n$ for $m<n$;
\item[(a2)] $\|[u^{(i)}_n, u^{(i)}_m]\| < 2^{-2n}$ for $m\leq n$;
\item[(a3)] $\|(1-u^{(i)}_n)h_i\| \leq 2^{-n}$;
\item[(a4)] $\|{g(u^{(i)}_n)-u^{(i)}_n}\|< 2^{-n}$ for any $g \in Y_n \subset G$;
\item[(a5)] $\|[u^{(i)}_n, x]\| < 2^{-n}$ for $x\in Z$;
\item[(a6)] $\|[w_n, u_m^{(i)}]\| <  \delta_n$ for $m\leq n$;
\item[(a7)] $\|[w_n, w_m]\| <  \delta_{n}$ for $m\leq n$;
\item[(a8)] $\|(1-w_n) u_m^{(1)}u_m^{(2)}\ldots u^{(N)}_m  \|< \delta_n$ for $m\in \{n, n-1\}$; 
\item[(a9)] $\|{g(w_n)-w_n}\|< \delta_n$ for any $g \in Y_n \subset G$;
\item[(a10)] $\|[w_n, x]\| < \delta_n$ for $x\in Z\cup \{h_j\mid 1\leq j \leq N\}$;  
\item[(a11)] $\|(1-w_n)h\| < \delta_n$.
\end{enumerate}

By the proof of \cite[Lemma 1.4]{Kas88} (see also \cite[Section 1]{Arveson}, \cite{Hig87} and \cite[Section 12.4]{Bla98}), for any approximate units (increasing, contractive, even degree) $\underline{u_n^{(i)}}$ of $\Compacts(E_i)$ and $\underline{w_n}$ of $\Compacts(E)$, we can inductively obtain such $u_n^{(i)}$ in the convex hull $\mathrm{conv}\{\underline{u_m^{(i)}}\mid m\geq n\}$ and $w_n \in  \mathrm{conv}\{\underline{w_m} \mid m\geq n\}$. Note that any approximate unit of $\Compacts(E_i)$ is an approximate unit of $\Compacts(E_i)+ \Compacts(E)$. 

The constants $(\delta_n)_n$ are chosen, using the approximation of $x^{1/2}$ by polynomials in $x$ in $C[0, 1]$ (see \cite{Hig87} for example) so that, with these conditions satisfied, we have for $d_n=(w_{n+1}-w_n)^{1/2}$,
 \begin{enumerate}
\item[(b1)] $\|[u^{(i)}_n, d_m]\| <  2^{-2n}$ for $m< n$;
\item[(b2)] $\|[d_n, u_m^{(i)}]\| <  2^{-2n}$ for $m\leq n$;
\item[(b3)] $\|[d_n, d_m]\| <  2^{-2n}$ for $m\leq n$;
\item[(b4)] $\|{g(d_n)-d_n}\|< 2^{-n}$ for any $g \in Y_n \subset G$;
\item[(b5)] $\|[d_n, x]\| < 2^{-n}$ for $x\in Z\cup \{h_j\mid 1\leq j \leq N\}$;
\item[(b6)] $\|d_n h\| \leq 2^{-n}$.
\item[(b7)] $\|d_n u_n^{(1)}u_n^{(2)}\ldots u^{(N)}_n  \|\leq 2^{-n}$; 
\end{enumerate}
To see that item (b1) for $m=n-1$ holds, use $\|[u^{(i)}_n, w_{n-1}]\| <\delta_n$ from item (a1) and  $\| [u^{(i)}_n, w_{n} ]\| <\delta_n$ from item (a6). Combining items (b1), (b2), we have 
 \begin{enumerate}
\item[(b8)] $\|[u^{(i)}_n, d_m]\| <  2^{-2\max\{n, m\}}$ for all $m, n$.
\end{enumerate}

Define $M_J$ as in \eqref{eq_MJ}. Then these sums are well-defined, strictly convergent sums whose partial sums all have norm  bounded by one, and the following conditions hold:
\begin{enumerate}
\item[(c1)] $M_J$ are $G$-continuous, even degree, positive elements in $\mathcal{L}(E)$; 
\item[(c2)] $g(M_J)-M_J \in \Compacts(E)$ for any $g\in G$;
\item[(c3)] $M_J\Compacts(E_i) \subset \Compacts(E)$ if $i\in J$;
\item[(c4)] $M_\emptyset \in \Compacts(E)$;
\item[(c5)] $[M_J, x] \in \Compacts(E)$ for $x \in \Delta$.
\end{enumerate}
We leave this routine verification to the reader. We now check the critical condition $[M_J, M_{J'}]\in \Compacts(E)$. Let us simply write $M_J=\sum_{k\geq0}d_k U^{(J)}_k d_k$ where $U^{(J)}_k=\left( \prod_{j\in J} (1-u_k^{(j)})  \prod_{i \in \{1, \ldots, N\} \smallsetminus J} u_k^{(i)}  \right)$.

First of all, we have
\[
M_J = \sum_{k\geq0}( [d_k, U^{(J)}_k]d_k+ U^{(J)}_kd_k^2 )
\]
where the sum is strictly convergent with uniformly bounded partial sums. Since $\sum_{k\geq0}( [d_k, U^{(J)}_k] d_k)$ is absolutely convergent to a compact operator in $\Compacts(E)$ (by item (b8)), we have
\[
M_J - \sum_{k\geq0}(U_k^{(J)}d_k^2) \in \Compacts(E)
\]
where the sum is strictly convergent with uniformly bounded partial sums. Thus, to show $[M_J, M_{J'}] \in \Compacts(E)$, it suffices to show 
\[
\sum_{k\geq0}(U_k^{(J)}d_k^2 )  \sum_{l\geq0}(U_l^{(J')}d_l^2 ) -  \sum_{l\geq0}(U_l^{(J')}d_l^2 )  \sum_{k\geq0}(U_k^{(J)}d_k^2 )  \in \Compacts(E).
\]
Since each sum is strictly convergent with uniformly bounded partial sums, the left hand side is equal to 
\[
\lim_{T\to \infty}\sum_{0\leq k, l\leq T}(U_k^{(J)}d_k^2 )(U_l^{(J')}d_l^2 ) - (U_l^{(J')}d_l^2 )(U_k^{(J)}d_k^2 ) 
\]
where the limit is in the strict topology, with uniformly bounded terms. We have
\[
\sum_{0\leq k, l\leq T}(U_k^{(J)}d_k^2 )(U_l^{(J')}d_l^2 ) - (U_l^{(J')}d_l^2 )(U_k^{(J)}d_k^2 )  = \sum_{0\leq k, l \leq T}(A_{k,l} +  B_{k,l} + C_{k, l} + D_{k, l})
\]
where
\[
A_{k, l}= U_k^{(J)}[d_k^2, U_l^{(J')}]d_l^2; 
\]
\[
B_{k, l}= U_k^{(J)}U_l^{(J')}[d_k^2, d_l^2];  
\]
\[
C_{k, l}= [U_k^{(J)},U_l^{(J')}]d_l^2d_k^2;
\]
\[
D_{k, l}= U_l^{(J')}[U_k^{(J)}, d_l^2]d_k^2. 
\]
By (a2), (b3), (b8), we have 
\[
\max\{\|A_{k, l}\|, \|B_{k, l}\|, \|C_{k, l}\|, \|D_{k, l}\|\} \leq  4N\cdot 2^{-k-l}
\]
for all $k, l$. It follows that
\[
\sum_{k, l\geq0} (A_{k,l} +  B_{k,l} + C_{k, l} + D_{k, l})
\]
is absolutely convergent to a compact operator in $\Compacts(E)$. We have shown that $[M_J, M_{J'}] \in \Compacts(E)$. 

Now, define $N_i$ as in \eqref{eq_Ni}. It is easy to check that the all the conditions from (1) to (6) in the statement are satisfied for $(N_i)_{1\leq i\leq N}$.

We now consider the symmetric setting of the latter assertion. In this case, we can take $w_k$ to be $S_N$-equivariant, and $u_k^{(i)}$ to be $S_N$-equivariant in the sense that $w(u^{(w^{-1}i)}_k)=u_k^{(i)}$ for $w\in S_N$. It is not hard to see that the required conditions (a1)-(a11) can be arranged with these extra conditions on $w_k$ and $u_k^{(i)}$. Then, the corresponding elements $N_i$ satisfy $w(N_{w^{-1}i})=N_i$ for $w \in S_N$ as desired.
\end{proof}

\begin{remark} A naive approach to prove the latter assertion (the symmetric case) of Theorem \ref{thm_tech_thm} would be to take a standard partition of unity in the formula of the external Kasparov products, and take their $S_N$-averages. We note that, without any extra adjustment, this will not work unless $N=2$. For example, for $N=3$, suppose we take the $S_N$-average of the standard partition $(N_i')_{1\leq i\leq 3}$ of unity satisfying $N'_1+N'_2+N'_3=1$. Even if $N'_i$ are essentially commuting to each other, in order for their $S_3$-averages to essentially commute to each other, we want a condition like $[N'_1, w(N'_1)]\in \Compacts(E)$ for $w\in S_3$, which is not automatic from the standard construction of $N'_1$.
\end{remark}

\begin{definition}\label{def_wreath_func} For any second countable, locally compact groups $G, F$, for any finite $F$-set $\Omega$, and for any separable graded $G$-$C^*$-algebras $A, B$, we define the equivariant power map
\[
\hat\otimes_\Omega\colon KK_G(A, B) \to KK_{G\wr_\Omega F}(A^{\hat\otimes \Omega}, B^{\hat \otimes \Omega})
\]
as the map constructed in the proof of Theorem \ref{thm_KK_wr_functor}. For the separable ungraded $G$-$C^*$-algebras, it is a unique functor from $KK_G$ to $KK_{G\wr_\Omega F}$ sending $[\phi]$ to $[\phi^{\otimes \Omega}]$ for any $G$-$\ast$-homomorphism $\phi\colon A\to B$, as shown in Theorem \ref{thm_KK_wr_functor}.
\end{definition}

\begin{remark} It is expected but the authors do not know whether $\hat\otimes_\Omega$ is a functor for the graded $G$-$C^*$-algebras as well. We leave this to the interested reader, as it will not be needed in this article. By Theorem \ref{thm_KK_wr_functor}, this would follow if we could show that every morphism in $KK_G(A, B)$ is a composition of $\ast$-homomorphisms and their inverses in $KK_G$ for any separable graded $G$-$C^*$-algebras $A, B$. By \cite[Theorem 7.4]{Meyer2000}, this boils down to showing this for the single element $e\in KK(\bC, \hat {S})$, but the authors do not know if this is true. 
\end{remark}

We remind our convention that the category $KK_G$ consists of the separable \emph{ungraded} $G$-$C^*$-algebras. 

\begin{lemma}  For any second countable, locally compact groups $G, F, L$, for any finite $F$-set $\Omega$, and for any continuous homomorphism $L\to G\wr_\Omega F$, the composition 
\[
\mathrm{Res}_{G\wr _\Omega F}^{L}\circ (\hat\otimes_\Omega) \colon KK_G\to KK_{G\wr_\Omega F} \to KK_{L}
\]
is a unique functor from $KK_G$ to $KK_L$ sending any separable ungraded $G$-$C^*$ algebra $A$ to $A^{\otimes \Omega}$ (equipped with the $L$-action restricted from the $G\wr_\Omega F$-action) and the class of a $G$-equivariant $\ast$-homomorphism $\phi\colon A\to B$ to the class of the $L$-equivariant $\ast$-homomorphism $\phi^{\otimes \Omega}\colon A^{\otimes \Omega }\to B^{\otimes \Omega}$.
\end{lemma}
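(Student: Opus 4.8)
The plan is to recognise this composite as nothing more than the composition of two functors whose behaviour on objects and on $\ast$-homomorphisms is already understood, and then to invoke Meyer's presentation of $KK_G$ to obtain uniqueness for free. First I would note that \Cref{thm_KK_wr_functor} already provides, on the full subcategory of separable ungraded $G$-$C^*$-algebras, a functor $\hat\otimes_\Omega\colon KK_G\to KK_{G\wr_\Omega F}$ which sends an object $A$ to $A^{\otimes\Omega}$ equipped with its canonical $G\wr_\Omega F$-action and which sends the class $[\phi]$ of a $G$-equivariant $\ast$-homomorphism $\phi\colon A\to B$ to $[\phi^{\otimes\Omega}]$. On the other hand, for any continuous homomorphism $L\to G\wr_\Omega F$, restriction of Kasparov cycles is functorial, so $\mathrm{Res}_{G\wr_\Omega F}^{L}\colon KK_{G\wr_\Omega F}\to KK_L$ is a functor; concretely it leaves a $C^*$-algebra and the underlying $\ast$-homomorphisms unchanged and only restricts the group action along $L\to G\wr_\Omega F$, and it carries homotopies to homotopies. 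Composing, $\mathrm{Res}_{G\wr_\Omega F}^{L}\circ(\hat\otimes_\Omega)$ is a functor $KK_G\to KK_L$.

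Next I would read off its effect on objects and $\ast$-homomorphisms. On an object $A$ it returns $A^{\otimes\Omega}$ with the $L$-action obtained by restricting the canonical $G\wr_\Omega F$-action along $L\to G\wr_\Omega F$, which is exactly the $L$-action in the statement. On the class of a $G$-equivariant $\ast$-homomorphism $\phi\colon A\to B$ it returns $\mathrm{Res}_{G\wr_\Omega F}^{L}\big([\phi^{\otimes\Omega}]\big)=[\phi^{\otimes\Omega}]$, now viewed as the class of the $L$-equivariant $\ast$-homomorphism $\phi^{\otimes\Omega}\colon A^{\otimes\Omega}\to B^{\otimes\Omega}$. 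This establishes the existence of a functor with the asserted description.

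For uniqueness I would argue exactly as in the last paragraph of the proof of \Cref{thm_KK_wr_functor}: by \cite[Theorem 6.5]{Meyer2000}, every morphism of $KK_G(A,B)$ with $A,B$ separable ungraded is a finite composition of classes of $G$-equivariant $\ast$-homomorphisms and of formal inverses of such classes that are $KK_G$-equivalences. Any functor out of $KK_G$ preserves composition and sends the inverse of an isomorphism to the inverse of its image, so it is completely determined by its values on objects and on the classes $[\phi]$. Hence two functors $KK_G\to KK_L$ agreeing on objects and on all such classes coincide, and in particular $\mathrm{Res}_{G\wr_\Omega F}^{L}\circ(\hat\otimes_\Omega)$ is the unique functor with the stated description.

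I do not expect a substantial obstacle here: the only point requiring (routine) verification is that restriction along a possibly non-injective continuous homomorphism $L\to G\wr_\Omega F$ is genuinely a functor on $KK$, compatible with $\ast$-homomorphisms and homotopies at the level of Kasparov cycles — which is standard — and that this is the notion of restriction to which Meyer's presentation of $KK_G$ applies, so that the uniqueness argument goes through verbatim.
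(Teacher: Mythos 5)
Your proposal is correct and follows the same route as the paper: the paper's one-line proof simply invokes \cite[Theorem 6.5]{Meyer2000} for uniqueness, taking the existence and description of the composite functor as immediate from Theorem \ref{thm_KK_wr_functor} and the functoriality of restriction, exactly as you spell out.
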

\begin{proof} The uniqueness follows because any map in $KK_G(A, B)$ is the composition of $\ast$-homomorphisms and the inverses of $\ast$-homomorphisms in $KK_G$, if $A, B$ are ungraded (\cite[Theorem 6.5]{Meyer2000}).
\end{proof}


\begin{theorem}\label{thm_gamma_wr_KK} Let $G$ be a second countable, locally compact group. If $G$ has a gamma element $\gamma_G$ in $KK_G(\bC, \bC)$ in the Meyer--Nest sense, then for any second countable, compact group $F$ and for any finite $F$-set $\Omega$, $G\wr_\Omega F$ has a gamma element $\gamma_{G\wr_\Omega F}$ in $KK_{G\wr_\Omega F}(\bC, \bC)$ in the Meyer--Nest sense, and $\gamma_{G\wr_\Omega F}=\hat\otimes_\Omega(\gamma_G)$. If $\gamma_G=1_G$ in $KK_G(\bC, \bC)$, then $\gamma_{G\wr_\Omega F}=1_{G\wr_\Omega F}$ in $KK_{G\wr_\Omega F}(\bC, \bC)$.
In particular, the BCC with finite wreath products holds for $G$ if $\gamma_G=1_G$ in $KK_G(\bC, \bC)$.
\end{theorem}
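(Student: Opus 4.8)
The plan is to transport a dual‑Dirac/Dirac pair for $G$ to $G\wr_\Omega F$ through the equivariant power functor $\hat\otimes_\Omega$ of Definition~\ref{def_wreath_func}, using that on ungraded $C^*$-algebras it is a genuine functor $KK_G\to KK_{G\wr_\Omega F}$ (Theorem~\ref{thm_KK_wr_functor}). Fix a Dirac morphism $D\in KK_G(P,\bC)$ with $P\in\langle\mathcal{CI}_G\rangle$ and a dual‑Dirac morphism $\eta\in KK_G(\bC,P)$, so $D\otimes_\bC\eta=1_P$ and $\gamma_G=\eta\otimes_P D$. Set $P'\coloneq P^{\otimes\Omega}$, $D'\coloneq\hat\otimes_\Omega(D)\in KK_{G\wr_\Omega F}(P',\bC)$ and $\eta'\coloneq\hat\otimes_\Omega(\eta)\in KK_{G\wr_\Omega F}(\bC,P')$ (using $\bC^{\otimes\Omega}=\bC$). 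Functoriality immediately gives $D'\otimes_\bC\eta'=\hat\otimes_\Omega(D\otimes_\bC\eta)=\hat\otimes_\Omega(1_P)=1_{P'}$, so it remains to check (i) $P'\in\langle\mathcal{CI}_{G\wr_\Omega F}\rangle$ and (ii) $D'$ is a weak equivalence. Granting these, $(P',D',\eta')$ is a dual‑Dirac/Dirac pair for $G\wr_\Omega F$, whence $\gamma_{G\wr_\Omega F}\coloneq\eta'\otimes_{P'}D'=\hat\otimes_\Omega(\eta\otimes_P D)=\hat\otimes_\Omega(\gamma_G)$ is a gamma element in the Meyer--Nest sense. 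If in addition $\gamma_G=1_G$, i.e.\ $\gamma_G=1_\bC$ in $KK_G(\bC,\bC)$, then $\gamma_{G\wr_\Omega F}=\hat\otimes_\Omega(1_\bC)=1_\bC=1_{G\wr_\Omega F}$, and as recalled above this forces the BCC for $G\wr_\Omega F$; specialising $\Omega=F$ for every finite group $F$ yields the BCC with finite wreath products for $G$.

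The computational heart is the identity
\[
\mathrm{Res}^{G\wr_\Omega F}_{\prod_\Omega G}\bigl(\hat\otimes_\Omega(x)\bigr)=\bigotimes_{\omega\in\Omega}x\qquad\text{(external Kasparov product)}
\]
for $x\in KK_G(A,B)$. This is read off the construction in the proof of Theorem~\ref{thm_KK_wr_functor}: forgetting the permutation action of $F$, the cycle $\bigl(E^{\hat\otimes\Omega},\pi\wr_\Omega F,\sum_\omega N_\omega^{1/2}T_\omega\bigr)$ is precisely the cycle produced by the Kasparov technical theorem for the $|\Omega|$-fold external product over $\prod_\Omega G$; conditions (1)--(6) on the family $(N_\omega)$ in Theorem~\ref{thm_tech_thm} are exactly those required of an external‑product partition of unity, condition (7) being only the extra datum recording the $F$-symmetry. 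For (ii) this reduces weak equivalence of $D'$ to that of $\bigotimes_\omega D$ over $\prod_\Omega G$: the mapping cone of $D'$ restricts along $\prod_\Omega G\hookrightarrow G\wr_\Omega F$ to $\mathrm{Cone}(\bigotimes_\omega D)$, and for any compact $K\leq\prod_\Omega G$ one has $K\leq\prod_\Omega L_\omega$ with each $L_\omega\leq G$ compact, so $\mathrm{Res}_K(\bigotimes_\omega D)$ is the restriction of the external product $\bigotimes_\omega\mathrm{Res}_{L_\omega}(D)$ of $KK_{L_\omega}$-equivalences, hence invertible. A general compact $K\leq G\wr_\Omega F$ has $K_0\coloneq K\cap\prod_\Omega G$ normal with $K/K_0$ compact; since $\mathrm{Cone}(D')$ is a $K$-equivariant retract of $\mathrm{Ind}_{K_0}^K\mathrm{Res}_{K_0}^K(\mathrm{Cone}(D'))\cong\mathrm{Cone}(D')\otimes C(K/K_0)$ (via the unit $\bC\to C(K/K_0)$ and the Haar‑invariant conditional expectation), vanishing of $\mathrm{Res}_{K_0}(\mathrm{Cone}(D'))$ in $KK_{K_0}$ forces vanishing in $KK_K$. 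Thus $\mathrm{Cone}(D')$ is weakly contractible for $G\wr_\Omega F$, so $D'$ is a weak equivalence.

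For (i) I would avoid pushing the cellular structure of $P$ through $\hat\otimes_\Omega$ (which, as the paper stresses, is not additive) and instead use the Meyer--Nest orthogonality description $\langle\mathcal{CI}_{G\wr_\Omega F}\rangle={}^{\perp}\mathcal{CC}_{G\wr_\Omega F}$, where $\mathcal{CC}$ denotes the weakly contractible objects. The same averaging argument as above shows that $N\in\mathcal{CC}_{G\wr_\Omega F}$ if and only if $\mathrm{Res}_{\prod_\Omega G}(N)\in\mathcal{CC}_{\prod_\Omega G}$. Now $\mathrm{Res}_{\prod_\Omega G}(P')=P^{\otimes\Omega}$ with the factorwise action, and since the external tensor product $KK_G\times\cdots\times KK_G\to KK_{\prod_\Omega G}$ is exact and additive in each variable and sends $\bigotimes_i\mathrm{Ind}_{L_i}^G(B_i)$ to $\mathrm{Ind}_{\prod_i L_i}^{\prod_\Omega G}\bigl(\bigotimes_i B_i\bigr)$, it carries $\langle\mathcal{CI}_G\rangle^{\times\Omega}$ into $\langle\mathcal{CI}_{\prod_\Omega G}\rangle$; hence $P^{\otimes\Omega}\in\langle\mathcal{CI}_{\prod_\Omega G}\rangle$ and $KK_{\prod_\Omega G}\bigl(P^{\otimes\Omega},\mathrm{Res}_{\prod_\Omega G}N\bigr)=0$ for every $N\in\mathcal{CC}_{G\wr_\Omega F}$. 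Frobenius reciprocity for the cocompact subgroup $\prod_\Omega G\leq G\wr_\Omega F$, together with the fact that $N$ is a retract of $\mathrm{Ind}_{\prod_\Omega G}^{G\wr_\Omega F}\mathrm{Res}_{\prod_\Omega G}(N)\cong N\otimes C(F)$, then gives an injection $KK_{G\wr_\Omega F}(P',N)\hookrightarrow KK_{\prod_\Omega G}(P^{\otimes\Omega},\mathrm{Res}_{\prod_\Omega G}N)=0$, so $P'\in{}^{\perp}\mathcal{CC}_{G\wr_\Omega F}=\langle\mathcal{CI}_{G\wr_\Omega F}\rangle$.

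I expect step~(i) to be the main obstacle: non‑additivity of $\hat\otimes_\Omega$ rules out the naive ``it preserves localizing subcategories'' argument, and the route above instead leans on the orthogonality characterisation of $\langle\mathcal{CI}\rangle$, the reduction of weak contractibility along $\prod_\Omega G\hookrightarrow G\wr_\Omega F$, and an averaging trick. One must take care that the induction/restriction adjunctions and the description of $\mathcal{CC}$ behave correctly for a possibly infinite (second countable) compact group $F$, not merely for finite $F$ — the latter being all that the BCC‑with‑finite‑wreath‑products application actually requires.
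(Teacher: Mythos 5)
Your setup mirrors the paper exactly: use the functor $\hat\otimes_\Omega$ to transport a Dirac/dual-Dirac pair, obtain $D'\otimes_\bC\eta'=1_{P'}$ by functoriality, and reduce to checking that (i) $P'\in\langle\mathcal{CI}_{G\wr_\Omega F}\rangle$ and (ii) $D'$ is a weak equivalence. However, your route to both (i) and (ii) diverges from the paper's at the same point, and that point has a genuine gap.

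The gap is the claim that for a compact subgroup $K\leq G\wr_\Omega F$ with $K_0=K\cap\prod_\Omega G$, $KK_{K_0}$-contractibility of $\mathrm{Res}_{K_0}(\mathrm{Cone}(D'))$ forces $KK_K$-contractibility, because $\mathrm{Cone}(D')$ is ``a $K$-equivariant retract'' of $\mathrm{Cone}(D')\otimes C(K/K_0)$ via the unit and the Haar-invariant conditional expectation. The unit $\bC\to C(K/K_0)$ is indeed a $K$-equivariant $*$-homomorphism, and the conditional expectation $\int_{K/K_0}:C(K/K_0)\to\bC$ is a $K$-equivariant unital completely positive map, and they do compose to the identity as linear maps. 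But the conditional expectation is not a $*$-homomorphism, and the element it represents in $KK_K(C(K/K_0),\bC)$ (its Stinespring class $[L^2(K/K_0),0]$) composed with the unit gives the permutation representation class $[L^2(K/K_0)]\in R(K)\cong KK_K(\bC,\bC)$, not $1_K$. For example, with $K=\bZ/2$, $K_0=\{e\}$, one checks $KK_{\bZ/2}(\bC^2_{\mathrm{swap}},\bC)\cong\bZ$ and the unique composite with the unit is the regular representation $1+\mathrm{sgn}\in R(\bZ/2)$, so $\bC$ is \emph{not} a $KK_{\bZ/2}$-retract of $\bC^2_{\mathrm{swap}}$. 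More generally the implication ``$KK_{K_0}$-contractible $\Rightarrow$ $KK_K$-contractible'' is false for compact $K$ and proper $K_0\trianglelefteq K$ (e.g.\ a flip action on $\mathcal{O}_2\otimes\mathcal{O}_2$ gives an algebra that is $KK$-contractible with nonvanishing $\bZ/2$-equivariant $K$-theory). The same averaging retraction underlies your step (i) (``$N$ is a retract of $N\otimes C(F)$'') and fails there for the identical reason, independently of whether $F$ is finite.

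The paper circumvents exactly this obstacle by working at the level of the \emph{element} $z=\hat\otimes_\Omega(\gamma_G)\in KK_{G\wr_\Omega F}(\bC,\bC)$ rather than at the level of objects. The key input is that $KK_L(\bC,\bC)\cong R(L)$ for compact $L$ is detected, by Peter--Weyl character theory, on topologically cyclic subgroups $L_x=\overline{\langle x\rangle}$; this is what lets one replace $L$ by small abelian subgroups, and it has no analogue for weak contractibility of a general object. After this reduction, Lemma~\ref{lem_wr_conjugate} conjugates $x$ into a subgroup of the form $(\prod_{O\in\Omega/F_x}\prod_O L_O)\rtimes F_x$, where one can compare the two functors $T_0$ and $T_1$ directly and conclude $\mathrm{Res}_L(z)=1_L$. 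For step (i) the paper instead proves $P^{\otimes\Omega}\in\langle\mathcal{CI}_{G\wr_\Omega F}\rangle$ by a bootstrap over Izumi filtrations (Lemma~\ref{lem_wr_CI}, Proposition~\ref{prop_admissible_cylinder}), which carefully tracks the non-additivity of $\hat\otimes_\Omega$ at the level of exact triangles. If you want to rescue your approach, the minimal repair would be to replace the averaging step by the character-theoretic reduction: once (i) and the functoriality identities are in place, it suffices to prove $\mathrm{Res}_L(z)=1_L$ on topologically cyclic $L$, which is where the conjugation lemma becomes indispensable.
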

The proof is given after some preparations. 

Let $G, F$ be any locally compact groups and $\Omega$ be a finite $F$-set. We recall from \cite[Definition 3.10]{KN25}, the \emph{Izumi filtration} of $A^{\hat\otimes \Omega}$ associated with any short exact sequence 
\[
0 \to I \to A\to A/I \to 0
\]
of graded $G$-$C^*$-algebras. This is the filtration of $A^{\hat\otimes \Omega}$ by $G\wr_\Omega F$-invariant graded ideals:
\[
0 = I_{|\Omega|+1} \subset  I_{|\Omega|} \subset  \ldots \subset  I_{j+1} \subset I_j \subset  \ldots \subset I_{0} = A^{\hat\otimes \Omega},
\]
where $I_j$ is the closure of $\sum_{S\subset \Omega, |S|=j}I^{\hat\otimes S}\hat\otimes A^{\hat\otimes \Omega \smallsetminus S}$ in $A^{\hat\otimes \Omega}$. In particular, we have $ I_{|\Omega|}= I^{\hat\otimes \Omega}$. By convention, $I_{|\Omega|+1} \coloneq0$. By the proof of \cite[Proposition 3.9]{KN25}, assuming that the quotient map $A\to A/I$ admits a graded (not necessarily $G$-equivariant, not necessarily contractive) completely positive splitting (c.p.), we have a canonical $G\wr_\Omega F$-equivariant isomorphism 
\[
I_{j}/I_{j+1} \cong \bigoplus_{S\subset \Omega, |S|=j} I^{\hat\otimes S} \hat\otimes (A/I)^{\hat\otimes \Omega \smallsetminus S},
\]
where the right-hand side is endowed with the canonically induced $G\wr_\Omega F$-action. Moreover, the short exact sequence
\[
0 \to I_{j+1} \to I_j \to I_{j}/I_{j+1} \to 0
\]
admits a graded (not necessarily $G$-equivariant, not necessarily contractive) c.p. splitting. 

Let $A, B$ be any graded $G$-$C^*$-algebras. Let $\phi\colon A\to B$ be any (not necessarily surjective) graded $G$-equivariant $\ast$-homomorphism. Let $M_\phi$ be the mapping cylinder:
\[
M_\phi \coloneq \{ (a, b) \in A\oplus C([0,1], B) \mid \phi(a)=b(0)   \},
\]
and let $C_\phi$ be the mapping cone:
\[
C_\phi \coloneq \{ (a, b) \in A\oplus C([0,1], B) \mid \phi(a)=b(0), b(1)=0   \}.
\]
For the discussion below, $M_\phi$ is best viewed as the algebra of sections of an upper-semicontinuous field of $C^*$-algebras on $[0, 1]$ whose fibers are $A$ at $0$ and $B$ over $(0, 1]$. The  sections are generated (spanned) by $(a, \phi(a))$ for $a\in A$ and $(0, b)$ for $b \in C_0((0, 1], B)$, and $M_\phi$ is a $C[0,1]$-algebra. The ideal $C_\phi$ of $M_\phi$ consists of the sections vanishing at $1$.

Let $(I_j)_{0\leq j \leq |\Omega|+1}$ be the Izumi filtration of $M_\phi^{\hat\otimes \Omega}$ associated with the exact sequence
\[
0\to C_\phi \to M_\phi \xrightarrow{{\mathrm{ev}_{t=1}}}  B\to 0.
\] 
Since this exact sequence admits a graded ($G$-equivariant) contractive c.p. (c.c.p.) section $\psi\colon b\mapsto (0, tb)$, the sequence
\begin{equation}\label{eq_Izumi_ref}
0\to I_{j+1} \to I_j \to I_{j}/I_{j+1} \to 0
\end{equation}
admits a graded ($G\wr_\Omega F$-equivariant, not necessarily contractive) c.p. section, and we have
\begin{equation}\label{eq_Izumi_decomp}
I_{j}/I_{j+1} \cong \bigoplus_{S\subset \Omega, |S|=j} C_\phi^{\hat \otimes S} \hat\otimes B^{\hat \otimes \Omega \smallsetminus S}.
\end{equation}
As in the proof of \cite[Proposition 3.9]{KN25}, a $G\wr_\Omega F$-equivariant c.p. section $I_{j}/I_{j+1} \to I_j$ is given by $\bigoplus_{S\subset \Omega, |S|=j} \mathrm{id}_{C_\phi}^{\hat \otimes S} \hat \otimes \psi^{\hat\otimes \Omega \smallsetminus S}$. These hold more generally for the Izumi filtration of $A^{\hat \otimes \Omega}$ associated with any short exact sequence of graded $G$-$C^*$-algebras $(0\to I \to A \to A/I\to 0)$ which admits a graded $G$-equivariant c.p. section. 

A more relevant fact we prove below is that the sequence \eqref{eq_Izumi_ref} admits a $G\wr_\Omega F$-equivariant c.c.p. section, at least, in the setting of the mapping cylinder/cone. To make the proof of Proposition \ref{prop_admissible_cylinder} below more understandable, it is helpful to illustrate the following key example:

\begin{example} Suppose $A=B=\bC$, $\phi=\mathrm{id}_\bC$, $G$ is the trivial group, and $F$ is the cyclic group $C_2$ of order two acting on $\Omega=C_2$ non-trivially. Then we have $M_\phi\cong C[0,1]$ and $M_\phi^{\otimes\Omega}=C([0,1]\times [0,1])$. The ideals $I_1$ and $I_2$ of the Izumi filtration are the ideals of $C([0,1]\times [0,1])$ vanishing at $(1,1)$ and on $[0, 1]\times \{1\} \cup \{1\}\times [0, 1]$, respectively. The quotient map $I_1\to I_1/I_2$ is identified as the restriction of $C_0$-functions on $[0,1]\times [0,1] \smallsetminus \{(1, 1)\} $ to $[0, 1)\times \{1\} \sqcup \{1\}\times [0, 1)$. The $C_2$-equivariant c.p. section $I_1/I_2\to I_1$ we mentioned above, is identified as the extension of functions on $[0, 1]\times \{1\} \cup \{1\}\times [0, 1]$ (vanishing at $(1,1)$) to the square $[0,1]\times [0,1]$ of the following form: a function $f=(f_1, f_2) \in C_0([0,1)\times \{1\}) \oplus C_0(\{1\} \times [0,1))$ is extended to $\tilde{f} \in C([0,1]\times [0,1])$ by $\tilde{f}(t_1, t_2)\coloneq t_2f_1(t_1, 1) + t_1f_2(1, t_2) =  t_2f(t_1, 1) +  t_1f(1,t_2) $. This extension is not contractive (it has norm $2$) as a map  $I_1/I_2\to I_1$ although it is quite canonical. On the other hand, we can define a $C_2$-equivariant c.c.p. extension by extending a function $f$ on  $[0, 1]\times \{1\} \cup \{1\}\times [0, 1]$ to $\tilde{f}$ on $[0,1]\times [0,1]$ as the $1$-homogeneous extension with respect to the $\ell^\infty$-metric on the cube:
\[
\tilde{f}(t_1, t_2) \coloneq 
  \begin{cases}  
\max{\{t_1, t_2\}}  f(\frac{t_1}{\max{\{t_1, t_2\}}}, \frac{t_2}{\max{\{t_1, t_2\}}})   &    (t_1, t_2) \neq (0, 0)  \\
    0 &   (t_1, t_2) =(0, 0). 
    \end{cases} 
  \]
  \end{example}

The proof of the following proposition is all about extending this construction to the general mapping cylinders, and it is not as complicated as it might look:

\begin{proposition}\label{prop_admissible_cylinder} Let $G, F$ be any locally compact groups, and let $\Omega$ be any finite $F$-set. Let $\phi\colon A\to B$ be any graded $G$-equivariant $\ast$-homomorphism for graded $G$-$C^*$-algebras $A, B$.  Let $(I_j)_{0\leq j \leq |\Omega|+1}$ be the Izumi filtration of $M_\phi^{\hat\otimes \Omega}$ associated with the mapping cone/cylinder exact sequence $(0\to C_\phi \to M_\phi \to B\to 0)$. For any $0\leq j \leq |\Omega| -1$, the short exact sequence
\[
0 \to I_{j+1} \to I_j \to I_j/I_{j+1}\to 0
 \]
 admits a graded, $G\wr_\Omega F$-equivariant, contractive completely positive (c.c.p.) section.
\end{proposition}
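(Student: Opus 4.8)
The plan is to cut off, by a suitable $F$-equivariant partition of unity pulled back from the base $[0,1]^\Omega$, the $G\wr_\Omega F$-equivariant (but non-contractive) completely positive section $\rho_j:=\bigoplus_{|S|=j}\rho_{j,S}$ of $0\to I_{j+1}\to I_j\to I_j/I_{j+1}\to 0$ already available from \cite[Proposition 3.9]{KN25}, where $\rho_{j,S}:=\mathrm{id}_{C_\phi}^{\hat\otimes S}\hat\otimes\psi^{\hat\otimes\Omega\smallsetminus S}$. The key observation is that each component $\rho_{j,S}\colon C_\phi^{\hat\otimes S}\hat\otimes B^{\hat\otimes\Omega\smallsetminus S}\to I_j$, being a graded minimal tensor product of the identity of $C_\phi$ with the c.c.p. map $\psi$, is itself c.c.p.; only the unnormalised summation over the $\binom{|\Omega|}{j}$ subsets $S$ destroys contractivity.

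First I would record the relevant $C([0,1]^\Omega)$-algebra bookkeeping. Write $C_\phi$ as the ideal of $M_\phi$ for the open subset $[0,1)\subset[0,1]$; then $M_\phi^{\hat\otimes\Omega}$ is a nondegenerate $C([0,1]^\Omega)$-algebra and, for $|S|=j$, the ideal $J_S:=C_\phi^{\hat\otimes S}\hat\otimes M_\phi^{\hat\otimes\Omega\smallsetminus S}$ corresponds to the open set $V_S:=\{t\in[0,1]^\Omega:\ t_i<1\text{ for all }i\in S\}$, so that $I_j=\overline{\sum_{|S|=j}J_S}$ is the ideal for $U_j:=\bigcup_{|S|=j}V_S=\{t:\ \#\{i:t_i=1\}\le|\Omega|-j\}$. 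Under the isomorphism $I_j/I_{j+1}\cong\bigoplus_{|S|=j}C_\phi^{\hat\otimes S}\hat\otimes B^{\hat\otimes\Omega\smallsetminus S}$ of \cite{KN25}, the $S$-summand is the image of $J_S$ and is supported on the clopen piece $P_S:=V_S\cap W_{j+1}=\{t:\ t_i<1\ (i\in S),\ t_i=1\ (i\notin S)\}$ of $W_{j+1}\smallsetminus W_j=\bigsqcup_{|S|=j}P_S$; moreover $\rho_{j,S}\bigl(C_\phi^{\hat\otimes S}\hat\otimes B^{\hat\otimes\Omega\smallsetminus S}\bigr)\subseteq J_S$ and, since $\rho_j$ is a section, $q\bigl(\rho_{j,S}(x_S)\bigr)=x_S$ for the quotient map $q\colon I_j\to I_j/I_{j+1}$. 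The one combinatorial input I will use is that $P_{S'}\cap V_S=\varnothing$ whenever $S\ne S'$ with $|S|=|S'|=j$, because a point of $P_{S'}$ has exactly the coordinates indexed by $S'$ strictly less than $1$.

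Next I would choose the partition of unity and define the candidate section. Since $F$ acts continuously on the finite discrete set $\Omega$, the action factors through a finite quotient, so we may assume $F$ finite. Take any partition of unity $(\chi^0_S)_{|S|=j}$ on the (metrizable, hence paracompact) space $U_j$ subordinate to the finite open cover $\{V_S\}$, and set $\chi_S:=|F|^{-1}\sum_{w\in F}w\cdot\chi^0_{w^{-1}S}$; one checks directly that $(\chi_S)_{|S|=j}$ is again a partition of unity on $U_j$ subordinate to $\{V_S\}$ and that $w\cdot\chi_S=\chi_{wS}$ for all $w\in F$. Regarding each $\chi_S$ as a central positive contraction in $M(I_j)$ through $C_b(U_j)\subseteq Z(M(I_j))$ — and noting $\chi_S\cdot I_j\subseteq J_S$ because $\chi_S$ vanishes off $V_S$ — I define
\[
\sigma_j\colon I_j/I_{j+1}=\bigoplus_{|S|=j}C_\phi^{\hat\otimes S}\hat\otimes B^{\hat\otimes\Omega\smallsetminus S}\longrightarrow I_j,\qquad \sigma_j\bigl((x_S)_{|S|=j}\bigr):=\sum_{|S|=j}\chi_S\cdot\rho_{j,S}(x_S).
\]

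Finally I would verify the required properties of $\sigma_j$. It is graded ($\rho_{j,S}$ is graded and $\chi_S$ is even) and $G\wr_\Omega F$-equivariant: $\prod_\Omega G$ fixes the $\chi_S$ and acts compatibly through the $\rho_{j,S}$, while for $w\in F$ the identities $w\cdot\rho_{j,S}(y)=\rho_{j,wS}(w\cdot y)$ (from equivariance of $\rho_j$) and $w\cdot\chi_S=\chi_{wS}$ give $w\cdot\sigma_j(x)=\sigma_j(w\cdot x)$ after reindexing. It is a section: $q$ is $C_0(U_j)$-linear, so $q(\chi_S\cdot z)=\bigl(\chi_S|_{W_{j+1}\smallsetminus W_j}\bigr)\cdot q(z)$; since $\sum_{S'}\chi_{S'}=1$ on $U_j$ and, by the combinatorial input, $\chi_S|_{P_{S'}}=0$ for $S'\ne S$, we get $\chi_S|_{P_S}=1$, so $\chi_S|_{W_{j+1}\smallsetminus W_j}$ is the central projection onto the $S$-summand and $q\bigl(\chi_S\rho_{j,S}(x_S)\bigr)=x_S$; summing, $q(\sigma_j(x))=\sum_S x_S=x$. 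It is c.c.p.: each $z\mapsto\chi_S\cdot\rho_{j,S}(z)$ is completely positive, hence so is $\sigma_j$; and for contractivity, setting $y_S:=\rho_{j,S}(x_S)$ (so $\|y_S\|\le\|x_S\|\le\|x\|$), the row $R:=(\chi_S^{1/2})_{|S|=j}$ and the diagonal $W:=\mathrm{diag}(y_S)_{|S|=j}$ over $M(I_j)$ satisfy $\sigma_j(x)=\sum_S\chi_S^{1/2}y_S\chi_S^{1/2}=RWR^\ast$ by centrality of $\chi_S$, whence $\|\sigma_j(x)\|\le\|R\|^2\|W\|=\bigl\|\sum_S\chi_S\bigr\|\cdot\max_S\|y_S\|\le\|x\|$. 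The step requiring the most care is the section property — tracking through $q$ how the central multipliers $\chi_S$ restrict to the base $W_{j+1}\smallsetminus W_j$ of $I_j/I_{j+1}$; everything else is formal once the $C([0,1]^\Omega)$-algebra structure is in place, the equivariant partition of unity is fixed, and the $RWR^\ast$-estimate is invoked for contractivity.
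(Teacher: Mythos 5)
Your proposal is correct and takes a genuinely different route from the paper. The paper constructs the section by a single explicit formula: view $M_\phi^{\hat\otimes\Omega}$ as ``legitimate sections'' of a field over the cube $[0,1]^\Omega$, and extend a section defined on the faces $\cup_{|S|=j}F_S^1$ to $\cup_{|S|=j+1}F_S^1$ by the $1$-homogeneous (with respect to the $\ell^\infty$-metric) scaling $\tilde f(t)=(\max_s t_s)\,f(t/\max_s t_s)$, iterating over $j$; this is manifestly $F$-equivariant and c.c.p.\ by inspection. You instead start from the known $G\wr_\Omega F$-equivariant c.p.\ (but non-contractive) section $\rho_j=\bigoplus_{|S|=j}\mathrm{id}_{C_\phi}^{\hat\otimes S}\hat\otimes\psi^{\hat\otimes\Omega\smallsetminus S}$ and multiply the $S$-component by a member of an $F$-averaged partition of unity $(\chi_S)$ subordinate to $\{V_S\}$; the section property then falls out of the disjointness $P_{S'}\cap V_S=\varnothing$ for $S\neq S'$ (so $\chi_S$ is the characteristic function of $P_S$ on $U_j\smallsetminus U_{j+1}$), and contractivity from the row–diagonal estimate $\sigma_j(x)=RWR^\ast$ with $\|R\|^2=\|\sum_S\chi_S\|=1$. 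Both the $C_0(U_j)$-linearity of the quotient, the $\prod_\Omega G$-fixedness of the $\chi_S$, and the $RWR^\ast$ bound at every matrix level are sound, and the complete positivity of $\rho_{j,S}$ (graded tensor of c.c.p.\ maps) is exactly what the paper already invokes. Your approach is more modular — it black-boxes the existing c.p.\ section and uses only the $C([0,1]^\Omega)$-algebra structure and a soft partition-of-unity argument — at the cost of the section being non-canonical (it depends on a choice of $(\chi^0_S)$). The paper's approach yields a single canonical formula, which can be more convenient if one wants to track the section through further computations. One small remark: paracompactness is not needed, since the cover $\{V_S\}_{|S|=j}$ is finite; normality of $U_j$ already gives the partition of unity.
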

  \begin{proof}
 Note that $M_\phi^{\hat\otimes \Omega}$ is canonically a graded $G\wr_\Omega F$-subalgebra of $(A\oplus C([0,1], B))^{\hat\otimes\Omega}$. Using the canonical isomorphism (the binomial expansion)
 \[
 (A\oplus C([0,1], B))^{\hat\otimes\Omega} \cong \bigoplus_{S\subset \Omega} C([0, 1]^{S}\times \{0\}^{\Omega \smallsetminus S}, B^{\hat \otimes  S}\hat \otimes A^{\hat\otimes \Omega \smallsetminus S}),
 \]
 we observe that $M_\phi^{\hat \otimes \Omega}$ is canonically isomorphic to the following graded $C^*$-algebra
  \[
\mathcal{B}_\Omega \coloneq \{ (b_S)_{S\subset \Omega} \in  \bigoplus_{S\subset \Omega} C([0, 1]^{S}\times \{0\}^{\Omega \smallsetminus S}, B^{\hat \otimes  S}\hat \otimes A^{\hat\otimes \Omega \smallsetminus S}) \mid \text{ $(b_S)_{S\subset \Omega}$ is compatible}  \}
  \]
  equipped with the canonical $G\wr_\Omega F$-action. Here, we say $(b_S)_{S\subset \Omega}$ is compatible if for any $S\subset S'$, we have
  \[
  b_{S'}\mid_{[0,1]^S\times \{0\}^{\Omega \smallsetminus S} } = ( \mathrm{id_B}^{\hat\otimes S} \hat \otimes \phi^{\hat \otimes S'\smallsetminus S} \hat \otimes \mathrm{id_A}^{\hat\otimes \Omega \smallsetminus S'}) (b_S). 
  \]
  For example, for $\emptyset \subset \Omega$, this says $(b_\Omega)(0)= \phi^{\hat \otimes \Omega}(b_\emptyset) \in B^{\hat\otimes \Omega}$. We can see the isomorphism $M_\phi^{\hat\otimes\Omega}\cong \mathcal{B}_\Omega$ by iteratively using the fact that for any $C^*$-algebra $C$, $C\hat \otimes M_\phi$ is isomorphic to the subalgebra of $(C\hat \otimes A) \oplus  C([0,1], C\hat \otimes B)$ consisting of $(x, y)$ satisfying $y(0)=(\mathrm{id}_C \hat\otimes \phi)(x)$. The latter is just saying $C\hat \otimes M_\phi$ is canonically the mapping cylinder of $\mathrm{id}_C\hat\otimes \phi$.
  
   The algebra $M_\phi^{\hat \otimes \Omega}$ is best viewed as the algebra of sections of an upper semi-continuous field $\mathcal{A}$ of $C^*$-algebras over the cube $[0, 1]^\Omega$ whose fibers $\mathcal{A}_x$ are $B^{\hat \otimes S}\hat \otimes A^{\hat\otimes \Omega \smallsetminus S}$ on the half-open cube $(0, 1]^S\times \{0\}^{\Omega \smallsetminus S}$. Note that, we have 
  \[
  [0, 1]^\Omega = \bigsqcup_{S\subset \Omega} (0, 1]^S\times \{0\}^{\Omega \smallsetminus S}.
  \]
For convenience, we introduce some terminologies. Let \( Y \subset [0,1]^{\Omega} \) be a closed subset. A \emph{section defined on \(Y\)} is an (arbitrary) section of the field $\mathcal{A}$ (restricted on $Y$). A section \( f \) defined on \( Y \) is called \emph{legitimate} if for every \( S \subset \Omega \), the associated function
$
    f_S : Y \cap [0,1]^S \times \{0\}^{\Omega \smallsetminus S} \ \longrightarrow\ 
    B^{\hat\otimes S} \ \hat\otimes\  A^{\hat\otimes (\Omega \smallsetminus S)}
   $
    obtained by restricting \(f\) and applying \(\phi\) to the relevant factors pointwise is continuous. By using $M_\phi^{\hat\otimes\Omega}\cong \mathcal{B}_\Omega$, we see that the $M_\phi^{\hat\otimes\Omega}$ is canonically isomorphic to the algebra of legitimate sections defined on $[0,1]^\Omega$.

 For any $S\subset \Omega$, we define $|S|$-dimensional face $F_S^1 \coloneq [0 ,1]^S \times \{ 1 \}^{\Omega \smallsetminus S}$ of the cube $[0,1]^\Omega$. The Izumi filtration $(I_j)_{0\leq j \leq |\Omega|+1}$ of $M_\phi^{\hat \otimes \Omega}$ has the following description: $I_j\subset M_\phi^{\hat \otimes \Omega}$ consists of legitimate sections defined on the cube $[0,1]^\Omega$ that vanish on the faces $F_S^1$ for all $S$ such that $|S|\leq j-1$. For example, $I_{1}$ consists of legitimate sections on $[0,1]^\Omega$ that vanish at $\{1\}^{\Omega}$. 

Recall (see \eqref{eq_Izumi_decomp}) that $I_j/I_{j+1}$ is canonically isomorphic to $\bigoplus_{S\subset \Omega, |S|=j} C_\phi^{\hat \otimes S} \hat\otimes B^{\hat \otimes \Omega \smallsetminus S}$. Analogously to $M_\phi^{\hat\otimes\Omega}$, each summand $C_\phi^{\hat \otimes S}\hat\otimes B^{\hat \otimes \Omega \smallsetminus S}$ is canonically identified as the algebra of legitimate sections defined on $F^1_S$, vanishing on $F^1_{S'}$ for all $S'\subsetneq S$. It follows that $I_j/I_{j+1}$ is canonically identified as the algebra of legitimate sections defined on the union $\cup_{|S|=j}F_S^1$, vanishing on $\cup_{|S|=j-1}F_S^1$.

For any $0\leq j \leq |\Omega|-1$, let $f \in I_j/I_{j+1}$ be a legitimate section defined on the union $\cup_{|S|=j}F_S^1$, vanishing on $\cup_{|S|=j-1}F_S^1$, we define an extension $\tilde f$, a legitimate section defined on the union $\cup_{|S|=j+1}F_S^1$ as follows. For any $S\subset \Omega$ with $|S|=j+1$, we define $\tilde{f}$ on $F_S^1=[0,1]^S\times \{1\}^{\Omega \smallsetminus S}$ by the formula
  \[
  \tilde{f}((t_s)_{s\in S}, (1)_{s\in \Omega \smallsetminus S}) \coloneq 
  \begin{cases}  
   ( \max_{s\in S} t_s)   f( (\frac{t_s}{   ( \max_{s\in S} t_s)})_{s\in S}, (1)_{s\in \Omega \smallsetminus S} )   &    \max_{s\in S} t_s  >0,  \\
    0 &    \max_{s\in S} t_s =0.
    \end{cases} 
  \]
The first expression is well-defined since at least one of the coordinates of $(\frac{t_s}{   ( \max_{s\in S} t_s)})_{s\in S}$ is $1$ so $( (\frac{t_s}{   ( \max_{s\in S} t_s)})_{s\in S}, (1)_{s\in \Omega \smallsetminus S} ) \in F_{S'}^1$ for some $S'$ with $|S'|=j$. For any $S'\subset \Omega$, the function $\tilde{f}$ takes values in $B^{\hat \otimes S'}\hat \otimes A^{\hat\otimes \Omega \smallsetminus S'}$ on $F_S^1\cap (0, 1]^{S'}\times \{0\}^{\Omega \smallsetminus S'}$. We see that $\tilde{f}$ is a legitimate section defined on the union $\cup_{|S|=j+1}F_S^1$, since for any $S'\subset \Omega$ the above extension formula restricts compatibly to the (same) formula that extends continuous $B^{\hat\otimes S'}\hat\otimes A^{\hat\otimes \Omega \smallsetminus S'}$-valued functions on $\{0\}^{S'}\times [0,1]^{\Omega \smallsetminus S'} \cap (\cup_{|S|=j}F_S^1)$ to continuous  $B^{\hat\otimes S'}\hat\otimes A^{\hat\otimes \Omega \smallsetminus S'}$-valued functions on $\{0\}^{S'}\times [0,1]^{\Omega \smallsetminus S'} \cap (\cup_{|S|=j+1} F^1_S)$.

Inductively, this defines an extension of $f\in I_{j}/I_{j+1}$ to a legitimate section defined on the cube $[0,1]^{\Omega}$. It is not hard to see that the map $ f \mapsto \tilde f$ defines a graded, $G\wr_\Omega F$-equivariant, c.c.p. section $I_j/I_{j+1} \to I_j$ as desired.
  \end{proof}
  
We recall that $\langle \mathcal{CI}_G \rangle$ is the localizing subcategory of $KK_G$ generated by the compactly induced algebras. We remind again that $KK_G$ consists of the separable \emph{ungraded} $G$-$C^*$-algebras.

\begin{lemma}\label{lem_wr_CI} Let $G$ be a second countable, locally compact group. For any $A\in \langle \mathcal{CI}_G \rangle$, for any second countable, compact group $F$, and for any finite $F$-set $\Omega$, $A^{\otimes \Omega} \in \langle\mathcal{CI}_{G\wr_\Omega F} \rangle$.
\end{lemma}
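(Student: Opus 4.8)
The plan is to prove, by induction on $n=|\Omega|$, the statement $\mathfrak{S}(n)$: for every second countable compact group $F'$, every finite $F'$-set $\Omega'$ with $|\Omega'|\le n$, and every $A\in\langle\mathcal{CI}_G\rangle$, one has $A^{\otimes\Omega'}\in\langle\mathcal{CI}_{G\wr_{\Omega'}F'}\rangle$ (the case $n=0$ being clear, as $\mathbb{C}=\mathrm{Ind}_{F'}^{F'}\mathbb{C}$ with $F'$ compact). Fix $n\ge 1$, assume $\mathfrak{S}(n-1)$, fix $F'$ and $\Omega'$ with $|\Omega'|=n$, and put $P=G\wr_{\Omega'}F'$. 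Let $\mathcal{M}\subseteq\langle\mathcal{CI}_G\rangle$ be the full subcategory of those $A$ with $A^{\otimes\Omega'}\in\langle\mathcal{CI}_P\rangle$; since $\langle\mathcal{CI}_G\rangle$ is generated by $\mathcal{CI}_G$ as a localizing subcategory of $KK_G$, it suffices to show $\mathcal{M}$ is a localizing subcategory containing $\mathcal{CI}_G$, and then the case $(F,\Omega)$ is the claim. Closure of $\mathcal{M}$ under $KK_G$-equivalence is immediate from the functoriality of $\otimes_{\Omega'}$ and the fact that it preserves $KK$-equivalences (\Cref{thm_KK_wr_functor}); since $0\in\mathcal{M}$, closure under (de)suspension follows once closure under mapping cones is established. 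For the generators, iterating $\mathrm{Ind}_{L_1}^{G_1}B_1\otimes\mathrm{Ind}_{L_2}^{G_2}B_2\cong\mathrm{Ind}_{L_1\times L_2}^{G_1\times G_2}(B_1\otimes B_2)$ and tracking the $F'$-permutation gives a $P$-equivariant isomorphism $(\mathrm{Ind}_L^G B)^{\otimes\Omega'}\cong\mathrm{Ind}_{(\prod_{\Omega'}L)\rtimes F'}^{P}(B^{\otimes\Omega'})$ with $(\prod_{\Omega'}L)\rtimes F'$ compact, so $\mathcal{CI}_G\subseteq\mathcal{M}$.

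The key input is a \emph{mixed-power} statement: for $C,D\in\langle\mathcal{CI}_G\rangle$ and $0\le j\le n$, the $P$-$C^*$-algebra $\bigoplus_{|S|=j}C^{\otimes S}\otimes D^{\otimes(\Omega'\setminus S)}$ lies in $\langle\mathcal{CI}_P\rangle$. Grouping the summands by the $F'$-orbits of $j$-element subsets, this is a finite direct sum of $\mathrm{Ind}_{Q_{S_0}}^{P}\!\big(C^{\otimes S_0}\otimes D^{\otimes(\Omega'\setminus S_0)}\big)$, where $Q_{S_0}=(\prod_{\Omega'}G)\rtimes F'_{S_0}$ is the stabilizer of $S_0$ (with $F'_{S_0}=\mathrm{Stab}_{F'}(S_0)$ compact) and $Q_{S_0}=(G\wr_{S_0}F'_{S_0})\times_{F'_{S_0}}(G\wr_{\Omega'\setminus S_0}F'_{S_0})$. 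Using that induction from a closed subgroup carries $\langle\mathcal{CI}\rangle$ into $\langle\mathcal{CI}\rangle$ (see \cite{CE},\cite{MN06}), I reduce to $C^{\otimes S_0}\otimes D^{\otimes(\Omega'\setminus S_0)}\in\langle\mathcal{CI}_{Q_{S_0}}\rangle$: for $S_0\in\{\emptyset,\Omega'\}$ this is $D^{\otimes\Omega'}$ or $C^{\otimes\Omega'}$, in $\langle\mathcal{CI}_P\rangle$ because $C,D\in\mathcal{M}$, while for $0<|S_0|<n$ the hypothesis $\mathfrak{S}(n-1)$ gives $C^{\otimes S_0}\in\langle\mathcal{CI}_{G\wr_{S_0}F'_{S_0}}\rangle$ and $D^{\otimes(\Omega'\setminus S_0)}\in\langle\mathcal{CI}_{G\wr_{\Omega'\setminus S_0}F'_{S_0}}\rangle$, and one finishes with a \emph{fibered K\"unneth} fact: if $H_1,H_2$ are second countable locally compact groups surjecting onto a compact group $K$, then $\langle\mathcal{CI}_{H_1}\rangle\otimes\langle\mathcal{CI}_{H_2}\rangle\subseteq\langle\mathcal{CI}_{H_1\times_K H_2}\rangle$. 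This I would prove by fixing one factor at a time: $M\mapsto\mathrm{Res}^{H_1\times H_2}_{H_1\times_K H_2}(M\otimes N)$ is exact and coproduct-preserving, and it carries a generator $\mathrm{Ind}_{L_1}^{H_1}M'$ to $\mathrm{Res}^{H_1\times H_2}_{H_1\times_K H_2}\mathrm{Ind}_{L_1\times L_2}^{H_1\times H_2}(M'\otimes N')$, which is a proper $(H_1\times_K H_2)$-$C^*$-algebra over the $(H_1\times_K H_2)$-compact proper $(H_1\times_K H_2)$-space $(H_1\times H_2)/(L_1\times L_2)$ — hence in $\langle\mathcal{CI}_{H_1\times_K H_2}\rangle$ by \cite{MN06} (for $K$ finite, this step is just the Mackey decomposition into a countable direct sum of compactly induced algebras); varying the other factor likewise gives the claim.

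Granting the mixed-power statement, closure of $\mathcal{M}$ under finite direct sums follows by iterating $(A_1\oplus A_2)^{\otimes\Omega'}=\bigoplus_{S\subseteq\Omega'}A_1^{\otimes S}\otimes A_2^{\otimes(\Omega'\setminus S)}$, and under countable direct sums by writing $\bigoplus_k A_k=\varinjlim_m(A_1\oplus\cdots\oplus A_m)$, using that $\otimes_{\Omega'}$ commutes with inductive limits and that $\langle\mathcal{CI}_P\rangle$ is closed under countable inductive limits. For closure under mapping cones, since $\mathcal{M}$ is closed under $KK_G$-equivalence it suffices to treat cones $C_\phi$ of $G$-equivariant $\ast$-homomorphisms $\phi\colon X\to Y$ between objects of $\mathcal{M}$, and then $C_\phi\in\langle\mathcal{CI}_G\rangle$. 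Take the Izumi filtration $0=I_{n+1}\subseteq I_n\subseteq\cdots\subseteq I_0=M_\phi^{\otimes\Omega'}$ of $M_\phi^{\otimes\Omega'}$ attached to $0\to C_\phi\to M_\phi\to Y\to 0$; then $I_n=C_\phi^{\otimes\Omega'}$ and, by \eqref{eq_Izumi_decomp}, $I_j/I_{j+1}\cong\bigoplus_{|S|=j}C_\phi^{\otimes S}\otimes Y^{\otimes(\Omega'\setminus S)}\in\langle\mathcal{CI}_P\rangle$ for $0\le j\le n-1$ by the mixed-power statement. By \Cref{prop_admissible_cylinder} each $0\to I_{j+1}\to I_j\to I_j/I_{j+1}\to 0$ admits a $P$-equivariant c.c.p. section, hence is an exact triangle in $KK_P$, and since $I_0=M_\phi^{\otimes\Omega'}\sim_{KK_P}X^{\otimes\Omega'}\in\langle\mathcal{CI}_P\rangle$, a downward induction along these triangles yields $I_n=C_\phi^{\otimes\Omega'}\in\langle\mathcal{CI}_P\rangle$, i.e. $C_\phi\in\mathcal{M}$. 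This closes the induction.

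The main obstacle is that $\otimes_{\Omega'}$ is not additive, so $\mathcal{M}$ cannot be shown to be localizing directly: the Izumi filtration (with the c.c.p. splittings from \Cref{prop_admissible_cylinder}) is what converts $(A_1\oplus A_2)^{\otimes\Omega'}$ and $C_\phi^{\otimes\Omega'}$ into towers of extensions, and the mixed tensor powers $C^{\otimes S_0}\otimes D^{\otimes(\Omega'\setminus S_0)}$ arising there live over the smaller group $Q_{S_0}$ — a fibered product of two strictly smaller wreath products — which is exactly what the induction on $|\Omega|$ together with the fibered K\"unneth fact is designed to control.
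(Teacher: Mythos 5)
Your argument is correct and reaches the conclusion by a genuinely different organizational device. The paper dispenses with any induction on $|\Omega|$ by defining its bootstrap class $\mathcal{C}$ to consist of those $A$ for which $A^{\otimes\Omega}\in\langle\mathcal{CI}_{G\wr_\Omega F}\rangle$ for \emph{every} compact $F$ and \emph{every} finite $F$-set $\Omega$ simultaneously; the mapping-cone step then obtains $C_\phi^{\otimes S}\in\langle\mathcal{CI}_{G\wr_S F_S}\rangle$ and $B^{\otimes\Omega\smallsetminus S}\in\langle\mathcal{CI}_{G\wr_{\Omega\smallsetminus S}F_S}\rangle$ for free from $C_\phi, B\in\mathcal{C}$, and finishes by external K\"unneth followed by restriction along the (cocompact) inclusion $G\wr_\Omega F_S\hookrightarrow (G\wr_S F_S)\times(G\wr_{\Omega\smallsetminus S}F_S)$. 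Your induction on $n=|\Omega'|$ substitutes $\mathfrak{S}(n-1)$ for this built-in quantifier, and your ``fibered K\"unneth'' lemma is precisely the composite of the two facts the paper invokes (external K\"unneth for $\langle\mathcal{CI}\rangle$-classes, and preservation of $\langle\mathcal{CI}\rangle$ under restriction to a closed, here cocompact, subgroup). You also run the Izumi filtration downward, from $I_0=M_\phi^{\otimes\Omega'}\sim_{KK}X^{\otimes\Omega'}$ to $I_n=C_\phi^{\otimes\Omega'}$, and treat countable coproducts via the split-injective inductive system $(A_1\oplus\cdots\oplus A_m)^{\otimes\Omega'}$ and a telescope argument, whereas the paper runs the filtration upward from $I_{|\Omega|}=C_\phi^{\otimes\Omega}$ and decomposes $(\bigoplus_k A_k)^{\otimes\Omega}$ directly as a countable direct sum of compactly induced tensor products; both routes are valid. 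The one cosmetic slip is that your mixed-power statement, announced for arbitrary $C,D\in\langle\mathcal{CI}_G\rangle$, actually requires $C,D\in\mathcal{M}$ in the edge cases $j\in\{0,n\}$, a hypothesis you do have in each application (for the direct-sum step both summands lie in $\mathcal{M}$; for the cone step you only need $j\le n-1$ and $Y\in\mathcal{M}$). The paper's ``all $(F,\Omega)$ at once'' class is a touch slicker, since it avoids singling out edge cases and no separate induction bookkeeping is needed; your version makes the reduction-to-smaller-$\Omega$ mechanism more explicit.
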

\begin{proof}
Suppose $A\cong \mathrm{Ind}_L^G(B)$ for a compact subgroup $L$ of $G$ and for a separable $L$-$C^*$-algebra $B$.  Then, $A^{\otimes \Omega}$ is isomorphic to the induced algebra $\mathrm{Ind}_{L\wr_\Omega F}^{G\wr_\Omega F}(B^{\otimes \Omega})$. It follows $\hat\otimes_\Omega$ sends any compactly induced object in $KK_G$ to a compactly induced object in $KK_{G\wr_\Omega F}$.

Let $\mathcal{C}$ be the full subcategory of $KK_G$ that consists of all $A$ such that $A^{\otimes \Omega} \in \langle\mathcal{CI}_{G\wr_\Omega F} \rangle$ for \emph{any} second countable, compact group $F$ and for \emph{any} finite $F$-set $\Omega$. We just saw that $\mathcal{CI}_{G} \subset  \mathcal{C}$. Our assertion follows once we show that $\mathcal{C}$ is a localizing subcategory of $KK_G$. Note that since $\hat\otimes_\Omega$ is a functor for any $F$ and $\Omega$, if $A$ is $KK_G$-equivalent to $B$, then $A\in \mathcal{C}$ if and only if $B\in \mathcal{C}$. 

Suppose we have an exact triangle $\Sigma A_2\to A_0\to A_1\to A_2$ in $KK_G$. Suppose that $A_0, A_2$ belong to $\mathcal{C}$. We show that $A_1$ belongs to $\mathcal{C}$. Any exact triangle in $KK_G$ is equivalent to an extension triangle \cite[Definition 2.3]{MN06} for a mapping cone/cylinder exact sequence $C_\phi\to M_\phi\to B$ for some $G$-$\ast$-homomorphism $\phi\colon B_0\to B$. In this case, $C_\phi$ and $B$ belong to $\mathcal{C}$, and we only need to show that $M_\phi$ belongs to $\mathcal{C}$. Let $F$ be any second countable compact group, and let $\Omega$ be any finite $F$-set. Consider the Izumi filtration $(I_j)_{0\leq j\leq |\Omega|+1}$ of $M_\phi^{\otimes \Omega}$ associated with the mapping cone/cylinder exact sequence. By Proposition \ref{prop_admissible_cylinder}, each short exact sequence 
\begin{equation}
0\to I_{j+1} \to I_{j} \to I_j/I_{j+1} \to 0
\end{equation}
is admissible (\cite[Definition 2.3]{MN06}) in $KK_{G\wr_\Omega F}$. We have $I_{|\Omega|}\cong C_\phi^{\otimes \Omega}$, $I_0\cong M_\phi^{\otimes \Omega}$, and $I_{j}/I_{j+1} \cong \bigoplus_{S\subset \Omega, |S|=j} C_\phi^{\otimes S} \otimes B^{\otimes \Omega \smallsetminus S}$ for all $0\leq j \leq |\Omega|$. Note that $I_j/I_{j+1}$ is the direct sum of the induced algebras of the form $\mathrm{Ind}^{G\wr_\Omega F}_{G\wr_\Omega F_S}( C_\phi^{\otimes S}\otimes B^{\otimes \Omega \smallsetminus S})$ for $S\subset \Omega$ ($|S|=j$) where $F_S\leq F$ is the (set-wise) stabilizer of $S$ in $F$. Thus, to show  $I_j/I_{j+1}$ belongs to $\langle \mathcal{CI} _{G\wr_\Omega F} \rangle$, we only need to show that $C_\phi^{\otimes S}\otimes B^{\otimes \Omega \smallsetminus S}$ belongs to $\langle{\mathcal{CI}_{G\wr_\Omega F_S} \rangle}$. Since $C_\phi$ and $B$ are in $\mathcal{C}$, we have $C_\phi^{\otimes S} \in \langle \mathcal{CI}_{G\wr_S F_S} \rangle $ and $B^{\otimes \Omega \smallsetminus S} \in \langle \mathcal{CI}_{G\wr_{\Omega \smallsetminus S} F_S} \rangle$. Hence, as the external tensor product, $C_\phi^{\otimes S} \otimes B^{\otimes \Omega \smallsetminus S} $ belongs to $\langle \mathcal{CI}_{ (G\wr_S F_S) \times (G\wr_{\Omega \smallsetminus S} F_S)} \rangle$. Here, we used that for any groups $G_1, G_2$, the (external) minimal tensor product bi-functor $KK_{G_1}\times KK_{G_2} \to KK_{G_1\times G_2}$ sends $\langle \mathcal{CI}_{G_1}\rangle \times \langle \mathcal{CI}_{G_2}\rangle$ to $\langle \mathcal{CI}_{G_1\times G_2}\rangle$, which follows by a simpler bootstrap argument (in one variable at a time). We also note that for any closed subgroup $G_4$ of a group $G_3$, the restriction functor $KK_{G_3}\to KK_{G_4}$ maps $\langle \mathcal{CI}_{G_3}\rangle$ to  $\langle \mathcal{CI}_{G_4}\rangle$. Combing all these, we see that  $C_\phi^{\otimes S}\otimes B^{\otimes \Omega \smallsetminus S}$ belong to $\langle  \mathcal{CI}_{G\wr_\Omega F_S} \rangle$. Therefore, $I_{j}/I_{j+1} \in  \langle \mathcal{CI}_{G\wr_\Omega F} \rangle$. Using the admissible extensions $I_{j+1}\to I_j \to I_{j}/I_{j+1}$, starting from the bottom $j=|\Omega|$, it follows $I_j$ belongs to $\langle \mathcal{CI}_{G\wr_\Omega F} \rangle$ for all $j$. In particular, $I_0\cong  M_\phi^{\otimes \Omega}$ belongs to $\langle \mathcal{CI} _{G\wr_\Omega F} \rangle$ for any $F$ and $\Omega$, showing $M_\phi \in \mathcal{C}$ as desired.

If $A\in \mathcal{C}$, the suspension $\Sigma A=C_0(\bR)\otimes A \in \mathcal{C}$. This is because $(\Sigma A)^{\otimes \Omega} \cong C_0(\bR)^{\otimes \Omega} \otimes A^{\otimes \Omega}$, and $\langle \mathcal{CI}_{G\wr_\Omega F} \rangle$ is stable under taking the minimal (and maximal) tensor product with any object.

Finally, we show that if $A_n \in \mathcal{C}$ for $n\geq1$, the countable direct sum $A=\bigoplus_{n\geq 1} A_n \in \mathcal{C}$. We have 
\[
A^{\otimes \Omega} \cong \bigoplus_{c\colon \Omega \to \mathbb{N}} \bigotimes_{\omega \in \Omega }A_{c(\omega)} 
\]
equipped with the natural $G \wr_\Omega F$ action. This is the direct sum of the algebra induced from $\bigotimes_{\omega \in \Omega }A_{c(\omega)}$.
With the same argument that we used to show $I_{j}/I_{j+1} \in  \langle \mathcal{CI}_{G\wr_\Omega F} \rangle$ above, we see that $A^{\otimes \Omega}\in \langle \mathcal{CI}_{G\wr_\Omega F} \rangle$. Therefore, $A \in \mathcal{C}$. This completes the proof.
\end{proof}

\begin{lemma}\label{lem_wr_conjugate} For any topological groups $G, F$, for any finite $F$-set $\Omega$, and for any element $x \in G \wr_\Omega F$, let $x_F\in F$ be the image of $x$ under the quotient map $G\wr_\Omega F\to F$. Let $F_x$ be the closed subgroup of $F$ generated by $x_F$. Then, $x$ is conjugate to an element $y$ of the form $((g_\omega)_\omega, x_F)\in (\prod_\Omega G) \rtimes F = G \wr_\Omega F $ such that $g_\omega\neq 1$ for at most one $\omega$ in each $F_x$-orbit of $\Omega$. Such an element $y$ is contained in the closed subgroup $(\prod_{O\in \Omega/F_x} \prod_{O}L_O)\rtimes F_x\leq (\prod_\Omega G) \rtimes F$ where for each $F_x$-orbit $O$ of $\Omega$, $L_O$ is the closed subgroup of $G$ generated by  $g_\omega$ for $\omega \in O$ (note that at most one $g_\omega$ for $\omega \in O$ is nontrivial).
\end{lemma}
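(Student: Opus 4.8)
The plan is to bring $x$ into the asserted normal form by conjugating inside the base group $\prod_\Omega G$, using a telescoping (Shapiro‑type) choice of conjugator along each orbit. First I would set up notation: write $x=((a_\omega)_{\omega\in\Omega},t)$ with $t=x_F\in F$, so that $F_x=\overline{\langle t\rangle}$. Since a finite $F$-set is discrete and the $F$-action is continuous, for any $\omega$ the orbit $F_x\omega$ is contained in the closure $\overline{\langle t\rangle\omega}$, which equals the finite (hence closed) set $\langle t\rangle\omega$; therefore the $F_x$-orbits on $\Omega$ are exactly the $\langle t\rangle$-orbits, and each such orbit $O$ is $\langle t\rangle$-cyclic, say $O=\{\omega_O,t\omega_O,\dots,t^{n-1}\omega_O\}$ with $n=|O|$ and $t^n\omega_O=\omega_O$.

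Next I would record the conjugation formula: for $\mathbf b=(b_\omega)_{\omega\in\Omega}\in\prod_\Omega G$, the wreath-product multiplication gives
\[
(\mathbf b,1)\,\big((a_\omega)_\omega,t\big)\,(\mathbf b,1)^{-1}=\big((\, b_\omega a_\omega b_{t^{-1}\omega}^{-1}\,)_\omega,\ t\big),
\]
so such a conjugation fixes the $F$-coordinate $t$ and twists the base tuple as above. Then, for each orbit $O$ I would fix the basepoint $\omega_O$ and set $b_{\omega_O}=1$ and inductively $b_{t^j\omega_O}=b_{t^{j-1}\omega_O}\,a_{t^j\omega_O}^{-1}$ for $1\le j\le n-1$; a one-line computation then shows that the twisted entry at $t^j\omega_O$ is $1$ for $1\le j\le n-1$, while the twisted entry at $\omega_O$ is the cyclic product $g_{\omega_O}:=a_{\omega_O}\,a_{t^{n-1}\omega_O}\,a_{t^{n-2}\omega_O}\cdots a_{t\omega_O}$. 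Carrying this out on every orbit independently produces $\mathbf b\in\prod_\Omega G$ for which $y:=(\mathbf b,1)\,x\,(\mathbf b,1)^{-1}=((g_\omega)_\omega,t)$ with $g_\omega=1$ unless $\omega$ is the chosen basepoint of its $F_x$-orbit — precisely the normal form in the statement, with $F$-component $x_F$.

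For the final assertion I would note that within a fixed $F_x$-orbit $O$ the only possibly nontrivial entry is $g_{\omega_O}$, so the closed subgroup $L_O$ generated by $\{g_\omega:\omega\in O\}$ equals $\overline{\langle g_{\omega_O}\rangle}$ and every $g_\omega$ with $\omega\in O$ lies in $L_O$; hence $(g_\omega)_\omega\in\prod_{O\in\Omega/F_x}\prod_{\omega\in O}L_O$ and $t\in F_x$. Since $F_x$ preserves each orbit $O$, permutes its coordinates, and all coordinates in $O$ carry the same group $L_O$, the product $\prod_{O}\prod_{\omega\in O}L_O$ is $F_x$-invariant; with $L_O$ closed in $G$, $F_x$ closed in $F$, and $\Omega$ finite, this makes $\big(\prod_{O\in\Omega/F_x}\prod_{\omega\in O}L_O\big)\rtimes F_x$ a closed subgroup of $G\wr_\Omega F$, and it contains $y$ by construction.

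Essentially everything here is elementary bookkeeping with the wreath-product law. The only genuinely non-formal point is the identification of $F_x$-orbits with $\langle t\rangle$-orbits, which is where the topology (density of $\langle t\rangle$ in $F_x$, finiteness and discreteness of $\Omega$) enters; I expect that, together with keeping the index arithmetic in the telescoping step straight, to be the only (mild) source of friction.
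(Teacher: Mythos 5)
Your proof is correct and takes essentially the same approach as the paper: a telescoping conjugation by elements of the base group $\prod_\Omega G$, performed orbit by orbit, collapsing the entries in each $\langle t\rangle$-orbit into a single cyclic product at the basepoint. The only difference is cosmetic — you build the full conjugator $\mathbf b$ at once via a recursion, whereas the paper conjugates step by step in the single-orbit cyclic case and then says "the general case follows easily"; you also spell out the topological points (identifying $F_x$-orbits with $\langle t\rangle$-orbits, and closedness of the target subgroup) which the paper leaves implicit.
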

\begin{proof} Consider the special case when $F$ is the cyclic group $C_n$ of order $n$ and $\Omega=\{0, \dots, n-1\}$ with a non-trivial $F$-action with a single orbit. Let $x=((g_0, \dots , g_{n-1}), x_F)\in G\wr_\Omega F$ and suppose $x_F$ acts on $\Omega$ as the cycle $(0, 1, \dots, n-1)$. Then, the element $(g_0^{-1}, 1, \dots, 1)\in \prod_\Omega G$ conjugates $x$ to $x_1=((1, g_1g_0, g_2, \dots, g_{n-1}), x_F)$. The element $(1, (g_1g_0)^{-1}, 1, \dots, 1)$ conjugates $x_1$ to $x_2=((1, 1, g_2g_1g_0, g_3, \dots, g_{n-1}), x_F)$. Continuing recursively, we  conjugate $x_{n-2}=((1, \dots, 1, g_{n-2}\cdots g_0, g_{n-1}), x_F)$ by the element $(1, \dots , (g_{n-2}\cdots g_0)^{-1},1)$ to obtain
$$x_{n-1}=((1, \dots, 1, g_{n-1}\cdots g_0), x_F),$$
as desired.
If $y=((1,\dots, 1, g), x_F)$, then, $y$ is contained in $(\prod_\Omega L) \rtimes F_x$ where $L$ is the closed subgroup of $G$ generated by $g$, and $F_x$ is the closed subgroup of $F$ generated by $x_F$ (which is $F$ in this case). The general case follows easily from the above special case. 
\end{proof}

\begin{proof}[Proof of Theorem \ref{thm_gamma_wr_KK}] 
Let $D\in KK_G(P, \bC)$ be a Dirac morphism for $G$, and let $\eta \colon KK_G(\bC, P)$ be a dual-Dirac morphism for $G$ such that $D\otimes_\bC \eta=1_P$. Let $\gamma_G=\eta \otimes_P D$ be the gamma element for $G$. Then, we have $\hat\otimes_\Omega(D)\colon P^{\otimes \Omega}\to \bC$ with $P^{\otimes \Omega} \in \langle \mathcal{CI}_{G\wr_\Omega F} \rangle $ by Lemma \ref{lem_wr_CI}. We have $(\hat\otimes_\Omega(D)) \otimes_ \bC(\hat\otimes_\Omega(\eta)) = \hat\otimes_\Omega( D\otimes_\bC \eta ) = 1_{P^{\otimes \Omega}}$. 

Let $z = (\hat\otimes_\Omega(\eta )) \otimes_{P^{\otimes \Omega}} (\hat\otimes_\Omega(D)) =   \hat\otimes_\Omega(\eta \otimes_P D ) = \hat\otimes_\Omega(\gamma_G)$ in $KK_{G\wr_\Omega F}(\bC, \bC)$. We show that $\mathrm{Res}_{G\wr_\Omega F}^L(z)=1_L$ for all compact subgroups $L$ of $G\wr_\Omega F$. By the character theory (the Peter--Weyl theory \cite{PW}) for the finite-dimensional representations of the compact groups, $\mathrm{Res}_{G\wr_\Omega F}^L(z)=1_L$ if and only if $\mathrm{Res}_{G\wr_\Omega F}^{L_x}(z)=1_{L_x}$ for any closed subgroup $L_x\leq L$ generated by an element $x\in L$. Here, we used that the characters of finite-dimensional representations define an injection $KK_L(\bC, \bC)\cong R(L)\to C(L)^{\mathrm{Ad}L}$, extending $[\pi] \mapsto \chi_\pi = (l\mapsto \mathrm{Tr}(\pi(l)))$ for the isomorphism classes of (unitary) finite-dimensional representations $\pi$. It is injective since $\chi_\pi$ for ${[\pi]\in \widehat L}$ (the isomorphism classes of the irreducible unitary representations of $L$) are nonzero orthogonal vectors in $L^2(L)$. By Lemma \ref{lem_wr_conjugate}, any element $x\in G\wr_\Omega F$ is conjugate to an element $y$ contained in the closed subgroup $L=(\prod_{O\in \Omega/F_x} \prod_{O}L_O)\rtimes F_x\leq (\prod_\Omega G) \rtimes F$ where $F_x$, $y$ and $L_O$ are as in Lemma \ref{lem_wr_conjugate}, and in our case where $x$ generates a compact subgroup of $G\wr_\Omega F$, $L_O$ are compact subgroups of $G$, and $F_x$ is a compact subgroup of $F$. Overall, to show $\mathrm{Res}_{G\wr_\Omega F}^L(z)=1_L$ for all compact subgroups $L$ of $G\wr_\Omega F$, we only need to show  $\mathrm{Res}_{G\wr_\Omega F}^L(z)=1_L$ for compact subgroups $L$ of the form $L=(\prod_{O\in \Omega/F_x} \prod_{O}L_O)\rtimes F_x\leq (\prod_\Omega G) \rtimes F$. Recall that
\[
T_0\coloneq\mathrm{Res}_{G\wr_\Omega F}^L \circ \hat\otimes_\Omega \colon KK_G \to KK_{G\wr_\Omega F} \to KK_L
\]
is the unique functor sending $A$ to $A^{\otimes \Omega}$, and $\phi\colon A\to B$ to $\phi^{\otimes \Omega}$. On the other hand, we have the following composition of functors
\[
T_1\colon KK_G  \to  \prod_{O\in \Omega/F_x}KK_{L_O} \to \prod_{O \in\Omega/F_x}KK_{L_O\wr_O F_x} \to KK_{L},
\]
where the first functor is the product of $\mathrm{Res}_G^{L_O}$, the second is the product of the equivariant power functors $\hat\otimes_O\colon KK_{L_O}\to KK_{L_O\wr_O F_x}$, and the last is the external minimal tensor product followed by the restriction via the inclusion $L\to \prod_{O\in \Omega/F_x}(L_O\wr_O F_x)$. The functor $T_1$ sends  $A$ to $\bigotimes_{O\in \Omega/F_x}(A^{\otimes O})$ and $\phi \colon A\to B$ to $\phi^{\otimes \Omega}$. The two functors $T_0$ and $T_1$ are naturally isomorphic via the canonical isomorphism $A^{\otimes \Omega} \cong \bigotimes_{O\in \Omega/F_x}(A^{\otimes O})$. Since $\mathrm{Res}_G^{L_O}(\gamma_G)=1_{L_O}$ for any compact subgroup $L_O$ of $G$, it follows that $\mathrm{Res}_{G\wr_\Omega F}^L(z)=1_L$ as desired. 

Overall, we have shown that $\hat\otimes_\Omega (D)\colon P^{\otimes \Omega} \to \bC$ is a weak equivalence in $KK_{G\wr_\Omega F}$ with $P^{\otimes \Omega} \in \langle \mathcal{CI}_{G\wr_\Omega F}\rangle$, with a weak inverse $\hat\otimes_\Omega(\eta)$. The rest follows immediately from this.
\end{proof}

Our remaining task is to show that the map $\hat\otimes_\Omega$ can be extended to $KK_{G, \ell}(A, B)$ so that we have the following commutative diagram:
\begin{equation}\label{eq_commute_wr_functor}
\xymatrix{
\hat\otimes_\Omega \colon & KK_{G}(A, B) \ar[d] \ar[r] &  KK_{G\wr_\Omega F}(A^{\hat\otimes \Omega}, B^{\hat\otimes \Omega}) \ar[d] \\
\hat\otimes_\Omega \colon  & KK_{G, \ell}(A, B) \ar[r] & KK_{G\wr_\Omega F, \tilde\ell}(A^{\hat\otimes \Omega}, B^{\hat\otimes \Omega}).
}
\end{equation}
For Theorem \ref{thm_gamma_wr} (and thus for Theorem \ref{thm_hyp_BCC_wr}), we only need this map for $A=B=\bC$.

For this, we only need to extend Theorem \ref{thm_tech_thm} slightly. The only difference is that we take a cycle from $KK_{G, \ell}(A, B)$, instead of $KK_G(A, B)$.

\begin{theorem}\label{thm_tech_thm_Gl} For $1\leq i\leq N$, let $G_i$ be a second countable, locally compact group, let $\ell_i$ be any length function on $G_i$, let $A_i, B_i$ be separable, graded, $G_i$-$C^*$-algebras, and let $(E_i, \pi_i, T_i)$ be a Kasparov cycle for $KK_{G_i, \ell_i}(A_i, B_i)$. Let $E=\hat\bigotimes_{1\leq i \leq N}E_i$ be the exterior tensor product (over $\bC$), which is a graded $\hat\bigotimes_{1\leq i \leq N}A_i$-$\hat\bigotimes_{1\leq i \leq N}B_i$-module with the $G$-action $\hat\bigotimes_{1\leq i \leq N} \pi_i$ where $G=\prod_{1\leq i\leq N}G_i$. We regard $T_i\in \Linears(E)$ via $\Linears(E_i) \subset \Linears(E)$. Let $\Delta$ be any separable subspace of $\Linears(E)$ consisting of elements $x\in \Linears(E)$ such that $[x, \Compacts(E_i)]\subset \Compacts(E_i) + \Compacts(E)$ for all $i$ where we regard $\Compacts(E_i)\subset \Linears(E)$ (in practice, we include $\Compacts(E_j)$, $A_j$ and $T_j$ in $\Delta$ for all $j$).

Then, there is a family $(N_i)_{1\leq i\leq N}$ of elements in $\Linears(E)$ satisfying the following conditions:
\begin{enumerate}
\item $N_i$ are $G$-continuous, even degree, positive elements in $\mathcal{L}(E)$; 
\item $1 =\sum_{1\leq i \leq N}N_i$;
\item $[N_i, N_{j}] \in \Compacts(E)$ for any $1\leq i, j\leq N$;
\item $g(N_i)-N_i \in \Compacts(E)$ for any $g\in G$;
\item $N_i \Compacts(E_i) \subset \Compacts(E)$;
\item $[N_i, \Delta] \subset \Compacts(E)$.
\end{enumerate}

If in addition, $G_i$, $A_i$, $B_i$, $E_i$, $\pi_i$, $T_i$ are independent of $i$, then $(N_i)_{1\leq i\leq N}$ can be taken to be $S_N$-equivariant, for the symmetric group $S_N$, in the sense that 
\begin{enumerate}
\item[(7)] $w(N_{w^{-1}i})=N_{i}$ for any $w\in S_N$ and for any $1\leq i \leq N$,
\end{enumerate}
for the canonical $S_N$-action on $E$.
\end{theorem}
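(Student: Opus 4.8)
The plan is to run the proof of Theorem~\ref{thm_tech_thm} essentially verbatim, checking at each step that the only features of the actions used are strong continuity and local norm-boundedness, together with the Kasparov-cycle conditions; these all persist for cycles in $E_{G_i,\ell_i}(A_i,B_i)$, so the passage from isometric to $e^{\ell}$-bounded actions introduces no new idea. First I would record the structure. By Remark~\ref{rem_El}, a cycle $(E_i,\pi_i,T_i)$ for $KK_{G_i,\ell_i}(A_i,B_i)$ carries a pair of strongly continuous actions $g^{\rangle},g^{\langle}$ on $E_i$ with $(g^{\rangle}(S))^{\ast}=g^{\langle}(S^{\ast})$ and $\|g^{\rangle}\|,\|g^{\langle}\|\le e^{\ell_i(g)}$; hence, via $g^{\rangle}(\theta_{x,y})=\theta_{g^{\rangle}x,g^{\langle}y}$, the induced actions on $\Compacts(E_i)$ and $\Linears(E_i)$ are strongly continuous algebra automorphisms, locally norm-bounded by $e^{2\ell_i(g)}$, though no longer $\ast$-preserving. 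On $E=\hat\bigotimes_i E_i$ with the product action, the operator norm of the action of $(g_i)_i\in\prod_i G_i$ is at most $e^{\sum_i\ell_i(g_i)}$; in the wreath-product specialization this is precisely the bound demanded by the length function $\tilde\ell$ on $G\wr_\Omega F$ in \eqref{eq_commute_wr_functor}, so the growth condition is automatic and otherwise inert in the construction below.

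Next I would reproduce the construction of $(N_i)$. The one technical input is the existence, for an ideal $J$ in a $C^{\ast}$-algebra $\mathcal A$ equipped with a strongly continuous, locally norm-bounded action of a second countable locally compact group by algebra automorphisms, of an increasing approximate unit for $J$ in $\mathcal A$ that is both quasi-central against a prescribed separable subset of $\mathcal A$ and asymptotically invariant on compact subsets of the group. This is the convex-hull argument of Arveson, Higson and Kasparov used in the proof of \cite[Lemma~1.4]{Kas88} (see also \cite{Hig87}, \cite[Section~1]{Arveson}, \cite[Section~12.4]{Bla98}), and the mild extension to non-$\ast$-preserving bounded automorphisms is exactly what is already in force in Lafforgue's framework \cite{La02, Laff10, Lafforgue12}; neither version uses that the automorphisms are isometric. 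Applying it to $\Compacts(E_i)\subset\Compacts(E_i)+\Compacts(E)$ and to $\Compacts(E)$ yields the sequences $u_k^{(i)}$ and $w_k$, hence $d_k=(w_{k+1}-w_k)^{1/2}$, satisfying conditions (a1)--(a11) and (b1)--(b8) verbatim; here the inputs that $[a,T_i]$, $a(1-T_i^2)\in\Compacts(E_i)$ and that $g\mapsto a(T_i-g^{\rangle}(T_i))$ is norm-continuous into $\Compacts(E_i)$ are exactly the defining conditions of $E_{G_i,\ell_i}(A_i,B_i)$, so the quasi-centrality estimates against $\Delta\supseteq\{A_j,T_j,\Compacts(E_j)\}$ hold. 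Defining $M_J$ by \eqref{eq_MJ} and $N_i$ by \eqref{eq_Ni}, the verification of conditions (c1)--(c5) and of the crucial estimate $[M_J,M_{J'}]\in\Compacts(E)$ is unchanged, since it rests only on those norm inequalities and on the absolute convergence of $\sum_{k,l}(A_{k,l}+B_{k,l}+C_{k,l}+D_{k,l})$, and never touches the Hilbert-module inner product or the isometry of the action.

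One point warrants a sentence of care: each $N_i$ is positive, hence self-adjoint, whereas $g^{\rangle}(N_i)$ is not self-adjoint once the action is non-unitary. This is harmless, because $N_i=N_i^{\ast}$ forces $g^{\langle}(N_i)=(g^{\rangle}(N_i))^{\ast}$, so $g^{\rangle}(N_i)-N_i\in\Compacts(E)$ if and only if $g^{\langle}(N_i)-N_i\in\Compacts(E)$, and likewise the compactness of all the commutators in the statement holds simultaneously for both of the actions attached to the $B$-pair $(\overline E,E)$ --- which is what is needed when $(N_i)$ is subsequently used to form a cycle for $KK_{G\wr_\Omega F,\tilde\ell}(\bC,\bC)$ as in \eqref{eq_commute_wr_functor}. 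For the symmetric case one repeats the last paragraph of the proof of Theorem~\ref{thm_tech_thm}: when the data $(G_i,A_i,B_i,E_i,\pi_i,T_i)$ are independent of $i$, the $S_N$-action on $E$ permuting tensor factors is by isometric (graded) automorphisms, so the ambient approximate units may be chosen $S_N$-equivariantly, conditions (a1)--(a11) arranged with these extra symmetries, and the resulting $N_i$ satisfy $w(N_{w^{-1}i})=N_i$. I expect the genuine content to be only this bookkeeping --- confirming line by line that the approximate-unit machinery and every commutator estimate of Theorem~\ref{thm_tech_thm} are insensitive to replacing unitary actions by $e^{\ell}$-bounded ones --- and the remarks above indicate why they are: those are statements about ideals in $C^{\ast}$-algebras carrying continuous, locally bounded automorphic actions, independent of the underlying Hilbert-module structure.
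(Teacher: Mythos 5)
Your argument is correct and follows the same route as the paper: both identify the sole new issue as the non-$*$-preserving (but continuous, locally bounded) automorphic $G$-action, resolve it by the convex-hull/Hahn--Banach refinement of the approximate-unit construction in the style of \cite[Lemma 1.4]{Kas88}, and then observe that the remainder of the proof of Theorem~\ref{thm_tech_thm} runs verbatim, with the $S_N$-equivariance following because $S_N$ acts unitarily. Your additional remark that self-adjointness of $N_i$ makes the compactness of $g^{\rangle}(N_i)-N_i$ and $g^{\langle}(N_i)-N_i$ equivalent is a sensible clarification consistent with the conventions of Remark~\ref{rem_El}, though the paper leaves it implicit.
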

\begin{proof} Note that the only difference from the setting of Theorem \ref{thm_tech_thm} is that the $G$-actions on $\Compacts(E_i)$, $\Compacts(E)$, are not necessarily $\ast$-preserving, but continuous and uniformly bounded on compact subsets of $G$. For any approximate unit $(v_n)_{n\geq1}$ of any separable $C^*$-algebra $A$ equipped with such a (not necessarily $\ast$-preserving) continuous $G$-action, there is an approximate unit $(u_n)_{n\geq1}$ such that $u_n$ is in the convex hull of $(v_m)_{m\geq n}$, and that $\lim_{n\to \infty}\|{g(u_n)-u_n}\| = 0$ uniformly on compact subsets of $G$. This follows by the same argument as in the case when $A$ is a $G$-$C^*$-algebra (c.f. \cite[Lemma 1.4]{Kas88}): for any compact subset $Y$ of $G$, let
\[
a_n\coloneq(g\mapsto g(v_n)-v_n) \in C(Y,A).
\]
Then the sequence $(a_n)_{n\geq1}$ in $C(Y, A)$ converges strictly to zero, and the sequence $(a_n)$ is bounded in norm. By the Hahn--Banach separation argument, it follows that the norm closure of the convex hull of $(a_m)_{m\geq n}$ contains the zero element for any $n\geq1$. It follows that for any $\epsilon>0$, there is $u_n$ in the convex hull of $(v_n)_{m\geq n}$ such that $\sup_{g\in Y}\|g(u_n)-u_n\|<\epsilon$.

With this observation in mind, the proof of Theorem \ref{thm_tech_thm} works verbatim. Note that the action of $S_N$ on $E$ is unitary, hence the action by $w\in S_N$ preserves positive elements.
\end{proof}

\begin{theorem}\label{thm_wr_KK_Gl} Let $G$ and $F$ be second countable locally compact groups. Let $\ell$ be a length function on $G$. Let $\Omega$ be a finite $F$-set $\Omega$. Let $\tilde \ell$ be the length function on $G\wr_\Omega F$ defined by $\tilde \ell(g) \coloneq \sum_{\omega \in \Omega}\ell(g_\omega)$ for $(g_\omega)_{\omega \in \Omega}$ in $\prod_\Omega G \subset G \wr_\Omega F$ and $\tilde \ell (f) \coloneq 0$ for $f \in F$.

For any separable graded $G$-$C^*$-algebras $A, B$, there is a map
\[
\hat\otimes_\Omega \colon KK_{G, \ell}(A, B) \to KK_{G\wr_\Omega F, \tilde \ell}(A^{\hat\otimes \Omega}, B^{\hat\otimes \Omega})
\]
 such that  the diagram \eqref{eq_commute_wr_functor} commutative.
\end{theorem}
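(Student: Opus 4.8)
The plan is to repeat, essentially verbatim, the Kasparov-cycle construction from the proof of Theorem~\ref{thm_KK_wr_functor}, but starting from a cycle for $KK_{G,\ell}(A,B)$ and invoking the length-function version of the Kasparov technical theorem, Theorem~\ref{thm_tech_thm_Gl}, in place of Theorem~\ref{thm_tech_thm}. Given $(E,\pi,T)\in E_{G,\ell}(A,B)$, form the triple $(E^{\hat\otimes\Omega},\pi\wr_\Omega F,T')$, where $\pi\wr_\Omega F$ is the canonical action of $G\wr_\Omega F$ on the graded tensor product constructed before Theorem~\ref{thm_KK_wr_functor}, and $T'=\sum_{\omega\in\Omega}N_\omega^{1/2}T_\omega$ with $(N_\omega)_{\omega\in\Omega}$ an $F$-equivariant family of positive operators on $E^{\hat\otimes\Omega}$ supplied by Theorem~\ref{thm_tech_thm_Gl} applied with $G_i=G$, $\ell_i=\ell$, $(E_i,\pi_i,T_i)=(E,\pi,T)$ and $\Delta$ a separable set containing $A^{\hat\otimes\Omega}$, all $T_{\omega'}$ and all $\Compacts(E)_{\omega'}$.

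The one genuinely new point is the operator-norm estimate. For $g=(g_\omega)_\omega\in\prod_\Omega G$ the right action $g^{\rangle}$ on $E^{\hat\otimes\Omega}$ is the graded tensor product $\hat\bigotimes_\omega g_\omega^{\rangle}$ of the component actions, well-defined as an action of the $B^{\hat\otimes\Omega}$-pair $(\overline{E^{\hat\otimes\Omega}},E^{\hat\otimes\Omega})$ by the last paragraph of Remark~\ref{rem_El}, with $\|g^{\rangle}\|\le\prod_\omega\|g_\omega^{\rangle}\|\le\prod_\omega e^{\ell(g_\omega)}=e^{\tilde\ell(g)}$ (and similarly for $g^{\langle}$); for $f\in F$ the action is the degree-preserving permutation unitary, of norm $1=e^{\tilde\ell(f)}$. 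Since $\tilde\ell$ is a length function on $G\wr_\Omega F$ with $\tilde\ell((g_\omega)_\omega,f)=\tilde\ell((g_\omega)_\omega)$ (a short direct check, using subadditivity of $\ell$ and $\tilde\ell(f)=0$), the composite bound $\|\pi\wr_\Omega F(x)\|\le e^{\tilde\ell(x)}$ holds for every $x\in G\wr_\Omega F$. Granting this, $(E^{\hat\otimes\Omega},\pi\wr_\Omega F,T')$ is a cycle for $E_{G\wr_\Omega F,\tilde\ell}(A^{\hat\otimes\Omega},B^{\hat\otimes\Omega})$: the conditions $[a,T'],\,a(1-T'^2)\in\Compacts(E^{\hat\otimes\Omega})$ and norm-continuity of $x\mapsto a(T'-x(T'))$ follow exactly as in the proof of Theorem~\ref{thm_KK_wr_functor} from conditions (1)--(6) of Theorem~\ref{thm_tech_thm_Gl} in the $\prod_\Omega G$-directions, while condition (7) gives $f(T')=T'$ for all $f\in F$, so the continuity and compactness requirements are automatic in the $F$-directions.

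Running the same construction over $B[0,1]$ shows that $(E,\pi,T)\mapsto[E^{\hat\otimes\Omega},\pi\wr_\Omega F,T']$ descends to a well-defined map $\hat\otimes_\Omega\colon KK_{G,\ell}(A,B)\to KK_{G\wr_\Omega F,\tilde\ell}(A^{\hat\otimes\Omega},B^{\hat\otimes\Omega})$, and independence of the choice of $(N_\omega)_{\omega\in\Omega}$ is obtained by the straight-line homotopy argument of Theorem~\ref{thm_KK_wr_functor}, applying Theorem~\ref{thm_tech_thm_Gl} once more to interpolate between two admissible families. Finally, the square \eqref{eq_commute_wr_functor} commutes because a Kasparov cycle for $KK_G(A,B)$ is in particular a cycle for $KK_{G,\ell}(A,B)$ (a unitary action satisfies $\|\pi(g)\|=1\le e^{\ell(g)}$, as $\ell\ge 0$), Theorem~\ref{thm_tech_thm} is the special case of Theorem~\ref{thm_tech_thm_Gl} in which all actions are $\ast$-preserving, and the operators $T'$ produced on the two levels are literally the same; the vertical maps being the evident maps that enlarge the allowed action-norm, commutativity is immediate from the constructions.

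I expect the only real obstacle to be the propagation of operator-norm bounds through the graded tensor product when the component $G$-actions are neither isometric nor adjointable: one must work with the pair $(\overline{E},E)$ and the two actions $g^{\rangle},g^{\langle}$ of Remark~\ref{rem_El}, and verify that $g^{\rangle}\otimes 1$ and $g^{\langle}\otimes 1$ still define an action of the $B^{\hat\otimes\Omega}$-pair with operator norm equal to the product of the component norms. All of the serious analysis --- producing an $F$-equivariant, asymptotically central partition of unity in the non-$\ast$-preserving setting --- is already isolated in Theorem~\ref{thm_tech_thm_Gl}, so what remains is bookkeeping.
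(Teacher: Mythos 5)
Your proposal is correct and follows exactly the route the paper takes: re-run the Kasparov-cycle construction from Theorem~\ref{thm_KK_wr_functor} with Theorem~\ref{thm_tech_thm_Gl} in place of Theorem~\ref{thm_tech_thm}, and the diagram \eqref{eq_commute_wr_functor} commutes by inspection. You make explicit two points the paper leaves implicit under ``works verbatim'' --- the operator-norm estimate $\|g^{\rangle}\|\le\prod_\omega\|g_\omega^{\rangle}\|\le e^{\tilde\ell(g)}$ via Remark~\ref{rem_El}, and the $F$-invariance $f(T')=T'$ from condition~(7) --- both of which are correct and worth spelling out.
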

\begin{proof} Using Theorem \ref{thm_tech_thm_Gl}, the proof of the first part of Theorem \ref{thm_KK_wr_functor} works verbatim: for any cycle $(E, \pi, T)$ in $E_{G,\ell}(A,B)$, we define the triple
\[
(E^{\hat\otimes\Omega}, \pi\wr_\Omega F,  \sum_{\omega \in \Omega}N^{1/2}_\omega T_\omega)
\]
as in the proof of Theorem \ref{thm_KK_wr_functor}. By Theorem \ref{thm_tech_thm_Gl}, there are operators $(N_\omega)_{\omega \in \Omega}$ satisfying conditions (1)-(7) in the proof of Theorem \ref{thm_KK_wr_functor}. The triple is a cycle for $E_{G\wr_\Omega F,\tilde\ell}(A^{\hat\otimes \Omega}, B^{\hat\otimes \Omega})$. The same proof shows that the homotopy class $[E^{\hat\otimes\Omega}, \pi\wr_\Omega F,  \sum_{\omega \in \Omega}N^{1/2}_\omega T_\omega]$ does not depend on the choice of $(N_\omega)_\omega$ and that it respects the homotopy of cycles so this induces a map
\[
KK_{G, \ell}(A, B) \to KK_{G\wr_\Omega F, \tilde\ell}(A^{\hat\otimes \Omega}, B^{\hat\otimes \Omega}).
\]
The commutativity of the diagram \eqref{eq_commute_wr_functor} is evident from the construction.
\end{proof}

\begin{proof}[Proof of Theorem \ref{thm_gamma_wr}]
The first part is Theorem \ref{thm_gamma_wr_KK}. Thus, $G\wr_\Omega F$ has a gamma element $\gamma_{G\wr_\Omega F} = \hat\otimes_\Omega(\gamma_G)$. By Theorem \ref{thm_wr_KK_Gl}, in particular, by the commutative diagram \eqref{eq_commute_wr_functor}, $\gamma_{G\wr_\Omega F}=1_{G\wr_\Omega F}$ in $KK_{G\wr_\Omega F, \tilde\ell}(\bC, \bC)$ provided $\gamma_{G}=1_{G}$ in $KK_{G,\ell}(\bC, \bC)$. 
\end{proof}

\section{BCC for relatively hyperbolic groups} \label{sec_CL_BCC}

Here, we combine our results to verify the Baum--Connes conjecture with coefficients with finite wreath products for a broad class of relatively hyperbolic groups. 

\begin{theorem}\label{thm_CLP_BCC_wr} Let $(G,\{H_\lambda\}_{\lambda \in \Lambda}, \{N_\lambda\}_{\lambda \in \Lambda})$ be a Cohen--Lyndon triple, and let $\bh_\lambda = H_\lambda/N_\lambda$, $\bg=G/\ll \mathcal{N} \rr$ where $\mathcal{N}=\bigcup_{\lambda\in\Lambda}N_{\lambda}$. Suppose that $\bg$ and $N_\lambda$ for all $\lambda \in \Lambda$ satisfy the BCC with finite wreath products. Then, $G$ satisfies the BCC with finite wreath products.
\end{theorem}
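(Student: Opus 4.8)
The plan is to realise $G$ as a group extension whose kernel and quotient both lie in the class $\mathfrak{BCCwr}$ of countable discrete groups satisfying the BCC with finite wreath products, and then to invoke the extension permanence property Theorem~\ref{thm_inh_wr}\eqref{item_wr_ext}. The relevant extension is
\[
1 \longrightarrow \m \longrightarrow G \longrightarrow \bg \longrightarrow 1,
\]
where $\m = \ll \mathcal{N} \rr_G$. The quotient $\bg$ lies in $\mathfrak{BCCwr}$ by hypothesis, so the whole content of the argument is to verify that the kernel $\m$ belongs to $\mathfrak{BCCwr}$.

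For this I would use the defining feature of a Cohen--Lyndon triple: there exist left transversals $T_\lambda$ of $H_\lambda \m$ in $G$ such that
\[
\m = \Asterisk_{\lambda \in \Lambda,\; t \in T_\lambda} t N_\lambda t^{-1}.
\]
Each free factor $t N_\lambda t^{-1}$ is isomorphic to $N_\lambda$, hence lies in $\mathfrak{BCCwr}$ by hypothesis. Since $G$ is countable (as it must be for the conclusion to make sense — note $\bg$ and the $N_\lambda$ are countable, and one may assume the indexing data is countable, the subgroup $\m \leq G$ being countable), the collection of nontrivial free factors appearing above is countable. Therefore Theorem~\ref{thm_inh_wr}\eqref{item_wr_free} on free products applies and yields $\m \in \mathfrak{BCCwr}$. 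Combining this with $\bg \in \mathfrak{BCCwr}$ and Theorem~\ref{thm_inh_wr}\eqref{item_wr_ext} applied to the displayed extension gives $G \in \mathfrak{BCCwr}$, as desired.

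I do not anticipate a genuine obstacle: the proof is an assembly of the permanence properties packaged in Theorem~\ref{thm_inh_wr} with the free-product decomposition of $\ll \mathcal{N} \rr_G$ supplied by the Cohen--Lyndon property. The only points deserving a sentence of care are (i) the countability of the indexing sets $\Lambda$ and $T_\lambda$, which is automatic once $G$ is assumed countable, and (ii) the observation that this route is genuinely self-contained — in contrast, the excision diagrams of Section~\ref{sec_homology} (and property \hyperlink{cond:fin}{(FIN)}) are \emph{not} used here, since those relate the $K$-theory of $G$, $\bg$, $H_\lambda$, $\bh_\lambda$ but presuppose the conjecture for several of those groups rather than establishing it for $G$.
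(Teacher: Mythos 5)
Your proof is correct and matches the paper's argument essentially verbatim: both decompose $\m = \ll\mathcal{N}\rr$ as the Cohen–Lyndon free product $\Asterisk_{\lambda,t} tN_\lambda t^{-1}$, invoke Theorem~\ref{thm_inh_wr}\eqref{item_wr_free} to place $\m$ in $\mathfrak{BCCwr}$, and conclude via the extension permanence Theorem~\ref{thm_inh_wr}\eqref{item_wr_ext}. Your extra remarks on countability and on the irrelevance of the excision diagrams are sensible housekeeping but do not change the argument.
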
 
\begin{proof} Consider the extension
\[
0 \to \ll \mathcal{N}   \rr \to G \to \bg \to 0.
\]
We have
\[
 \ll \mathcal{N}   \rr  = \Asterisk_{ \lambda \in \Lambda} \Asterisk_{t\in T_\lambda}t N_\lambda t^{-1},
\]
and this group satisfies the BCC with finite wreath products by the assumption on $N_\lambda$ and the permanence property \eqref{item_wr_free} in Theorem \ref{thm_inh_wr}. The assertion follows by combing this with the assumption on $\bg$ and the permanence property \eqref{item_wr_ext} in Theorem \ref{thm_inh_wr}.
\end{proof}

\begin{theorem}\label{thm_BCC_rel} Suppose that a countable discrete group $G$ is relatively hyperbolic to a family $\{H_\lambda\}_{\lambda \in \Lambda}$ of groups $H_\lambda$, with $|\Lambda|<\infty$. Suppose for any $\lambda \in \Lambda$, $H_\lambda$ is residually finite and satisfies the BCC with finite wreath products. Then, $G$ satisfies the BCC with finite wreath products.
\end{theorem}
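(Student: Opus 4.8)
The plan is to follow the strategy of Antol\'in--Coulon--Gandini, reducing the statement to the hyperbolic case by performing a \emph{sufficiently deep} Dehn filling with \emph{finite index} fillings; the latter is possible precisely because the peripheral subgroups are residually finite, and this is the only place residual finiteness enters.

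First I would fix, for each $\lambda \in \Lambda$, a finite set $\mathcal{F}_\lambda \subset H_\lambda \smallsetminus \{1\}$ large enough that the conclusions of the Dehn filling theorem \Cref{thm. simple Dehn filling} and of Sun's theorem \Cref{prop CL}~(\ref{thm. cl}) hold whenever $N_\lambda \cap \mathcal{F}_\lambda = \emptyset$ for all $\lambda$: namely, $(G,\{H_\lambda\}_{\lambda\in\Lambda},\{N_\lambda\}_{\lambda\in\Lambda})$ is then a Cohen--Lyndon triple and $\bg = G/\m$ is hyperbolic relative to $\bhc = \{\bhl\}_{\lambda\in\Lambda}$. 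Since each $H_\lambda$ is residually finite, there is a finite-index normal subgroup $N_\lambda \lhd H_\lambda$ with $N_\lambda \cap \mathcal{F}_\lambda = \emptyset$ (take the intersection of finitely many kernels of homomorphisms of $H_\lambda$ onto finite groups, one separating each element of the finite set $\mathcal{F}_\lambda$ from the identity). Fix such a choice of $\{N_\lambda\}_{\lambda\in\Lambda}$. Then each $\bhl = H_\lambda/N_\lambda$ is a \emph{finite} group, and since a group hyperbolic relative to a finite family of finite subgroups is hyperbolic, the quotient $\bg$ is a hyperbolic group.

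Next I would invoke \Cref{thm_hyp_BCC_wr}: the hyperbolic group $\bg$ satisfies the BCC with finite wreath products. Moreover, each $N_\lambda$ is a subgroup of $H_\lambda$, which satisfies the BCC with finite wreath products by hypothesis, so $N_\lambda$ satisfies the BCC with finite wreath products by the subgroup permanence property \eqref{item_wr_sub} of \Cref{thm_inh_wr}. Now applying \Cref{thm_CLP_BCC_wr} to the Cohen--Lyndon triple $(G,\{H_\lambda\}_{\lambda\in\Lambda},\{N_\lambda\}_{\lambda\in\Lambda})$, since $\bg$ and all the $N_\lambda$ satisfy the BCC with finite wreath products, so does $G$. (Unwinding \Cref{thm_CLP_BCC_wr}, this last step uses the extension $1 \to \m \to G \to \bg \to 1$ together with the free product decomposition $\m = \Asterisk_{\lambda\in\Lambda,\,t\in T_\lambda} tN_\lambda t^{-1}$ coming from the Cohen--Lyndon property, combined with the free-product and extension permanence properties \eqref{item_wr_free} and \eqref{item_wr_ext} of \Cref{thm_inh_wr}.)

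The only genuine subtlety is arranging a single family $\{N_\lambda\}$ that is simultaneously deep enough for the Dehn filling and Cohen--Lyndon conclusions and of finite index so that the fillings $\bhl$ are finite, which is what makes $\bg$ hyperbolic; this is exactly the role of residual finiteness and is the crux of the argument. Everything else is bookkeeping that glues together already-established inputs: the Dehn filling theorem and the Cohen--Lyndon property, the Lafforgue-type theorem \Cref{thm_hyp_BCC_wr}, and the permanence package of \Cref{thm_inh_wr} via \Cref{thm_CLP_BCC_wr}.
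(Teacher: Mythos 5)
Your proof is correct and takes essentially the same route as the paper: use residual finiteness to choose finite-index normal subgroups $N_\lambda$ that are simultaneously deep enough for the Dehn filling and Cohen--Lyndon conclusions, so that $\bg$ is hyperbolic; then combine Theorem \ref{thm_hyp_BCC_wr} for $\bg$, subgroup permanence for the $N_\lambda$, and the free-product and extension permanence properties of Theorem \ref{thm_inh_wr} via the extension $1\to \m \to G\to \bg\to 1$ with $\m = \Asterisk_{\lambda,t} tN_\lambda t^{-1}$. The paper cites the free-product decomposition as part of the Dahmani--Guirardel--Osin Dehn filling theorem and applies the permanence properties directly rather than routing through Theorem \ref{thm_CLP_BCC_wr}, but as you note, unwinding that theorem yields exactly the same argument.
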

\begin{proof} By Theorem \ref{thm. simple Dehn filling} (\cite[Theorem 7.19]{dahmani2017hyperbolically}), for any deep enough normal subgroups $N_\lambda$ of $H_\lambda$, $\bg=G/\ll\N\rr$ is relatively hyperbolic to the family $\{\bhl\}_{\lambda \in \Lambda}$, 
and we have an extension
\[
0\to \Asterisk_{\lambda\in \Lambda} \Asterisk_{t \in T_\lambda} N_\lambda^t \to G\to \bg \to 0,
\]
for some subsets $T_\lambda$ of $G$. Since $H_\lambda$ is residually finite, there are deep enough $N_\lambda$ so that $\bhl$ are finite, in which case $\bg$ is hyperbolic.

All hyperbolic groups satisfy the BCC with finite wreath products by Theorem \ref{thm_hyp_BCC_wr}. By the assumption on $H_\lambda$, all $N_\lambda$ satisfy the BCC with finite wreath products. By the permanence properties \eqref{item_wr_ext}, \eqref{item_wr_free} in Theorem \ref{thm_inh_wr}, $G$ satisfies the BCC with finite wreath products.
\end{proof}

\begin{remark} Bartels demonstrated in \cite[Corollary]{Bartels17} that the Farrell--Jones conjecture holds for any relatively hyperbolic group $G$ if the peripheral subgroups satisfy the Farrell--Jones conjecture (without assuming that the peripheral subgroups are residually finite). An analogous result for the Baum--Connes conjecture (with coefficients) lies far beyond the scope of this article. 
\end{remark}

Recall that Farb \cite[Theorem 5.1]{Farb98} (see also \cite{Eberlein80}, \cite{Osin06}) showed that the fundamental group of any (connected) complete, noncompact, finite-volume Riemannian manifold with pinched negative sectional curvatures, is relatively hyperbolic to the set $\{H_1, \ldots, H_r\}$ of cusp subgroups. Here, $H_i$ are finitely generated and virtually nilpotent. Any finitely generated virtually nilpotent group is residually finite and amenable. Thus, we immediately obtain:

\begin{corollary}\label{cor_negative_curvature} Let $G$ be the fundamental group of a complete, noncompact, finite-volume Riemannian manifold with pinched negative sectional curvatures. Then, $G$ satisfies the BCC with finite wreath products.
\end{corollary}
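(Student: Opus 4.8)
The plan is to obtain this as an immediate consequence of the relatively hyperbolic permanence statement, Theorem \ref{thm_BCC_rel}, once the peripheral structure of $G$ is identified. First I would invoke Farb's structure theorem \cite[Theorem 5.1]{Farb98} (see also \cite{Eberlein80}, \cite{Osin06}): if $M$ is a connected, complete, noncompact, finite-volume Riemannian manifold with pinched negative sectional curvatures, then the cusps of $M$ have compact infranilmanifold cross-sections, and $G=\pi_1(M)$ is hyperbolic relative to the finite family $\{H_1,\dots,H_r\}$ of cusp subgroups, each of which is finitely generated and virtually nilpotent.

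The next step is to check that these peripheral subgroups meet both hypotheses of Theorem \ref{thm_BCC_rel}. A finitely generated virtually nilpotent group is polycyclic-by-finite, hence residually finite by the classical Hirsch--Mal'cev argument; and it is amenable, hence a-T-menable, so by Example \ref{ex_HK} (the Higson--Kasparov theorem \cite{HK01}) it satisfies the BCC with finite wreath products. Both conditions being verified, Theorem \ref{thm_BCC_rel} applies directly and gives that $G$ itself satisfies the BCC with finite wreath products.

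There is no serious obstacle here; the proof is a short assembly of cited results. The only point deserving a moment's care is ensuring that the version of Farb's theorem one uses genuinely yields \emph{finitely generated} (not merely abstractly virtually nilpotent) cusp subgroups, since finite generation is what makes residual finiteness and amenability available in the form needed — but this is guaranteed by the compactness of the cusp cross-sections in the finite-volume pinched negatively curved setting.
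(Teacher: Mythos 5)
Your proof is correct and follows essentially the same route as the paper: invoke Farb's theorem to realise $G$ as hyperbolic relative to its finitely generated, virtually nilpotent cusp subgroups, observe that these are residually finite and amenable (hence a-T-menable, so BCC with finite wreath products holds by Example~\ref{ex_HK}), and then apply Theorem~\ref{thm_BCC_rel}. The only cosmetic difference is that you spell out the Hirsch--Mal'cev/polycyclic-by-finite justification for residual finiteness, which the paper leaves implicit.
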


\begin{corollary}\label{cor_lattice_BCC} Any lattice of a simple Lie group of real rank one with finitely many connected components satisfies the BCC with finite wreath products.
\end{corollary}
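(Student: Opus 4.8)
The plan is to reduce the statement to the connected, torsion-free case and then quote the results already established: Theorem \ref{thm_hyp_BCC_wr} for cocompact lattices and Corollary \ref{cor_negative_curvature} for non-cocompact ones, propagating back up via the finite-index permanence property \eqref{item_wr_over} of Theorem \ref{thm_inh_wr}.

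First I would dispose of the disconnectedness. Let $G$ be a simple Lie group of real rank one with finitely many connected components, let $G^\circ$ denote its identity component, and let $\Gamma\le G$ be a lattice. Since $[G:G^\circ]<\infty$, the subgroup $\Gamma_1\coloneqq\Gamma\cap G^\circ$ has finite index in $\Gamma$; moreover $\Gamma_1$ is a lattice in $G^\circ$, being discrete with covolume at most $[\Gamma:\Gamma_1]$ times the covolume of $\Gamma$ inside the open-and-closed subgroup $\Gamma G^\circ\le G$. By part \eqref{item_wr_over} of Theorem \ref{thm_inh_wr} it suffices to prove that $\Gamma_1$ satisfies the BCC with finite wreath products, so from now on I assume $G$ is connected. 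Next I would remove torsion: a lattice $\Gamma$ in a connected rank-one simple Lie group is finitely generated and linear over a field of characteristic zero, so Selberg's lemma furnishes a torsion-free finite-index subgroup $\Gamma_0\le\Gamma$, and again by \eqref{item_wr_over} of Theorem \ref{thm_inh_wr} it is enough to treat $\Gamma_0$.

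Now let $X=G/K$ be the associated rank-one Riemannian symmetric space (real, complex, or quaternionic hyperbolic space, or the Cayley hyperbolic plane); $X$ is a Hadamard manifold with pinched negative sectional curvature, and $\Gamma_0$ acts freely, properly discontinuously and isometrically on $X$, with $M\coloneqq\Gamma_0\backslash X$ a complete finite-volume Riemannian manifold with pinched negative sectional curvature and $\pi_1(M)\cong\Gamma_0$. If $\Gamma_0$ is cocompact, then $M$ is compact, so $\Gamma_0$ acts geometrically on the Gromov hyperbolic space $X$ and is therefore a hyperbolic group by the \v{S}varc--Milnor lemma; Theorem \ref{thm_hyp_BCC_wr} (equivalently Theorem \ref{intro_thm_hyp_BCC_wr}) then shows $\Gamma_0$ satisfies the BCC with finite wreath products. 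If $\Gamma_0$ is not cocompact, then $M$ is a complete, noncompact, finite-volume Riemannian manifold with pinched negative sectional curvatures, so Corollary \ref{cor_negative_curvature} applies directly and gives that $\Gamma_0=\pi_1(M)$ satisfies the BCC with finite wreath products. In either case $\Gamma_0\in\mathfrak{BCCwr}$, and propagating back up through the two finite-index reductions via \eqref{item_wr_over} of Theorem \ref{thm_inh_wr} yields $\Gamma\in\mathfrak{BCCwr}$.

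There is no substantial obstacle here: the corollary is essentially a bookkeeping assembly of Lafforgue's theorem (entering through Theorem \ref{thm_hyp_BCC_wr}), Farb's relative hyperbolicity of cusped pinched-negatively-curved manifolds (entering through Corollary \ref{cor_negative_curvature}), and the permanence toolbox of Theorem \ref{thm_inh_wr}. The only points requiring a little care are the verification that $\Gamma\cap G^\circ$ is a lattice in $G^\circ$, the applicability of Selberg's lemma, and---in the cocompact case---the identification of $\Gamma_0$ as a word-hyperbolic group via the negatively curved symmetric space $X$.
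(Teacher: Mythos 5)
Your reduction to the identity component, your passage to a torsion-free finite-index subgroup via Selberg, and the appeal to Theorem \ref{thm_hyp_BCC_wr} (cocompact case) and Corollary \ref{cor_negative_curvature} (noncocompact case) all match the paper's strategy in spirit. But there is a genuine gap: you never deal with the center of $G^\circ$, and that step is essential. A simple Lie group of real rank one may have \emph{infinite} center --- the canonical example being $\widetilde{\mathrm{SL}}(2,\mathbb{R})$ (and more generally any infinite cover of a rank-one Hermitian group). For such $G^\circ$, the quotient $G^\circ/K$ by a maximal compact subgroup is not one of the rank-one hyperbolic spaces, and a lattice $\Gamma_0 \leq G^\circ$ does not act properly on the symmetric space of the adjoint group: the kernel of the action is $\Gamma_0 \cap Z(G^\circ)$, which can be infinite cyclic. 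Passing to a torsion-free finite-index subgroup, as you do, does not remove this center --- an infinite cyclic central subgroup is already torsion-free and survives into every finite-index subgroup. Concretely, your argument would conclude that the fundamental group of the unit tangent bundle of a closed hyperbolic surface (a cocompact lattice in $\widetilde{\mathrm{SL}}(2,\mathbb{R})$) is word-hyperbolic, which is false: it has infinite center.

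The paper's proof fixes exactly this by inserting an extra reduction between your two: after replacing $\Gamma$ by $\Gamma_0 = \Gamma \cap G^\circ$ via \eqref{item_wr_over}, it passes to $\overline{\Gamma} = \Gamma_0/(\Gamma_0 \cap Z(G^\circ))$, which is a lattice in the centerless group $G^\circ/Z(G^\circ)$, using the \emph{extension} permanence property \eqref{item_wr_ext} of Theorem \ref{thm_inh_wr}. This is legitimate because $\Gamma_0 \cap Z(G^\circ)$ is a central, hence abelian, subgroup and therefore a-T-menable, so it lies in $\mathfrak{BCCwr}$ by Example \ref{ex_HK}. Only after that central quotient is it correct to invoke the symmetric space $X$ of $G^\circ/Z(G^\circ)$, split into cocompact (hyperbolic group, Theorem \ref{thm_hyp_BCC_wr}) and noncocompact (pinched negative curvature, Corollary \ref{cor_negative_curvature}) cases, and propagate back with \eqref{item_wr_over} and \eqref{item_wr_ext}. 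To repair your write-up you should add this central-quotient step and cite \eqref{item_wr_ext}; the Selberg step then becomes optional, since one can also absorb the finite torsion of $\overline{\Gamma}$ into a finite-index reduction as the paper does.
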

\begin{proof} Let $\Gamma$ be a lattice of a simple Lie group $G$ of real rank one with finitely many connected components. By permanence property \eqref{item_wr_over}, it is enough to consider $\Gamma_0=\Gamma \cap G_0$ where $G_0$ is the connected component of $G$. By permanence property \eqref{item_wr_ext}, it is enough to consider $\overline{\Gamma}=\Gamma_0/\Gamma_0 \cap Z(G_0)$ which is a lattice of $G_0/Z(G_0)$. The lattice $\overline{\Gamma}$ is either hyperbolic (uniform lattice) or contains a finite index torsion free subgroup satisfying the assumption of Corollary \ref{cor_negative_curvature}. Hence, $\overline{\Gamma}$ satisfies the BCC with finite wreath products by Theorem \ref{thm_hyp_BCC_wr}, Corollary \ref{cor_negative_curvature} and the permanence property \eqref{item_wr_over} in Theorem \ref{thm_inh_wr}. 
\end{proof}

\begin{corollary}\label{cor_quot_rel_hyp} Suppose that a countable discrete group $G$ is relatively hyperbolic to a family $\{H_\lambda\}_{\lambda \in \Lambda}$ of groups $H_\lambda$ with $|\Lambda|<\infty$. Suppose for any $\lambda \in \Lambda$, $H_\lambda$ is finitely generated and virtually nilpotent. Then, there are finite subsets $F_\lambda$ of $H_\lambda \smallsetminus \{1\}$ such that for any normal subgroup $N_\lambda$ of $H_\lambda$ such that $N_\lambda \cap F_\lambda =\emptyset$, $\bg= G/\ll \mathcal{N}\rr $ satisfies the BCC with finite wreath products.
\end{corollary}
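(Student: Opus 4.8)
The plan is to combine the Cohen--Lyndon mechanism with the Dehn filling theorem for relatively hyperbolic groups and the inheritance properties of the class $\mathfrak{BCCwr}$. First I would invoke Proposition~\ref{prop CL}~(\ref{prop CL_4}) (Sun's theorem): since $\{H_\lambda\}_{\lambda\in\Lambda}\hookrightarrow_h G$ (relative hyperbolicity implies hyperbolic embeddedness by \cite[Proposition~4.28]{dahmani2017hyperbolically}), there is a finite set $\mathcal{F}_\lambda\subset H_\lambda\smallsetminus\{1\}$ for each $\lambda$ such that whenever $N_\lambda\cap\mathcal{F}_\lambda=\emptyset$ for all $\lambda$, the triple $(G,\{H_\lambda\}_{\lambda\in\Lambda},\{N_\lambda\}_{\lambda\in\Lambda})$ is a Cohen--Lyndon triple. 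By enlarging the $\mathcal{F}_\lambda$ to also absorb the finite sets coming from Theorem~\ref{thm. simple Dehn filling} we may moreover assume $\bhl\hookrightarrow_h\bg$, though what we really need is just the Cohen--Lyndon property.

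Next I would apply Theorem~\ref{thm_CLP_BCC_wr}. That theorem says: if $(G,\{H_\lambda\},\{N_\lambda\})$ is a Cohen--Lyndon triple and both $\bg$ and every $N_\lambda$ satisfy the BCC with finite wreath products, then $G$ does. So it remains to verify the two hypotheses. For $N_\lambda$: each $N_\lambda$ is a subgroup of $H_\lambda$, which is finitely generated virtually nilpotent, hence amenable, in particular a-T-menable; by Example~\ref{ex_HK} (or directly by the Higson--Kasparov theorem \cite{HK01}) $N_\lambda\in\mathfrak{BCCwr}$. For $\bg$: since $H_\lambda$ is finitely generated virtually nilpotent, it is residually finite, so after possibly enlarging $\mathcal{F}_\lambda$ once more we can choose $N_\lambda$ of finite index in $H_\lambda$, making each $\bhl=H_\lambda/N_\lambda$ finite. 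Then $\bg$ is hyperbolic relative to the finite family $\{\bhl\}$, hence (being hyperbolic relative to finite subgroups) is itself a hyperbolic group; by Theorem~\ref{thm_hyp_BCC_wr}, $\bg\in\mathfrak{BCCwr}$.

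Actually one must be slightly careful: Corollary~\ref{cor_negative_curvature}'s input, relative hyperbolicity with virtually nilpotent peripherals, is exactly what is hypothesized here, and the statement to be proved asks only for \emph{finite subsets} $F_\lambda\subset H_\lambda\smallsetminus\{1\}$ such that $N_\lambda\cap F_\lambda=\emptyset$ suffices — it does \emph{not} require $N_\lambda$ to have finite index. To get this, note that the only place finite index was used above was to force $\bhl$ finite so that $\bg$ is hyperbolic. Without that, $\bg$ is merely hyperbolic relative to $\{\bhl\}$, where $\bhl$ is still finitely generated virtually nilpotent (a quotient of $H_\lambda$), hence still residually finite and satisfies the BCC with finite wreath products by Example~\ref{ex_HK}; so Theorem~\ref{thm_BCC_rel} applied to $\bg$ gives $\bg\in\mathfrak{BCCwr}$ directly. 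Thus taking $F_\lambda=\mathcal{F}_\lambda$ (the Cohen--Lyndon sets from Sun's theorem, \emph{without} the residual-finiteness enlargement) already works: $N_\lambda\cap F_\lambda=\emptyset$ makes the triple Cohen--Lyndon, $N_\lambda\in\mathfrak{BCCwr}$ always, $\bg\in\mathfrak{BCCwr}$ by Theorem~\ref{thm_BCC_rel}, and Theorem~\ref{thm_CLP_BCC_wr} finishes.

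I expect the statement is essentially a corollary and there is no serious obstacle; the one point requiring attention is making sure the finite sets $F_\lambda$ are chosen uniformly (independently of the final $N_\lambda$) — which is automatic because Sun's theorem (Proposition~\ref{prop CL}~(\ref{prop CL_4})) and the Dehn filling theorem (Theorem~\ref{thm. simple Dehn filling}) both produce their exceptional sets depending only on $(G,\{H_\lambda\})$, not on the fillings — and checking that no additional residual finiteness or finite-index hypothesis sneaks in, i.e.\ that Theorem~\ref{thm_BCC_rel} can be applied to $\bg$ with its (possibly infinite) virtually nilpotent peripherals $\bhl$ rather than requiring them finite.
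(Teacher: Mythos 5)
The core of your argument is correct and matches the paper's, but it is embedded in framing that confuses $G$ with $\bg$, and that confusion leaks into your final choice of $F_\lambda$.

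The corollary's conclusion is that $\bg = G/\llrr{\mathcal{N}}$ satisfies the BCC with finite wreath products, \emph{not} $G$. Theorem~\ref{thm_CLP_BCC_wr}, which you set up as your main vehicle, concludes $G\in\mathfrak{BCCwr}$ from hypotheses including $\bg\in\mathfrak{BCCwr}$; so when you ``verify the hypothesis for $\bg$'' you are in fact proving the entire corollary, and ``Theorem~\ref{thm_CLP_BCC_wr} finishes'' is a non sequitur — it establishes a different (and here irrelevant) statement. The Cohen--Lyndon machinery, Sun's theorem, and Theorem~\ref{thm_CLP_BCC_wr} play no role. The paper's proof is simply: by Theorem~\ref{thm. simple Dehn filling}, for sufficiently deep $N_\lambda$ the quotient $\bg$ is hyperbolic relative to $\{\bhl\}$; the $\bhl$ are finitely generated virtually nilpotent (quotients of $H_\lambda$), hence residually finite and amenable, hence in $\mathfrak{BCCwr}$; apply Theorem~\ref{thm_BCC_rel} to $\bg$. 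You do reach exactly this argument in your third paragraph.

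The concrete gap is in your last sentence: you assert that $F_\lambda=\mathcal{F}_\lambda$, the exceptional sets from Sun's Cohen--Lyndon theorem, ``already works,'' with the Cohen--Lyndon property doing the work. But what your own (correct) application of Theorem~\ref{thm_BCC_rel} to $\bg$ actually requires is that $\bg$ be hyperbolic \emph{relative to} $\{\bhl\}$, and that is supplied by the ``sufficiently deep'' clause of Theorem~\ref{thm. simple Dehn filling}, not by the Cohen--Lyndon property. The two theorems a priori produce different exceptional finite sets, and you have not argued that Sun's sets absorb the Dehn-filling ones. So the right choice is to take $F_\lambda$ to be the exceptional sets from Theorem~\ref{thm. simple Dehn filling} (or their union with Sun's, if one insists on retaining the Cohen--Lyndon part, but that part is superfluous here). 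Once that is fixed and the detour through Theorem~\ref{thm_CLP_BCC_wr} is deleted, your argument coincides with the paper's.
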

\begin{proof} By  \Cref{thm. simple Dehn filling},
for deep enough quotients, $\bg$ is hyperbolic relative to $\overline{\h}$, and $\bhl$ are finitely generated and virtually nilpotent, so Theorem \ref{thm_BCC_rel} applies to $\bg$.
\end{proof} 

The above result can be applied to Dehn fillings $M_T$ of complete finite-volume manifolds $M$ with pinched negative curvature. A precise definition of $M_T$ is given in \cite[\S 6.1]{sun2019cohomologyii}. In the special case where $M$ has toral cusps, the construction of $M_T$ has a simpler description (see e.g.\,\cite[\S 7.3]{PS_L2}).

\begin{corollary}\label{cor_Dehn_filling_mfld} Let $\overline{M}$ be a compact oriented $n$-manifold with nilmanifold boundary components  such that the centre of the fundamental group of each boundary component is of rank at least $2$. Suppose the interior of $\overline{M}$ admits a Riemannian metric with a complete pinched negative sectional curvature and finite volume.   If $M_T$  is a sufficiently deep Dehn filling manifold of $\overline{M}$, then $M_T$ is a closed oriented aspherical manifold and $\pi_1(M_T)$ satisfies the BCC with finite wreath products.
\end{corollary}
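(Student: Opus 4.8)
The plan is to combine the construction of the Dehn filling manifold from \cite[\S 6.1]{sun2019cohomologyii} (see also \cite[\S 7.3]{PS_L2}) with \Cref{cor_quot_rel_hyp}. Set $G=\pi_1(\overline M)$ and let $H_1,\dots,H_r$ be the fundamental groups of the boundary components of $\overline M$; since each component is a closed nilmanifold, each $H_i$ is finitely generated nilpotent, and by hypothesis its centre has rank at least two. By Farb's theorem \cite[Theorem 5.1]{Farb98}, $G$ is hyperbolic relative to $\{H_i\}_{i=1}^r$. Recall that $M_T$ is obtained from $\overline M$ by excising open collars of the boundary components and gluing in, for each $i$, a compact \emph{aspherical} Dehn filling piece $W_i$ with $\pi_1(W_i)\cong\bh_i=H_i/N_i$ along the boundary nilmanifold, where $N_i\lhd H_i$ is taken inside (a finite-index subgroup of) the centre of $H_i$; the rank-at-least-two hypothesis is precisely what allows $N_i$ to be chosen arbitrarily deep while $W_i$ remains a valid filling. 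I would take the $N_i$ deep enough that simultaneously the conclusions of \Cref{thm. simple Dehn filling} hold (so $\bh_i\hookrightarrow\bg$ and $\bg=G/\m$ is hyperbolic relative to $\{\bh_i\}$), the triple $(G,\{H_i\},\{N_i\})$ is a Cohen--Lyndon triple by \Cref{prop CL}(4), and the finite avoidance set of \Cref{cor_quot_rel_hyp} is met as well; a single finite subset of $\bigcup_i(H_i\smallsetminus\{1\})$ governs all these requirements.

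Next I would verify the topological statements. Van Kampen gives $\pi_1(M_T)\cong\bg$, and $M_T$ is a closed oriented $n$-manifold because the gluings identify closed oriented $(n-1)$-manifolds orientation-reversibly. For asphericity I would use that $\overline M$ is aspherical (the universal cover of its interior is contractible by the Cartan--Hadamard theorem, and $\overline M$ is homotopy equivalent to its interior), that the pieces $W_i$ and the boundary nilmanifolds are aspherical, and that the inclusions $H_i\hookrightarrow G$ (relative hyperbolicity) and $\bh_i\hookrightarrow\bg$ (\Cref{thm. simple Dehn filling}) are $\pi_1$-injective. Exactly as in \cite[Theorem 4.2]{PS_L2}, the Cohen--Lyndon decomposition $\m=\Asterisk_{i,\,t\in T_i}tN_it^{-1}$ forces the pattern in which the lifts of the $W_i$ attach to $\widetilde{\overline M}$ inside $\widetilde{M_T}$ to be tree-like, so $\widetilde{M_T}$ is contractible; equivalently $M_T$ is homotopy equivalent to the Dehn filling space of $(G,\{H_i\},\{N_i\})$, which is a $K(\bg,1)$ by \Cref{thm. topology}. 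Hence $M_T$ is a closed oriented aspherical $n$-manifold.

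Finally, for the Baum--Connes statement: each $\bh_i$ is finitely generated and virtually nilpotent, hence residually finite and a-T-menable, so it satisfies the BCC with finite wreath products by \Cref{ex_HK}; since $\bg$ is hyperbolic relative to $\{\bh_i\}$, \Cref{thm_BCC_rel} gives that $\bg=\pi_1(M_T)$ satisfies the BCC with finite wreath products. In fact this last step is nothing but \Cref{cor_quot_rel_hyp} applied to $G$ and $\{H_i\}$, so the Baum--Connes conclusion of the corollary reduces to a short deduction once the fillings are chosen deep enough.

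The step I expect to be the main obstacle is the asphericity argument: one has to reconcile the explicit cut-and-paste construction of the manifold $M_T$ with the cell-complex Dehn filling space closely enough to transport the contractibility of the latter's universal cover — which genuinely rests on the Cohen--Lyndon property — to $\widetilde{M_T}$. This is already carried out in \cite{sun2019cohomologyii, PS_L2}, so in practice the new content here is the Baum--Connes conclusion, where the key observation is simply that the peripheral quotients $\bh_i$ are amenable, reducing everything to \Cref{cor_quot_rel_hyp}.
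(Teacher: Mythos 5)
Your proposal follows essentially the same route as the paper's proof: identify $G=\pi_1(\overline M)$ as hyperbolic relative to the nilpotent cusp groups $H_i$ via Farb, choose $N_i\leq Z(H_i)$ deep enough that the Cohen--Lyndon property and the avoidance set of Corollary~\ref{cor_quot_rel_hyp} both hold, cite Sun--Petrosyan for the closed-aspherical-manifold conclusion, and apply Corollary~\ref{cor_quot_rel_hyp} for the Baum--Connes statement. Where you sketch the asphericity via the Dehn filling space and a tree-like covering argument, the paper simply cites \cite[Cor.~6.6]{sun2019cohomologyii}; your expansion is consistent with what that reference proves, and you correctly flag it as the place where the real topological work lives.

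One small but genuine omission: you never explicitly impose that each quotient $\bh_i=H_i/N_i$ be torsion-free. This is needed for the filling pieces $W_i$ to be honest aspherical manifolds (rather than orbifolds) and hence for $M_T$ to be a closed manifold; you implicitly assume it when you describe the $W_i$ as ``compact aspherical Dehn filling pieces.'' The paper's proof makes this explicit (``and such that $\overline{H_i}$ is torsion-free''), and it is part of why one works inside $Z(H_i)$ of rank at least $2$: one needs room to choose $N_i$ arbitrarily deep, of full rank in $Z(H_i)$, with torsion-free quotient, and avoiding the finite exclusion sets all at once. You should add this condition when specifying the choice of $N_i$.
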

\begin{proof} Let $G=\pi_1(\overline{M})$. Then $G$ is hyperbolic relative to a finite collection of finitely generated nilpotent subgroups $\{H_i\}_{i=1}^m$ which are the fundamental groups of the boundary components of $\overline{M}$. By assumptions, the centre $Z(H_i)$ of each $H_i$ has rank at least 2. For each $i$, we take sufficiently deep normal subgroups $N_i\lhd Z(H_i)$, i.e.~avoiding the finite subsets in the assumptions of both \Cref{cor_quot_rel_hyp} and \Cref{prop CL} \eqref{prop CL_4}, and such that $\overline{H_i}$ is torsion-free. Then, by  \cite[Cor.~6.6]{sun2019cohomologyii}, $M_T$ is a closed oriented aspherical manifold and by \Cref{cor_quot_rel_hyp}, $\pi_1(M_T) =G/\ll \mathcal{N}\rr $ satisfies the BCC with finite wreath products.
\end{proof}

Next, we present an application of \Cref{cor_quot_rel_hyp} to closed aspherical manifolds that admit an Einstein metric.

\begin{corollary}\label{cor_Einstein_mfld} Let $M$ be a complete, finite-volume hyperbolic manifold of dimension at least three, with toral cusps. Then, any
  $M_T$ obtained by a sufficiently large Dehn filling of the cusps of $M$ is an Anderson-type Einstein manifold with fundamental group satisfying the BCC with finite wreath products.
\end{corollary}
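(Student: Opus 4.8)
The plan is to combine Anderson's construction of Einstein metrics on large Dehn fillings with \Cref{cor_quot_rel_hyp}, the key point being that a geometrically ``sufficiently large'' filling of $M$ realises, on the level of fundamental groups, a group-theoretically ``sufficiently deep'' Dehn filling. Write $G=\pi_1(M)$. Since $M$ is a complete finite-volume hyperbolic $n$-manifold with $n\geq 3$ and toral cusps, $G$ is hyperbolic relative to the family $\{H_i\}_{i=1}^m$ of cusp subgroups, each canonically isomorphic to $\mathbb{Z}^{n-1}$; this is classical for hyperbolic manifolds and is the special case of Farb's theorem \cite{Farb98} already invoked in the proof of \Cref{cor_negative_curvature}. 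In particular each $H_i$ is finitely generated and virtually nilpotent, so \Cref{cor_quot_rel_hyp} applies and produces finite subsets $F_i\subset H_i\smallsetminus\{1\}$ such that, for every family of normal subgroups $N_i\lhd H_i$ with $N_i\cap F_i=\emptyset$, the quotient $G/\ll \bigcup_i N_i \rr$ satisfies the BCC with finite wreath products.

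Next I would match up the two notions of ``large''. A Dehn filling $M_T$ of the cusps of $M$ is obtained by removing a cusp neighbourhood $T^{n-1}\times[0,\infty)$ of each cusp and gluing in a copy of $D^2\times T^{n-2}$; it is specified by a choice, for each cusp, of a primitive slope $\gamma_i\in H_1(T^{n-1};\mathbb{Z})\cong H_i$, and the Seifert--van Kampen theorem gives $\pi_1(M_T)=G/\ll \bigcup_i\langle\gamma_i\rangle \rr$. Thus $M_T$ induces on $\pi_1$ precisely the algebraic Dehn filling associated to the triple $(G,\{H_i\},\{N_i\})$ with $N_i=\langle\gamma_i\rangle$. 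Anderson's largeness hypothesis is that the flat length of each filling slope exceeds a universal constant; since the flat structure on the cusp cross-section identifies $H_i$ with a lattice in a Euclidean space whose induced norm is bi-Lipschitz to the word norm of $H_i\cong\mathbb{Z}^{n-1}$, and since $|\gamma_i^{\,k}|$ grows linearly in $k$, there is a constant $R$ with the property that, whenever every filling slope has flat length at least $R$, every nontrivial element of $N_i=\langle\gamma_i\rangle$ has word length exceeding $\max_{f\in F_i}|f|$, so that $N_i\cap F_i=\emptyset$ for all $i$. After enlarging $R$ so that it also realises Anderson's threshold, every filling with all slopes of flat length at least $R$ is simultaneously an Anderson-type filling and a sufficiently deep Dehn filling in the sense of \Cref{cor_quot_rel_hyp}.

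For any such $M_T$, Anderson's theorem \cite{And06} equips $M_T$ with an Einstein metric, so $M_T$ is an Anderson-type Einstein manifold; and \Cref{cor_quot_rel_hyp} shows that $\pi_1(M_T)=G/\ll \bigcup_i N_i \rr$ satisfies the BCC with finite wreath products, as claimed. I expect the only genuine (and fairly mild) obstacle to be the compatibility bookkeeping in the second step: one must check that Anderson's analytic largeness hypothesis and the combinatorial depth hypothesis underlying \Cref{cor_quot_rel_hyp} (ultimately the depth hypotheses in \Cref{thm. simple Dehn filling} and \Cref{prop CL}) can be met simultaneously, and that the topological filling genuinely induces the algebraic Dehn filling on $\pi_1$; both are routine once the dictionary between primitive slopes and the normal subgroups $N_i$ is made explicit.
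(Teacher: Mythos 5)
Your proof takes essentially the same route as the paper: invoke \Cref{cor_quot_rel_hyp} to obtain group-theoretic finite exclusion sets, invoke Anderson's theorem for the Einstein metric, and show the two ``largeness'' conditions can be met simultaneously. The paper phrases the compatibility step purely combinatorially (it reads off from Anderson's theorem a family of finite subsets $\mathcal{F}'_i\subset H_i\smallsetminus\{1\}$ and simply takes the union $\mathcal{F}_i=\mathcal{F}'_i\cup\mathcal{F}''_i$ with the exclusion sets from \Cref{cor_quot_rel_hyp}), whereas you make the dictionary explicit via flat length on the cusp cross-section and the bi-Lipschitz comparison with word length — a harmless and slightly more self-contained unpacking of the same step. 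One small point worth making explicit, which the paper does flag: for $M_T$ to be a manifold filling one needs each $\bh_i=H_i/N_i$ to be torsion-free; in your setup this is automatic since $N_i=\langle\gamma_i\rangle$ with $\gamma_i$ primitive gives $H_i/N_i\cong\mathbb{Z}^{n-2}$, but it should be stated rather than left implicit.
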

\begin{proof}  The fundamental group of $M$ is hyperbolic relative the fundamental groups $\{H_i\}_{i=1}^m$ of the boundary tori. Following \cite[Definition 7.8]{PS_L2}, the fundamental group of any Dehn filling manifold $M_T$ of $M$ is $\pi_1(M)/\llrr{\bigcup_{i=1}^m N_i}$ where each $N_i$ is an infinite cyclic subgroup of $H_i$.

By Theorem 1.1 of \cite{And06}, any Dehn filling manifold $M_T$ obtained by sufficiently large Dehn filling of the cusps of $M$ admits an Einstein metric. This means that there is family of finite subsets $\{\mathcal F'_i\subset H_i\smallsetminus \{1\}\}_{i=1}^m$ such that if the family of subgroups $\{N_i<H_i\}_{i=1}^m$ induces a manifold filling $M_T$ of $M$ (equivalently, $\forall i, \, H_i/{N_i}$ is torsion-free) and satisfies $\forall i, \, N_i\cap \mathcal F'_i=\emptyset$, then $M_T$ admits an Einstein metric. 

By \Cref{cor_quot_rel_hyp}, there is family of finite subsets $\{\mathcal F''_i\subset H_i\smallsetminus \{1\}\}_{i=1}^m$ such that if the family of subgroups $\{N_i<H_i\}_{i=1}^m$ satisfies $N_i\cap \mathcal F''_i=\emptyset, \, \forall i$, then $\pi_1(M_T)$ satisfies the BCC with finite wreath products.

We define the family of finite subsets $\{\mathcal F_i:=\mathcal F'_i\cup F''_i\}_{i=1}^m$. It is clear that if the family of subgroups $\{N_i<H_i\}_{i=1}^m$ induces a manifold filling $M_T$ of $M$ and satisfies $N_i\cap \mathcal F_i=\emptyset, \, \forall i$, then $M_T$ admits an Einstein metric and $\pi_1(M_T)$ satisfies the BCC with finite wreath products. 
\end{proof}

Lastly, we present an application of \Cref{thm_BCC_rel} to quotients of the mapping class group of the closed genus 2 surface by sufficiently high powers of all Dehn twists. These types of quotients of mapping class groups of surfaces have recently attracted significant attention due to their hierarchically hyperbolic structure \cite{BHMS24, DHS21, BHS17}. We will need the following result from \cite{BHMS24}.

\begin{proposition}[{\cite[Cor.~7.4]{BHMS24}}]\label{prop: MCG quotients}There exists an integer $K_0 \geq 1$ so that for all non-zero multiples $K$ of $K_0$, the quotient
$\mathrm{MCG}(\Sigma_2) / \mathrm{DT}_K$
is hyperbolic relative to an infinite index subgroup commensurable with the product of two $C'\big(1/6\big)$-groups, where $\mathrm{DT}_K$ denotes the normal subgroup generated by $K$-th powers of all the Dehn twists.  
\end{proposition}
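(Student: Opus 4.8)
The plan is to derive this from the hierarchically hyperbolic Dehn filling machinery of \cite{BHMS24}, applied to the very specific combinatorics of the genus-two surface. First I would record that $\mathrm{MCG}(\Sigma_2)$ is a hierarchically hyperbolic group with the standard Masur--Minsky/Behrstock--Hagen--Sisto structure, whose index set $\mathfrak{S}$ consists of the top domain $[\Sigma_2]$, the annular domains $[\gamma]$ over isotopy classes of essential simple closed curves $\gamma$, and the proper non-annular subsurface domains. An Euler-characteristic count shows that in $\Sigma_2$ the only proper non-annular subsurfaces supporting pseudo-Anosov mapping classes are one-holed tori $\Sigma_{1,1}$, two-holed tori $\Sigma_{1,2}$, and four-holed spheres $\Sigma_{0,4}$, and — crucially — that the only way two such domains can be orthogonal (disjointly embedded) is as the pair of one-holed tori $S_1,S_2$ cut off by a separating curve.

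Next I would invoke the combinatorial/hierarchically hyperbolic Dehn filling theorem of \cite{BHMS24}: there is an integer $K_0\geq 1$ such that for every non-zero multiple $K$ of $K_0$, collapsing the annular coordinates by killing $T_\gamma^K$ for all $\gamma$ produces again a hierarchically hyperbolic group, namely $\overline{G}_K := \mathrm{MCG}(\Sigma_2)/\mathrm{DT}_K$, whose index set $\overline{\mathfrak{S}}$ is obtained from $\mathfrak{S}$ by deleting all annular domains (their hyperbolic factors become bounded) and replacing each proper-subsurface factor by its ``filled'' version. The role of the divisibility by $K_0$ is precisely to make the ``deep enough / large links'' hypotheses of that theorem hold uniformly, and simultaneously — via the classical analysis of large quotients of $\mathrm{MCG}(\Sigma_{1,1})\cong B_3$, which after killing $K$-th powers of its Dehn twists becomes a $(2,3,K)$-triangle-type group — to ensure the filled one-holed-torus factor $\overline{\mathrm{MCG}}(S_i)$ is commensurable with a $C'(1/6)$ group.

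Then I would read off the relative hyperbolicity from $\overline{\mathfrak{S}}$. Since every surviving proper non-annular domain other than the $S_i$ (i.e.\ the $\Sigma_{1,2}$- and $\Sigma_{0,4}$-domains) carries no orthogonality after the filling and hence becomes hyperbolic, the only ``product regions'' in $\overline{G}_K$ come from orthogonal pairs $\{[S_1],[S_2]\}$, i.e.\ from separating curves. Applying the relative-hyperbolicity criterion for hierarchically hyperbolic groups (an HHG whose squares all lie in a single $G$-orbit of standard product regions is hyperbolic relative to the stabiliser of such a region, part of the structure theory developed in \cite{BHS17, DHS21} and used in \cite{BHMS24}), $\overline{G}_K$ is hyperbolic relative to the image $P$ of the separating-curve stabiliser. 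This image is virtually $\overline{\mathrm{MCG}}(S_1)\times\overline{\mathrm{MCG}}(S_2)$, hence commensurable with a product of two $C'(1/6)$ groups; and $P$ has infinite index, as one sees from the unboundedness of the top-level factor of the HHG structure of $\overline{G}_K$ (equivalently, from the fact that the $\mathrm{MCG}(\Sigma_2)$-orbit of a separating curve remains ``spread out'' after the filling).

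The main obstacle I anticipate is the second step: verifying that a single $K_0$ simultaneously forces the hierarchically hyperbolic Dehn filling hypotheses to hold and makes the filled one-holed-torus mapping class groups $C'(1/6)$ up to commensurability, and — more delicately — confirming that after the filling no \emph{new} orthogonality appears and that the $\Sigma_{1,2}$- and $\Sigma_{0,4}$-domains genuinely become Gromov hyperbolic rather than merely relatively hyperbolic. This is exactly where the genus-two-specific combinatorics does all the work, and where the argument fails to generalise to higher genus.
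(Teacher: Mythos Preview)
The paper does not actually prove this proposition: it is stated as a direct citation of \cite[Cor.~7.4]{BHMS24} and used as a black box input to Corollary~\ref{cor: MCG quotients}. So there is no ``paper's own proof'' to compare your attempt against.

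What you have written is a reasonable high-level outline of how the result is obtained in \cite{BHMS24} itself --- via the hierarchically hyperbolic Dehn filling machinery, the genus-two combinatorics forcing the only orthogonality to come from the two one-holed tori across a separating curve, and the identification of the filled $\mathrm{MCG}(\Sigma_{1,1})$ factors as (commensurable with) $C'(1/6)$ groups. That is indeed the shape of the argument in the cited reference, and your identification of the delicate point (that a single $K_0$ handles both the filling hypotheses and the small-cancellation condition, and that no new orthogonality appears) is accurate. But for the purposes of the present paper none of this is needed: the authors simply import the statement and move on.
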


\begin{corollary} \label{cor: MCG quotients} There is an integer $K_0 \geq 1$ such that for all non-zero multiples $K$ of $K_0$, the quotient $\mathrm{MCG}(\Sigma_2) / \mathrm{DT}_K$ satisfies the BCC with finite wreath products.
\end{corollary}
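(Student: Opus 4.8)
The plan is to combine Proposition~\ref{prop: MCG quotients} with Theorem~\ref{thm_BCC_rel}, so the argument is essentially a matter of checking that the peripheral subgroup appearing in that proposition satisfies the hypotheses of Theorem~\ref{thm_BCC_rel}, namely residual finiteness and the BCC with finite wreath products. Fix $K_0$ as in Proposition~\ref{prop: MCG quotients} and let $K$ be a non-zero multiple of $K_0$. Then $\mathrm{MCG}(\Sigma_2)/\mathrm{DT}_K$ is hyperbolic relative to a single subgroup $H$ that is commensurable with a product $P_1 \times P_2$ of two $C'(1/6)$ small-cancellation groups. Since $\Lambda$ is a singleton here, $|\Lambda| < \infty$ is automatic, and it remains only to verify the two properties for $H$.

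First I would record that each $C'(1/6)$ group $P_i$ is hyperbolic: a $C'(1/6)$ presentation is in particular a $C(7)$ presentation (indeed a $C(6)$ presentation suffices for the Dehn-type argument, but $C'(1/6)$ gives the strict metric condition), and classical small-cancellation theory shows such groups are word-hyperbolic — this is the fact already invoked in Remark~\ref{rmk:CLA_applications} via \cite{AJW24}. Hyperbolic groups are residually finite? That is not known in general, so instead I would use that $C'(1/6)$ groups are residually finite for a more structural reason: finitely presented $C'(1/6)$ groups act properly and cocompactly on CAT(0) cube complexes by Wise's work, hence are virtually special, and in particular residually finite; alternatively one cites that $C'(1/6)$ groups are residually finite directly from the literature on small-cancellation quotients. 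Consequently $P_1 \times P_2$ is residually finite, and residual finiteness passes to commensurable groups (a group commensurable with a residually finite group is residually finite, using that finite-index subgroups and finite extensions of residually finite groups are residually finite), so $H$ is residually finite.

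Next I would verify the BCC with finite wreath products for $H$. Each $P_i$ is hyperbolic, so by Theorem~\ref{thm_hyp_BCC_wr} it satisfies the BCC with finite wreath products. By the inheritance property \eqref{item_wr_prod} of Theorem~\ref{thm_inh_wr}, the product $P_1 \times P_2$ satisfies the BCC with finite wreath products. Finally, by property \eqref{item_wr_commen} of Theorem~\ref{thm_inh_wr}, a group commensurable with $P_1 \times P_2$ also satisfies the BCC with finite wreath products, so $H$ does. (If one prefers to work with the cubical description, one can instead note that $P_1\times P_2$ acts properly and cocompactly on a product of CAT(0) cube complexes, which is again a CAT(0) cube complex, hence is a-T-menable by \cite{NR97}, so Example~\ref{ex_HK} applies; but the route through Theorem~\ref{thm_hyp_BCC_wr} and \eqref{item_wr_prod} is cleaner and does not require the finite-presentability of the $C'(1/6)$ factors.) With $H$ residually finite and satisfying the BCC with finite wreath products, Theorem~\ref{thm_BCC_rel} applied to $\mathrm{MCG}(\Sigma_2)/\mathrm{DT}_K$ with peripheral family $\{H\}$ yields that $\mathrm{MCG}(\Sigma_2)/\mathrm{DT}_K$ satisfies the BCC with finite wreath products, for every non-zero multiple $K$ of $K_0$.

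The only genuinely delicate point is making sure the peripheral subgroup $H$ really does satisfy both hypotheses; the main obstacle is residual finiteness of $H$, since residual finiteness of general hyperbolic or $C'(1/6)$ groups needs to be justified carefully (via virtual specialness, or via a direct residual-finiteness statement for $C'(1/6)$ small-cancellation quotients) and then transported across the commensurability in Proposition~\ref{prop: MCG quotients}. The BCC-with-finite-wreath-products half, by contrast, is a routine assembly of Theorem~\ref{thm_hyp_BCC_wr} with the product and commensurability permanence statements of Theorem~\ref{thm_inh_wr}, and needs no small-cancellation input beyond hyperbolicity of the $C'(1/6)$ factors.
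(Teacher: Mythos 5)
Your proof is correct and follows essentially the same route as the paper's: invoke Proposition~\ref{prop: MCG quotients}, establish residual finiteness of the peripheral subgroup $H$ via cubulation/virtual specialness of $C'(1/6)$ groups and pass through commensurability, establish the BCC with finite wreath products for $H$ via Theorem~\ref{thm_hyp_BCC_wr} together with the product and commensurability permanence in Theorem~\ref{thm_inh_wr}, and then apply Theorem~\ref{thm_BCC_rel}. Your parenthetical alternative via a-T-menability of groups acting properly on CAT(0) cube complexes is exactly the paper's Remark~\ref{rem: a-T-amen}.
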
 

\begin{proof} Let $K_0 \geq 1$ be as in \Cref{prop: MCG quotients}. Then $\mathrm{MCG}(\Sigma_2) / \mathrm{DT}_K$ is hyperbolic relative to a subgroup $H$ commensurable with the product of two $C'\big(1/6\big)$-groups. 
Any $C'(1/6)$-group is residually finite. Indeed, by \cite[Theorem~1.2]{wise06}, such a group acts properly and cocompactly on a CAT(0) cube complex. By \cite[Theorem~1.1]{agol2013virtual}, every hyperbolic cubulated group is virtually special. Finally, by \cite{HW08}, virtually special groups embed virtually into right-angled Artin groups, which are residually finite.

Since  $C'\big(1/6\big)$-groups are residually finite  and  $H$ commensurable with a product of two such groups, it is residually finite. Moreover, since all $C'\big(1/6\big)$-groups are hyperbolic, they satisfy the BCC with finite wreath products by Theorem \ref{thm_hyp_BCC_wr}. Applying the permanence properties \eqref{item_wr_commen} and \eqref{item_wr_prod} of \Cref{thm_inh_wr}, it follows that $H$ satisfies the BCC with finite wreath products. The result now follows from \Cref{thm_BCC_rel}.
\end{proof}
\begin{remark}\label{rem: a-T-amen} Alternatively to the above argument, since any $C'\big(1/6\big)$-group acts properly on a CAT(0) cube complex, the peripheral subgroup $H$ of \Cref{prop: MCG quotients} does as well, and is therefore a-T-amenable \cite{NR97}. Hence, $H$ satisfies the BCC with finite wreath products \cite{HK01} (see Example \ref{ex_HK}).
\end{remark}

\begin{corollary} \label{cor: MCG BCC equal} There is an integer $K_0 \geq 1$ such that   $\mathrm{MCG}(\Sigma_2)$ satisfies the BCC with finite wreath products if and only if $\mathrm{DT}_K$ satisfies BCC with finite wreath products for some non-zero multiple $K$ of $K_0$.
\end{corollary}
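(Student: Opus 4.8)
The plan is to deduce the equivalence directly from the short exact sequence
\[
1 \longrightarrow \mathrm{DT}_K \longrightarrow \mathrm{MCG}(\Sigma_2) \longrightarrow \mathrm{MCG}(\Sigma_2)/\mathrm{DT}_K \longrightarrow 1,
\]
combined with the inheritance properties of the class $\mathfrak{BCCwr}$ from Theorem \ref{thm_inh_wr} and the fact, established in Corollary \ref{cor: MCG quotients}, that $\mathrm{MCG}(\Sigma_2)/\mathrm{DT}_K$ belongs to $\mathfrak{BCCwr}$ for every non-zero multiple $K$ of the integer $K_0$ produced there; this $K_0$ will be the one in the statement. Since $\mathrm{MCG}(\Sigma_2)$ is finitely generated, all groups involved are countable and discrete, so Theorem \ref{thm_inh_wr} is applicable throughout.

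For the forward direction I would argue as follows: if $\mathrm{MCG}(\Sigma_2) \in \mathfrak{BCCwr}$, then because $\mathrm{DT}_K$ is a subgroup of $\mathrm{MCG}(\Sigma_2)$, the subgroup permanence property \eqref{item_wr_sub} of Theorem \ref{thm_inh_wr} yields $\mathrm{DT}_K \in \mathfrak{BCCwr}$ --- in particular for some (indeed for every) non-zero multiple $K$ of $K_0$.

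For the reverse direction, I would fix a non-zero multiple $K$ of $K_0$ with $\mathrm{DT}_K \in \mathfrak{BCCwr}$. Corollary \ref{cor: MCG quotients} then gives $\mathrm{MCG}(\Sigma_2)/\mathrm{DT}_K \in \mathfrak{BCCwr}$, and applying the extension permanence property \eqref{item_wr_ext} of Theorem \ref{thm_inh_wr} to the displayed sequence, with kernel $\mathrm{DT}_K$ and quotient $\mathrm{MCG}(\Sigma_2)/\mathrm{DT}_K$, produces $\mathrm{MCG}(\Sigma_2) \in \mathfrak{BCCwr}$.

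No real obstacle is expected here; the argument is a formal application of the permanence toolbox, and all of the substantive content sits upstream --- in the relative hyperbolicity statement Proposition \ref{prop: MCG quotients} taken from \cite{BHMS24}, in Lafforgue's theorem in the form of Theorem \ref{thm_hyp_BCC_wr}, and in the residual finiteness of $C'(1/6)$-groups, all feeding into Corollary \ref{cor: MCG quotients}. The one conceptual point worth flagging is that the finite wreath products formulation is precisely what legitimises the extension step: stability of the plain BCC under arbitrary group extensions is not known, whereas property \eqref{item_wr_ext} holds because wreathing with a finite group converts the relevant extensions into ones built from pieces already known to lie in $\mathfrak{BCCwr}$.
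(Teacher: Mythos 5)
Your proof is correct and takes essentially the same approach as the paper: both directions rest on the short exact sequence $1 \to \mathrm{DT}_K \to \mathrm{MCG}(\Sigma_2) \to \mathrm{MCG}(\Sigma_2)/\mathrm{DT}_K \to 1$, with Corollary \ref{cor: MCG quotients} supplying membership of the quotient in $\mathfrak{BCCwr}$, the subgroup property \eqref{item_wr_sub} giving the forward implication, and the extension property \eqref{item_wr_ext} giving the converse. The paper states this more tersely but the argument is identical.
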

\begin{proof} Let $K_0 \geq 1$ be as in \Cref{prop: MCG quotients} and $K$ be a non-zero multiple of $K_0$. By \Cref{cor: MCG quotients}, $\mathrm{MCG}(\Sigma_2) / \mathrm{DT}_K$ satisfies the BCC with finite wreath products. The result now follows from properties \eqref{item_wr_sub} and \eqref{item_wr_ext} of \Cref{thm_inh_wr}.    
\end{proof}

\bibliographystyle{amsalpha}
\bibliography{bib1}

\end{document}